\pdfoutput=1
\documentclass[a4paper, 12pt]{article}

\usepackage[final]{showkeys} % show references
% final -> with references or draft -> without

\usepackage{standalone}
\usepackage{header}
\usepackage{abbrev}
\usepackage{abbrev_qi}

\usepackage{tikz}
\usepackage{enumitem}

\newtheorem{theo}[thm]{Theorem}
\newtheorem{rem}[thm]{Remark}
\usepackage{caption}

\usepackage{color}
\definecolor{applegreen}{rgb}{0.55, 0.71, 0.0}
\definecolor{yellowcut}{RGB}{0, 200, 0}
\definecolor{orangecross}{RGB}{248, 48, 8}
\definecolor{violetcross}{RGB}{178, 8, 248}
\definecolor{ao(english)}{rgb}{0.0, 0.5, 0.0}
\definecolor{green(html/cssgreen)}{rgb}{0.0, 0.5, 0.0}
\definecolor{lincolngreen}{rgb}{0.11, 0.35, 0.02}

\newcommand\Pf{\mathbf{Pf}}
\newcommand\Corr[2]{\langle#2 \rangle_{#1}}

\def\Re{\mathop{\mathrm{Re}}\nolimits}

\def\supp{\mathop{\mathrm{supp}}\nolimits}

\def\Xsym{X^\mathrm{sym}_{[\mathbf v, \mathbf u]}}
\def\Xanti{X^\mathrm{anti}_{[\mathbf v, \mathbf u]}}

\newcommand\tempskipped[1]{}

\definecolor{darkblue}{RGB}{0, 0, 139}

\title{Conformal Invariance in the Quantum Ising Model}
\author{
  Jhih-Huang~Li \footnote{National Taiwan University. \Email{lijhih@ntu.edu.tw}}, 
  Rémy~Mahfouf \footnote{École Normale Supérieure. \Email{remy.mahfouf@ens.fr}}}
\date{\today}

\begin{document}

% \pdfbookmark[0]{Convergence of the Quantum Ising Model}{title}
\maketitle

\vspace{1cm}

\begin{abstract}
  We introduce Kadanoff--Ceva order-disorder operators in the quantum Ising model.
  This approach was first used for the classical planar Ising model~\cite{KC-order-disorder} and recently put back to the stage~\cite{Chelkak-Cimasoni-Kassel, Chelkak-state-art}.
  This representation turns out to be equivalent to the loop expansion of Sminorv's fermionic observables~\cite{Smirnov-conformal} and is particularly interesting due to its simple and compact formulation.
  Using this approach, we are able to extend different results known in the classical planar Ising model, such as the conformal invariance / covariance of correlations and the energy-density, to the spin-representation of the (1+1)-dimensional quantum Ising model.
\end{abstract}

\vspace{1cm}

\newpage
% \pdfbookmark[0]{\contentsname}{}
\tableofcontents

\newpage

\section{Introduction}

The one-dimensional quantum Ising model is a quantum spin chain defined on $\bbZ$.
Given two positive parameters $\tau, \theta > 0$, the interaction of the system is described by the following Hamiltonian,
\[
  \bfH = - \theta \sum_{x \sim y} \Pauli{x}{3} \Pauli{y}{3} 
  - \tau \sum_{x \in V} \Pauli{x}{1},
\]
acting on the Hilbert space $\bigotimes_\bbZ \bbC^2$.
In the above definition, $\theta$ is the intensity of the interaction between neighboring particles and $\tau$ is the intensity of the interaction with the transverse field.
We recall that the $\tfrac12$-spin Pauli matrices $\Pauli{}{1}$ and $\Pauli{}{3}$ are given by
\begin{equation*}
  \Pauli{}{1} = \left(
    \begin{array}{cc}
      0 & 1 \\ 1 & 0
    \end{array}
  \right),\quad
  \Pauli{}{3} = \left(
    \begin{array}{cc}
      1 & 0 \\ 0 & -1
    \end{array}
  \right),
\end{equation*}
which act on the state space of a spin $\bbC^2 \cong {\textrm{Span}} ( \ket{+}, \ket{-} )$, where we identify $\ket{+}$ with $(1, 0)$ and $\ket{-}$ with $(0, 1)$, for instance.
The operators $\Pauli{x}{1}$ is defined by the tensor product which takes the Pauli matrix $\Pauli{}{1}$ at coordinate $x$ and identity operator elsewhere; the same applies to $\Pauli{x}{3}$.
As consequence, the operator $\bfH$ makes sense and acts effectively on $\bigotimes_\bbZ \bbC^2$.

\medskip

The one-dimensional quantum Ising model is an exactly solvable one-dimensional quantum model~\cite{Pfeuty-QI}.
The quantization $e^{-\beta H}$ of the Gibbs measure of the classical Ising model will be the operator of our interest and the ground state, or the limit of the operator when $\beta \to \infty$, is the quantum state, or the probability measure we are interested in.

This ground state has several graphical representations, such as the usual spin-representation, the FK-representation or the random-current representation.
Readers may have a look at~\cite{Ioffe-QI} for a nice and complete exposition on this topic.
These representations are useful in interpreting results from the classical Ising model~\cite{GOS-entanglement, BG-QI-sharp, Bjornberg-infrared, Li-QI-SLE}, and in particular, it has been shown that the ground state of the one-dimensional quantum Ising model goes through a continuous phase transition at $\rho = \rho_c := 2$~\cite{BG-QI-sharp, DCLM-universality}.

% Note that only the ratio $\rho = \tau / \theta$ matters when it comes to the phase transition since the Hamiltonian $H$ is of degree 1 in $\tau$ and $\theta$.

\medskip

In this paper, we will tackle a finer study of the behavior of the model at the critical point $\rho_c = 2$ on one side, and the magnetization in the spin subcritical regime on the other side.
The main tool is based on an approach of fermionic observables using Kadanoff--Ceva correlators, which, to our knowledge, has not been done yet for the quantum Ising model.
This approach allows us to derive, in a more systematic way, local relations of the observables, leading to results for the space-time representation of the quantum model.
Similar local relations were obtained previously FK-loop representation~\cite{Bjornberg-obs, Li-QI-SLE}.

This formalism allows us to derive the horizontal full-plane spin-spin correlations at any temperature, and in any directions at criticality.
We also prove the convergence of the energy density (Theorem~\ref{thm:single_energy} and Theorem~\ref{thm:multiple_energy}) and $n$-spin correlations (Theorem~\ref{thm:spins}) in simply-connected domains.
The last mentioned results have a particularly nice physical interpretation when considering the space-time representation of the one-dimensional quantum Ising model.

Moreover, this approach also provides us a way to compute the magnetization in the spin sub-critical regime, see Theorem~\ref{thm:magnetization_below_criticality}.

\medskip

The paper is organized as follows.
In Section~\ref{sec:model} we recall the definition of the model and its multiple equivalent representations.
In Section~\ref{sec:disorder_insertion}, we introduce the Kadanoff--Ceva formalism in the semi-discrete lattice and use it to define mixed-correlators.
In Sections~\ref{sec:bvp} and~\ref{sec:derivation_main_theorems}, we recall the main tools to study boundary value problems (BVP) that arise naturally from semi-discrete fermionic observables, and prove the convergence to their continuous counterparts.

Those tools were originally used to prove conformal invariance of the interface on the square lattice~\cite{Smirnov-towards, Smirnov-conformal, DCS-CI}, which were generalized later to isoradial graphs~\cite{CS-universality}.
Recently, the first author used them in a similar way in the quantum model~\cite[Section~4]{Li-QI-SLE}.
The convergence of BVP allows then to derive the existence and the conformal invariance of the scaling limit.
Finally, in Section~\ref{sec:polynomials}, we follow the formalism of~\cite{CHM-zig-zag-Ising} to rederive directly in the semi-discrete lattice full-plane asymptotics of correlations.

\paragraph{Acknowledgments} 
J.-H. L. acknowledges support from EPRSC through grant EP/R024456/1. R.M. acknowledges the support from the ANR-18CE40-0033 project DIMERS.
This research was initiated during the stay of the second author at the University of Warwick.
J.-H. L. is grateful to Nikolaos Zygouras for his support at University of Warwick and to Hugo Duminil-Copin and Stanislav Smirnov for introducing him to the quantum Ising model.
R.M. is grateful to the University of Warwick for its hospitality, Dmitry Chelkak, Sung-Chul Park and Konstantin Izyurov for fruitful discussions.

\section{The quantum Ising model}\label{sec:model}

\subsection{Semi-discrete graph notations}

% Given a domain $\Omega$ and $\delta > 0$.
% Denote $\primaldomain$ (resp. $\dualdomain$) its semi-discretized primal (resp. dual) domain.

Here we recall some definitions from~\cite[Section~3.1]{Li-QI-SLE}, 
given a simply connected domain $\Omega \subseteq \bbC$ and $\delta > 0$.
We write $\primaldomain$ for the semi-discretized \emph{primal domain} of $\Omega$ and $\dualdomain$ for the dual of $\primaldomain$, which is called the \emph{dual domain}.
Mathematically, one can take for instance $\primaldomain = \Omega \cap 2\delta (\bbZ \times \bbR)$ and $\dualdomain = \Omega \cap \delta ( 2\bbZ + 1) \times \bbR)$.
We call the joint domain $ \medialdomain = \diamonddomain := \primaldomain \cup \dualdomain$ the \emph{medial domain}.
The graphs $\medialdomain$ and  $\diamonddomain$ are have the same vertex set but are denoted differently to be closer to the current litterature and avoid confusions on local relations of s-holomorphic functions and the primitive of their square. We discuss their graph properties in the following paragraph.
In this article, unless otherwise specified, $u$ (resp. $v$) mostly denotes a point on the primal (resp. dual) lattice.

We say that $v $ and $u$ are neighbours in $\medialdomain $ if they share the same vertical cordinate while their horizontal cordinate only differs by $\pm \delta $.
This makes $\medialdomain$ a bipartite graph, whose edges (linking primal to dual vertices) are in bijection with the $\emph{corner graph} $ $\midedgedomain$ (also called $\emph{midedge domain} $ in~\cite{Li-QI-SLE}).
Given $p \in \medialdomain$, write $p^-$ (resp. $p^+$) its closest left (resp. right) vertex in $\medialdomain$ (hence $p^{\pm} $ belong to the graph which is dual to the one containing $p$).

A corner $c$ (or a \emph{mid-edge}) is the middle of the segment formed by two neighbours in $\medialdomain$.
We write $u_c$ (resp. $v_c$) its neighboring primal (resp. dual) vertex make the identification $c=(v_c u_c)$.
We also write $c^-$ (resp. $c^+$) the closest left (resp. right) corner with the same vertical cordinate as $c$.
The set of edges linking nearby corners is in bijection with the diamond graph $\Omega_{\delta}^{\diamond}$ i.e. for any neighbouring corners $c_{1, 2}$, we set $z:=z(c_1, c_2)$ the middle of segment $[c_1 c_2]$, which belongs to $\Omega_{\delta}^{\diamond}$. 
See Figure~\ref{fig:graph_relation}.

\begin{figure}[htb] \centering
  \includegraphics[scale=1, page=1]{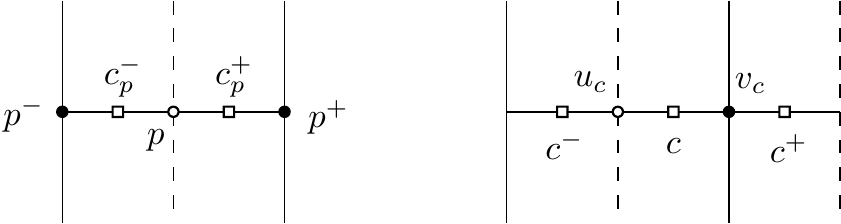}
  \caption{Notations}
  \label{fig:graph_relation}
\end{figure}

\subsection{The model}

Let $\Omega$ be a domain in $\bbC$.
Denote $\primaldomain$ (resp. $\dualdomain$) the semi-discretized primal (resp. dual) domain of $\Omega$.
We will study the quantum Ising model defined on $\primaldomain$ with + boundary condition, or its dual model, the quantum Ising model defined on $\dualdomain$ with free boundary condition.
Both models are defined as a Radon-Nikodym derivative with respect to some Poisson point processes.

Note that the following probability measures in different representations are only well-defined on bounded domains.
For unbounded domains, we follow a classical approach.
We construct compatible measures on a sequence of increasing bounded domains whose limit is the unbounded one, and the weak limit of the sequence of measures will be the measure on the unbounded domain.

Given two positive parameters $\tau$ and $\theta$.
We write $\PPP^\bullet_\tau$ to be the Poisson point process of intensity $\tau$ on primal lines of $\primaldomain$.
Similary, $\PPP^\circ_\theta$ denotes the Poisson point process of intensity $\theta$ on dual lines of $\dualdomain$.
The two Poisson point processes $\PPP^\bullet_\tau$ and $\PPP^\circ_\theta$ are assumed to be independent.
We write $\PPP_{\tau, \theta}$ to be the tensor product $\PPP^\bullet_\tau \otimes \PPP^\circ_\theta$.
We also write $\Lebp$ (resp. $\Lebd$) for the one-dimensional Lebesgue measure on primal lines of $\primaldomain$ (resp. dual lines of $\dualdomain$).
We drop the superscript and write $\Leb$ if the context is clear.

\subsubsection{The FK representation}

The FK representation is defined through the Poisson point process $\PPP_{\tau, \theta}$.
To define the notion of connected components for the FK representation, we shall see the points on primal lines to be death points (denoted $D$) and the points on dual lines to be bridges (denoted $B$) connected neighboring lines.
As such, the FK measure is given by the Random-Nikodym derivative, 
\[
  \frac{\dd \bbPFK (\omega)}{\dd \PPP_{\tau, \theta} (D, B)} \propto 2^{k(\omega)}, 
\]
where $k(\omega)$ counts the number of connected components in $\omega$ with some prescribed boundary condition that is omitted in the above notation.

% Recall that in our setting, we are interested in the free boundary condition if the model is defined on $\primaldomain$ and + boundary condition if the model is defined on $\dualdomain$. \comment{Here is tipically where I feel there is no need to specify every time. In some respect the important thing is the space-time model in that case, which is the one who carries more clearly the physic	al content.}

\subsubsection{The space-time spin representation}

The space-time spin representation is defined through the Poisson point process $\PPP^\bullet_\tau$ on the primal lines.
Write $D$ for a (countable) set of points given by this point process, which are called \emph{death points}.
% A function $\sigma : \primaldomain \to \{ +1, -1 \}$ is said to be a \emph{spin configuration} (compatible with $D$) if it is continuous on $\primaldomain \backslash D$.
% \comment{R: Here is for example where you can shorten, just say it is constant on each connected component}
% In other words, $\sigma$ is constant on each connected component of $\primaldomain \backslash D$.
A function $\sigma : \primaldomain \to \{ +1, -1 \}$ is said to be a \emph{spin configuration} (compatible with $D$) if it is constant on each connected component of $\primaldomain \backslash D$.
Write $\Sigma(D)$ for the set of spin configurations compatible with $D$.
Note that if $\sigma \in \Sigma(D)$ for some countable set $D$, then $\sigma \in \Sigma(D')$ for any $D' \supseteq D$.
This shows that $\Sigma(D)$ is increasing for inclusion and that $\Sigma(D)$'s are not pairwise disjoint.

The spin measure is characterized as follows.
Given a measurable function $F : \bigcup \Sigma(D) \to \bbR$, its expectation is given by
\begin{equation} \label{eq:spin_F}
  \bbPspin [ F(\sigma) ] :=
  \frac{  \PPP^\bullet_\tau \left[ \sum_{\sigma \in \Sigma(D)} F(\sigma) \exp \left(- \frac{\theta}{2} \calH(\sigma) \right) \right] }
  { \PPP^\bullet_\tau \left[ \sum_{\sigma \in \Sigma(D)} \exp \left(- \frac{\theta}{2} \calH(\sigma) \right) \right] }, 
\end{equation}
where $\theta > 0$ and $\calH$ denotes the Hamiltonian,
\[
  \calH(\sigma) := - \Lebd( \eps_{v} ), \qquad \mbox{ where } \quad \eps_v = \sigma_{v^+} \sigma_{v^-} \mbox{ for } v \in \dualdomain.
\]
The denominator in the above Equation~\eqref{eq:spin_F} is called the \emph{partition function}, 
\begin{equation} \label{eq:spin_pf}
  Z (\tau, \theta)
  := \PPP^\bullet_\tau \left[ \sum_{\sigma \in \Sigma(D)} \exp \left(- \textstyle \frac\theta2 \calH(\sigma) \right) \right]
  = \PPP^\bullet_\tau \left[ \sum_{\sigma \in \Sigma(D)} \exp \left( \textstyle \frac\theta2 \cdot \Lebd(\eps_v) \right) \right].
\end{equation}

Note that we could have defined $\Sigma'(D)$ to be the subset of $\Sigma(D)$ such that $\sigma \in \Sigma'(D)$ changes value at \emph{every} point of $D$.
As such, $\Sigma'(D)$'s are disjoint and the following Radon-Nikodym derivative makes sense.
For $\sigma \in \Sigma'(D)$, one defines
\[
  \frac{\dd \bbPspin (\sigma)}{\dd \PPP^\bullet_\tau (D)}
  \propto \exp \left(- \textstyle \frac{\theta}{2} \calH(\sigma) \right).
\]
In this case, \eqref{eq:spin_F} and~\eqref{eq:spin_pf} stay the same except that $\Sigma(D)$ is replaced by $\Sigma'(D)$ in the summations.

Additionally, we can define the energy density $\psi_\sigma : \dualdomain \to \{ +1, -1\}$ by $\psi_\sigma(v) = \eps_v$ for $v \in \dualdomain$.
Let $E_+(\sigma) = \psi_\sigma^{-1}(+1)$, $E_-(\sigma) = \psi_\sigma^{-1}(-1)$ and $S_\pm(\sigma) = \Lebd(E_\pm(\sigma))$.
Using the fact that $S_+(\sigma) + S_-(\sigma) = \Lebd(\dualdomain)$ is a constant not depending on $\sigma$ (but on $\dualdomain$) and that $\calH(\sigma) = S_+(\sigma) - S_-(\sigma)$, we can rewrite, 
\begin{equation} \label{eq:spin_measure}
  \frac{\dd \bbPspin (\sigma) }{\dd \PPP^\bullet_\tau (D)}
  \propto \exp(- \theta S_-(\sigma))
  \propto \exp( \theta S_+(\sigma)).
\end{equation}

Let $A$ be a subset of points in $\primaldomain$ and write $\sigma_A = \prod_{x \in A} \sigma_x$ for a spin configuration $\sigma \in \coprod \Sigma'(D) = \bigcup \Sigma'(D)$.
We have, 
\begin{equation} \label{eq:low-T}
  \bbPspin ( \sigma_A ) =
  \frac{  \PPP^\bullet_\tau \left[ \sum_{\sigma \in \Sigma'(D)} \sigma_A \exp \left(- \theta S_-(\sigma) \right) \right] }
  { \PPP^\bullet_\tau \left[ \sum_{\sigma \in \Sigma'(D)} \exp \left(- \theta S_-(\sigma) \right) \right] }, 
\end{equation}

\subsubsection{Random-parity representation}

The randon-parity representation is particularly useful to write the $n$-spin correlation in an alternative way.
It is defined with resepect to the Poisson point process $\PPP^\circ_{\theta/2}$ defined on $\dualdomain$, which, in the aforementioned FK-representation, can be interpreted as bridges.
Wirte $B$ for a (countable) set of points given by $\PPP^\circ_{\theta/2}$.
Denote $B^\pm \subseteq \primaldomain$ the set of extremities of bridges in $B$, i.e. $B^\pm = \bigcup_{e \in B} \{ e^+, e^- \}$.

Consider additionally a finite subset $A \subseteq \primaldomain$ called \emph{source}.
A function $\psi : \primaldomain \to \{ 0, 1 \}$ is said to be a \emph{random-parity function}, with source $A$ and compatible with $B$, if
(a) it is continuous on $\primaldomain \backslash (A \cup B^\pm)$ and
(b) the subgraph $(A \cup B^\pm, \psi^{-1}(1))$ has an even degree at vertices in $B^\pm$ and an odd degree at vertices in $A$.
Define $I(\psi) = \Lebp(\psi^{-1}(1))$ to be the one-dimension Lebesgue measure of $\psi^{-1}(1)$.
Write $\Psi_A(B)$ for the set of such random-parity functions.

The random-parity function being defined for any finite source $A$, the $n$-spin correlation reads, 
\begin{equation} \label{eq:high-T}
  \bbPspin( \sigma_A )
  = \frac{ \PPP^\circ_{\theta/2} \left[ \sum_{\psi \in \Psi_A(B)} \exp \left(- 2 \tau I(\psi) \right) \right] }
  { \PPP^\circ_{\theta/2} \left[ \sum_{\psi \in \Psi_\emptyset(B)} \exp \left(- 2 \tau I(\psi) \right) \right] }
\end{equation}
Same as we said just after~\eqref{eq:spin_pf}, we could have also defined $\Psi'_A(B)$ (resp. $\Psi'_\emptyset(B)$) with a mandatory discontinuity at \emph{every} point of $A \cup B^\pm$ (resp. $B^\pm$) and replaced $\Psi_A(B)$ and $\Psi_\emptyset(B)$ with $\Psi'_A(B)$ and $\Psi'_\emptyset(B)$ in~\eqref{eq:high-T}.

\subsection{FK-spin coupling}
\label{sec:FK-spin_coupling}

Given a spin configuration $\sigma$ and a FK configuration $\omega$, we say that they are \emph{compatible} if $x$ and $y$ are in the same component in $\omega$, then the spins $\sigma_x$ and $\sigma_y$ coincide.
Below is the construction of the Edward-Sokal coupling for the quantum Ising model.

Given a FK configuration $\omega$ sampled according to $\bbPFK$, define a (random) spin configuration $\sigma$ in the following way.
To each connected component of $\omega$, we choose a spin ($+$ or $-$) uniformly at random.
We write $\bbP_1$ for this measure.
% We write $\bbP_1$ for this measure and write its Radon-Nikodym derivative with respect to $\PPP_{\tau, \theta}$, 
% \[
%   \frac{\dd \bbP_1 (\omega, \sigma)}{\dd \PPP_{\tau, \theta} (D, B)}
%   \propto 2^{k(\omega)} \cdot \left( \frac12 \right)^{k(\omega)} = 1
% \]

Conversely, given a spin configuration $\sigma$ sampled according to $\bbPspin$, define a (random) FK configuration $\omega$ by adding Poisson points of parameter $\theta$ on the dual lines where the neighboring spins coincide, which are bridges connected different components having the same spin.
We write $\bbP_2$ for this measure. Using elementary properties of the Poisson point processes, one can show that the two measures $\bbP_1$ and $\bbP_2$ have the same  distribution.

\subsection{Kramers-Wannier duality}
\label{sec:KW-duality}

The space-time spin representation and the random-parity representation are dual representations to each other. To be more precise, the space-time spin representation corresponds to the so-called \emph{low-temperature} expansion.
A spin configuration defined on $\primaldomain$ is in bijection with collection of contours on $\dualdomain$ which appear as interfaces between + and - spins, see Equation~\eqref{eq:spin_measure} and Figure~\ref{fig:KW_duality}.
As such, the spin measure is exactly the measure defined on the set of contours.

In a similar manner, the random-parity representation corresponds to the so-called \emph{high-temperature} expansion.
It does not consist in rewriting the probability measure configuration-wise.
Instead, it gives an alternative way to rewrite the $n$-point correlation, after summing/integrating over spin configurations, using contours and paths as in Equation~\eqref{eq:high-T}.
An example is given in Figure~\ref{fig:KW_duality}.

\begin{figure}[htb]
  \centering
  \includegraphics[scale=1, page=2]{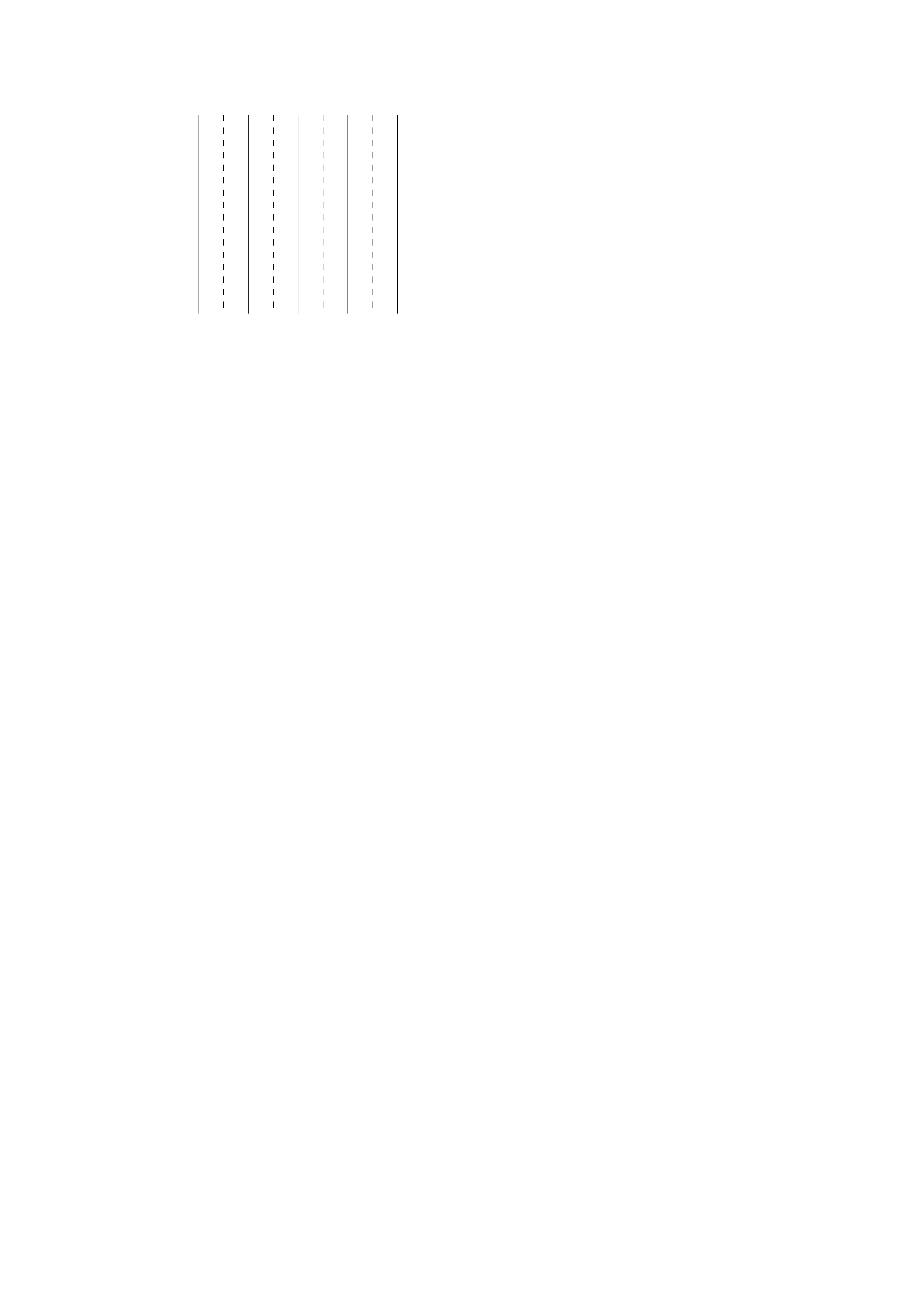}
  \qquad
  \qquad
  \includegraphics[scale=1, page=3]{images/KW-duality.pdf}
  \caption{We work with a primal domain with free boundary condition.
    \textbf{Left:} An example of the space-time spin representation with + spin in blue and - spin in orange. The contours separating different spins are given in green.
    \textbf{Right:} An example of the random-parity representation with source given by two points. }
  \label{fig:KW_duality}
\end{figure}

Note that the Kramers-Wannier duality reads as, 
\begin{equation} \label{eq:KW_duality}
  (\tau^\star, \theta^\star) = (\textstyle \frac\theta2, 2 \tau).
\end{equation}
It is an involution, i.e. $((\tau^\star)^\star, (\theta^\star)^\star) = (\tau, \theta)$.
The self-dual point is given by $\frac{\tau}{\theta} = 2$ which is also the crtical point of the quantum Ising model, see~\cite{BG-QI-sharp, DCLM-universality}.

We remind the reader that the ``usual'' way of talking about sub- or super-criticality differs in the spin- and the FK-representation.
Due to the coupling mentioned in Section~\ref{sec:FK-spin_coupling}, the existence of an infinite cluster in the FK-representation is equivalent to a positive spontaneous magnetization.
However, in the FK setting, we usually refer this case as \emph{super-critical}, whereas in the spin setting, we usually refer this as \emph{sub-critical}.

\subsection{From discrete to semi-discrete}

% \comment{R 18/02 :  As discussef, we keep this part, fill it explaining that one can rederive results for interface but normalizations procedure doesen't survive to unbounded angles, thus it is not possible a priori to derive the result passing to exchange the limits $angles \to 0 $ and $\delta \to 0 $. }

The different representations described above can be seen as their corresponding representations in the classical two-dimension Ising model on a flattened lattice.
As shown in Figure~\ref{fig:flattened_lattice}, there are two types of edges, long ``horizontal'' edges of length $\frac{\delta}{2} \cos \frac\eps2$ and short ``vertical'' edges of length $\frac{\delta}{2} \sin \frac\eps2$ in the flattend lattice $\calG^\eps$ and we note that when $\eps \to 0$, it ``converges'' to the semi-discrete lattice.

\begin{figure}[htb]
  \centering
  \includegraphics[scale=0.75, page=1]{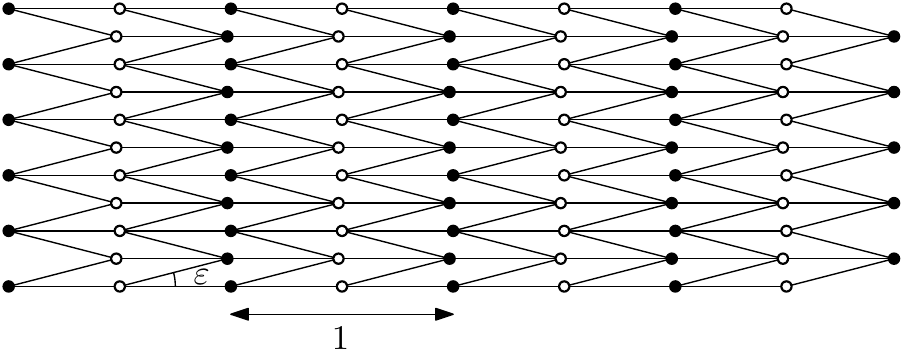}
  \hfill
  \includegraphics[scale=0.75, page=2]{images/flattened_lattice.pdf}
  \caption{The flattened lattice $\calG^\eps$ which ``converges'' to the semi-discrete lattice in the limit. \textbf{Left:} The isoradial representation (diamond graph) when $\delta = \frac12 $. \textbf{Right:} The primal lattice with dual vertices.}
  \label{fig:flattened_lattice}
\end{figure}

At discrete level, let us fix the parameters as follows.
To each edge $e$ one associates the coupling constant $J_e > 0$ which only depends on the type, horizontal or vertical: each horizontal edge $e$ has the same coupling constant $J_h$ and each vertical edge has the same coupling constant $J_v$.
And we recall that the Hamiltonian in the discrete model is given by
\begin{align*}
  \calH(\sigma) & = - \sum_{e \in E} J_e \sigma_x \sigma_y, \qquad \mbox{ where } e = (x, y) \mbox{ for } e \in E.
\end{align*}
Note that in the above expression, one drops the additional parameter $\beta$ which is the inverse temperature.
One can include it in $J_e$ and the regime of the model (subcritical, critical, supercritical) is thus determined by how the parameters $J_e$ are chosen.

In the low-temperature expansion, a spin configuration is in bijection with a collection of contours in the dual lattice.
For each edge $e$ in the primal lattice, the weight of its dual edge is given by $t_e^\star = e^{-2 J_e}$.
The coupling constatns $J_h$ and $J_v$ are chosen such that
% $t_h = 1 - \theta \eps + \calO(\eps^2)$ and $t_v = \tau \eps + \calO(\eps^2)$
$1 - t_h^\star \sim \theta \eps$ and $t_v^\star \sim \tau \eps$.
Recall that the dual of a horizontal edge is vertical and vice versa.
As such, when the limit $\eps \to 0$ is taken, one obtains contours defined by Poisson Point Processes as weak limit of families of i.i.d. Bernoulli random variables whose parameters are scaled as described above.

In the corresponding high-temperature expansion, we make use of $\tanh(J_e) = \frac{1 - t_e^\star}{1 + t_e^\star}$.
This gives $\tanh(J_h) \sim \frac{\theta}{2} \eps$ and $1 - \tanh(J_v) \sim 2 \tau \eps$.
Using the same argument to obtain Poisson Point Processes, this also confirms that the dual model is given by parameters as shown in~\eqref{eq:KW_duality}.

Above we describe the procedure from discrete to semi-discrete in the spin-representation.
When it comes to the FK-representation, we refer interested readers to~\cite[Section~5.1]{DCLM-universality}.

\medskip

This similarity between the discrete model and the semi-discrete representation of the quantum model suggests that the same results in the classical Ising model are expected to hold in the quantum setting.
However, when dealing with universality of results in the discrete setting, one important assumption is the so-called \emph{bounded-angle property}, i.e. angles of the graph need to be uniformly bounded away from 0 and $\pi$.
This property does not survive in the procedure from discrete to semi-discrete for obvious reasons.

In~\cite{DCLM-universality}, authors obtained finer estimations in this particular procedure from discrete to semi-discrete, thus extending  phase transition results from discrete graphs described in Figure~\ref{fig:flattened_lattice} to the semi-discrete lattice $2\bbZ \times \bbR$. This approach might be tempting knowing universality of spin-correlation on isoradial graphs proven in~\cite{CIM-universality}, provided they keep the bounded angle-property. Still, in this case, additional difficulties rise due to local normalizing factor appearing before smashing the grid in the vertical direction. Thus, we prefer to present a more straightforward and elegant method by working directly in the semi-discrete setting, following the spirit of~\cite{Li-QI-SLE}.

% \begin{figure}[htb]
%   \centering
%   \includegraphics[scale=1.3, page=3]{images/flattened_lattice.pdf} \\
%   \includegraphics[scale=1.3, page=4]{images/flattened_lattice.pdf}
%   \caption{The spin representation (low-temperature) and the random-parity (high-temperature) representation on the flattened lattice.
%   \textbf{Top:} $+$ spins are in blue and $-$ spins are in orange. The boundary condition + is marked with blue squares and contours are drawn in green.
%   \textbf{Bottom:} An example of the random-parity representation (in red) with source marked with red squres. All the vertices are of even degree except those in the source which are of odd degree. }
%   \label{fig:discrete_KW}
% \end{figure}

\section{Results} \label{sec:results}

In this section, we give a summary of the main results proved in this paper.
They are expressed in terms of the space-time spin representation of the quantum Ising model.
They also extend a series of results proved on isoradial graphs~\cite{HS-energy-Ising, Hongler-PhD, CHI-spin, CIM-universality} to the semi-discrete lattice, showing the existence and the conformal covariance of a scaling limit in simply connected domains.
Such an extension has previously been made for interfaces in the FK-loop representation~\cite{Li-QI-SLE}, but more work is required in this paper for general correlations, one of the reasons being that normalizing factors depending on the mesh size $\delta$ of the lattice are now required.

For concision, we mostly state those results for with $+$ boundary conditions, although one can extend them to any boundary condition composed of a finite sequence of boundary arc in $\{ +, -, \free \}$ using the theory developped in~\cite{CHI-mixed}.
For more general boundary conditions, the scaling limit is simply muliplied by a conformally invariant factor.
It is also enough to prove the convergence results in smooth domains, since the monotonicity with respect to boundary conditions and approximations by smooth domains allow to prove convergence in general domains once it is done in smooth domains.

Since the (1+1)-dimensional quantum Ising model has several equivalent representations, the results will be stated in the three following settings.
Unless otherwise specified, the model is considered at its criticality, i.e., $\theta = \theta^\star$.
\begin{enumerate}
  \item In $\Omega_\delta$, which is the semi-discretization by $2\delta(\bbZ \times \bbR)$ of a given simply connected domain $\Omega \subset \bbC$.
  The quantum Ising measure at the criticality will be denoted by $\QIOmega$ in this case.
  \item In $\bbH_\delta := 2 \delta (\bbZ \times \bbR)$, which can be regarded as the graphical time-evolution of the quantum Ising model on $2 \delta \bbZ$, or as the special case $\Omega = \bbH$ in the first point.
  The quantum Ising measure at the criticality will be denoted by $\QIH$ in this case.
  \item In $\calR_\delta(k)$, where $\calR_\delta(k)$ stands for the semi-discretization by $2 \delta( \bbZ \times \bbR)$ of $\calR(k)$, which is the rectangle defined in Appendix~\ref{sec:hyperbolic_rectangle}.
  We have that $\calR(k) = (-K(k), K(k)) \times (0, K'(k))$, where, for $k \in (0, 1)$,
  \[
    K(k) := \int_0^1 \frac{\dd s}{ \sqrt{ (1-s^2)(1-k^2s^2) } }
  \]
  is the complete elliptic integral of the first kind, and $K'(k) = K(\sqrt{1-k^2})$.
  Since $k \mapsto K(k) / K'(k)$ is a diffeomorphism between $(0, 1)$ and $(0, +\infty)$, the set $\{ \calR(k), k \in (0, 1)\}$ describes all the possible aspect ratios of a rectangle.
  More details are given in Appendix~\ref{sec:hyperbolic_rectangle}.
  This can be understood as the thermodynamic limit of a finite-size quantum Ising model where we scale both the number of vertices and the time together by $\delta^{-1}$.
  The corresponding quantum Ising measure at the criticality will be denoted by $\QIrect$.
\end{enumerate}

In the second (resp. the third) setting, due to the natural space-time representation, we introduce some additional notations.
For a space-time point $(x^\delta, t)$ in $\bbH_\delta$ (resp. $\calR_\delta(k)$

\medskip

Our first result states the conformal covariance of the horizontal energy density in simply connected domains.

\begin{theo}\label{thm:single_energy}
  Let $a\in \Omega $ an interior point of a simply connected domain approximated by $a^{\delta} \in \primaldomain$.
  Set $\epsilon_{a^{\delta}}:= \sigma_{a^{\delta}} \sigma_{a^{\delta}+2\delta} - \bbE_{\bbC_\delta}^{+}[\sigma_{a^{\delta}}\sigma_{a^{\delta}+2\delta}] $ the \emph{energy density} random variable.
  For the critical space-time representation of the quantum Ising model, one has
  \begin{equation}
    \tfrac{1}{\delta} \,\bbE_{\primaldomain}^{+}[\epsilon_{a^{\delta}}]
    \underset{\delta \to 0}{\longrightarrow}
    \tfrac{1}{\sqrt{2}\pi} \ell_{\Omega}(a),
  \end{equation}
  where $\ell_{\Omega}(a)$ is the hyperbolic metric of $\Omega $ at the point $a$, i.e. $\ell_{\Omega}(a)$ is the modulus of the derivative at $a$ of any conformal from $\bbD $ to $\Omega $ vanishing at $a$. The above convergence is uniform over points remaining at a definite distance from $\partial \Omega $.
\end{theo}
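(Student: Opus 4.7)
The plan is to realize $\delta^{-1}\bbE_{\primaldomain}^{+}[\epsilon_{a^\delta}]$ as the evaluation at a specific corner of a Kadanoff--Ceva fermionic observable $F^\delta = F^{\delta}_{\Omega,a^\delta}$ of the type introduced in Section~\ref{sec:disorder_insertion}, and then to apply the BVP machinery of Sections~\ref{sec:bvp}--\ref{sec:derivation_main_theorems} to pass to the scaling limit. Concretely, I would set $F^\delta(c) := \langle \chi_{c_a}\chi_c \rangle_{\primaldomain}^{+}$, where $c_a$ is the corner sitting on the horizontal edge $[a^\delta,a^\delta+2\delta]$, seen as a function of the running corner $c$; the centered horizontal energy density $\epsilon_{a^\delta}$ is then encoded by $F^\delta$ evaluated at $c_a$ itself, after subtracting its full-plane counterpart $F^{\delta}_{\bbC,a^\delta}$ (whose explicit form is supplied by Section~\ref{sec:polynomials}). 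The resulting observable is s-holomorphic on all of $\primaldomain$ except at $a^\delta$, carries a prescribed singularity at $a^\delta$ inherited from $F^{\delta}_{\bbC,a^\delta}$, and satisfies Riemann-type boundary conditions on $\partial\primaldomain$ dictated by the $+$ boundary state.

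The second step is to apply the s-holomorphic BVP convergence results of Sections~\ref{sec:bvp}--\ref{sec:derivation_main_theorems} to the rescaled observable $\delta^{-1}(F^\delta - F^{\delta}_{\bbC,a^\delta})$. The natural continuous limit is the unique function $R_\Omega$ holomorphic in $\Omega$ that extends continuously up to $a$ and satisfies the Riemann boundary conditions associated with the $+$ state, with singular reference function the continuous full-plane fermion $F_{\bbC,a}$ behaving as a multiple of $(z-a)^{-1}$ near $a$. The $\delta^{-1}$ prefactor in the statement is forced by the fact that $F^{\delta}_{\bbC,a^\delta}$ scales like $\delta$ at macroscopic distance from $a^\delta$, which is read off from the explicit computations of Section~\ref{sec:polynomials}.

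The final step is to identify $R_\Omega(a)$ in closed form. Pulling everything back through the Riemann uniformization $\varphi\colon\Omega \to \bbD$ normalized by $\varphi(a)=0$, a classical computation (see e.g.~\cite{HS-energy-Ising, Hongler-PhD}) gives $R_\Omega(a) = \tfrac{1}{\sqrt{2}\pi}|\varphi'(a)| = \tfrac{1}{\sqrt{2}\pi}\ell_\Omega(a)$, matching the claimed constant. Uniformity over points at a definite distance from $\partial\Omega$ then comes for free from the corresponding uniformity already built into the BVP convergence theorems, together with the continuity of $a \mapsto \ell_\Omega(a)$ on compact subsets of $\Omega$.

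The main obstacle is the convergence step in the presence of a singularity, combined with the failure of the bounded-angle property in the passage from the discrete lattice $\calG^\eps$ to the semi-discrete $\primaldomain$. Because of this failure, the universality results of~\cite{CS-universality, CIM-universality} cannot be invoked out of the box, and one is forced to use the explicit semi-discrete full-plane fermion from Section~\ref{sec:polynomials} in order to match both the leading and the sub-leading behaviour of $F^\delta$ near $a^\delta$. This sub-leading match is precisely what pins down the universal prefactor $\tfrac{1}{\sqrt{2}\pi}$ together with the correct normalization; once it is secured, the rest of the proof reduces to applying the general BVP convergence result of Section~\ref{sec:bvp} to the regular part of the observable.
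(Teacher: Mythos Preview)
Your proposal is correct and follows essentially the same route as the paper: define the Kadanoff--Ceva energy correlator $\Psi^{\calE}_{\Omega_\delta,a}$, subtract the explicit full-plane kernel $G_{(a)}$, apply the BVP convergence of Proposition~\ref{prop:convergence_energy_observable} to $F^\dagger_\delta:=\delta^{-1}(\Psi^{\calE}_{\Omega_\delta,a}-G_{(a)})$ (which is now s-holomorphic everywhere, so the convergence is uniform up to and including the corner $v_a$), and read off the limit from the expansion~\eqref{eqn:expansion_energy_observable} of the continuous solution $f^{\calE}_{\Omega,a}$.

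One small correction: the explicit full-plane fermion you need to subtract is the kernel $G_{(a)}$ built from discrete exponentials in Appendix~\ref{sec:infinite_volume_corrlators}, not in Section~\ref{sec:polynomials}; the orthogonal-polynomial section only supplies the numerical value $\bbE_{\bbC_\delta}^{+}[\sigma_{a^\delta}\sigma_{a^\delta+2\delta}]=\tfrac{\sqrt{2}}{2}$ (Remark~\ref{rem:energy_polynomials}) entering the definition of $\epsilon_{a^\delta}$, but not the singular profile of the observable near $a$. With that adjustment your outline matches the paper's proof.
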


The substraction of the full-plane energy density $\bbE_{\bbC_\delta}^{+}[\sigma_{a^{\delta}}\sigma_{a^{\delta}+2\delta}]=\frac{\sqrt{2}}{2}$ (see Remark~\ref{rem:energy_polynomials}) is necessary to underline the influence of the domain $\Omega $.
Indeed, using standard Russo-Seymour-Welch type estimates~\cite{DCLM-universality} and comparaison between boundary conditions, one can easily see that $\bbE_{\Omega_\delta}^{+}[\epsilon_{a^{\delta}}] \to 0$ as $\delta \to 0 $. The effect of the boundary of $\Omega $ reads in the sub-leading term of the expansion of $\bbE_{\Omega_\delta}^{+}[\sigma_{a^{\delta}}\sigma_{a^{\delta}+2\delta}]$.

One can also state a more general result involving several energy density variables, whose proof is based upon the special case $r=1$ and the Pfaffian structure of the Ising Model.

\begin{theo}\label{thm:multiple_energy}
  Let $\Seq{a_{?1}} \in \Omega$ be distinct interior points of $\Omega$, approximated respectively by $\Seq{a_{?1}^{\delta}}$ in $\primaldomain$.
  In the setup of the previous theorem, one has
  \begin{equation}
    \tfrac{1}{\delta^n}\,
    \bbE_{\primaldomain}^{+} [ \varepsilon_{a^{\delta}_1}\cdots\varepsilon_{a^{\delta}_n} ]
    \underset{\delta \to 0}{\longrightarrow}
    \langle \Seq[]{\epsilon_{a_{?1}}} \rangle_{\Omega}^{+},
  \end{equation}
  where the multiple energy correlator is a conformally covariant quantity defined via ~\eqref{defn:multiple_energ_correlator}.
  Moreover, the convergence is uniform over points $\Seq{a_{?1}}$ remaining at a definite distance from each other and from the boundary.
\end{theo}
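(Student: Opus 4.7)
The strategy is to reduce the $n$-point energy correlator to a Pfaffian of two-point fermionic observables, combining Theorem~\ref{thm:single_energy} for the diagonal entries with a two-source extension of the same BVP technology for the off-diagonal ones.

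Step~1 (Pfaffian reduction). Using the Kadanoff--Ceva disorder-insertion formalism from Section~\ref{sec:disorder_insertion} --- equivalently, the free-fermion / random-current representation of the planar Ising model --- one writes
\[
  \bbE_{\primaldomain}^{+}\Bigl[\prod_{i=1}^n \varepsilon_{a_i^\delta}\Bigr]
  \;=\; \Pf\bigl(M^\delta\bigr),
\]
where $M^\delta$ is a $2n\times 2n$ antisymmetric matrix indexed by the $2n$ primal vertices $\{a_i^\delta, a_i^\delta + 2\delta\}_{i=1}^n$, whose entries are two-spin / two-source fermionic observables with appropriate centring. The subtraction of the full-plane mean built into $\varepsilon_{a^\delta}$ (together with the analogous short-distance cancellations between distinct pairs) ensures that every entry is of order $\delta$, so $\Pf(M^\delta) = \mathcal{O}(\delta^n)$ as $\delta \to 0$.

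Step~2 (Entry-wise convergence). I would then show that each renormalised entry $\delta^{-1}(M^\delta)_{\alpha\beta}$ converges to an explicit continuous, conformally covariant two-point kernel on $\Omega$. For the diagonal $2\times 2$ blocks this is precisely Theorem~\ref{thm:single_energy}. The off-diagonal blocks, involving two distinct macroscopic sources $a_i \neq a_j$, require setting up the corresponding discrete boundary value problem for the two-source fermionic observable (s-holomorphicity, Riemann-type boundary conditions, prescribed singularities at the sources) and applying the BVP convergence machinery of Sections~\ref{sec:bvp} and~\ref{sec:derivation_main_theorems} to pass to the continuous limit. The resulting limiting kernels are exactly those entering the definition~\eqref{defn:multiple_energ_correlator}.

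Step~3 (Pfaffian continuity and uniformity). Since $\Pf$ is a polynomial of degree $n$ in its matrix entries, entry-wise convergence together with local uniform bounds yields $\delta^{-n}\Pf(M^\delta) \to \Pf(M) = \langle \varepsilon_{a_1}\cdots\varepsilon_{a_n}\rangle_\Omega^+$. Uniformity over configurations $(a_i)$ staying at a definite distance from each other and from $\partial\Omega$ follows from the corresponding uniformity of each matrix entry. The main obstacle is Step~2: obtaining the two-source convergence with the correct $\delta^{-1}$ scaling. One must match the short-distance singularity of the continuous two-point kernel as $a_i \to a_j$ by the pre-asymptotic behaviour of the discrete observable, and because the bounded-angle property fails in the semi-discrete setting one cannot invoke isoradial universality but must run the BVP analysis directly, in the spirit of~\cite{Li-QI-SLE}, paying particular attention to the $\delta$-dependent normalisations already flagged at the start of Section~\ref{sec:results}.
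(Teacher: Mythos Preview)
Your overall strategy matches the paper's: use the Pfaffian structure of Kadanoff--Ceva fermions and reduce to the convergence of two-point observables. The paper writes the multi-point correlator as
\[
  G(c_1,\ldots,c_{2n})=\eta_{c_1}\cdots\eta_{c_{2n}}\langle\chi_{c_1}\cdots\chi_{c_{2n}}\rangle_{\Omega_\delta}
  =\Pf\big[\eta_{c_k}\eta_{c_l}\langle\chi_{c_k}\chi_{c_l}\rangle_{\Omega_\delta}\big]_{1\le k,l\le 2n},
\]
indexed by \emph{corners} (not primal vertices), and recovers the multi-energy expectation by taking $c_{k}\sim c_{k+n}$ adjacent, for $k=1,\dots,n$.

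Where you diverge from the paper is in your Step~2, which you flag as the main obstacle. In fact it is not one: the off-diagonal entry $\eta_{c_k}\eta_{c_l}\langle\chi_{c_k}\chi_{c_l}\rangle_{\Omega_\delta}$ is \emph{literally} the energy observable $\Psi^{\calE}_{\Omega_\delta,c_l}(c_k)$ of Definition~\eqref{defn:energy_correlator}, evaluated at a macroscopic distance from its source. Its convergence with the $\delta^{-1}$ scaling is precisely Proposition~\ref{prop:convergence_energy_observable}, which was already established en route to Theorem~\ref{thm:single_energy}. No new two-source boundary value problem needs to be set up; the ``two sources'' are just the fixed corner $a$ and the running corner $c$ of the \emph{same} single-source observable. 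The diagonal (adjacent-corner) entries are handled by subtracting the full-plane kernel $G_{(a)}$ and evaluating the remainder at $a$, exactly as in the $n=1$ proof. So your Step~2 collapses to a direct citation, and the argument becomes short rather than the most delicate part.
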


Let $\bbK(a_1,\ldots,a_r)$ be the $r \times r$ matrix defined by $\bbK_{i,j} := \mathbf{1}_{i\neq j} (a_i-a_j)^{-1} $ and denote $\Pf $ the usual Pfaffian operator on matrices.

\begin{thm}\label{thm:energy_formulas}
  Consider the critical Ising quantum spin chain on $2\delta \bbZ $, fix $(x_k)_{1\leq k \leq r} $ distinct points of $\bbR $ approximated by $(x^{\delta}_k)_{1 \leq k \leq r} $ on $2\delta \bbZ $. Fix also a sequence of positive instants $(t_k)_{1 \leq k\leq r} $. For a space-time configuration $(x^\delta,t) $ in $2\delta \bbZ \times \bbR^{\star}_{+} $, denote  $\epsilon^{(t)}_{x^{\delta}}:= \sigma^{(t)}_{x^{\delta}}\sigma^{(t)}_{x^{\delta}+2\delta} - \frac{\sqrt{2}}{2}$ the horizontal energy density at site $x^\delta$ at time $t$. On has
  \begin{equation}\label{eq:energy_half_plane_+}
    \tfrac{1}{\delta}
    \,\bbE_{\mathrm{QI}_{\delta}}^{+}[\epsilon^{(t)}_{x^{\delta}}]
    \underset{\delta \to 0}{\longrightarrow}
    \tfrac{1}{\sqrt{2}\pi t},
    \quad \tfrac{1}{\delta}
    \,\bbE_{\mathrm{QI}_{\delta}}^{\free}[\epsilon^{(t)}_{x^{\delta}}]
    \underset{\delta \to 0}{\longrightarrow}
    -\tfrac{1}{\sqrt{2}\pi t},
  \end{equation}
  \begin{equation}
    \tfrac{1}{\delta^n}
    \,\bbE_{\mathrm{QI}_{\delta}}^{+} \big[ \varepsilon_{x_{1}^{\delta}}^{(t_1)} \cdots \varepsilon_{x_{n}^{\delta}}^{(t_n)} \big]
    \underset{\delta \to 0}{\longrightarrow}
    \tfrac{1}{(\sqrt{2}i\pi)^n} \Pf \big[ \bbK( \Seq{x_{?1} + \icomp t_{?1}}, \Seq{x_{?1} - \icomp t_{?1}} ) \big], \nonumber
  \end{equation}
  \begin{equation}
    \tfrac{1}{\delta}
    \,\QIrect[+] [\epsilon^{(t)}_{x^{\delta}}]
    \underset{\delta \to 0}{\longrightarrow}
    \tfrac{1}{\sqrt{2}\pi} \tfrac{ \cn(x + \icomp t, k) \dn(x + \icomp t, k) }{ \myi \sn(x + \icomp t, k) }, \nonumber \\
    \tfrac{1}{\delta} \QIrect[\free] [\epsilon^{(t)}_{x^{\delta}}]
    \underset{\delta \to 0}{\longrightarrow}
    -\tfrac{1}{\sqrt{2}\pi} \tfrac{ \cn(x + \icomp t, k) \dn(x + \icomp t, k) }{ \myi \sn(x + \icomp t, k) }, \nonumber
  \end{equation}
  where $\cn ,\dn ,\sn $ denote usual Jacobi theta elliptic function, $QI^{\delta} $ denotes critical the quantum spin chain on $2\delta\bbZ $ while $QI^{\delta}(k) $ denotes the quantum spin chain on $(-K(k),K(k)) \cap 2\delta \bbZ $ until time $K'(k)$.
\end{thm}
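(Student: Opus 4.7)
The plan is to reduce every identity in the theorem to an explicit evaluation of the conformally covariant scaling limit provided by Theorems~\ref{thm:single_energy} and~\ref{thm:multiple_energy}, using explicit uniformizations of $\bbH$ and $\calR(k)$, and then to invoke the Kramers--Wannier duality of Section~\ref{sec:KW-duality} to pass from $+$ to free boundary conditions.

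For the half-plane with $+$ boundary condition, Theorem~\ref{thm:single_energy} identifies the limit as $(\sqrt{2}\pi)^{-1}\ell_{\bbH}(x+\icomp t)$, so I would compute $\ell_{\bbH}$ directly from the conformal map $g:\bbD\to \bbH$ with $g(0)=x+\icomp t$, namely $g(z)=x+\icomp t\,(1+z)/(1-z)$; the hyperbolic-metric normalization $\ell_\Omega(a)=2/|g'(0)|$ used in Theorem~\ref{thm:single_energy} then yields $\ell_{\bbH}(x+\icomp t)=1/t$, giving the first displayed identity. For the rectangle I would exploit the classical fact that $z\mapsto \sn(z,k)$ is a conformal bijection $\calR(k)\to \bbH$ (sending the four corners to $\pm 1,\pm 1/k$) with derivative $\cn(z,k)\dn(z,k)$, composed with a Möbius map $\bbH\to\bbD$ vanishing at $\sn(a,k)$. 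The chain rule then expresses $\ell_{\calR(k)}(a)$ in the form $|\cn(a,k)\dn(a,k)|/\Im\sn(a,k)$, which, after recognizing the correct branches of the Jacobi functions, matches the expression $\cn\,\dn/(\icomp\,\sn)$ in the statement.

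The free boundary condition formulas are obtained from the Kramers--Wannier duality at the self-dual critical point. Duality identifies the space-time spin measure with $+$ boundary condition on $\primaldomain$ with the one with free boundary condition on $\dualdomain$; under this correspondence a dual segment carrying $\varepsilon=+1$ (spins agree) becomes one with disagreeing spins, so the \emph{centered} observable $\varepsilon=\sigma\sigma - \tfrac{\sqrt{2}}{2}$ is sent to its negative once the full-plane background $\tfrac{\sqrt 2}{2}$ is subtracted. This produces the ``free'' limits with a global sign flip, as stated.

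Finally, the Pfaffian formula in the half-plane follows from the free-fermionic Pfaffian structure underlying Theorem~\ref{thm:multiple_energy}: the $n$-point scaling limit is the Pfaffian of its two-point restrictions, and the $+$ boundary condition on $\partial\bbH$ is encoded by the method of images, which is precisely the origin of the doubling of each insertion $x_k+\icomp t_k$ by its reflection $x_k-\icomp t_k$. The full-plane two-point fermionic kernel has the form $(\sqrt{2}\,\icomp\pi)^{-1}(a_i-a_j)^{-1}$, so assembling these entries into $\bbK$ gives the displayed Pfaffian. The main technical obstacle will be the careful bookkeeping of constants, signs and branch choices in the rectangle computation, together with a clean implementation of Kramers--Wannier duality for the \emph{centered} energy in a bounded semi-discrete domain, where boundary corrections a priori contribute at the same order $\delta^{-1}$ as the leading term and must be shown to drop out uniformly in the scaling limit.
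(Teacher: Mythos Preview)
Your proposal is correct and follows essentially the same route as the paper: apply Theorems~\ref{thm:single_energy} and~\ref{thm:multiple_energy} and then evaluate the hyperbolic metric (respectively the Pfaffian correlator~\eqref{defn:multiple_energ_correlator}) explicitly in $\bbH$ and in $\calR(k)$, the rectangle computation being precisely the content of Appendix~\ref{sec:hyperbolic_rectangle}. Your concern about boundary corrections in the Kramers--Wannier step is unwarranted: the identity~\eqref{eq:disorder_duality} is exact on the semi-discrete lattice and the full-plane subtraction $\tfrac{\sqrt 2}{2}$ is self-dual at criticality; equivalently, in the paper's proof of Theorem~\ref{thm:single_energy} the free-boundary value already appears (with the correct sign) as the same observable $F_\delta^\dagger$ evaluated at the adjacent primal vertex $u_a$, so no separate asymptotic control of duality is needed.
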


\begin{rem}
  The convergence un the upper-half plane is invariant by translation, which comes directly from translation invarariance of the model.
  One also notices that the energy density blows up near time $t=0$.
  For example, in the case of $+$ boundary conditions, both nearby spins involved in the energy density are more likely to be correlated as they feel the $+$ imposed at the boundary when they approach it.
  With the $\free$ boundary conditon, it is the other way around and the nearby spins are less likely to be correlated as approaching the boundary.
\end{rem}

We now state the results concerning the general spin-correlations, i.e. the correlations between spins at macroscopic distances from each other.

\begin{theo}\label{thm:ratios_spins}
  Let $\Omega $ be a simply connected domain and $a_1,\ldots,a_n\in \Omega$ distinct interior points approximated respectively by $a_{1}^{\delta},\ldots,a_{n}^{\delta} $ in $\Omega^{\bullet}_{\delta}$. Let also $b_1,\ldots,b_n\in \Omega$ be distinct interior points approximated respectively by $b_{1}^{\delta},\ldots,b_{n}^{\delta}$ in $\Omega^{\bullet}_{\delta}$. For the critical space-time representation of the quantum Ising model one has
  \begin{equation}
    \frac{\bbE_{\primaldomain}^{+}[\sigma_{a^{\delta}_1}\cdots\sigma_{a^{\delta}_n}]}{\bbE_{\primaldomain}^{+}[\sigma_{b^{\delta}_1}\cdots\sigma_{b^{\delta}_n}]} \underset{\delta \to 0}{\longrightarrow} \frac{\langle \sigma_{a_1}\ldots\sigma_{a_n} \rangle_{\Omega}^{+}}{\langle \sigma_{b_1}\ldots\sigma_{b_n} \rangle_{\Omega}^{+}},
  \end{equation}
  where the function $\langle \sigma_{(\cdot)}\ldots \sigma_{(\cdot)} \rangle_\Omega^{+} $ is defined in~\eqref{eq:def_correlation_+}. The convergence is again uniform over points  remaining at a definite distance from each other and from the boundary.
\end{theo}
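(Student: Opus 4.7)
The plan is to follow the strategy developed for isoradial graphs in~\cite{CHI-spin}, adapted to the semi-discrete setting using the Kadanoff--Ceva formalism of Section~\ref{sec:disorder_insertion}. The crucial observation is that, because we work with \emph{ratios} rather than absolute values of $n$-spin correlations, the $\delta^{-n/8}$ scaling (reflecting the spin dimension $\tfrac{1}{16}$) cancels out, so no overall renormalization constant needs to be identified; one only needs to reconstruct how the correlator depends on the configuration of points.

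First I would reduce the statement to a telescoping product of discrete logarithmic derivatives along a path: it suffices to show that, at each fixed configuration of marked points,
\[
  \tfrac{1}{\delta}\log \frac{\bbE_{\Omega_\delta}^{+}[\sigma_{a^\delta+2\delta}\,\sigma_{a_2^\delta}\cdots\sigma_{a_n^\delta}]}{\bbE_{\Omega_\delta}^{+}[\sigma_{a^\delta}\,\sigma_{a_2^\delta}\cdots\sigma_{a_n^\delta}]}
\]
converges to the real part of a specific holomorphic function of $a$, and then integrate along paths from $b_j$ to $a_j$ coordinate by coordinate. Using the Kadanoff--Ceva disorder insertion of Section~\ref{sec:disorder_insertion}, this discrete logarithmic derivative is exactly expressible as the value, at a corner adjacent to $a^\delta$, of the fermionic observable $F_{\Omega_\delta;\,a^\delta,a_2^\delta,\ldots,a_n^\delta}$: a spinor on a double cover of $\Omega_\delta$ ramifying at each $a_i^\delta$, satisfying the semi-discrete s-holomorphic relations, with $+$ boundary conditions on $\partial\Omega_\delta$ and prescribed singular behavior $\sim (z-a_i)^{-1/2}$ near each marked point.

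Second, I would invoke the boundary value problem convergence machinery developed in Section~\ref{sec:bvp}: compactness of the family of normalized discrete observables, together with identification of subsequential limits via their boundary data and prescribed monodromies, forces convergence to the unique continuous holomorphic spinor $f_\Omega(\,\cdot\,;a_1,\ldots,a_n)$ on the double cover. The coefficient of the regular part of $f_\Omega$ at $a_1$ (the ``$\mathcal{A}$-coefficient'' in the CHI normalization) gives the limit of the discrete logarithmic derivative, and via the definition~\eqref{eq:def_correlation_+} it is identified with $\partial_{a_1}\log\langle\sigma_{a_1}\cdots\sigma_{a_n}\rangle_\Omega^+$. Integration along the chosen paths then yields the convergence of the ratio, and the uniformity statement is inherited from the uniformity in the BVP convergence.

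The main technical obstacle is controlling the discrete spinor observables near the branch points $a_i^\delta$, where the singular prefactor $(z-a_i)^{-1/2}$ prevents a naive application of compactness for discrete holomorphic functions. One resolves this by passing to the regularized quantity $F_{\Omega_\delta}(z)-(z-a_i^\delta)^{-1/2}$ in a neighborhood of each $a_i$, and by establishing a priori bounds via RSW-type estimates~\cite{DCLM-universality} combined with the Pfaffian structure already exploited in the proof of Theorem~\ref{thm:multiple_energy}. The lack of a bounded-angle property in the semi-discrete lattice forces a direct semi-discrete treatment (as in~\cite[Section~4]{Li-QI-SLE}) rather than a transfer from isoradial results; in particular, integrating the logarithmic derivative along vertical (continuous-time) segments requires extra care because of the Lebesgue density appearing in the spin representation of Section~\ref{sec:model}.
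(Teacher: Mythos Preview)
Your overall strategy matches the paper's: reduce to logarithmic increments, express these via the spin observable $\Psi^{\calS}_{\Omega_\delta,\varpi}$, invoke Proposition~\ref{prop:convergence_spin_observable} to get convergence to $f^{\calS}_{\Omega,\varpi}$, read off the coefficient $\calA_{(u_1,\ldots,u_n)}$ from the expansion~\eqref{eq:def_coefficient_A}, and integrate along paths.

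There are, however, two points where your proposal is too vague to constitute a proof in the semi-discrete setting, and where the paper does substantial work:

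\textbf{The vertical direction.} You write that the discrete logarithmic derivative is ``exactly expressible as the value, at a corner adjacent to $a^\delta$, of the fermionic observable''. This is how the horizontal increment~\eqref{eq:correlation_ratio_horizontal} is handled (via the contour-integration trick of Proposition~\ref{prop:integration-trick} against the full-plane kernels $G_{[u^\delta_\pm]}$). But in the vertical (continuous) direction there is no neighboring corner to evaluate at; one must compute $\partial_y \log \bbE^+_{\Omega_\delta}[\sigma_{u_1^\delta}\cdots\sigma_{u_n^\delta}]$ directly. The paper does this by differentiating the branching position of the observable using the Dotsenko propagation equation (Propositions~\ref{prop:two_term}--\ref{prop:two_term_dual}), which produces an auxiliary three-branching correlator $Y_{[v^\delta,u_-^\delta,u_+^\delta]}$. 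A key step is then to show that the contour integral of this auxiliary term against $G_{[u_+^\delta]}$ \emph{vanishes identically}, because the two increments across the segment $[v^\delta u_+^\delta]$ carry opposite values of $\sgn(\calO,v^\delta)$ and cancel (see~\eqref{eq:vanishing-increment}). Your remark that vertical integration ``requires extra care because of the Lebesgue density'' does not locate this mechanism; without it the argument does not close.

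\textbf{Regularization near the branchings.} Subtracting the continuous function $(z-a_i^\delta)^{-1/2}$ does not yield an s-holomorphic remainder, so the compactness and maximum-principle arguments of Section~\ref{sec:bvp} would not apply. The paper instead subtracts the explicit \emph{discrete} full-plane spinor $G_{[v]}$ (Appendix~\ref{sec:infinite_volume_corrlators}), which has precisely the same discrete singularity; this is what makes $F^\dagger_\delta$ in the proof of Proposition~\ref{prop:convergence_spin_observable} genuinely s-holomorphic near $u_1$. The a~priori control near the branchings comes from the sub/super-harmonicity of $H_\delta$ and $H^\dagger_\delta$, not from RSW or the Pfaffian structure.
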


Using Krammer-Wannier duality and the formalism of disorders (which can be understood as dual-spins), one can also prove the convergence of correlation ratios between primal and dual model.

\begin{theo}\label{thm:disorder-spins}
  Let $u^{\delta}_{1} \in \primaldomain $ and $v^{\delta}_{1} \in \dualdomain $ approximating $a_1 \in \overset{\circ}{\Omega}$ (respectively  $u^{\delta}_{2} \in \primaldomain $ and $v^{\delta}_{2} \in \dualdomain $ approximating $a_2 \in \overset{\circ}{\Omega}$). One has
  \begin{equation}
    \frac{\bbE_{\Omega^{\circ}_\delta}^{\free}[\sigma_{v^{\delta}_1}\sigma_{v^{\delta}_2}]}{\bbE_{\primaldomain}^{+}[\sigma_{u^{\delta}_1}\sigma_{u^{\delta}_2}]} \underset{\delta \to 0}{\longrightarrow} \calB_{\Omega}(a_1,a_2),
  \end{equation}
  where the coefficient $\calB_{\Omega}(a_1,a_2) $ is defined in~\eqref{eq:def_coefficient_B} via the expansion of the  boundary value problem~\eqref{eq:BVP-spins} near one of its branchings.
\end{theo}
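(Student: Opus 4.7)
The plan is to translate the claimed ratio into a fermionic observable via the Kadanoff--Ceva correspondence of Section~\ref{sec:disorder_insertion}, and then apply the convergence machinery developed in Sections~\ref{sec:bvp}--\ref{sec:derivation_main_theorems}.

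First I would use Kramers--Wannier duality (Section~\ref{sec:KW-duality}) to identify $\bbE_{\Omega^{\circ}_\delta}^{\free}[\sigma_{v^{\delta}_1}\sigma_{v^{\delta}_2}]$ with the disorder--disorder correlator $\bbE_{\primaldomain}^{+}[\mu_{v^{\delta}_1}\mu_{v^{\delta}_2}]$ in the primal model with $+$ boundary conditions. This places numerator and denominator in the same probability space, and the problem reduces to computing the scaling limit of a ratio of mixed correlators within one fixed model.

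Next I would introduce the mixed Kadanoff--Ceva fermionic observable
\[
  F^{\delta}(c) := \bbE_{\primaldomain}^{+}\bigl[\chi_{c}\,\sigma_{u^{\delta}_1}\sigma_{u^{\delta}_2}\,\mu_{v^{\delta}_1}\mu_{v^{\delta}_2}\bigr],
\]
where $c$ ranges over the corners of $\midedgedomain$ and $\chi_c$ is the Ceva spinor at $c$. By the construction of Section~\ref{sec:disorder_insertion}, $F^{\delta}$ is s-holomorphic away from the four marked points, satisfies a Riemann-type boundary condition induced by the $+$ boundary values (with the standard phase conventions attached to disorders), and carries spinor branchings at $u^{\delta}_1,u^{\delta}_2,v^{\delta}_1,v^{\delta}_2$. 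The leading coefficient of $F^{\delta}$ near $u^{\delta}_1$ absorbs the $\sigma_{u^{\delta}_1}$ insertion and is proportional to $\bbE_{\primaldomain}^{+}[\mu_{v^{\delta}_1}\mu_{v^{\delta}_2}]$, while the leading coefficient near $v^{\delta}_1$ absorbs the $\mu_{v^{\delta}_1}$ insertion and is proportional to $\bbE_{\primaldomain}^{+}[\sigma_{u^{\delta}_1}\sigma_{u^{\delta}_2}]$, with explicit lattice-independent proportionality constants. Thus the ratio of these two branching coefficients equals, up to an explicit constant, the ratio of correlations we wish to control.

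I would then normalize $F^{\delta}$ by fixing its expansion coefficient at one of the branchings (say at $u^{\delta}_1$) and apply the results of Sections~\ref{sec:bvp}--\ref{sec:derivation_main_theorems} to deduce that the normalized observable converges in the bulk to the unique solution $F_{\Omega}$ of the continuous boundary value problem~\eqref{eq:BVP-spins}. The coefficient $\calB_{\Omega}(a_1,a_2)$ of~\eqref{eq:def_coefficient_B} is precisely the expansion coefficient of $F_{\Omega}$ at the corresponding branching, and identifying it as the scaling limit of the discrete expansion coefficient yields the theorem. The main obstacle is controlling $F^{\delta}$ uniformly near all four branch points simultaneously so that the discrete expansion coefficients can be shown to converge to their continuous counterparts; this requires the semi-discrete analogue of the a priori regularity estimates for s-holomorphic spinors with branching, which cannot be imported directly from the isoradial theory since the bounded-angle property is lost in the passage from the flattened lattice to the semi-discrete one, as discussed at the end of Section~\ref{sec:model}.
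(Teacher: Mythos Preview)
There is a concrete error in the choice of observable. With your $F^\delta(c)=\bbE^+_{\primaldomain}[\chi_c\,\sigma_{u_1^\delta}\sigma_{u_2^\delta}\mu_{v_1^\delta}\mu_{v_2^\delta}]$, placing $c$ at the corner $(v_1^\delta,u_1^\delta)$ and using $\sigma_{u_1}^2=\mu_{v_1}^2=1$ yields $\braket{\sigma_{u_2^\delta}\mu_{v_2^\delta}}$, not $\braket{\mu_{v_1^\delta}\mu_{v_2^\delta}}$; at any other corner adjacent to $u_1^\delta$ one gets a four-point mixed correlator. So neither the numerator nor the denominator of the target ratio actually appears as a branching coefficient of your $F^\delta$. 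More structurally, since $u_i^\delta$ and $v_i^\delta$ both approximate the same point $a_i$, your four spinor branchings pair up macroscopically and the limiting observable carries energy-type (pole) singularities at $a_1,a_2$ rather than the square-root branchings of~\eqref{eq:BVP-spins} that define~$\calB_\Omega(a_1,a_2)$.

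The paper's route is simpler: it reuses the $n=2$ spin observable $F^{\calS}_{\Omega_\delta,\varpi}$ with $\varpi=\{v_1^\delta,u_2^\delta\}$, i.e.\ only the two insertions $\mu_{v_1^\delta}\sigma_{u_2^\delta}$ besides $\chi_c$. For this observable, evaluating at $c=(v_1^\delta,u_1^\delta)$ gives $\braket{\sigma_{u_1^\delta}\sigma_{u_2^\delta}}$ while evaluating at $c=(v_2^\delta,u_2^\delta)$ gives $\braket{\mu_{v_1^\delta}\mu_{v_2^\delta}}$, so both correlations sit at the two branchings of the \emph{same} observable. Its convergence to $f^{\calS}_{\Omega,\varpi}$ is already established in Proposition~\ref{prop:convergence_spin_observable}, and the value near the second branching $u_2$ is extracted by the contour-integral trick of Proposition~\ref{prop:integration-trick} against the full-plane kernel $G^{\diamond}_{[v_2^\delta]}$; the residue computation then identifies the limit with~$\calB_\Omega(a_1,a_2)$ via~\eqref{eq:def_coefficient_B}. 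In particular the ``obstacle'' you flag (regularity near branchings without bounded angles) is bypassed entirely by this integration trick, not by new a priori estimates. Kramers--Wannier duality is used only at the end to rewrite $\bbE^+[\mu_{v_1^\delta}\mu_{v_2^\delta}]$ as $\bbE^{\free}[\sigma_{v_1^\delta}\sigma_{v_2^\delta}]$.
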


Using the last two theorems and an induction, one can recover the convergence of rescaled correlation, with a fully explicit lattice dependant normalization. The next theorem states this complete result.

\begin{theo}\label{thm:spins}
  Set $\calC := 2^{\frac{1}{6}}e^{\frac{3}{2}\zeta'(-1)} $.
  Under the previous hypothesis, given $a_1, \dots, a_n$ interior points of $\Omega $ approximated by $a^{\delta}_1,\ldots,a^{\delta}_n $ in  $\Omega^{\bullet}_{\delta}$, one has
  \begin{equation}
    \delta^{-\frac{n}{8}} \; \bbE_{\primaldomain}^{+}[\sigma_{a^{\delta}_1}\cdots\sigma_{a^{\delta}_n}] \underset{\delta \to 0}{\longrightarrow} \calC^{n} \langle \sigma_{a_1}\ldots\sigma_{a_n} \rangle_{\Omega}^{+}.
  \end{equation}
  The convergence is uniform over points remaining at a definite distance from each other and from the boundary.
\end{theo}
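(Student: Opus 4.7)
The plan is to combine three ingredients: the ratio convergence of Theorem~\ref{thm:ratios_spins} (which removes all dependence on the positions within a fixed domain), the exact discrete fusion identity $\sigma_{x^{\delta}}\sigma_{x^{\delta}+2\delta} = \tfrac{\sqrt{2}}{2} + \varepsilon_{x^{\delta}}$ together with Theorem~\ref{thm:single_energy} (which propagates the constant from $n-2$ to $n$ spins and fixes the $\delta^{n/8}$ scaling uniformly in $\Omega$), and a direct full-plane computation (which pins down the numerical value of $\calC$).

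By Theorem~\ref{thm:ratios_spins}, for each simply connected $\Omega$ and each $n\ge 0$ there is a scalar $\kappa_{n}(\Omega,\delta)$, independent of the positions of the insertion points, such that
\begin{equation*}
  \bbE^{+}_{\primaldomain}[\sigma_{a^{\delta}_1}\cdots \sigma_{a^{\delta}_n}] = \kappa_{n}(\Omega,\delta)\cdot \langle \sigma_{a_1}\cdots \sigma_{a_n} \rangle^{+}_{\Omega}\cdot (1+o(1))
\end{equation*}
uniformly on compacts; the task is then to identify $\kappa_{n}(\Omega,\delta)= \calC^{\,n}\delta^{n/8}(1+o(1))$ with $\calC$ independent of $\Omega$.

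For the inductive step $n-2 \Rightarrow n$, I would place the last two spins at the minimal primal distance $2\delta$ and inject the discrete fusion identity, obtaining
\begin{equation*}
  \bbE^{+}_{\primaldomain}[\sigma_{a^{\delta}_1}\cdots \sigma_{a^{\delta}_{n-2}}\sigma_{a^{\delta}}\sigma_{a^{\delta}+2\delta}] = \tfrac{\sqrt{2}}{2}\,\bbE^{+}_{\primaldomain}[\sigma_{a^{\delta}_1}\cdots \sigma_{a^{\delta}_{n-2}}] + \bbE^{+}_{\primaldomain}[\sigma_{a^{\delta}_1}\cdots \sigma_{a^{\delta}_{n-2}}\varepsilon_{a^{\delta}}].
\end{equation*}
The induction hypothesis controls the first term at order $\delta^{(n-2)/8}$, while a mixed spin--energy analogue of Theorem~\ref{thm:single_energy}, proved by inserting disorders into the same s-holomorphic observable, controls the second at the subleading order $\delta^{1+(n-2)/8}$. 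Matching with the continuum $\sigma$--$\sigma$ OPE (whose identity channel coefficient is universal, depending neither on $\Omega$ nor on the choice of $(a_i)_{i\le n-2}$) produces a recursion $\kappa_{n} = \calC^{2}\delta^{1/4}\kappa_{n-2}(1+o(1))$ with the \emph{same} $\calC$ in every simply connected $\Omega$. The odd base case $n=1$ is treated analogously by a boundary fusion, while Theorem~\ref{thm:disorder-spins} provides an independent consistency check by tying primal and dual two-point functions together in the same domain.

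The numerical value of $\calC$ is finally pinned down by the full-plane case $n=2$ on $\bbC$: translation invariance permits a direct Fourier/elliptic diagonalization of the relevant semi-discrete fermionic observable, which yields, along the lines of the CHM-zig-zag-Ising formalism of Section~\ref{sec:polynomials}, the asymptotic $\bbE^{+}_{\bbC_\delta}[\sigma_{a^{\delta}}\sigma_{b^{\delta}}]\sim \calC^{2}\delta^{1/4}|a-b|^{-1/4}$. This last step is the main obstacle: because the semi-discrete lattice fails the bounded-angle property, the isoradial computations of~\cite{CHI-spin,CIM-universality} cannot be invoked as black boxes, and the Szeg\H{o}/Wu-McCoy-Tracy-Barouch-type asymptotic analysis has to be redone on $2\delta\bbZ\times\bbR$ itself, with careful bookkeeping of every lattice-dependent normalization all the way down to the transcendental factor $e^{\tfrac{3}{2}\zeta'(-1)}$.
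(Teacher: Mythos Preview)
There is a genuine gap in the inductive step. Theorem~\ref{thm:ratios_spins} only controls the $n$-point function when the insertion points stay at a \emph{definite macroscopic} distance from one another; the moment you place the last two spins at the minimal primal distance $2\delta$ you have left that regime, and nothing in the ratio statement lets you write the left-hand side of your fusion identity as $\kappa_n\cdot\langle\sigma_{a_1}\cdots\sigma_{a_{n-2}}\sigma_a\sigma_{a+2\delta}\rangle^{+}_{\Omega}(1+o(1))$. In fact the discrepancy is not a technicality: the exact nearest-neighbour value is $\bbE^{+}_{\bbC_\delta}[\sigma_{a}\sigma_{a+2\delta}]=\tfrac{\sqrt 2}{2}$ (Remark~\ref{rem:energy_polynomials}), whereas the continuum OPE combined with the claimed scaling would predict $\calC^{2}(2)^{-1/4}=2^{1/12}e^{3\zeta'(-1)}\neq \tfrac{\sqrt 2}{2}$. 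So if one naively matched as you describe, the recursion would read $\kappa_n=2^{-1/4}\delta^{1/4}\kappa_{n-2}(1+o(1))$ rather than $\kappa_n=\calC^{2}\delta^{1/4}\kappa_{n-2}(1+o(1))$, producing the wrong lattice constant. The ``mixed spin--energy analogue of Theorem~\ref{thm:single_energy}'' you invoke for the subleading term is plausible but is not established in the paper and would in any case not repair the leading-order mismatch.

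The paper closes exactly this gap by never letting points collide at the lattice scale. For $n=2$ it writes $\bbE^{+}_{\primaldomain}[\sigma_a\sigma_b]$ as a telescoping product involving an auxiliary macroscopic point $b'$, sends $\delta\to 0$ \emph{first} (so that Theorem~\ref{thm:ratios_spins} applies), and only then sends $b'\to a$; the domain--versus--full-plane ratio $\bbE^{+}_{\primaldomain}[\sigma_a\sigma_{b'}]/\bbE^{+}_{\bbC_\delta}[\sigma_a\sigma_{b'}]$ is controlled uniformly in $\delta$ by FKG together with Theorem~\ref{thm:disorder-spins} (since $\calB_\Omega(a,b')\to 1$), and the full-plane factor is handled by Theorem~\ref{thm:rotational_invariance}/Theorem~\ref{thm:crit-homogen}. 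The cases $n=1$ and $n\ge 3$ proceed analogously, sending the auxiliary point to $\partial\Omega$ and using GHS/FKG/RSW to decouple it. Your identification of the final ingredient --- the Szeg\H{o}-type computation on $2\delta\bbZ\times\bbR$ fixing $\calC$ --- is correct and is indeed carried out in Section~\ref{sec:polynomials}.
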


Once the previous theorem is proven, one can compute explicit formulas in specific domains to recover analytic expressions of correlation. Those expressions can be used to compute space-time correlations of the one dimensional quantum Ising model. The next theorem provides those formulas for the two most natural setups.

\begin{thm}\label{thm:spins_formulas}
  Consider the critical one dimensional Ising quantum spin chain on $2\delta \bbZ $. Set 
  $x_1,\ldots,x_n $ horizontal cordinates approximatated by $x^{\delta}_1,\ldots,x^{\delta}_n $  and $t_1,\ldots,t_n $ positives times. Let $\sigma^{(t)}_{x^{\delta}} $ be the spin variable $x^{\delta}$ at time $t$. One has the space-time spin-spin correlations,
  \begin{align*}
    \delta^{-\frac{n}{8}} \; \bbE_{\mathrm{QI}_{\delta}}^{+}[\sigma^{(t_1)}_{x^{\delta}_1} \cdots \sigma^{(t_n)}_{x^{\delta}_n} ] & \underset{\delta \to 0}{\longrightarrow}  \calC^n \prod_{r=1}^n \big( \tfrac{2}{t_r} \big)^{\frac18} 
    \Big( 2^{-\frac{n}{2}} \sum_{\mu \in \{ \pm 1 \}^n }
    \prod_{1 \leq r < m \leq n} \Big| \tfrac{(x_r - x_m)^2 + (t_r - t_m)^2}{(x_r - x_m)^2 + (t_r+t_m)^2} \Big|^{\frac{\mu_r \mu_m}{4}} 
    \Big)^{\frac12}, \\
    \frac{\bbE_{\mathrm{QI}_{\delta}(k)}^{+}[\sigma^{(t_1)}_{x^{\delta}_1} \cdots \sigma^{(t_n)}_{x^{\delta}_n} ]}{\bbE_{\mathrm{QI}_{\delta}}^{+}[\sigma^{(t_1)}_{x^{\delta}_1} \cdots \sigma^{(t_n)}_{x^{\delta}_n} ]} & \underset{\delta \to 0}{\longrightarrow}  \prod_{j=1}^n | \cn(x_j + i t_j, k) \dn(x_j + i t_j, k) |. \nonumber    
  \end{align*}
  In particular, the one-point function and the two-point function write,
  \begin{align*}
    \delta^{-\frac{1}{8}} \; \bbE_{\mathrm{QI}_{\delta}}^{+}[\sigma_{x}^{(t)}]
    & \underset{\delta \to 0}{\longrightarrow} \calC (\tfrac{2}{t})^{\frac{1}{8}}, \\
    \delta^{-\frac{1}{4}} \; \bbE_{\mathrm{QI}_{\delta}}^{+}[\sigma_{x_1}^{(t_1)}\sigma_{x_2}^{(t_2)}]
    & \underset{\delta \to 0}{\longrightarrow} \calC^2 (\tfrac{1}{t_1 t_2})^{\frac{1}{8}} \big{[}  \big| \tfrac{(x_1-x_2)^2 + (t_1+t_2)^2}{(x_1-x_2)^2 + (t_1-t_2)^2} \big|^{\frac14} + \big| \tfrac{(x_1-x_2)^2 + (t_1-t_2)^2}{(x_1-x_2)^2 + (t_1+t_2)^2} \big|^{\frac14}  \big]^\frac12, \\
    \delta^{-\frac{1}{4}} \; \bbE_{\mathrm{QI}_{\delta}}^{\free}[\sigma_{x_1}^{(t_1)}\sigma_{x_2}^{(t_2)}]
    &\underset{\delta \to 0}{\longrightarrow} \calC^2 (\tfrac{1}{t_1 t_2})^{\frac{1}{8}} \big{[}  \big| \tfrac{(x_1-x_2)^2 + (t_1+t_2)^2}{(x_1-x_2)^2 + (t_1-t_2)^2} \big|^{\frac14} - \big| \tfrac{(x_1-x_2)^2 + (t_1-t_2)^2}{(x_1-x_2)^2 + (t_1+t_2)^2} \big|^{\frac14}  \big]^\frac12.    
  \end{align*}
\end{thm}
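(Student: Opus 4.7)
The plan is to deduce Theorem~\ref{thm:spins_formulas} from Theorem~\ref{thm:spins} by evaluating the continuum correlator $\langle \sigma_{a_1}\cdots\sigma_{a_n}\rangle_\Omega^+$ from~\eqref{eq:def_correlation_+} explicitly in the two domains of interest. Since Theorem~\ref{thm:spins} already supplies the convergence of the rescaled lattice correlation to $\calC^n \langle\sigma_{a_1}\cdots\sigma_{a_n}\rangle_\Omega^+$ with the explicit constant $\calC$ and the scaling $\delta^{n/8}$, no further work on the probabilistic side is needed: everything reduces to computing the right-hand limiting objects.

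For the half-plane $\Omega = \bbH$, I would use the reflection principle (method of images), coupling the $+$-boundary correlator with the full-plane $2n$-spin correlator at the doubled configuration $\{z_r, \bar z_r\}_{r=1}^n$, together with the Kadanoff--Ceva / Wu--McCoy closed form for the latter, to establish
\[
  \langle \sigma_{z_1}\cdots\sigma_{z_n}\rangle_{\bbH}^+
  = \prod_{r=1}^n (2\Im z_r)^{-1/8}\left( 2^{-n}\sum_{\mu\in\{\pm 1\}^n}\prod_{1\leq r<m\leq n}\left|\frac{(z_r-z_m)(\bar z_r-\bar z_m)}{(z_r-\bar z_m)(\bar z_r-z_m)}\right|^{\mu_r\mu_m/4}\right)^{1/2}.
\]
The square root reflects the $\bbZ_2$-quotient by the doubling involution. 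Substituting $z_r = x_r + i t_r$ and absorbing the $2$-powers that distinguish $(2\Im z_r)^{-1/8}$ from $(2/t_r)^{1/8}$ into $\calC^n$ yields the first convergence of the theorem; the $n=1,2$ specializations are immediate algebraic consequences.

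For the rectangle $\Omega = \calR(k)$, I would invoke the biholomorphism $\phi(z) = \operatorname{sn}(z;k) : \calR(k) \to \bbH$, whose derivative $\phi'(z) = \operatorname{cn}(z;k)\operatorname{dn}(z;k)$ is recorded in Appendix~\ref{sec:hyperbolic_rectangle}. Conformal covariance of $\langle \cdot \rangle^+_\Omega$ with scaling dimension $1/8$ at each insertion yields
\[
  \langle\sigma_{z_1}\cdots\sigma_{z_n}\rangle_{\calR(k)}^+ = \prod_{r=1}^n |\operatorname{cn}(z_r;k)\operatorname{dn}(z_r;k)|^{1/8}\;\langle\sigma_{\phi(z_1)}\cdots\sigma_{\phi(z_n)}\rangle_{\bbH}^+,
\]
and taking the ratio with the half-plane correlator at the same insertion points $z_r$ produces the rectangle/half-plane formula of Theorem~\ref{thm:spins_formulas} after simplification.

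The main obstacle is identifying the implicit correlator~\eqref{eq:def_correlation_+} with the explicit BPZ/Wu--McCoy expression on $\bbH$ above. Two natural routes present themselves: (a) a direct computation starting from the boundary value problem~\eqref{eq:BVP-spins} on $\bbH$, where the extra translation symmetry reduces it to a solvable problem with rational spinor solutions, allowing the exponents in the $\mu$-sum to be matched term by term; or (b) a uniqueness argument showing that any conformally covariant function of scaling dimension $1/8$ per insertion, with the correct fusion asymptotics $\sim |z_r-z_m|^{-1/4}$ and boundary growth $\sim (2\Im z_r)^{-1/8}$, is unique up to a global multiplicative constant, which is then pinned down by compatibility with the single-insertion normalization derivable from the energy-density convergence in Theorem~\ref{thm:energy_formulas}. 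Once this matching is done for $\bbH$, the rectangle case follows mechanically from conformal covariance.
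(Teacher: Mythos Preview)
Your approach is essentially the paper's own: it simply says that Theorem~\ref{thm:spins_formulas} follows from Theorem~\ref{thm:spins} together with the explicit continuum formulas for $\langle\sigma_{a_1}\cdots\sigma_{a_n}\rangle_\Omega^+$ in the half-plane (citing \cite[Sec.~7]{CHI-mixed}) and the conformal map $\sn(\cdot,k):\calR(k)\to\bbH$ from Appendix~\ref{sec:hyperbolic_rectangle}. Your routes (a) and (b) for identifying \eqref{eq:def_correlation_+} with the closed BPZ-type formula on $\bbH$ are exactly how that reference proceeds, so there is no genuine methodological difference.

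Two small points are worth flagging. First, you do not address the last displayed line with $\mathrm{free}$ boundary conditions; that one does not come directly from Theorem~\ref{thm:spins} but from Theorem~\ref{thm:disorder-spins} and the definition \eqref{eq:def_correlation_free} via the coefficient $\calB_\Omega(a_1,a_2)$, which produces the minus sign between the two terms. Second, the remark about ``absorbing the $2$-powers \ldots into $\calC^n$'' is not legitimate: $\calC=2^{1/6}e^{3\zeta'(-1)/2}$ is a fixed lattice constant determined by Theorem~\ref{thm:spins} and cannot be redefined; the factor $(2\Im z_r)^{-1/8}$ in your half-plane formula should match the stated limit directly without any adjustment of $\calC$.
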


The lattice dependant scaling factor $\delta^{\frac{n}{8}}\calC^{n}$ is obtained by replacing the expectation in bounded regions by the expectation in the full-plane.
Theorems~\ref{thm:energy_formulas} and~\ref{thm:spins_formulas} can be seen as a byproduct of the space-time interpretation for the critical 1D Ising model, as well as explicit formulas for correlation functions in the upper-half plane and in rectangles. 
\bigskip

In the above-mentioned theorems, explicit lattice scaling factors appear and are related to the full-plane asymptotics of the two points functions, which we compute explicitely, already on the semi-discrete lattice. We developp a formalism, close to the one introduced in~\cite{CHM-zig-zag-Ising}, that allows to derive the desired asymptotics in the vertical direction. Combined with the convergence of correlation ratios mentioned above, it implies in particular the rotational invariance of the critical model, together with its explicit two point function.

\begin{theo}\label{thm:rotational_invariance}
  For the critical space-time representation of the quantum Ising model in full-plane $\bbC_{1}$, one has the asymptotic
  \begin{equation}
    \bbE^{+}_{\bbC_{1}}[\sigma_0 \sigma_{z}]
    \underset{z \to \infty}{\sim}
    \calC^2 |2z|^{-\frac{1}{4}}.
  \end{equation} 
\end{theo}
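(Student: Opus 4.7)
The plan is to establish the full-plane asymptotic first in the vertical (time) direction using the polynomial formalism of Section~\ref{sec:polynomials} (following the semi-discrete adaptation of \cite{CHM-zig-zag-Ising}), then to promote it to an asymptotic along any direction by combining finite-volume approximation with the conformal ratio convergence of Theorem~\ref{thm:ratios_spins} and rotational invariance of the scaling limit.

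\emph{Vertical direction.} Consider $z = \icomp t$ with $t$ large, corresponding to two spins at the same spatial site separated by time $t$. The Kadanoff--Ceva formalism of Section~\ref{sec:disorder_insertion} expresses $\bbE^{+}_{\bbC_1}[\sigma_{(0,0)} \sigma_{(0,t)}]$ as a Pfaffian of fermionic observables. Because the vertical direction is continuous, the transfer operator in time is a one-parameter family of matrices commuting with the spatial translations, and the fermionic observable satisfies a first-order ODE in $t$ coupled to a discrete difference relation in the horizontal coordinate. Fourier-transforming along the spatial direction reduces the problem to a scalar Riemann--Hilbert problem parametrized by $t$; at criticality the associated symbol has winding number zero, consistent with the self-dual point $\tau/\theta = 2$. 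The large-$t$ analysis carried out in Section~\ref{sec:polynomials} will then yield
\begin{equation*}
  \bbE^{+}_{\bbC_1}[\sigma_{(0,0)} \sigma_{(0,t)}] \;=\; \calC^{2} (2t)^{-1/4} \bigl( 1 + o(1) \bigr),
\end{equation*}
where the constant $\calC^{2} = 2^{1/3} e^{3\zeta'(-1)}$ is identified through a Barnes $G$-function / Glaisher--Kinkelin evaluation, mirroring the Onsager--Yang magnetization computation.

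\emph{General direction.} For $z = x + \icomp t$ with $R := |z|$ large, the plan is to approximate the full-plane correlation by its value in a disk $B(0, MR)$ with $M > 1$ fixed, using FKG monotonicity:
\begin{equation*}
  \bbE^{+}_{\bbC_1}[\sigma_0 \sigma_z] \;=\; \lim_{M \to \infty} \bbE^{+}_{B(0, MR) \cap (2\bbZ \times \bbR)}[\sigma_0 \sigma_z].
\end{equation*}
Rescaling the lattice by $1/R$, the right-hand side becomes the correlation in the semi-discretization of $B(0, M)$ at mesh $1/R$ between the fixed points $0$ and $z/R$ with $|z/R| = 1$. Applying Theorem~\ref{thm:ratios_spins} as $1/R \to 0$ gives
\begin{equation*}
  \frac{\bbE^{+}_{\bbC_1}[\sigma_0 \sigma_z]}{\bbE^{+}_{\bbC_1}[\sigma_0 \sigma_{\icomp|z|}]} \;\longrightarrow\; \frac{\langle \sigma_0 \sigma_{z/R}\rangle^{+}_{B(0,M)}}{\langle \sigma_0 \sigma_{\icomp}\rangle^{+}_{B(0,M)}} \;=\; 1,
\end{equation*}
where the last equality follows from the rotational symmetry of $B(0, M)$, itself a direct consequence of conformal covariance of the continuum correlation under the rotation $\zeta \mapsto e^{\icomp\theta}\zeta$. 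Combined with the vertical asymptotic of the previous paragraph, this yields $\bbE^{+}_{\bbC_1}[\sigma_0 \sigma_z] \sim \calC^{2}|2z|^{-1/4}$ along any direction.

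The main obstacle is the semi-discrete Riemann--Hilbert / Szeg\H{o}-type analysis in the vertical direction. Because the vertical step is continuous the bounded-angle property fails, so the classical isoradial Wiener--Hopf approach cannot be transferred directly; one must instead carry out the continuous-time adaptation of the CHM orthogonal-polynomial method and propagate the sub-leading correction through an explicit Barnes $G$-function computation to match the precise constant $\calC^{2}$. A secondary difficulty lies in interchanging the limits $R, M \to \infty$ in the ratio argument, which requires uniform RSW-type a priori estimates from \cite{DCLM-universality} together with FKG monotonicity to bound the finite-volume error away from the boundary.
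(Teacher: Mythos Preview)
Your two-step architecture is exactly the paper's: first an explicit asymptotic along one lattice direction, then a comparison argument in large disks using Theorem~\ref{thm:ratios_spins} and the rotational invariance of the continuum two-point function to transport the asymptotic to every direction. Your Step~2 (finite-volume approximation via FKG, rescaling, ratio convergence in the disk, rotational symmetry of $\langle\sigma_0\sigma_{\,\cdot\,}\rangle_{\bbD_R}^+$) is essentially the paper's argument; the interchange of limits you flag as a difficulty is handled, as you say, by RSW/FKG giving a uniform comparison between $\bbE^+_{\bbD^\delta_R}$ and $\bbE^+_{\bbC_\delta}$.

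The one point to correct is the reference direction in Step~1. Section~\ref{sec:polynomials} and Theorem~\ref{thm:crit-homogen} treat the \emph{horizontal} full-plane correlations $D_m=\bbE^+_{\bbZ\times\bbR}[\sigma_{(0,0)}\sigma_{(m,0)}]$, not the vertical ones: the branch points $\mathbf v,\mathbf u$ sit on the real axis, the Fourier transform is taken in the discrete horizontal variable, and the output is $D_m\sim\calC^2(2m)^{-1/4}$. (The sentence ``asymptotic in the vertical direction'' in the paper's short proof is a slip; the cited Theorem~\ref{thm:crit-homogen} is horizontal.) Your sketched vertical computation (Fourier in space, ODE in time, scalar Riemann--Hilbert) is a plausible alternative route but is not what the CHM-style machinery of Section~\ref{sec:polynomials} actually does, and as written it is too vague to pin down the constant $\calC^2$. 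The simplest repair is to replace ``vertical'' by ``horizontal'' throughout your Step~1 and quote Theorem~\ref{thm:crit-homogen} directly; then your Step~2 compares $\bbE^+_{\bbC_\delta}[\sigma_{-s}\sigma_s]$ to $\bbE^+_{\bbC_\delta}[\sigma_{-1}\sigma_1]$ and the proof coincides with the paper's.
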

In particular, the critical model is rotationally invariant. The uniqueness of the Gibbs measure at criticality allows to drop $+$ boundary condition at infinity from the definition. We now pass to the results outside of criticality, obtained by the same formalism of Topelitz+Hankle determinants.

\begin{theo}
  For the sub-critical $(\theta < \theta^\star) $ space-time representation of quantum Ising model in $\bbC_{1}$ with parameters $(\theta,\theta^{\star})$ and $+$ boundary conditions at infinity, the spontaneous magnetization satisfies
  \begin{equation}
    \label{eq:M=(KOY-thm)}
    \calM(\theta, \theta^{\star})\ := \lim\limits_{n\to \infty} \bbE^{+}_{\bbC_{1}}[\sigma_0 \sigma_{n}]^{1/2}
    \ =\ (\theta^2 + {\theta^\star}^2)^{-\frac12} (1 - (\tfrac{\theta}{\theta^\star})^2)^{\frac18}.
  \end{equation}
\end{theo}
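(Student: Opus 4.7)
\bigskip

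The plan is to derive the Yang--type formula by representing the horizontal full--plane spin--spin correlation as a Toeplitz determinant via the fermionic (Kadanoff--Ceva) formalism developed in the earlier sections, then invoking the strong Szegő limit theorem. By translation invariance of the full--plane model in the horizontal direction, the matrix of two--point fermionic amplitudes between primal sites $0,1,\dots,n-1$ and dual sites $\tfrac12,\tfrac32,\dots,n-\tfrac12$ (all on a single horizontal line) is Toeplitz; the Kadanoff--Ceva disorder insertion then yields an identity of the form
\begin{equation*}
  \bigl(\bbE^{+}_{\bbC_{1}}[\sigma_0 \sigma_{n}]\bigr)^{2}
  \;=\; \det T_{n}\bigl(\phi_{\theta,\theta^{\star}}\bigr),
\end{equation*}
where $\phi_{\theta,\theta^{\star}}$ is an explicit symbol on the unit circle depending only on the parameters. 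This is the exact semi--discrete analogue of the classical Kaufman--Onsager formula, and the random--parity / Pfaffian structure recalled in Section~\ref{sec:model} is what makes the identity available.

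The second step is to compute the symbol $\phi_{\theta,\theta^{\star}}$ explicitly. One uses the factorization of the full--plane fermionic kernel into two horizontal transfer--matrix factors encoding respectively the Poisson processes of intensity $\tau=\theta^{\star}/2$ on primal lines and of intensity $\theta$ on dual lines (after horizontal inter--line steps have been integrated out, using translation invariance in the vertical direction). In the sub--critical phase $\theta<\theta^{\star}$ the resulting symbol is of unit modulus with zero winding number on the unit circle, of the shape
\begin{equation*}
  \phi_{\theta,\theta^{\star}}(e^{\icomp \alpha})
  \;=\;\left(\frac{\theta^{\star}-\theta e^{-\icomp \alpha}}{\theta^{\star}-\theta e^{\icomp \alpha}}\right)^{\!1/2}
  \left(\frac{\theta^{\star}e^{\icomp \alpha}-\theta}{\theta^{\star}e^{-\icomp \alpha}-\theta}\right)^{\!1/2},
\end{equation*}
so that the geometric--mean factor $G(\phi)=\exp\bigl(\tfrac{1}{2\pi}\!\int\!\log \phi\bigr)$ equals one and $\det T_n(\phi)$ admits a finite limit as $n\to\infty$.

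The third step is to apply the strong Szegő limit theorem. Writing $\log \phi = \sum_{k\neq 0} c_k e^{\icomp k \alpha}$ with $c_{-k}=\overline{c_k}$, one obtains
\begin{equation*}
  \lim_{n\to\infty}\det T_n(\phi_{\theta,\theta^{\star}})
  \;=\; \exp\!\Bigl(\sum_{k\geq 1} k\,|c_k|^{2}\Bigr),
\end{equation*}
and the right--hand side is computed by residues: expanding each square--root factor as $\log(1-x) = -\sum_{k\geq 1} x^k/k$ one extracts $c_k = \tfrac{1}{k}\bigl[(\theta/\theta^{\star})^{k}-(-\theta/\theta^{\star})^{k}\bigr]/2$ (up to conjugation), so that the series $\sum_k k|c_k|^2$ sums to a closed form involving $\log(1-(\theta/\theta^{\star})^{2})$ plus a prefactor $\log(\theta^{2}+\theta^{\star 2})^{-1}$. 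Taking the square root as dictated by $\calM=\lim_n \bbE^+_{\bbC_1}[\sigma_0\sigma_n]^{1/2}$ delivers exactly the formula~\eqref{eq:M=(KOY-thm)}.

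The main obstacle is the first step, i.e.\ obtaining the clean Toeplitz identity with an explicitly identifiable symbol directly in the semi--discrete setting. The difficulty is twofold: one must correctly account for the asymmetry between horizontal (discrete, lattice step $2$) and vertical (continuous, Poissonian) directions when reducing the full two--dimensional Pfaffian to a one--dimensional Toeplitz determinant on a single horizontal line, and one must verify that the resulting symbol is indeed of winding number zero precisely in the regime $\theta<\theta^{\star}$ (with a jump of winding at the self--dual point, which is what causes $\calM$ to vanish at criticality with the predicted exponent $1/8$). Once these points are settled, the Szegő computation is a routine residue calculation.
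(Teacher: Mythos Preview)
Your proposal follows the classical McCoy--Wu route (Toeplitz determinant plus strong Szeg\H{o}), which is \emph{not} the approach the paper takes. The paper instead adapts the orthogonal--polynomial machinery of~\cite{CHM-zig-zag-Ising}: it introduces the full--plane two--branching observable $F_{[\mathbf v,\mathbf u]}$, forms its symmetrized and antisymmetrized parts $\Xsym,\Xanti$, and characterizes these uniquely as solutions of explicit massive--harmonic half--plane problems (Lemmas~\ref{lem:uniqueness} and~\ref{lem:H-to-dbl-cover}). Solving those problems by Fourier transform produces trigonometric polynomials $Q_n,Q^\#_{n+1}$ orthogonal to intermediate monomials with respect to the weights $w(t)=|\theta-\theta^\star e^{it}|$ and $w^\#=w^{-1}$ (Lemmas~\ref{lem:solution-sym},~\ref{lem:solution-anti}). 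This yields the two--step recursion of Lemma~\ref{lem:reccurence_correlation_Verbrusky}, whose telescoping product~\eqref{eq:Dn-square-diff} expresses $D_{2m+1}^2-(D_{2m+1}^\star)^2$ as $\prod\beta_k^\#\prod\beta_k\cdot(L_0^2-(L_0^\star)^2)$; the Szeg\H{o} theory is then applied only to evaluate $\lim\prod\beta_k^\#\prod\beta_k$, and one uses $D_n^\star\to 0$ to conclude. No Toeplitz determinant identity for $D_n$ itself is ever needed.

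The gap you yourself flag is genuine and is exactly what the paper's method is designed to bypass. You assert $(\bbE^+_{\bbC_1}[\sigma_0\sigma_n])^2=\det T_n(\phi)$ ``by the random--parity / Pfaffian structure recalled in Section~\ref{sec:model}'', but nothing in that section delivers such an identity in the semi--discrete lattice, and your derivation of the symbol is entirely heuristic. Moreover, your proposed unimodular symbol with $G(\phi)=1$ cannot produce the prefactor $(\theta^2+{\theta^\star}^2)^{-1}$ in $\calM^2$: strong Szeg\H{o} for a unimodular symbol with zero winding and $G=1$ gives $\lim\det T_n=\exp(\sum_{k\ge 1}k|c_k|^2)\ge 1$, whereas $\calM^4$ can be arbitrarily small. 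So either the determinant identity, the form of the symbol, or the claimed Szeg\H{o} output is wrong as stated. If you want to make the Toeplitz route work in this setting you would need to (i) actually establish a determinantal formula for $D_n$ (not $D_n^2$) directly from the semi--discrete Kadanoff--Ceva correlators, with a symbol whose geometric mean carries the $(\theta^2+{\theta^\star}^2)$ factor, and (ii) verify its analytic structure. The paper's observable/OPUC approach sidesteps all of this: it extracts the needed information from the fermionic correlator one recursion step at a time, so the only ``Szeg\H{o}'' input is the standard asymptotics of the norming constants $\beta_k,\beta_k^\#$ for the explicit weight $|\theta-\theta^\star e^{it}|^{\pm 1}$.
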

This result generalizes the full-plane magnetization result below criticality proven for rectangular grids in~\cite{CHM-zig-zag-Ising} and in general $Z$-invariant isoradial graphs with bounded angles in~\cite{CIM-universality}.
It is known by~\cite{DCLM-universality} that above criticality, the spin-spin correlation decays exponentially fast.
In particular, using quasi-multiplicativity arguments, one can see that the so-called \emph{correlation length}~\cite[Thm.~1.5]{DCLM-universality} in the horizontal direction $\xi $ is well defined.
The next result gives its exact value, extending to the horizontal direction the result proved in~\cite{beffara-duminil}.
\begin{theo}
  For the super-critical $(\theta > \theta^\star) $ quantum Ising model in full-plane $\bbC_{1}$ with parameters $(\theta,\theta^{\star})$ and $+$ boundary conditions at infinity, one has 
  \begin{equation}
    \xi = \lim\limits_{n\to \infty} -\tfrac{1}{n} \log \bbE^{+}_{\bbC_{1}}[\sigma_0\sigma_{n}]
    = \tfrac{1}{2} \log( \tfrac{\theta}{\theta^{\star}} ). 
  \end{equation} 

\end{theo}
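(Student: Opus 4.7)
The plan is to push the full-plane Toeplitz-plus-Hankel determinant machinery developed in Section~\ref{sec:polynomials} for Theorem~\ref{thm:rotational_invariance} and formula~\eqref{eq:M=(KOY-thm)} from the critical/sub-critical regime into the super-critical one, and read the exponential decay rate off the resulting Wiener--Hopf factorisation. The central point is that translation invariance of the full-plane measure on $\bbC_{1}$ allows the fermionic representation of $\bbE^{+}_{\bbC_{1}}[\sigma_{0}\sigma_{n}]$ to be reorganised into a Toeplitz determinant $\det T_{n}(\phi_{\theta,\theta^{\star}})$ whose symbol is an explicit function on the unit circle built from the semi-discrete fermionic Green's function. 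In the super-critical regime $\theta>\theta^{\star}$ this symbol admits a Wiener--Hopf factorisation with a pair of singular points at $|z|=\theta^{\star}/\theta$ inside the unit disk, and it is these singularities that will govern the exponential decay.

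The concrete steps are the following. First, I would write $\bbE^{+}_{\bbC_{1}}[\sigma_{0}\sigma_{n}]$ explicitly as a Pfaffian of fermionic propagators on $\bbC_{1}$, paralleling the derivation of~\eqref{eq:M=(KOY-thm)} in Section~\ref{sec:polynomials} but in the regime $\theta>\theta^{\star}$, where the relevant symbol carries non-trivial winding and the sub-critical constant $\calM^{2}$ vanishes. Next, I would perform the Wiener--Hopf factorisation of this symbol directly on the semi-discrete lattice, following the polynomial formalism of~\cite{CHM-zig-zag-Ising} employed in Section~\ref{sec:polynomials}; a short calculation then places the leading singularities of the factors at $|z|=\theta^{\star}/\theta$. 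Finally, a Szeg\H{o}/Fisher--Hartwig asymptotic for Toeplitz determinants with such a gap yields the leading exponential behaviour of $\det T_{n}(\phi_{\theta,\theta^{\star}})$, and the standard relation between the spin correlation and this determinant (in which a factor $\tfrac12$ enters, as the spin variable is a square-root of the underlying fermion) gives the announced $\xi=\tfrac12\log(\theta/\theta^{\star})$. A shorter, essentially equivalent route is to apply Kramers--Wannier duality~\eqref{eq:KW_duality} to rewrite the super-critical spin--spin correlation at $(\theta,\theta^{\star})$ as a disorder--disorder correlation in the sub-critical dual model at $(\theta^{\star},\theta)$ and to identify $\xi^{-1}$ with the surface tension of the ordered dual phase; the horizontal-direction statement then follows from the vertical-direction result of~\cite{beffara-duminil} combined with the rotational invariance provided by Theorem~\ref{thm:rotational_invariance}.

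The hardest part will be the uniform analytic control of the Wiener--Hopf factorisation in the limit $\eps\to 0$ of the flattened lattice introduced in Section~\ref{sec:model}: one must check that the discrete symbol and its meromorphic continuation converge to the announced limiting symbol without spurious singularities appearing from the degeneration $J_{v}\to\infty$, and that the Szeg\H{o}-type asymptotic survives the semi-discrete limit uniformly enough to produce the exact rate. A secondary, routine point is the independence of the exponential rate on the $+$ boundary condition at infinity, which follows from uniqueness of the infinite-volume Gibbs measure in the super-critical (disordered) regime, itself guaranteed by~\cite{DCLM-universality} together with the Edward--Sokal coupling of Section~\ref{sec:FK-spin_coupling}.
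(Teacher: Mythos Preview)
Your second route has a genuine gap: Theorem~\ref{thm:rotational_invariance} is a \emph{critical} statement only (the paper says explicitly ``In particular, the critical model is rotationally invariant''), and the paper even remarks after the theorem that the super-critical correlation length is \emph{not} isotropic. So you cannot transport the vertical-direction result of~\cite{beffara-duminil} to the horizontal direction via rotational invariance in the regime $\theta>\theta^{\star}$; that argument simply does not apply off criticality.

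Your first route is in the right spirit but misidentifies both the mechanism and the hard part. The paper does \emph{not} pass through a flattened-lattice $\eps\to 0$ limit (it explicitly warns against this at the start of Section~\ref{sec:polynomials}), and it does not invoke Fisher--Hartwig asymptotics for a Toeplitz determinant with singular symbol. Instead it stays inside the orthogonal-polynomial framework already built in Section~\ref{subsect:construction-homogen}: combining~\eqref{eq:Dn-to-Ln+1} and~\eqref{eq:Ln-to-Dn+1} yields a two-step linear recursion $D_{n+2}=q_n D_n+r_n$ with $q_n=\beta^{\#}_{n+1}\beta_{n-1}(1-\alpha^{\#}_{n+1}\alpha_{n-1})$ and $r_n=\beta^{\#}_{n+1}\beta_{n-1}(\alpha_{n-1}-\alpha^{\#}_{n+1})D_n^{\star}$. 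Since $\prod_m q_m$ converges to a nonzero constant (by~\eqref{eq:product_beta_beta_sharp}) while $D_n\to 0$, solving the recursion forces $D_{2n}$ to equal the tail $-\sum_{j\ge n} r_j/\prod_{k\le j}q_k$, and the exponential rate is then read directly off the decay of the Verblunsky coefficients $\alpha_n,\alpha^{\#}_n$. The actual work is the precise asymptotic $\alpha_j\sim C_1 R^{-j}j^{-1/2}$, $\alpha^{\#}_j\sim C_1^{\#}R^{-j}j^{1/2}$ obtained from the Szeg\H{o} functions $D(z)=\theta^{1/2}(1-z/R)^{1/2}$ and $D^{\#}(z)=D(z)^{-1}$ via~\cite[Thm.~7.2.1]{Simon-Book}; this is where the value $R=\theta/\theta^{\star}$ enters, and the $\tfrac12$ in $\xi$ comes from the two-step nature of the recursion ($D_{n+2}$ versus $D_n$), not from any square-root relation between spins and fermions.
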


An interesting question would be interesting to find a way to exploit this first result to prove at least the exponential decay in all directions. As for the general correlation lenght expression (which is not isotropic), it looks to be a more challenging task and is not adressed here.

\section{Disorder insertion}\label{sec:disorder_insertion}

We are interested in the space-time spin representation of the quantum Ising model on the primal domain $\primaldomain$ with $+$ boundary condition.
The underlying Poisson point process is of parameter $\tau$ on primal lines (death points) and the coupling constant is denoted by $\theta$.
The latter parameter is also the underlying Poisson point process for bridges in the FK representation, but not needed here for the spin representation.

Its dual model is the quantum Ising model defined on the dual domain $\dualdomain$ with free boundary condition whose underlying Poisson point process is of parameter $\tau^\star$ on dual lines and whose coupling constant is given by $\theta^\star$.
The relation between these parameters $(\tau^\star, \theta^\star) = (\frac\theta2, 2\tau)$ is given in~\eqref{eq:KW_duality}. 
Recall that the critical point is also the self-dual point, and the choice of parameters we make will be $\tau^\star = \tau = \frac{1}{4\delta}$ and $\theta^\star = \theta = \frac{1}{2\delta}$.
Only the ratio $\frac\theta\tau$ determines for the regime of the model, but this particular choice allows us to have some nice isotropic property as explained in~\cite{Li-QI-SLE} and will be discussed later in Section~\ref{sec:Dotsenko_equation} and~\ref{sec:correlators}.

\subsection{Definition}

% Contrary to the classical literature of the planar Ising model~\cite{}, we prefer keeping the conventions of [], defining the spins on the dual graph and inserting disorders in the primal lines.
Below we give the formal definition of the disorder operator which is valid for any parameters.
In particular we do \emph{not} assume the self-dual condition~\eqref{eq:KW_duality}, which will not be used when working with the model outside of criticality.

% \comment{R 19/07 Please make the changes here, this is not the dual lines, this is the quantum Ising model with spins assigned to the dual graphs. It removes many stars I feel, no need to say every time it is dual to stg.}

% \comment{JH:
% If we define the model on the dual graph, the spins live on the dual lines (which are faces of the primal graph).
% As such, the parameters have a star on the top.
% Personally I really don't like to define the (primal) model on the dual graph keeping the primal notations (without stars).
% It is not notationally consistant with all the other existing work on quantum models.

% I initially wrote this for the primal version (without stars) then realized that you wanted me to work on the dual graph so a changed everything in this way. It should be correct I think.
% }

As in the discrete setup, the disorder insertion has two possible equivalent definitions. It can be seen as the ratio of partition functions between a modified model and the original one, or as an expectation of a random variables defined along disorder lines in the primal graph.
More precisely, 

\begin{defn}
  \label{def:path_pairing}
  For pairwise disjoint vertices $(v_i)_{1 \leq i \leq 2n}$ in the dual domain $\dualdomain$, we define the disorder operator by
  \begin{equation} \label{eq:disorder}
    \primalmeas [ \mu_{v_1} \dots \mu_{v_{2n}} ]
    := \frac{ Z(\tau, \tilde{\theta} )}{Z( \tau, \theta )}
    = \primalmeas
    \left[ \exp \big( -\theta \cdot \Lebd( \mathbbm{1}_{v \in L_1 \cup \cdots \cup L_n} \eps_v ) \big) \right], 
  \end{equation}
  where $(L_i)_{1 \leq i \leq n}$ is a path collection pairing vertices in $(v_i)_{1 \leq i \leq 2n}$.
\end{defn}
% Note that all the paths should be oriented from bottom to top.
The parameter $\tilde{\theta}$ here is defined to be $-\theta$ along all $(L_i)$ and $\theta$ elsewhere. By reversing along the lines $(L_i)$ the coupling constants, we make the model anti-ferromagnetic there. 
The partition functions $Z(\tau, \theta)$ and $Z(\tau, \tilde{\theta})$ are defined in~\eqref{eq:spin_pf}.
We need to check that the above definition does not depend either on (a) how the vertices $v_i$ are paired or (b) the path taken between two vertices.

Indeed, it is enough to check the second condition since by deforming paths, the first one is a special case of the second.
More precisely, we deform a path so that it goes through another disorder vertex, cut the path and glue in a different manner from that vertex.
See Figure~\ref{fig:path_deformation} for an illustration.
The definition~\eqref{eq:disorder} can also be generalized to an odd number or disorders, but since there is no such a pairing of disorders, the quantitiy is zero.
If two disorders are located at the same vertex, we formally pair them together and say they cancel each other (i.e.  $\mu_v \mu_v =1$ almost surely).

\begin{figure}[htb] \centering
  \includegraphics[scale=0.75, page=1]{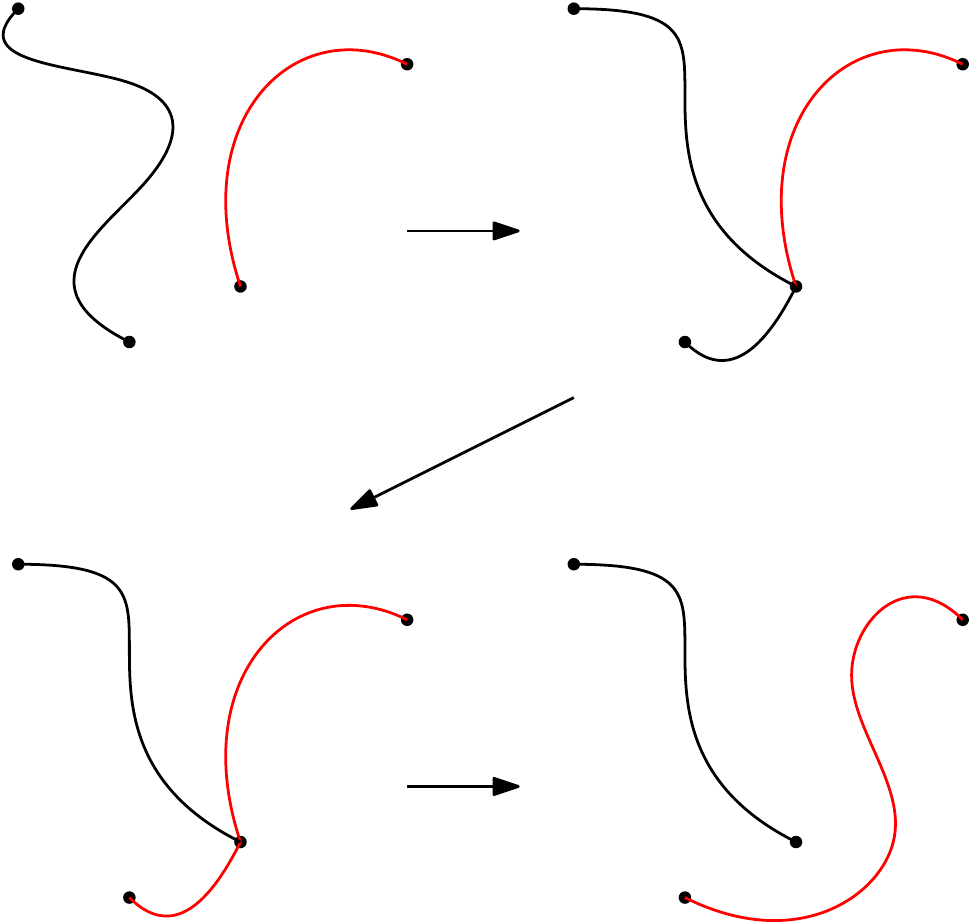}
  \caption{Path deformation showing that the way the disorder lines pair vertices up does not matter.}
  \label{fig:path_deformation}
\end{figure}

Now, we show that the definition~\eqref{eq:disorder} is path-independent.
It is enough to show this for a disorder line connecting $v_1$ to $v_2$.
Let $L_1$ and $\tilde{L}_1$ be two such paths.
Wrtie $\theta_1$ and $\tilde{\theta}_1$ as in the definition of the partition function~\eqref{eq:disorder}.
Write $\Lambda$ for the (closed) region delimited by $L_1$ and $\tilde{L}_1$, $L_\Lambda$ for the collection of dual lines in $\Lambda$ (boundary included).
For a spin configuration $\sigma$, write $\sigma_{\Lambda^c}$ for $\sigma$ restricted on $\Lambda^c$ and $\tilde{\sigma}$ for the configuration which coincides with $\sigma$ on $\Lambda^c$ but with reversed sign on $\Lambda$.
Then, 
\begin{align*}
  \frac{Z(\tilde{\theta}_1)}{Z(\theta)}
  & \overset{(1)}{=} Z(\theta)^{-1} \cdot \PPP^\bullet_{\tau} \left[
    \sum_{\sigma \in \Sigma(D)} C(\sigma_{\Lambda^c}) \exp \big( \textstyle \frac{\tilde{\theta}_1}{2} \cdot \Lebd( \mathbbm{1}_{v \in L_\Lambda} \eps_v ) \big)
    \right] \\
  & \overset{(2)}{=} Z(\theta)^{-1} \cdot \PPP^\bullet_{\tau} \left[
    \sum_{\sigma \in \Sigma(D)} C(\tilde{\sigma}_{\Lambda^c}) \exp \big( \textstyle \frac{\theta_1}{2} \cdot \Lebd( \mathbbm{1}_{v \in L_\Lambda} \eps_v ) \big)
    \right] \\
  & \overset{(3)}{=} Z(\theta)^{-1} \cdot \PPP^\bullet_{\tau} \left[
    \sum_{\sigma \in \Sigma(D)} C(\sigma_{\Lambda^c}) \exp \big( \textstyle \frac{\theta_1}{2} \cdot \Lebd( \mathbbm{1}_{v \in L_\Lambda} \eps_v ) \big)
    \right], 
\end{align*}
where $D$ is sampled according to $\PPP^\bullet_{\tau}$.
In the above computation, the first $(1)$ line is the definition and $C(\sigma_{\Lambda^c})$ is some constant depending only on the spin outside of $\Lambda$.
In the second line $(2)$, we flip all the spins inside $\Lambda$ which is equivalent to switching the coupling constant from $\tilde{\theta}_1$ to $\theta_1$.
It is possible to do so due to the following equality in distribution for any finite set $X \subseteq \primaldomain$, 
\begin{equation*}
  \calL(D \mid X \subseteq D) = \calL(D) \cup X.
\end{equation*}
Finally, in the last line $(3)$, we use the bijection $\sigma \mapsto \tilde{\sigma}$.

Using the duality between the space-time spin representation and the random-parity representation, one can see the disorders as a dual spin configuration, for the dual model with dual parameters, e.g. 
\begin{equation} \label{eq:disorder_duality}
  \primalmeas [ \mu_{v_1} \dots \mu_{v_{n}} ]
  = \dualmeas [ \sigma_{v_1} \dots \sigma_{v_{n}} ], 
\end{equation}
where $(\tau^\star, \theta^\star) = (\frac{\theta}{2}, 2 \tau)$ is given in~\eqref{eq:KW_duality}.
Note again that above we defined the disorder operator when $n$ is even.
When the number of disorders is odd, the LHS of~\eqref{eq:disorder_duality} is zero because the disorders do not pair up; the RHS is also zero because under the free boundary condition, the measure is invariant under the globlal spin-flip.

We can give another interpretation of $Z(\tau, \tilde{\theta})$ in~\eqref{eq:disorder} using the quantum Ising model defined on some double cover.
This viewpoint will turn out to be very useful later for the definition of mixed correlators~\eqref{eq:order-disorder}.

\medskip

Let  $\calD_\delta = \primaldomain$, $\dualdomain$ or $\midedgedomain$.
A double cover of $\calD_\delta$  is formally defined as the disjoint union of two identical copies (also called sheets) $\calD_\delta \sqcup \calD_\delta$ with the following additional data telling about its branching structure:
\begin{itemize}
  \item $\textbf{v} = (v_1, \dots, v_n)$ where $v_1, \dots, v_n \in \dualdomain$ if $\calD_\delta = \primaldomain$ or $\midedgedomain$;
  \item $\textbf{u} = (u_1, \dots, u_m)$ where $u_1, \dots, u_m \in \primaldomain$ if $\calD_\delta = \dualdomain$ or $\midedgedomain$.
\end{itemize}
We denote this double cover by $[\primaldomain, \textbf{v}]$, $[\dualdomain, \textbf{u}]$ and $[\midedgedomain, \textbf{v}, \textbf{u}]$ respectively.
% A double cover of $\domain$  is formally defined as the disjoin union of two identical copies $\domain \sqcup \domain$ whose branching structure is specified by the following additional data: 
% $v_1, \dots, v_n \in \primaldomain$ and $u_1, \dots, u_m \in \dualdomain$.
% We denote this double cover by $\domain^{\vsbraket, \usbraket}$.
Its branching structure is described as follows:
\begin{itemize}
  \item If $\calD_\delta = \primaldomain$, the cuts are described by \emph{any} pairing of $v_i$'s;
  \item if $\calD_\delta = \dualdomain$, the cuts are described by \emph{any} pairing of $u_j$'s;
  \item if $\calD_\delta = \midedgedomain$, the cuts are described by \emph{any} pairing of $v_i$'s and $u_j$'s.
\end{itemize}
See Figure~\ref{fig:double_cover} for an illustration in the case of $[\primaldomain, v_1, v2]$.

\medskip

Given $v_1, \dots, v_n \in \dualdomain$.
\begin{itemize}
  \item Let $\doublecover$ be the double cover of $\primaldomain$ that branches over $v_1, \dots, v_{n} \in \dualdomain$. We consider the quantum Ising model on $\doublecover$.
  \item The Poisson point process $\eta_\tau^\bullet$ is sampled on one copy and is taken to be the same on the other copy.
  \item A spin configuration $\sigma$ on the double cover $\doublecover$ should satisfy the \emph{sign-flip symmetry}: $\sigma_{u} \sigma_{u^\#} = -1$ for any $u \in \doublecover$ (we recall that $u^\#$ is the other fiber in $\doublecover$ of the natural projection of $ u$ in $\primaldomain $).
  \item Given a finite set of death points $D$, denote by $\Sigma(D)$ the set of spin configurations on the double cover $\doublecover$ satisfying the sign-flip symmetry.
  Note that this does not depend on $v_1, \dots, v_n$ but only on $D$.
\end{itemize}
As such, the quantity $Z(\tau, \tilde{\theta})$ can be rewritten as the partition function of the Ising model defined on the double cover $\doublecover$ with a modified Hamiltonian~\footnote{The partition function sums ``twice'' the contribution of each pair of spins (on two sheets)}.
Denoting this quantity by $Z^{\vsbraket}(\tau, \theta)$, we can write, 

\begin{align} \label{eq:pf_double_cover_H}
  Z^{\vsbraket} (\tau, \theta)
  & = \PPP^\bullet_\tau \left[ \sum_{\sigma \in \Sigma(D)} \exp \left(- \textstyle \frac\theta2 \cdot \calH^\vsbraket(\sigma) \right) \right] \\
  \label{eq:pf_double_cover_Leb}
  & = \PPP^\bullet_\tau \left[ \sum_{\sigma \in \Sigma(D)} \exp \left( \textstyle \frac\theta2 \cdot \frac12 \Lebd(\eps_p) \right) \right], 
\end{align}
where the Hamiltonian $\calH^\vsbraket(\sigma)$ is defined to be $-\frac12 \Lebd(\eps_v)$, 
\begin{equation}
  \label{eq:Hamiltonian_dc}
  \calH^\vsbraket(\sigma) = -\frac12 \Lebd(\eps_v) =
  -\frac12 \int_{\doublecover} \eps_v |\dd v|.
\end{equation}
Note that the factor $\frac12$ comes from the fact that when working on the double cover, the contribution coming from each \emph{planar} pair of neighboring spins is counted twice.
In particular, the energy density $\eps_v = \sigma_{v^-} \sigma_{v^+}$ now takes into account the structure of the double cover.
More precisely, if $v$ lies on one of the disorder lines, then $v^-$ and $v^+$ belong to two different sheets, and that is where the sign-flip symmetry makes the difference.

The quantum Ising measure on this double cover is characterized by
\begin{equation}
  \bbE_{\doublecover, \tau, \theta}^+ \left[ F(\sigma) \right]
  = Z^{\vsbraket}(\tau, \theta)^{-1} \cdot
  \PPP^\bullet_\tau \left[ \sum_{\sigma \in \Sigma(D)} F(\sigma) \exp \left( - \textstyle \frac\theta2 \cdot \calH^\vsbraket(\sigma) \right) \right], 
\end{equation}
for any measurable function $F : \cup \Sigma(D) \to \bbR$.

Using this interpretation, for given $v_1, \dots, v_n \in \dualdomain$ and $u_1, \dots, u_m \in \primaldomain$, we can introduce the \emph{mixed correlator}, 
% \begin{align} \label{eq:order-disorder}
%   & \dualmeas [ \mu_{v_1} \dots \mu_{v_{2n}} \sigma_{u_1} \dots \sigma_{u_m} ] \nonumber \\
%   & \qquad := \eps \cdot
%   \dualmeas \left[
%     \exp \big(
%     -\theta^\star \cdot \Lebd( \mathbbm{1}_{v \in L_1 \cup \cdots \cup L_n} \eps_v )
%     \big) \sigma_{u_1} \dots \sigma_{u_m}
%   \right], 
% \end{align}
\begin{align} \label{eq:order-disorder}
  \primalmeas [ \Seq[]{\mu_{v_{#1}}} \Seq[]{ \sigma_{u_{?1}} }[1][m] ] 
  := \primalmeas [ \Seq[]{\mu_{v_{#1}}} ]
  \bbE_{\doublecover}^+ \left[ \Seq[]{ \sigma_{u_{?1}} }[1][m] \right].
\end{align}
In other words, denote $\DB{\Omega_\delta}{m}{n}$ the double cover of $(\dualdomain)^n \times (\primaldomain)^m$ with the following branching structure.
For $\bfv = (v_1, \dots, v_n) \in (\dualdomain)^n$ and $\bfu = (u_1, \dots, u_m) \in (\primaldomain)^m$, 
we fix a path collection connecting $v_i$'s to the boundary and another path collection connecting $u_j$'s to the boundary.
% we fix a path collection connecting $v_i$'s as in Definition~\ref{def:path_pairing};
% and similarly, due to the $+$ boundary condition, we also fix a path collection connecting $u_j$'s to the boundary.
When $u_j$'s wind around $v_i$'s, the cuts are given by the path collection of $v_i$'s; 
similarly, when $v_i$'s wind around $u_j$'s, the cuts are given by the path collection of $u_j$'s.

Using the dualiy between representations, one has, for any integers $n$ and $m$, 
\begin{align} \label{eq:order-disorder_duality}
  \primalmeas[ \mu_{v_1} \dots \mu_{v_{n}} \sigma_{u_1} \dots \sigma_{u_m} ]
  = \dualmeas[ \sigma_{v_1} \dots \sigma_{v_{n}} \mu_{u_1} \dots \mu_{u_m} ].
\end{align}
% This quantity is simply a function on $(\dualdomain)^n \times (\primaldomain)^m$.
In what follows, we write this last quantity as $\braket{ \mu_{v_1} \dots \mu_{v_{n}} \sigma_{u_1} \dots \sigma_{u_m} }$ or, equivalently, $\braket{ \sigma_{v_1} \dots \sigma_{v_{n}} \mu_{u_1} \dots \mu_{u_m} }$, since the object at stake is now canonical and does not ``favor'' spins or disorders (i.e. primal or dual spins).
Note that these quantities are defined on the double cover $\DB{\Omega_\delta}{m}{n}$ with the branching structure described above.
Such a function is called \emph{spinor} because the sign flips when replacing $ u $ (resp. $v$) by $ u^{\#}$ (resp. $ v^{\#}$).

\begin{figure}[htb]
  \centering%
  \begin{tikzpicture}[
    ext/.style = {line width=0pt, draw, fill, circle, inner sep=1pt, outer sep=0pt},
    ]
    % \definecolor{invempha}{HTML}{0070FF}
    % \definecolor{invemphb}{HTML}{0A2891}
    % \definecolor{empha}{HTML}{FF6600}
    % \definecolor{emphb}{HTML}{F5D76E}
    % \definecolor{empha}{HTML}{FFFFFF}

    \node at (0, 0) {\resizebox{7cm}{5cm}{\includegraphics{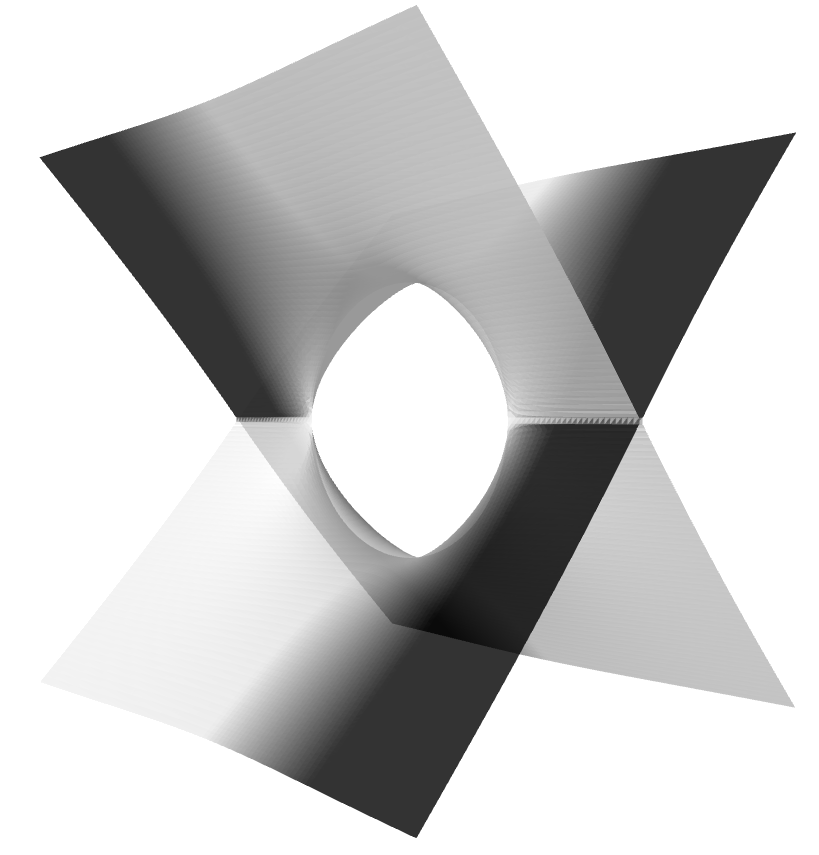}}};
    \node[ext, black, label={[white, xshift=-5pt]above:$v_1$}] (v1) at (-0.85, 0.05) {};
    \node[ext, black, label={[white, xshift=5pt]below:$v_2$}] (v2) at (0.85, 0.05) {};
    \draw[dashed, black, line width=1pt] (v1) -- (v2);
    \node[ext, white, label={[white]left:$u$}] (u) at (-1.5, 1.2) {};
    \node[ext, black, label={[black]left:$u^\#$}] (usharp) at (-1.5, -1.2) {};
    \path[top color=white, bottom color=black] ([xshift=-\pgflinewidth] u) rectangle ([xshift=\pgflinewidth] usharp);
    % \draw[dashed, emphb, top color=red, bottom color=blue] (u) -- (usharp);
  \end{tikzpicture}
  \caption{A graphical representation of ${\primaldomain}^{[v_1, v_2]}$ locally around the branching given by $v_1, v_2 \in \dualdomain$. The vertices $u, u^\# \in \primaldomain$ are two different vertices belonging to the same fiber.}
  \label{fig:double_cover}
\end{figure}

\subsection{Local propagation Equation} \label{sec:Dotsenko_equation}

The local propagation equation for Kadanoff--Ceva fermions (also called Dostenko propagation equation in~\cite{Mercat-Riemann-Ising}) in the classical Ising model~\cite{KC-order-disorder} has an equivalent in the graphical representation of the quantum Ising model.
Instead of having a three-term relation, we have a two-term relation which involves a vertical derivative of the order-disorder operator.
This is due to the degeneration when we squeeze the rectangular lattice to obtain the semi-discrete lattice.

% \begin{prop}
%   Let $\calO$ be a mixed correlator.
%   For two dual vertices $v_1$ and $v_2$ on the same sheet and the same vertical line, as $|v_2 - v_1| \to 0$, we have
%   \begin{equation} \label{eq:primal_three_term}
%     \braket{ \mu_{v_1} \mu_{v_2} \calO }
%     = \braket{ \calO }
%     - \theta \cdot \Lebd( \mathbbm{1}_{v \in [v_1, v_2]} \braket{ \sigma_{v^-} \sigma_{v^+} \calO } )
%     + O(|v_2 - v_1|^2).
%   \end{equation}
% \end{prop}

\begin{prop}
  \label{prop:two_term}
  Let $\calO$ be a mixed correlator.
  For a dual vertex $v$ on the double cover, 
  \begin{equation}
    \label{eq:partial_vert_der}
    \partial_y \braket{ \mu_{v} \calO } = \partial_y \braket{ \mu_{v + \icomp y} \calO }_{| y = 0}
    := \lim_{y \to 0} \frac{1}{y} \big[ \braket{\mu_{v + \icomp y} \calO } - \braket{\mu_v \calO} \big], 
  \end{equation}
  where for all small enough $y$, $v + \icomp y$ needs to be chosen such that it belongs to the same sheet of the double cover as $v$.
  Moreover, this derivative satisfies the following two-term relation, 
  \begin{equation}
    \label{eq:primal_two_term}
    \partial_y \braket{ \mu_v \calO }
    = \sgn(\calO, v) \cdot \theta \braket{\mu_v \sigma_{v^-} \sigma_{v^+} \calO}, 
  \end{equation}
  where $v^-$ and $v^+$ are chosen to belong to the same sheet of the double cover
  and $\sgn(\calO, v)$ takes value in $\{ 0, \pm 1 \}$ depending on the path $L$ that pairs $v$ with another dual vertex in $\calO$, 
  \begin{equation}
    \label{eq:sign_Ov}
    \sgn(\calO, v) = \left\{
      \begin{array}{ll}
        0 & \mbox{ if such a path $L$ does not exist}, \\
        +1 & \mbox{ if $L$ leaves $v$ from above}, \\
        -1 & \mbox{ if $L$ leaves $v$ from below}.
      \end{array}
    \right.
  \end{equation}
\end{prop}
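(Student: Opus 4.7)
The plan is to prove the relation by directly perturbing the disorder line as $v$ moves vertically. The key observation is that shifting $v$ to $v+\icomp y$ only perturbs the disorder line $L$ (pairing $v$ with another dual vertex determined by $\calO$) on the small vertical segment $S_y := [v, v+\icomp y]$, and this local perturbation modifies the Boltzmann weight by an explicit multiplicative factor that can be expanded to first order in $y$.

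Starting from the representation \eqref{eq:disorder}, combined with the double-cover partition function \eqref{eq:pf_double_cover_Leb} for the spin insertions in $\calO$, I would express $\braket{\mu_v \calO}$ as an expectation under $\primalmeas$ whose weight contains the disorder factor $\exp(-\theta\, \Lebd(\mathbbm{1}_{v \in L}\eps_v))$. By the path-independence of disorders discussed just after Definition~\ref{def:path_pairing}, I am free to choose the line $L_y$ realizing $\braket{\mu_{v+\icomp y}\calO}$ so that it coincides with $L$ outside a neighborhood of $v$. If $L$ leaves $v$ from above, then $L_y = L \setminus S_y$, and the ratio of weights equals $\exp(+\theta\, \Lebd(\mathbbm{1}_{v \in S_y}\eps_v)) = 1 + \theta y\, \eps_v + O(y^2)$; if $L$ leaves $v$ from below, then $L_y = L \cup S_y$, and the analogous expansion yields $1 - \theta y\, \eps_v + O(y^2)$. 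Plugging this back, dividing by $y$, and passing to the limit identifies the derivative with $\sgn(\calO,v)\cdot\theta\,\braket{\mu_v \sigma_{v^-}\sigma_{v^+}\calO}$, where the same-sheet condition on $v^\pm$ reflects the fact that locally $L$ does not cross either primal neighbor of $v$. The degenerate case $\sgn(\calO,v)=0$ corresponds to $\calO$ leaving $v$ unpaired, in which case the correlator vanishes identically and both sides of \eqref{eq:primal_two_term} are zero.

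The main technical point will be to justify the first-order expansion rigorously, i.e.\ to check that the remainder is truly $O(y^2)$ so that dominated convergence applies. The issue is that the identity of the primal neighbors $v^-$ and $v^+$ can jump along the segment $S_y$ if a death point of $\PPP^\bullet_\tau$ falls within vertical distance $|y|$ of $v$ on an adjacent primal line. However, this event has probability $O(y)$ and the Boltzmann factors $\exp(\pm \theta \int_{S_y}\eps_v)$ remain uniformly bounded by $e^{\theta|y|}$, so its contribution to the correlator difference is $O(y)\cdot O(y) = O(y^2)$. A secondary subtlety is the consistent lifting of $v^\pm$ to the extended double cover underlying $\braket{\mu_v\sigma_{v^-}\sigma_{v^+}\calO}$: the statement requires $v^\pm$ and $v$ to lie on the same sheet precisely so that the spinor signs on the two sides of \eqref{eq:primal_two_term} match.
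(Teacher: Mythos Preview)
Your proposal is correct and follows essentially the same approach as the paper: both expand the disorder-modified Boltzmann weight to first order in the vertical displacement of $v$, use dominated convergence to pass the derivative inside the expectation, and read off the factor $\sigma_{v^-}\sigma_{v^+}$ with the sign dictated by the direction in which $L$ leaves $v$. The only cosmetic difference is that the paper packages the perturbation computation as a separate lemma on $\partial_y \calH^{[v+\icomp y,\textbf{v}]}(\sigma)$ via the double-cover Hamiltonian~\eqref{eq:Hamiltonian_dc}, whereas you carry it out directly on the disorder line; the content is the same.
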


Note that it is indeed important to take the quantity $\sgn(\calO, v)$ into account since the mixed correlators $\braket{ \mu_{v + \icomp y} \calO }$ and $\braket{\mu_v \sigma_{v^-} \sigma_{v^+} \calO}$ are defined on a double cover where cuts depend on how the path $L$ changes.

\begin{proof}
  If the path $L$ defined as in the statement does not exist, then $\calO$ has an even number of disorders and $\braket{\mu_{v+\icomp y} \calO}$ is identically 0 for small enough $y$.
  The right side of~\eqref{eq:primal_two_term} is also identically zero for the same reason.
  
  Assume that $L$ exists.
  Using~\eqref{eq:disorder} and~\eqref{eq:order-disorder}, we get, 
  \begin{align*}
    Z(\tau, \theta) \cdot \partial_y \braket{ \mu_{v + \icomp y} \calO }_{|y=0}
    & = \partial_y \PPP^\bullet_\tau \left[ \sum_{\sigma \in \Sigma(D)} \calO_\sigma
      \exp \left( - \textstyle \frac\theta2 \cdot \calH^{[v + \icomp y, \textbf{v}]}(\sigma) \right) \right]_{|y=0}, \\
    & = \theta \cdot \sgn(\calO, v) \cdot \PPP^\bullet_\tau \left[
      \sum_{\sigma \in \Sigma(D)} \calO_\sigma
      \exp \left( - \textstyle \frac\theta2 \cdot \calH^{[v, \textbf{v}]}(\sigma) \right)
      \sigma_{v^-} \sigma_{v^+}
      \right], 
  \end{align*}
  where we use the dominated convergence to switch the expectation $\PPP^\bullet_\tau$ and the partial derivative $\partial_y$ along with~\eqref{eq:Hamiltonian_derivative}.
  This gives the final result~\eqref{eq:primal_two_term}.
\end{proof}

\begin{prop}
  \label{prop:two_term_dual}
  Let $\calO$ be a mixed correlator. For a primal vertex $u$ on the double cover, 
  \begin{equation}
    \label{eq:partial_vert_der_dual}
    \partial_u \braket{ \sigma_{u} \calO }= \partial_u \braket{ \sigma_{u + \icomp y} \calO }_{| y = 0}
    := \lim_{y \to 0} \frac{1}{y} \big[ \braket{\sigma_{u + \icomp y} \calO } - \braket{\mu_u \calO} \big], 
  \end{equation}
  where for all small enough $y$, $u + \icomp y$ needs to be chosen such that it belongs to the same sheet of the double cover as $u$.
  Moreover, this derivative satisfies the following two-term relation, 
  \begin{equation} \label{eq:dual_two_term}
    \partial_y \braket{\sigma_u \calO}
    = \sgn(\calO, u) \cdot \theta^\star \braket{ \sigma_u \mu_{u^-} \mu_{u^+} \calO }, 
  \end{equation}
  where $\sgn(\calO, u)$ takes value in $\{ \pm 1 \}$ depending on the path $L$ that connects $u$ with the boundary (thus actually no dependency on $\calO$), 
  \begin{equation}
    \label{eq:sign_Ou}
    \sgn(\calO, u) = \left\{
      \begin{array}{ll}
        +1 & \mbox{ if $L$ leaves $u$ from above}, \\
        -1 & \mbox{ if $L$ leaves $u$ from below}.
      \end{array}
    \right.
  \end{equation}

\end{prop}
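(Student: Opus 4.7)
The strategy is to deduce Proposition~\ref{prop:two_term_dual} from Proposition~\ref{prop:two_term} via the Kramers--Wannier duality~\eqref{eq:order-disorder_duality}. Under this duality, the order $\sigma_u$ in the primal model on $\primaldomain$ with parameters $(\tau,\theta)$ and $+$ boundary condition is identified with a disorder $\mu_u$ in the dual model on $\dualdomain$ with parameters $(\tau^\star,\theta^\star)=(\theta/2,2\tau)$ and $\free$ boundary condition. Since the derivative $\partial_y$ only acts on the planar position of $u$, it commutes with this identification, so it suffices to apply the already-established primal two-term relation to the dual model and then translate the answer back.

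Concretely, let $\calO^{\vee}$ denote the mixed correlator obtained from $\calO$ by swapping every $\mu$ with a $\sigma$ and vice versa. Applying~\eqref{eq:order-disorder_duality} to the quantity inside the limit~\eqref{eq:partial_vert_der_dual} yields $\braket{\sigma_{u+\icomp y}\calO}=\braket{\mu_{u+\icomp y}\calO^{\vee}}$, with the right-hand side understood in the dual model with $\free$ boundary condition. Proposition~\ref{prop:two_term} applied to this dual model at the vertex $u$ then gives both the existence of the limit and the two-term identity $\partial_y \braket{\mu_u\calO^{\vee}}=s\cdot\theta^\star\braket{\mu_u\sigma_{u^-}\sigma_{u^+}\calO^{\vee}}$, where $s\in\{0,\pm 1\}$ is the sign produced by the primal statement and $\theta^\star$ is the coupling constant of the dual model. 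Swapping $\sigma_{u^\pm}\leftrightarrow\mu_{u^\pm}$ back to the primal side via duality then yields~\eqref{eq:dual_two_term}.

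The only point deserving extra care is the sign $\sgn(\calO,u)$, which must always take the value $\pm 1$ and carry no dependence on the rest of $\calO$, in contrast to the primal case where the value $0$ was possible whenever the total number of disorders was odd. The reason is that, under $+$ boundary condition in the primal model, an order variable is automatically paired with the boundary via its branch cut on the double cover: the $+$ BC plays the role of a virtual external spin, so the path $L$ attached to $\sigma_u$ always exists, independently of the rest of $\calO$, and no parity obstruction can arise. This explains the independence on $\calO$ announced in~\eqref{eq:sign_Ou}, and the $\pm$ is then read off, exactly as in the primal case, from the direction in which $L$ leaves $u$.

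A direct alternative to the duality argument would be to mimic the proof of Proposition~\ref{prop:two_term} using the random-parity representation~\eqref{eq:high-T}: one rewrites $\braket{\sigma_u\calO}$ as a Poisson expectation over random-parity configurations whose source contains $u$, and differentiates in the vertical position of $u$. This brings down a factor equal to the bridge intensity $2\tau=\theta^\star$, corresponding to the insertion of the pair $\mu_{u^-}\mu_{u^+}$. I expect this purely combinatorial route to be heavier and to require more bookkeeping for the sign, which is why the duality argument above is the preferable one; the main obstacle in either approach is precisely checking that the branch-cut convention on the double cover is preserved when passing between representations.
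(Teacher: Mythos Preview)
Your proof is correct and is exactly the paper's approach: the paper's proof consists of the single sentence ``One simply uses Proposition~\ref{prop:two_term} together with the duality relation~\eqref{eq:order-disorder_duality}.'' Your additional discussion of why $\sgn(\calO,u)$ is always $\pm 1$ (via the $+$ boundary condition supplying the pairing partner) and the mention of the random-parity alternative are helpful elaborations, but the core argument is identical.
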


\begin{proof}
  One simply uses proposition~\ref{prop:two_term} together with the duality relation~\eqref{eq:order-disorder_duality}.
\end{proof}

\begin{lem}
  We use the notations from Proposition~\ref{prop:two_term} and write $\textup{\textbf{v}}$ for the set of all the disorders in $\supp(\calO)$. 
  Then, we have, 
  \begin{equation}
    \label{eq:Hamiltonian_derivative}
    \partial_y \calH^{[v + \icomp y, \textup{\textbf{v}}]}(\sigma)_{|y=0}
    := \lim_{y \to 0} \frac{ \calH^{[v + \icomp y, \textup{\textbf{v}}]}(\sigma) - \calH^{[v, \textup{\textbf{v}}]}(\sigma) }{y}
    = - \sgn(\calO, v) \cdot 2  \sigma_{v^-} \sigma_{v^+}, 
  \end{equation}
  where $v^-$ and $v^+$ are chosen to be on the same sheet of the double cover.
\end{lem}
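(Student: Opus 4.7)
The plan is to unwrap the definition of the double-cover Hamiltonian from~\eqref{eq:Hamiltonian_dc} and compare the two configurations $[v,\mathbf{v}]$ and $[v+\icomp y,\mathbf{v}]$ directly, using the fact that the two double covers differ only along the little vertical segment $[v,v+\icomp y]$ of the disorder line $L$ incident to $v$. Using the identification that a spin configuration on the double cover satisfying the sign-flip symmetry is (up to a global sign) the same data as a planar spin configuration $\sigma^{\mathrm{pl}}$ on $\primaldomain$, one rewrites
\[
  \calH^{[v,\mathbf{v}]}(\sigma)\ =\ -\int_{\primaldomain}\chi^{[v,\mathbf{v}]}(v')\,\sigma^{\mathrm{pl}}_{v'^-}\sigma^{\mathrm{pl}}_{v'^+}\,|dv'|,
\]
where $\chi^{[v,\mathbf{v}]}(v')=+1$ if $v'$ lies off the disorder lines and $-1$ if $v'$ lies on them; the factor $\frac12$ in~\eqref{eq:Hamiltonian_dc} is cancelled by the double counting of planar pairs on the two sheets, exactly as pointed out after \eqref{eq:pf_double_cover_Leb}.

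The next step is to observe that $\chi^{[v+\icomp y,\mathbf{v}]}$ and $\chi^{[v,\mathbf{v}]}$ differ only on the symmetric difference of the two disorder collections, which for small $y$ reduces to the segment $[v,v+\icomp y]$. If $L$ leaves $v$ from above (i.e.\ $\sgn(\calO,v)=+1$), then for $y>0$ small this segment belongs to $L$ in the $[v,\mathbf{v}]$-configuration but is \emph{removed} when the endpoint is pushed up to $v+\icomp y$; hence $\chi$ flips from $-1$ to $+1$ on that segment. If $L$ leaves $v$ from below ($\sgn(\calO,v)=-1$), the roles swap and $\chi$ flips from $+1$ to $-1$ on the segment. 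In both cases,
\[
  \calH^{[v+\icomp y,\mathbf{v}]}(\sigma)-\calH^{[v,\mathbf{v}]}(\sigma)\ =\ -2\,\sgn(\calO,v)\int_{[v,v+\icomp y]}\sigma^{\mathrm{pl}}_{v'^-}\sigma^{\mathrm{pl}}_{v'^+}\,|dv'|.
\]

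Dividing by $y$ and letting $y\to 0$, the integrand is continuous at $v$ (the death point set $D$ is almost surely finite and avoids $v$), so the mean-value theorem yields
\[
  \partial_y\calH^{[v+\icomp y,\mathbf{v}]}(\sigma)_{|y=0}\ =\ -2\,\sgn(\calO,v)\,\sigma^{\mathrm{pl}}_{v^-}\sigma^{\mathrm{pl}}_{v^+}.
\]
Finally, the convention specified in the statement is to choose $v^-$ and $v^+$ on the same sheet of the cover, so that $\sigma_{v^-}\sigma_{v^+}$ equals the planar product $\sigma^{\mathrm{pl}}_{v^-}\sigma^{\mathrm{pl}}_{v^+}$ with no extra sign, producing the claimed formula~\eqref{eq:Hamiltonian_derivative}. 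The main obstacle is purely bookkeeping: making sure that the orientation of the path $L$ at $v$, the choice of sheet for $v^\pm$, and the combinatorial factor arising from the double-cover integral are all consistent; the factor $2$ and the sign $-\sgn(\calO,v)$ both ultimately come from this careful comparison.
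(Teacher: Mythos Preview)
Your argument is correct and essentially the same as the paper's: both localize the difference $\calH^{[v+\icomp y,\mathbf v]}-\calH^{[v,\mathbf v]}$ to the short segment $[v,v+\icomp y]$ and extract the factor $2$ from the two-sheet/sign-flip structure, the only difference being that the paper performs the manipulation directly on the double cover (using $\sigma_{v'^-}\sigma_{v'^+}=-\sigma_{(v'^-)^\#}\sigma_{v'^+}$) rather than first passing to your planar sign function $\chi$. One minor slip: the integral defining $\calH^{[v,\mathbf v]}$ runs over $\dualdomain$, not $\primaldomain$.
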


\begin{proof}
  We show the result in the case that $\sgn(\calO, v) = 1$ and $y > 0$, the computation in  the other cases is similar.
  
  Write $v_1 = v$ and $v_2 = v + \icomp y$ with $y > 0$.
  % Since $\sgn(\calO, v_1) = 1$, i.e. the path $L$ joining $v_1$ and a vertex of $\textbf{v}$ leaves $v_1$ from above.
  Compute the difference of the two following Hamiltonians using~\eqref{eq:Hamiltonian_dc}, 
  \begin{align} \label{eq:H_identity}
    & \calH^{[v_1, \textbf{v}]}(\sigma) - \calH^{[v_2, \textbf{v}]}(\sigma) \nonumber \\
    & \overset{(1)}{=} - \frac12 \int_{\Omega^{[v_1, \textbf{v}]}} \mathbbm{1}_{v \in [v_1, v_2] \cup [v_1^{\#}, v_2^{\#}]} \sigma_{v^-} \sigma_{v^+} |\dd v|
      + \frac12 \int_{\Omega^{[v_2, \textbf{v}]}} \mathbbm{1}_{v \in [v_1, v_2] \cup [v_1^{\#}, v_2^{\#}]} \sigma_{v^-} \sigma_{v^+} |\dd v| \nonumber \\
    & \overset{(2)}{=} - \int_{\Omega^{[v_1, \textbf{v}]}} \mathbbm{1}_{v \in [v_1, v_2]} \sigma_{v^-} \sigma_{v^+} |\dd v|
      + \int_{\Omega^{[v_2, \textbf{v}]}} \mathbbm{1}_{v \in [v_1, v_2]} \sigma_{v^-} \sigma_{v^+} |\dd v| \nonumber \\
    & \overset{(3)}{=} \int_{\Omega^{[v_1, \textbf{v}]}} \mathbbm{1}_{v \in [v_1, v_2]} \sigma_{(v^-)^{\#}} \sigma_{v^+} |\dd v|
      + \int_{\Omega^{[v_2, \textbf{v}]}} \mathbbm{1}_{v \in [v_1, v_2]} \sigma_{v^-} \sigma_{v^+} |\dd v| \nonumber \\
      % & \overset{(4)}{=}\int_{\Omega^{[v_2, \textbf{v}]}} \mathbbm{1}_{v \in [v_1, v_2] \cup [v_1^{\#}, v_2^{\#}]} \sigma_{v^-} \sigma_{v^+} \dd v \nonumber \\
    & \overset{(4)}{=} 2 \int_{\Omega^{[v_2, \textbf{v}]}} \mathbbm{1}_{v \in [v_1, v_2]} \sigma_{v^-} \sigma_{v^+} |\dd v|.
  \end{align}
  In the second equality $(2)$, we use the fact that both sheets of the double cover contributes equally to the integrals.
  In the third equality, $(3)$, we use the sign-flip symmetry for the double cover $\Omega^{[v_1, \textbf{v}]}$.
  In the first term of the last equality $(4)$, on $\Omega^{}$ and for $v \in [v_1, v_2]$, the primal vertices $(v^-)^\#$ and $v^+$ belong to the same sheet, thus can be rewritten with the cut point $v_1$ shifted to $v_2$.

  To conclude the proof, we divide~\eqref{eq:H_identity} by $y$ and take the limit to obtain the desired formula~\eqref{eq:Hamiltonian_derivative}.
\end{proof}

% \comment{R: Please make once the expansion of the exponentials, this is important to me. Also replace mixed order-dirsorder by mixed correlator for example}

\subsection{Construction of observables via correlators}
In the following subsections, we will define two special correlators used in the convergence proofs of the critical model, which are constructed out of mixed correlators of the type $\calO$ .
We also prove they satisfy the so-called $s$-holomorphicity property, which makes discrete correlators regular already in discrete. As for mixed corelator of the form $\calO$, the construction is appropriate on some double cover. We precise here those definitions.

\begin{defn}
  Let $\tilde{F} : \dualdomain \times \primaldomain \to \bbC$ be a function defined on $\DB{\Omega_\delta}{1}{1}$.
  We say that 
  \begin{itemize}
    \item $\tilde{F}$ has the sign-flip property around $v \in \dualdomain$ if $\tilde{F}(v, u^\#) = - \tilde{F}(v, u)$ for all $u \in \DB{\Omega_\delta}{0}{1}$, 
    \item $\tilde{F}$ has the sign-flip property around $u \in \primaldomain$ if $\tilde{F}(v^\#, u) = - \tilde{F}(v, u)$ for all $v \in \DB{\Omega_\delta}{1}{0}$, 
    \item $\tilde{F}$ has the sign-flip property (everywhere) if the above two items are true for all $v \in \dualdomain$ and $u \in \primaldomain$.
  \end{itemize}
  In the above statement, $p^\#$ denotes the \emph{other fiber} of $p$ on the appropriate  double cover.
\end{defn}

As it is the case in the discrete case, we specify the notion of \emph{sign-flip symmetry}, which will appear naturally later. 

\begin{defn} 
  Let $F : \midedgedomain \to \bbC$ be a function defined on the corner semi-discrete lattice such that for each $c \in \midedgedomain$, $F(c)$ depends only on $v_c \in \dualdomain$ and $u_c \in \primaldomain$.
  We may write $F(c) = \tilde{F}(v_c, u_c)$.
  We say that $F$ has the \emph{sign-flip} property at $c \in \midedgedomain$ if
  % $\tilde{F}$ has the sign-flip property
  \begin{itemize}
    \item $v \mapsto \tilde{F}(u_c, v)$ has the sign-flip property around $u_c$, 
    \item $u \mapsto \tilde{F}(u, v_c)$ has the sign-flip property around $v_c$.
  \end{itemize}
\end{defn}
% Another way of seeing this property is to say that, for a fixed $c \in \midedgedomain$, the function $v \mapsto \tilde{F}(u_c, v)$ (\emph{resp.} $u \mapsto \tilde{F}(u, v_c)$) is defined on the double cover that branches at $u_c$ (\emph{resp.} at $v_c$) such that the sign of the function is flipped for two vertices having the same projection but on different sheets. 

\begin{defn}\label{defn:eta_c}
  Let $\eta_c := [\icomp (v_c - u_c)]^{-1/2}$ be the semi-discrete \emph{Dirac spinor}.  
\end{defn}
Note that $\eta_c$ is defined up to the sign, which explains the need of the double cover structure described above.
The sign of $\eta_c$ depends on the choice of $v_c$ and $u_c$.
In other words, $\eta_c$ has the sign-flip property around \emph{all} vertices of $\midedgedomain$.

\begin{defn}\label{defn:complexified_correlator}
  Let $\varpi = (u_1, \dots, u_{n-1}, v_1, \dots, v_{m-1}) \in (\primaldomain)^{n-1} \times (\dualdomain)^{m-1}$ and $\calO = \mu_{v_1} \dots \mu_{v_{m-1}} \sigma_{u_1} \dots \sigma_{u_{n-1}}$.  For a corner $c\in \Upsilon^{\times}_{\varpi}  $ set formally $\chi_c := \mu_{v_c} \sigma_{u_c}$, where $v_c$ (resp. $u_c$) is the neighboring dual (resp. primal) vertex. This allows to define the complexified correlator $\Psi_{\varpi}$ on $\Omega_{\varpi} $ by
  \begin{equation} \label{eq:correlator}
    \Psi_{\varpi}:= c\mapsto \eta_c \braket{ \chi_c \calO }
    = \eta_c \braket{\chi_c \mu_{u_1} \dots \mu_{u_{n-1}} \sigma_{v_1} \dots \sigma_{v_m-1}}.
  \end{equation} 
\end{defn}
The next proposition shows that the complexified correlator is properly defined and satifies the spinor property.

\begin{prop}
  In the context of the definition~\eqref{defn:complexified_correlator}, the complexified correlator $\Psi_{\varpi} $ is well defined on $\Omega_{\varpi}$. Moreover $\Psi_{\varpi} $ has the sign-flip property i.e. for $c, c^{\#}$ corners of $\Omega_{\varpi} $, one has $\Psi_{\varpi}(c)=-\Psi_{\varpi}(c^{\#})$, which means that $\Psi_{\varpi}$ changes its sign when~$c$ makes a turn around one of the $v_p$ or $u_q$.
\end{prop}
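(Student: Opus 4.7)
The plan is to prove the claim by computing the monodromy of the product $\Psi_\varpi(c)=\eta_c\braket{\chi_c\calO}$ along small loops in the corner graph, and showing that the monodromies of the two factors combine to give the correct spinor structure on $\Omega_\varpi$: trivial around vertices not in $\varpi$, and equal to $-1$ around each vertex of $\varpi$.

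The monodromy of $\eta_c$ is immediate: by the remark after Definition~\ref{defn:eta_c}, a small loop of $c$ around any vertex of $\medialdomain$ rotates the complex number $v_c-u_c$ by $2\pi$, so $\eta_c$ acquires a factor $-1$ in every such case. It therefore remains to establish the dual behaviour for the correlator, namely that $\braket{\chi_c\calO}$ has monodromy $-1$ around every vertex of $\medialdomain\setminus\varpi$ and monodromy $+1$ around every vertex of $\varpi$. This is the Kadanoff--Ceva property of the discrete fermion $\chi_c$ inserted at the running corner, where the insertions appearing in $\calO$ absorb the spin-$\tfrac12$ branching at the points of $\varpi$.

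To make this concrete, I would first fix a pairing realising the double cover $\DB{\Omega_\delta}{n}{m}$ from~\eqref{eq:order-disorder} and extend it by two short auxiliary paths leaving $v_c$ and $u_c$ through the corner $c$ and heading to the boundary. By the path-deformation invariance of disorders established around Figure~\ref{fig:path_deformation}, such a choice determines $\braket{\chi_c\calO}$ up to an overall sign, which is then absorbed in the spinor ambiguity on $\Omega_\varpi$. A small loop of $c$ around $w\notin\varpi$ can then be analysed by pushing the auxiliary paths far from $w$: the joint winding of $v_c$ and $u_c$ around $w$, combined with the constraint that they remain glued through $c$, forces the two auxiliary paths to cross an odd number of times before returning to their initial configuration, producing a sign $-1$ in the correlator. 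Conversely, a loop around $w\in\varpi$ can be absorbed by the pairing path already terminating at $w$, producing no crossing and hence a sign $+1$. Multiplying by the $-1$ coming from $\eta_c$ gives the announced behaviour in both cases; independence of the original pairing follows again from the deformation argument.

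The main obstacle is the careful crossing-parity count in the first case. I would resolve it by splitting according to whether $w$ is primal or dual, and by tracking how the short auxiliary paths attached at $c$ rotate relative to the pairing paths of $\calO$, reducing the statement in each case to a short topological fact about two curves emanating from a common basepoint and jointly winding around $w$.
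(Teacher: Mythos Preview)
Your approach is essentially that of the paper: decompose the monodromy of $\Psi_\varpi$ into the monodromies of $\eta_c$ and of $\braket{\chi_c\calO}$, and show they combine to leave precisely the branching over $\varpi$. The paper is terser, deferring the mechanism to \cite{CHI-mixed}; you spell it out via crossing parities of auxiliary disorder/spin paths.

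One place where the paper's argument is cleaner than yours: for the spinor property around $w\in\varpi$, instead of arguing that the loop ``can be absorbed by the pairing path already terminating at $w$'', the paper simply observes that $\chi_c=\mu_{v_c}\sigma_{u_c}$ depends only on the \emph{planar} positions of $v_c,u_c$, and is therefore the same operator at both lifts $c,c^\#\in\Omega_\varpi$. Hence $\braket{\chi_c\calO}=\braket{\chi_{c^\#}\calO}$ holds for free, and the sign flip in $\Psi_\varpi(c^\#)=-\Psi_\varpi(c)$ comes entirely from $\eta_c$. In your crossing-parity language this corresponds to the cancellation of two $-1$ contributions around $w\in\varpi$: one from the ``internal'' branching of the moving pair $(v_c,u_c)$ around each other (which is present at every vertex of $\medialdomain$ and is what you correctly identify in the $w\notin\varpi$ case), and one from the ``external'' branching of $v_c$ or $u_c$ around the fixed insertion sitting at $w$. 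Your absorption picture is a geometric heuristic for this cancellation, but as written it is not obvious that it accounts for both contributions; the planar-operator observation bypasses the count entirely.
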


\begin{proof}
  As explained above and in~\cite{CHI-mixed},  Remark 2.5, Lemma 2.6, Def 2.7], the correlator $\braket{ \chi_c \calO }$ can be viewed a \emph{single} multivalued function on \emph{one} double cover. The presence of nearby branchings at $c$ and the multiplication by $\eta_c $ (which branches around all points of $\midedgedomain $) kills the branching structure around each points except those of $\varpi $. Finally, the spinor property comes from the spinor property of $\eta_c $ since replacing $\chi_{c} $ by $\chi_{c^\#} $ doesen't change the sign of $\braket{ \chi_c \calO }$ but the one of $\eta_c $.

\end{proof}

\subsection{s-holomorphicity and fermionic correlators} \label{sec:correlators}
We recall now the definition of a semi-discrete $s$-holomorphic function~\cite[Section~3.7]{Li-QI-SLE}, which is a natural generalisation of the isoradial case.
In particular, we show that mixed correlators introduced above give rise to $s$-holomorphic functions.
Contrarily to the FK-loop representation~\cite{Li-QI-SLE}, the approach of disorder insertion described in previous sections provides the $s$-holomorphicity property almost directly (i.e. there is no need to look at combinatorial bijections between local loop configurations).
Below we define the notion of $s$-holomorphicity, which can be viewed in both the corner and diamond lattices, and provide a simple way to switch point of view depending on the context.

\begin{defn} \label{defn:s-hol}
  Let $F : \midedgedomain \to \bbC$ be a function defined on the \emph{corner} semi-discrete lattice.
  It is said to be \emph{corner s-holomorphic} if it satisfies the two following properties.
  \begin{enumerate}
    \item \emph{Parallelism}: for $c \in \midedgedomain$, we have $F(c) \parallel \eta_c \bbR$ where $\eta_c$ is defined in~\eqref{defn:eta_c}
    In other words, 
    \begin{itemize}
      \item $F(c) \in \nu \bbR$ if $u_c$ is on the left and $v_c$ is on the right (western corners), 
      \item $F(c) \in \icomp \nu \bbR$ if $u_c$ is on the right and $v_c$ is on the left (eastern corners), 
    \end{itemize}
    where $\nu = e^{- \icomp \frac{\pi}{4} }$.
    \item \emph{Holomorphicity}: for any corner $c \in \midedgedomain$, we have $\derzbar F(c) = 0$.
  \end{enumerate}
\end{defn}
Note that in the above definition, the semi-discrete derivative is given by
\begin{equation} \label{eq:sd-derivative}
  \derzbar F(c)
  := \frac{1}{2} \left[ \Deltax F(c) - \frac{\dy F(c)}{\icomp} \right]
  = \frac{1}{2} \left[ \frac{F(c^+) - F(c^-)}{2\delta} - \frac{\dy F(c)}{\icomp} \right].
\end{equation}

% Define a new function $f : \midedgedomain \to \bbR$ by
% $f(c) = F(c) / \nu$ if $p_c^+ \in \dualdomain$ and $f(c) = F(c) / (\icomp \nu)$ if $p_c^+ \in \primaldomain$.
% Then, $F$ is s-holomorphic if and only if
% \begin{itemize}
%   \item $\Deltax f(c) = \deltay F(c)$ for $c \in \midedgedomain$ such that $p_c^+ \in \primaldomain$, 
%   \item $\Deltax f(c) = - \deltay F(c)$ for $c \in \midedgedomain$ such that $p_c^+ \in \dualdomain$.
% \end{itemize}

\begin{defn} \label{defn:s-hol2}
  Let $F^{\diamond} : \Omega_{\delta}^{\diamond} \to \bbC$ be a function defined on the \emph{diamond} semi-discrete domain. It is said to be \emph{diamond s-holomorphic} if it satisfies the two following properties.
  \begin{enumerate}
    \item \emph{Projection}: for every $c = c(z^{-}_{c}, z^{+}_{c}) \in \midedgedomain$, we have
    \begin{equation}
      \label{eq:proj_prop}
      \Proj [F^{\diamond}(z^{-}_{c}), \eta_c \bbR] = \Proj [F^{\diamond}(z^{+}_{c}), \eta_c \bbR]
    \end{equation}
    where $\Proj(X, \eta\bbR)$ denotes the usual projection of $X$ on the line $\eta\bbR$, i.e.
    \[
      \Proj[X, \eta \bbR] = \tfrac{1}{2} \big[ X + \eta^2 \overline{X} \big].
    \]
    \item \emph{Holomorphicity}: for all vertex $z$ on the medial domain $\diamonddomain$, we have $\derzbar F^{\diamond}(z) = 0$.
  \end{enumerate}
\end{defn}

In the convergence proofs, we mainly work with \emph{diamond} holomorphic function, since they behave as continous holomorphic function as the lattice mesh-size goes to $0$. In fact, this passage to complexed valued diamond holomorphic function appears crucial to study the scaling limits, since corner holomorphic functions have a prescribed complex argument, thus cannot behave like holomorphic ones. In fact, there is a natural bijection between \emph{corner} and \emph{diamond} s-holomorphic functions on $\midedgedomain$ and those on $\diamonddomain$. The next proposition explicits this very simple link.

\begin{prop} \label{prop:s_hol_equiv}
  Given a \emph{corner} s-holomorphic function $F : \midedgedomain \mapsto \bbC$, one can define $F^{\diamond} : z\in  \diamonddomain \mapsto
  \bbC$ by:
  \[
    F^{\diamond}(z) := F(c^{-}_{z}) + F(c^{+}_{z}).
  \]
  Then, the function $F^{\diamond}$ is \emph{diamond} $s$-holomorphic. \newline 
  Conversely, given an \emph{diamond} s-holomorphic function $F^{\diamond} : \diamonddomain \mapsto \bbC$, one can define $F : c \in \midedgedomain \mapsto \bbC$ by
  \[
    F(c) := \Proj [F^{\diamond}(z^{-}_{c}), \eta_c \bbR] = \Proj [F^{\diamond}(z^{+}_{c}), \eta_c \bbR]. 
  \]
  Then, the function $F$ is \emph{corner} s-holomorphic.
\end{prop}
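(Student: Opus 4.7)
The proof rests on a linear-algebraic observation: on the same horizontal line, western and eastern corners alternate, and the associated spinor directions $\eta_c\bbR$ rotate by $\pi/2$ each step. Consequently two corners sharing a diamond vertex carry values in perpendicular rays, and the projector $\Proj[\,\cdot\,, \eta_c\bbR]$ acts as the identity on $\eta_c\bbR$ while annihilating $(i\eta_c)\bbR$. Essentially everything below will be a consequence of this single fact.

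For the corner-to-diamond direction, I take a corner s-holomorphic $F$ and set $F^\diamond(z) := F(c^-_z) + F(c^+_z)$. For any corner $c$ adjacent to the diamond vertex $z^-_c$, the other corner at $z^-_c$ has its value in the perpendicular ray, hence $\Proj[F^\diamond(z^-_c), \eta_c\bbR] = \Proj[F(c), \eta_c\bbR] + 0 = F(c)$; the same computation at $z^+_c$ yields the same value, which is exactly the projection property \eqref{eq:proj_prop}. For $\derzbar F^\diamond(z) = 0$ at a diamond vertex $z$, I will substitute $F^\diamond = F(c^-_{(\cdot)}) + F(c^+_{(\cdot)})$ into the discrete Cauchy--Riemann relation at $z$; using linearity of $\dy$ and reorganizing the horizontal shifts, the expression becomes a linear combination of the corner relations $\derzbar F = 0$ at corners near $z$, each of which vanishes by hypothesis.

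For the diamond-to-corner direction, the projection property \eqref{eq:proj_prop} is precisely what makes the two candidate definitions of $F(c)$ (via $z^-_c$ and via $z^+_c$) coincide, so $F$ is well-defined, and parallelism is tautological since $F(c)$ is by construction a projection onto $\eta_c\bbR$. For $\derzbar F(c) = 0$, I plan to apply $\Proj[\,\cdot\,, \eta_c\bbR]$ to $\derzbar F^\diamond = 0$ and use that this $\bbR$-linear projector commutes with both $\dy$ and with horizontal finite differences, so the projected identity rewrites as $\derzbar F(c) = 0$. Mutual invertibility of the two maps will then follow from the first paragraph: one composition is the computation already carried out for the projection property, while the other uses that each $F^\diamond(z)$ is determined by its orthogonal projections onto the perpendicular rays $\eta_{c^-_z}\bbR$ and $\eta_{c^+_z}\bbR$, which together span $\bbC$. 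The only delicate point will be matching the horizontal step sizes when passing between the corner-side $\derzbar$ in \eqref{eq:sd-derivative} and its diamond-side counterpart; once the substitution $F^\diamond = F(c^-_{(\cdot)}) + F(c^+_{(\cdot)})$ is unfolded, the shifts telescope and the two notions of discrete Cauchy--Riemann equation become equivalent.
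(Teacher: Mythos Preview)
Your argument is correct, and in fact the paper does not supply its own proof of this proposition at all: it is stated as a standard fact, implicitly deferring to the semi-discrete framework recalled from \cite[Section~3.7]{Li-QI-SLE}. So there is nothing to compare against; you have simply filled in what the paper omits, and your organizing observation---that adjacent corners carry values in perpendicular rays, so that the projections onto $\eta_{c^-_z}\bbR$ and $\eta_{c^+_z}\bbR$ split $F^\diamond(z)$ exactly into $F(c^-_z)$ and $F(c^+_z)$---is the right one.

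One small point of precision in the diamond-to-corner direction: you say the $\bbR$-linear projector $\Proj[\cdot,\eta_c\bbR]$ ``commutes'' with $\derzbar$. Taken literally this is false, since $\derzbar$ contains a factor of $i$ and $\Proj$ is not $\bbC$-linear. What actually makes the argument go through is that, once you write $F^\diamond=F(c^-_{(\cdot)})+F(c^+_{(\cdot)})$ and expand $\derzbar F^\diamond(z)=0$, every individual summand already lies in one of the two perpendicular rays $\eta_{c^-_z}\bbR$ or $\eta_{c^+_z}\bbR$ (the horizontal differences of same-type corners, and $i$ times the vertical derivative of opposite-type corners). Projecting onto one ray then kills exactly half the terms and leaves precisely the corner equation $\derzbar F=0$ at the adjacent corner. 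This is the mechanism you have in mind; the word ``commutes'' just slightly overstates it. It is a cosmetic issue, not a gap.
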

When the context is clear, we will drop the denomination corner or diamond s-holomorphic functions and just call them $s$-holomorphic functions, passing from one to the other using Proposition~\ref{prop:s_hol_equiv}.
As a consequence of the projections equality above mentioned, one can see as in~\cite[$(2.20)$ and Remark~2.9]{Che20} that $F^{\diamond}$ satisfies the maximum principle, i.e. for any connected set $S$ of $\diamondsuit(G)$ (where neighbours are either at a distance $2\delta $ from each other or belong to same vertical axis), the maximum of $|F^{\diamond}|$ is attained at the boundary of $S$.

We now prove that the correlator $\Psi_{\varpi} $ introduced in~\ref{defn:complexified_correlator} is $s$-holomorphic at the critical isotropic point. We first need to compute the vertical derivative of mixed correlator.

\begin{lem} \label{lem:chi_dy}
  Let $c \in \midedgedomain$ be a corner and $\calO$ a mix correlator.
  Let the partial derivative $\partial_y$ be taken with respect to the vertical variation of $c$, i.e., 
  \begin{align*}
    \partial_y \braket{\chi_c \calO} := \partial_y \braket{\chi_{c + \icomp y} \calO}_{|y=0}.
  \end{align*}
  Then, this derivative is given according to the positions of $v := v(c)$ and $u := u(c)$ where $c = (v(c), u(c))$.
  \begin{itemize}
    \item If $v$ is on the left and $u$ is on the right, then, 
    
    \begin{equation}
      \partial_y \braket{ \chi_c \calO } 
      = \sgn(\calO, v) \cdot \theta \braket{ \chi_{c^-} \calO }
      + \sgn(\calO, u) \cdot \theta^\star \braket{ \chi_{c^+} \calO }.\label{eq:Chi_dy1}
    \end{equation}
    \item If $v$ is on the right and $u$ is on the left, then, 
    
    \begin{equation}
      \partial_y \braket{ \chi_c \calO } 
      = \sgn(\calO, v) \cdot \theta \braket{ \chi_{c^+} \calO }
      + \sgn(\calO, u) \cdot \theta^\star \braket{ \chi_{c^-} \calO }.\label{eq:Chi_dy2}
    \end{equation}
  \end{itemize}
\end{lem}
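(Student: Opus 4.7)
The plan is to reduce the computation to the two already-established two-term relations of Propositions~\ref{prop:two_term} and~\ref{prop:two_term_dual} by treating $\chi_c = \mu_{v_c}\sigma_{u_c}$ as a product and differentiating in the two variables separately. When the corner $c$ moves vertically by $\icomp y$, both its dual endpoint $v=v_c$ and its primal endpoint $u=u_c$ shift by $\icomp y$ on the same sheet of the double cover; so I would first write
\[
\braket{\mu_{v+\icomp y}\sigma_{u+\icomp y}\calO}-\braket{\mu_v\sigma_u\calO}
=\bigl(\braket{\mu_{v+\icomp y}\sigma_{u+\icomp y}\calO}-\braket{\mu_{v+\icomp y}\sigma_u\calO}\bigr)
+\bigl(\braket{\mu_{v+\icomp y}\sigma_u\calO}-\braket{\mu_v\sigma_u\calO}\bigr),
\]
divide by $y$, let $y\to 0$, and use Proposition~\ref{prop:two_term_dual} on the first piece (viewing $\mu_{v+\icomp y}\calO$ as the passive correlator) and Proposition~\ref{prop:two_term} on the second piece (viewing $\sigma_u\calO$ as the passive correlator). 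This yields
\[
\partial_y\braket{\chi_c\calO}=\sgn(\calO,v)\cdot\theta\,\braket{\mu_v\sigma_{v^-}\sigma_{v^+}\sigma_u\calO}
+\sgn(\calO,u)\cdot\theta^\star\,\braket{\mu_v\sigma_u\mu_{u^-}\mu_{u^+}\calO}.
\]

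The second step is purely combinatorial: I use the fact that $v$ and $u$ are medial neighbours at distance $\delta$ to identify one of the pairs $\{v^-,v^+\}$ with $u$ and one of $\{u^-,u^+\}$ with $v$. Concretely, in the case where $v$ is to the left of $u$, one has $v^+=u$ and $u^-=v$; then the relations $\sigma_u^2=1$ and $\mu_v^2=1$ collapse the two terms to
\[
\theta\,\sgn(\calO,v)\braket{\mu_v\sigma_{v^-}\calO}+\theta^\star\,\sgn(\calO,u)\braket{\sigma_u\mu_{u^+}\calO}.
\]
Reading off the corner graph, the corner $c^-$ (resp.\ $c^+$) pairs $v$ with $v^-$ (resp.\ $u$ with $u^+$), so $\mu_v\sigma_{v^-}=\chi_{c^-}$ and $\sigma_u\mu_{u^+}=\chi_{c^+}$, which gives~\eqref{eq:Chi_dy1}. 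The case where $v$ is to the right of $u$ is entirely symmetric: then $v^-=u$ and $u^+=v$, and the same cancellations identify the two surviving correlators with $\chi_{c^+}$ and $\chi_{c^-}$ respectively, producing~\eqref{eq:Chi_dy2}.

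The main subtlety I anticipate is bookkeeping on the double cover: one must be sure that $v^\pm$ and $u^\pm$ are consistently taken on the sheet where $v$ and $u$ live after the infinitesimal shift, and that the signs $\sgn(\calO,\cdot)$ are computed relative to the same path configuration on both sides of each equality (otherwise the identification $\mu_v\sigma_{v^-}=\chi_{c^-}$ could be off by a sign). Once this is handled by insisting, as in the statements of Propositions~\ref{prop:two_term} and~\ref{prop:two_term_dual}, that all vertices entering a given bracket belong to the same sheet, the identifications above are forced and the lemma follows. The splitting trick at the start is harmless because both one-variable differences admit a finite limit (by the two propositions), so the product-rule manipulation is justified.
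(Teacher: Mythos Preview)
Your proof is correct and follows essentially the same approach as the paper: split the vertical derivative of $\braket{\mu_v\sigma_u\calO}$ via a product rule into separate variations of $v$ and $u$, apply Propositions~\ref{prop:two_term} and~\ref{prop:two_term_dual}, then use $\sigma_u^2=1$, $\mu_v^2=1$ together with the identifications $v^+=u$, $u^-=v$ (or their mirror in the other case) to collapse to $\chi_{c^\pm}$. The paper's write-up is slightly terser but the argument and the bookkeeping you flag about sheets are the same.
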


\begin{proof}
  First note that for any primal vertex $u$ and dual vertex $v$, we have, 
  \begin{align*}
    \partial_y \braket{ \mu_{v + \icomp y} \sigma_{u + \icomp y} \calO }_{| y = 0}
    & = \partial_y \braket{ \mu_{v + \icomp y} \sigma_{u} \calO }_{| y = 0}
      + \partial_y \braket{ \mu_{v} \sigma_{u + \icomp y} \calO }_{| y = 0} \\
    & = \sgn(\calO, v) \cdot \theta \braket{ \mu_v \sigma_{v^-} \sigma_{v^+} \sigma_u \calO }
      + \sgn(\calO, u) \cdot \theta^\star \braket{ \sigma_u \mu_{u^-} \mu_{u^+} \mu_v \calO }.
  \end{align*}
  The above result follows from~\eqref{eq:primal_two_term} and~\eqref{eq:dual_two_term} by respectively choosing $v^-$, $v^+$ on the same sheet and $u^-$, $u^+$ on the same sheet.
  Now the lemma follows from a direct application of the above formula.
  Let us write it for the first item, 
  \begin{align*}
    \partial_y \braket{ \chi_c \calO } 
    & = \partial_y \braket{ \mu_{v} \sigma_{u} \calO } \\
    & = \sgn(\calO, v) \cdot \theta \braket{ \mu_v \sigma_{v^-} \sigma_{v^+} \sigma_u \calO }
      + \sgn(\calO, u) \cdot \theta^\star \braket{ \sigma_u \mu_{u^-} \mu_{u^+} \mu_v \calO } \\
    & = \sgn(\calO, v) \cdot \theta \braket{ \mu_v \sigma_{v^-} \calO }
      + \sgn(\calO, u) \cdot \theta^\star \braket{ \sigma_u \mu_{u^+} \calO } \\
    & = \sgn(\calO, v) \cdot \theta \braket{ \chi_{c^-} \calO }
      + \sgn(\calO, u) \cdot \theta^\star \braket{ \chi_{c^+} \calO }.
  \end{align*}
\end{proof}

\begin{prop} \label{prop:s-hol}
  Let $\Psi_\varpi$ be the correlator as defined in~\eqref{eq:correlator}. If $c $ is not nearby one of the branchings of $\varpi$, one has 
  \begin{itemize}
    \item If $v$ is on the left and $u$ is on the right, then, 
    \begin{equation}
      \label{eq:Psi_dy1}
      \partial_y \Psi_\varpi(c) = \icomp [ \theta^\star \Psi_\varpi(c^+) - \theta \Psi_\varpi(c^-) ].
    \end{equation}
    \item If $v$ is on the right and $u$ is on the left, then, 
    \begin{equation}
      \label{eq:Psi_dy2}
      \partial_y \Psi_\varpi(c) = \icomp [ \theta \Psi_\varpi(c^+) - \theta^\star \Psi_\varpi(c^-) ].
    \end{equation}
  \end{itemize}
  In particular, at critical and isotropical point $\theta = \theta^\star = \frac{1}{2\delta}$, it is corner s-holomorphic on $\midedgedomain$ (still when $c^\pm \not\in \varpi$).
\end{prop}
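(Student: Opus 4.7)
The plan is to derive both identities as direct consequences of Lemma~\ref{lem:chi_dy}, by multiplying through by $\eta_c$ and regrouping the constants $\eta_c/\eta_{c^{\pm}}$, and then to deduce the $s$-holomorphicity at the isotropic critical point as an immediate corollary of the resulting two-term identity.

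First, I would observe that since $\eta_c=[\icomp(v_c-u_c)]^{-1/2}$ depends only on the (horizontal) relative positions of $v_c$ and $u_c$ and on the chosen sheet of the double cover, the vertical derivative commutes with multiplication by $\eta_c$, giving $\partial_y\Psi_\varpi(c)=\eta_c\,\partial_y\braket{\chi_c\calO}$. Substituting Lemma~\ref{lem:chi_dy} and replacing each $\braket{\chi_{c^\pm}\calO}$ by $\eta_{c^\pm}^{-1}\Psi_\varpi(c^\pm)$ yields, in the case $v_c$ on the left and $u_c$ on the right,
\[
\partial_y\Psi_\varpi(c)\ =\ \sgn(\calO,v_c)\,\theta\,\tfrac{\eta_c}{\eta_{c^-}}\Psi_\varpi(c^-)+\sgn(\calO,u_c)\,\theta^\star\,\tfrac{\eta_c}{\eta_{c^+}}\Psi_\varpi(c^+),
\]
and a symmetric expression in the other case. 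The proposition thus reduces to identifying $\eta_c/\eta_{c^\pm}$ (on the correct branch) and combining the result with the signs $\sgn(\calO,v_c)$ and $\sgn(\calO,u_c)$ from~\eqref{eq:sign_Ov} and~\eqref{eq:sign_Ou}.

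The key combinatorial step, and the main obstacle, will be this sign bookkeeping: between two horizontally adjacent corners $c$ and $c^\pm$ the complex number $\icomp(v_c-u_c)$ flips sign because $v$ and $u$ swap sides, so $\eta_c/\eta_{c^\pm}=\pm\icomp$ up to the spinor branch. Working away from $\varpi$, the branches of $\eta$ at $c$ and at $c^\pm$ are connected by a short horizontal segment that does not wind around any point of $\varpi$, so the sign is fixed unambiguously once a sheet is chosen. I then intend to check the four sub-cases (western vs.\ eastern $c$, and $c^-$ vs.\ $c^+$) and verify that the signed ratio $\eta_c/\eta_{c^\pm}$ multiplied by the corresponding $\sgn(\calO,\cdot)$-factor collapses to a uniform $\icomp$, producing exactly~\eqref{eq:Psi_dy1} and~\eqref{eq:Psi_dy2}; one case suffices, the others following by an obvious symmetry between east/west and primal/dual.

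Finally, for the $s$-holomorphicity at the isotropic critical point, the parallelism $\Psi_\varpi(c)\in\eta_c\bbR$ is immediate since $\braket{\chi_c\calO}$ is a real-valued Ising correlator on the appropriate double cover (a ratio of real partition functions up to sign). Plugging $\theta=\theta^\star=(2\delta)^{-1}$ into the identities just established collapses the right-hand side to $\tfrac{\icomp}{2\delta}[\Psi_\varpi(c^+)-\Psi_\varpi(c^-)]$, which by~\eqref{eq:sd-derivative} is exactly $\derzbar\Psi_\varpi(c)=0$, as required.
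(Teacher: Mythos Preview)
Your proposal is correct and follows essentially the same route as the paper: multiply Lemma~\ref{lem:chi_dy} by $\eta_c$, reduce to computing the ratios $\eta_c/\eta_{c^\pm}$ on the correct sheet, and observe that each ratio equals $\mp\sgn(\calO,\cdot)\,\icomp$ so that the signs cancel to a uniform $\icomp$ in front of $\Psi_\varpi(c^\pm)$. The paper does exactly this (recording the $\eta$-ratios as \eqref{eq:eta_v}--\eqref{eq:eta_u} and then combining with \eqref{eq:Chi_dy1}); your added remark that $\braket{\chi_c\calO}\in\bbR$ to justify parallelism is a small bonus the paper leaves implicit.
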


\begin{proof}
  Let $\Psi_\varpi(c) = \eta_c \braket{\chi_c \calO}$ where $c$ is a corner of $\Omega_\varpi $ and $\calO $ is a mixed order-disorder operator.
  Since $\eta_c$ stays constant when $c$ varies vertically, we only need to compute the first derivative of $\braket{\chi_c \calO}$ which is done in the above Lemma~\ref{lem:chi_dy}.
  Moreover, if $\eta$ is chosen such that its cuts coincide with those of $\braket{\chi_c \calO}$, we have the following relation, 
  \begin{equation}
    \label{eq:eta_v}
    \frac{\eta_{v v^+}}{\eta_{v v^-}}
    = \frac{[ \icomp (v - v^+) ]^{-1/2} }{[ \icomp (v - v^-) ]^{-1/2} }
    = \left( \frac{v - v^+}{v - v^-} \right)^{-1/2}
    = - \sgn(\calO, v) \cdot \icomp, 
  \end{equation}
  since $v - v^+ = e^{\sgn(\calO, v) \icomp \pi} (v - v^-)$.
  Similarly, we also have, 
  \begin{equation}
    \label{eq:eta_u}
    \frac{\eta_{u u^+}}{\eta_{u u^-}}
    = \frac{[ \icomp (u^+ - u) ]^{-1/2} }{[ \icomp (u^- - u) ]^{-1/2} }
    = \left( \frac{u^+ - u}{u^- - u} \right)^{-1/2}
    = - \sgn(\calO, u) \cdot \icomp
  \end{equation}
  since $u^+ - u = e^{\sgn(\calO, u) \icomp \pi} (u^- - u)$.

  Let us complete the computation for the first item.
  Assume that $v$ is on the left and $u$ is on the right of the corner $c$.
  From~\eqref{eq:eta_u}, we have $\eta_{c^+} = -\sgn(\calO, u) \cdot \icomp \eta_c$;
  from~\eqref{eq:eta_v}, we have $\eta_{c^-} = \sgn(\calO, v) \cdot \icomp \eta_c$.
  Combine these two relations with~\eqref{eq:Chi_dy1}, we find~\eqref{eq:Psi_dy1}.
  A similar computation results in~\eqref{eq:Psi_dy2}.
\end{proof}

\begin{prop}
  Let $\Psi=\Psi_\varpi$ be the correlator as defined in~\eqref{eq:correlator}. If $c$ is not nearby one of the branchings $\varpi$, it satisfies the following local relations, 
  \begin{equation}
    \label{eq:Psi_mLaplacian}
    \partial_{yy} \Psi(c) = (\theta^2 + {\theta^\star}^2) \Psi(c) - \theta \theta^\star \big[ \Psi(c^{++}) + \Psi(c^{--}) \big].
  \end{equation}
\end{prop}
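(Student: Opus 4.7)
The natural plan is to iterate the first-order relation from Proposition~\ref{prop:s-hol}. Away from the branchings of $\varpi$, the vertical derivative $\partial_y$ is well-defined on $\Psi$, and applying it twice should yield a three-term relation connecting $\Psi(c)$ with $\Psi(c^{\pm\pm})$.

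More concretely, suppose at $c$ the dual vertex $v$ is on the left and the primal vertex $u$ is on the right, so that~\eqref{eq:Psi_dy1} applies:
\[
\partial_y \Psi(c) = \icomp\bigl[\theta^\star \Psi(c^+) - \theta \Psi(c^-)\bigr].
\]
The crucial observation is that $c^+$ and $c^-$ have the \emph{opposite} orientation from $c$ (since shifting by one corner horizontally swaps which of $v_{c'},u_{c'}$ sits on the left), so the second case~\eqref{eq:Psi_dy2} of Proposition~\ref{prop:s-hol} governs $\partial_y \Psi(c^\pm)$:
\[
\partial_y \Psi(c^+) = \icomp\bigl[\theta \Psi(c^{++}) - \theta^\star \Psi(c)\bigr], \qquad \partial_y \Psi(c^-) = \icomp\bigl[\theta \Psi(c) - \theta^\star \Psi(c^{--})\bigr].
\]
Substituting these into $\partial_{yy}\Psi(c) = \icomp[\theta^\star \partial_y\Psi(c^+) - \theta\, \partial_y\Psi(c^-)]$ and expanding yields exactly~\eqref{eq:Psi_mLaplacian}; the cross terms reconstruct the coefficient $\theta^2 + (\theta^\star)^2$ in front of $\Psi(c)$, while the extremal terms recombine into $-\theta\theta^\star[\Psi(c^{++}) + \Psi(c^{--})]$. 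The symmetric case in which $v$ is on the right and $u$ is on the left is identical after swapping the roles of $c^+$ and $c^-$ and using~\eqref{eq:Psi_dy2} for $\partial_y\Psi(c)$ and~\eqref{eq:Psi_dy1} for $\partial_y\Psi(c^\pm)$.

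The only technical point to watch is the bookkeeping of signs on the double cover: one must verify that the lifts of $c^{\pm}$, $c^{\pm\pm}$ appearing when we apply Lemma~\ref{lem:chi_dy} (through Proposition~\ref{prop:s-hol}) twice are consistent with a single choice of sheet. Since the computation takes place in a neighbourhood of $c$ that is disjoint from $\varpi$, the $\sgn(\calO,\cdot)$ factors that intervene in~\eqref{eq:Chi_dy1}--\eqref{eq:Chi_dy2} are locally constant, and the $\eta$-ratios~\eqref{eq:eta_v}--\eqref{eq:eta_u} used in Proposition~\ref{prop:s-hol} already absorb them; hence the relations~\eqref{eq:Psi_dy1}--\eqref{eq:Psi_dy2} hold without any residual sign ambiguity, and the iteration above goes through unchanged. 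I expect this sign-tracking to be the main (though routine) obstacle; once it is handled, the identity~\eqref{eq:Psi_mLaplacian} is a one-line algebraic consequence.
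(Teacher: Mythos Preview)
Your proposal is correct and follows exactly the same route as the paper: differentiate~\eqref{eq:Psi_dy1} once more, use that $c^\pm$ are of the opposite type so that~\eqref{eq:Psi_dy2} applies to $\partial_y\Psi(c^\pm)$, and expand. The paper's proof is in fact terser than yours and does not spell out the double-cover sign check you (rightly) flag as routine.
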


\begin{proof}
  Assume that $c = (v, u)$ is a corner such that $v$ is on the left and $u$ is on the right.
  Then, the corners $c^+$ and $c^-$ are of the opposite type.
  We can take the vertical derivative to~\eqref{eq:Psi_dy1}
  \begin{align*}
    \partial_{yy} \Psi(c) & = \icomp [ \theta^\star \partial_y \Psi(c^+) - \theta \partial_y \Psi(c^-) ] \\
                          & = \icomp \big[ \theta^\star \cdot \icomp [ \theta \Psi(c^{++}) - \theta^\star \Psi(c) ]
                            - \theta \cdot \icomp [ \theta \Psi(c) - \theta^\star \Psi(c^{--}) ] \big] \\
                          & = (\theta^2 + {\theta^\star}^2) \Psi(c) - \theta \theta^\star \big[ \Psi(c^{++}) + \Psi(c^{--}) \big]
  \end{align*}
  The computation is the same if $c$ is of the other type.
\end{proof}

The above proposition suggests the following definition for the massive Laplacian operator $\mDelta = \Delta^{(m)}_{(\theta, \theta^\star)}$, 
\begin{equation} \label{eq:mLaplacian}
  \Delta^{(m)}_{(\theta, \theta^\star)} \Psi(c) =  \mDelta \Psi(c) :=  -\Psi(c) + \frac{1}{\theta^2 + {\theta^\star}^2} \big[ \ddy \Psi(c) + \theta \theta^\star \big[ \Psi(c^{++}) + \Psi(c^{--}) \big].
\end{equation}
The equation~\eqref{eq:Psi_mLaplacian} is equivalent to $\mDelta \Psi(c) = 0$.
The massive Laplacian $\mDelta$ can be interpreted as the infinitesimal generator of the semi-discrete Brownian Motion with horizontal killing rate $ 1 - \frac{2\theta \theta^{\star}}{\theta^2 + {\theta^\star}^2} $.
At the critical and isotropical point of the quantum Ising model $\theta = \theta^\star = \frac{1}{2\delta}$, the massive Laplacian operator~\eqref{eq:mLaplacian} gets simplified and one recovers the (normalized) semi-discrete Laplacian operator, 
\begin{equation} \label{eq:normalized_Laplacian}
  \laplacian \Psi(c) := \frac12 \ddy \Psi(c) + \frac{1}{2\delta^2} \big[ \Psi(c^{++}) + \Psi(c^{--}) -2 \Psi(c) \big].
\end{equation}
This shows that the correlator $\Psi$ is harmonic at the critical and isotropical point and massive harmonic otherwise. This massive harmonicity will be used to derive asymptotics of correlations in the infinite volume limit.

\subsection{The energy and spin correlators}

We introduced above a general formalism for fermionic correlators. We focus now on two specific choices of $\calO$ which that are useful to prove the existence of a scaling limit at criticality for energy density and spin-correlations. In several papers following Smirnov's seminal work~\cite{Smirnov-conformal} on the planar critical Ising model, those observables were introduced in terms of their loops expansion, but we prefer using here the Kadanoff--Ceva formalism developped above. The global strategy is to use the fermionic correlator~\ref{eq:order-disorder} as a functional of corners. We recall that given $\varpi := (v_1, \dots, v_{n-1}, u_1, \dots, u_{m-1}) \in (\primaldomain)^{n-1} \times (\dualdomain)^{m-1}$ and $c \in \Omega^{\Upsilon_\delta}_{\varpi}$, we defined $\Psi_{\varpi}(c)= \eta_c  \langle \chi_c \mu_{v_1}\dots\mu_{v_{m-1}}\sigma_{u_1}\dots\sigma_{u_{n-1}}\rangle$ on $\Omega_\varpi $.

\paragraph{The energy correlator}
We fix a corner $a=(v_a u_a) \in \midedgedomain$ and set $\varpi = \{u_a, v_a \} $. We define the graph $\Omega_{\delta,a} := (\Omega_\delta \backslash \{ a\}) \cup \{ a^{\pm} \} $ where the two corner $a^{\pm}$ are \emph{different} but located at $a$, i.e. $a$ is replaced by two corner $a^{\pm}$, and combinatorially, $a^{+}$ is a neighbour to the right part of the graph near $a$ while $a^{-}$ is a neighbour of the left part of the graph near $a$. We are now able to define the energy correlator using $\calO= \mu_{v_a}\sigma_{u_a} $.  The next definition precises this setup.

\begin{defn}
  Fix a corner $a=(v_a u_a) \in \midedgedomain$. We define the energy correlator double-valued at $a$ by setting for $c \neq a^{\pm} \in \Omega_{\delta,a}$
  \begin{equation}\label{defn:energy_correlator}
    \Psi^{\calE}_{\Omega_\delta, a} (c) := \eta_{c} \braket{ \sigma_{u_a}\mu_{v_a} \sigma_{u(c)}\mu_{v(c)}}_{\Omega_\delta} = \eta_{c} \braket{\chi_{c}\chi_{a}}_{\Omega_\delta}, 
  \end{equation}
  with $\Psi^{\calE}_{\Omega_\delta, a}(a^{\pm}) = \pm \eta_a $.
\end{defn}
The next proposition precises its main caracteristics of $\Psi^{\calE}_{\Omega_\delta, a} $.

\begin{prop}
  The energy correlator defined in~\eqref{defn:energy_correlator} is a well-defined, and  $s$-holomorphic everywhere.

\end{prop}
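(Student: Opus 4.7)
The plan is to decompose the statement into three pieces: well-definedness, the parallelism (projection) property, and the vanishing of $\bar\partial$. Each piece reduces either to what was already established for $\Psi_\varpi$ with $\varpi=\{u_a,v_a\}$ or to a local check at the branching corner $a$.

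For well-definedness, I would invoke the general construction of $\Psi_\varpi$: taking $\varpi=\{u_a,v_a\}$ puts us in the framework of Definition~\ref{defn:complexified_correlator}, so $\braket{\chi_c\chi_a}_{\Omega_\delta}$ is a spinor on the double cover branching at the two neighboring vertices $u_a$ and $v_a$. Because $u_a$ and $v_a$ are the two endpoints of the same mid-edge, the combined branching is effectively concentrated at $a$, and unfolding onto $\Omega_{\delta,a}$ (where $a$ is split into $a^+$ and $a^-$) produces a single-valued function. The prescribed values $\Psi^{\calE}_{\Omega_\delta,a}(a^\pm)=\pm\eta_a$ are the natural evaluation of the formal identity $\chi_a^2=\mu_{v_a}^2\sigma_{u_a}^2=1$ on the two sheets, with the sign dictated by the spinor property.

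For the parallelism property, I would note that $\braket{\chi_c\chi_a}$ is a ratio of partition functions with real Boltzmann weights, hence real-valued. Therefore $\Psi^{\calE}_{\Omega_\delta,a}(c)=\eta_c\cdot(\text{real})\in\eta_c\bbR$ for every $c\neq a^\pm$, and the values $\pm\eta_a$ at $a^\pm$ trivially lie in $\eta_a\bbR$. For the holomorphicity $\bar\partial\Psi^{\calE}_{\Omega_\delta,a}=0$, the case of corners $c$ that are not adjacent to $a$ is immediate from Proposition~\ref{prop:s-hol} specialized at the critical isotropic point $\theta=\theta^\star=\tfrac{1}{2\delta}$, since in that regime~\eqref{eq:Psi_dy1}–\eqref{eq:Psi_dy2} together with~\eqref{eq:sd-derivative} yield $\bar\partial\Psi_\varpi=0$.

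The remaining, and main, obstacle is to verify $\bar\partial\Psi^{\calE}_{\Omega_\delta,a}=0$ at the corners adjacent to the branching. At such a corner $c$, one of the horizontal neighbors $c^\pm$ is precisely $a^+$ or $a^-$, so the two-term relations of Lemma~\ref{lem:chi_dy} have to be combined with the prescribed boundary values $\pm\eta_a$. The computation uses $\chi_a\chi_a=1$ to collapse the correlator on the collapsed side, the spinor identities~\eqref{eq:eta_v}–\eqref{eq:eta_u} to track the $\pm\icomp$ factors coming from $\eta_{c^\pm}/\eta_c$, and the fact that $\sgn(\calO,v_a)$ and $\sgn(\calO,u_a)$ for this particular $\calO=\chi_a$ are determined by which side of $a$ we approach from. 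The output of this local check is exactly that the ansatz $\Psi^{\calE}(a^\pm)=\pm\eta_a$ is the unique choice compatible with the discrete Cauchy–Riemann equation at the neighboring corners, which simultaneously proves $s$-holomorphicity \emph{at} $a^\pm$ (understood as projection continuity across the mid-edge) and \emph{near} $a^\pm$.
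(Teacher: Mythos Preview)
Your proposal is correct and follows essentially the same approach as the paper: both recognize that $\Psi_\varpi$ with $\varpi=\{u_a,v_a\}$ is a priori a spinor on the double cover branching over the two adjacent vertices $u_a,v_a$, observe that winding around both branchings simultaneously returns to the same sheet (so the function descends to the planar domain $\Omega_{\delta,a}$ away from $a$), and then check that the prescribed values $\pm\eta_a$ at $a^\pm$ are exactly what is needed to preserve the local holomorphicity relation across the corner $a$. The paper's proof is more conceptual and defers the details to \cite[Lemma~2.9, Remark~2.13]{CHI-mixed} and \cite[Figure~6]{CIM-universality}, whereas you spell out the parallelism (via reality of the correlator) and the $\bar\partial$-check near $a$ more explicitly; your additional verification that $\pm\eta_a$ is the \emph{unique} choice compatible with the Cauchy--Riemann relation at neighboring corners is a nice complement that the paper leaves implicit.
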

The catchpoint of the above definition is that one shouldn't look at $\Psi^{\calE}_{\Omega_\delta, a}$ as a spinor on a double-cover, but rather see $\Psi^{\calE}_{\Omega_\delta, a}$ as a \emph{planar} observable (i.e., not defined on the double-cover) with a discrete singularity at $a$.
\begin{proof}
  This is a consequence of Lemma 2.9 and Remark 2.13 in~\cite{CHI-mixed}, or~\cite{CIM-universality}, figure 6. The introduction of $\Omega_{\delta,a} $ amounts to introducing a cut $\gamma_a =[u_a v_a] $ in the double cover $\Omega_{[u_a, v_a]} $ which branches around $u_a$ and $v_a$.
  A priori, the above correlator $\eta_c\braket{ \sigma_{u_a}\mu_{v_a} \sigma_{u(c)}\mu_{v(c)}}_{\Omega_\delta}$ is well defined on $\Omega_{[u_a, v_a]} $ that branches around nearby points $u_a $ and $v_a$. The two set $\Omega_{[u_a, v_a]} $ and $\Omega_\delta $ \emph{can be identified} except at the corner $a$. In terms of observables, this "planarity" statement can be easily seen since whenever $c$ winds around \emph{both} branchings at the same time in $\Omega_{[u_a, v_a]} $, it simply returns to the same sheet, i.e. the branchings effets around $v_a$ and $u_a$ simply kill each other. One can thus restrict $\braket{ \sigma_{u_a}\mu_{v_a} \sigma_{u(c)}\mu_{v(c)}}_{\Omega_\delta}$ to one of its sheets to get the identification with $\Omega_{\delta,a} $. Still, in this sheet restriction, one has to prescribe the representant of $a$. The split of $a$ in $a^{\pm}$ and the choice of valuation $\pm \eta_a$ preserves the local holomorphicity relation (which holded in $\Omega_{\delta,a} $), provided we assume that  $a^{\pm} $ interact (in a combinatorial way) respectively with right and the left and the part of the edge $[v_a u_a]$.
\end{proof}

In particular, the associated \emph{diamond} observable $\Psi^{\calE, \diamond}_{\Omega_\delta, a}$ is not $s$-holomorphic at $a$ (since the associated projections only match up to the sign). Still, as mentioned in the previous paragraph, using the corners $a^{\pm}$ instead of $a$, one can preserve the projection property and thus construct $\Psi^{\calE, \diamond}_{\Omega_\delta, a}$ .
We also define the full-plane observable (see appendix~\ref{sec:infinite_volume_corrlators}) $G_{(a)}$  in a combinatorial way, which turns out to have exactly the \emph{same} singularity as $\Psi^{\calE, \diamond}_{\Omega_\delta, a}$. In particular, since $\Psi^{\calE}_{\Omega_\delta, a}$ and $G_{(a)}$ have the same singularity at $a$, their difference $ \Psi^{\calE}_{\Omega_\delta, a} - G_{(a)}$ is now $s$-holomorphic \emph{everywhere} in $\Omega_{\delta} $ (i.e. there is no need to use the graph $\Omega_a $ where $a$ is replaced by $a^{\pm}$).

Assuming for example that $a$ is a western corner, the definition of the fermionic correlator, the identity $\mu_{v_a} \mu_{v_a} =1$ and the computation $G_{(a)}(a+\delta)= \eta_{a+\delta} \frac{\sqrt{2}}{2}= \eta_{a+\delta}\braket{\sigma_{u_a} \sigma_{u_{a+2\delta}}}_{\bbC_\delta}^+$ directly yields that
\begin{equation}
  [\Psi^{\calE, \diamond}_{\Omega_\delta, a} - G_{(a)}][v_a] = \eta_{a+\delta}[ \braket{\sigma_{u_a} \sigma_{u_{a+2\delta}}}_{\primaldomain}^+-\braket{\sigma_{u_a} \sigma_{u_{a+2\delta}}}_{\bbC_\delta}^+], 
\end{equation}
is the normalized energy density at $u_a$ up to an explicit modular factor.
One can deduce the same way that 
\begin{equation}
  [\Psi^{\calE, \diamond}_{\Omega_\delta, a} - G_{(a)}][u_a] = \eta_{a-\delta}[ \braket{\mu_{v_a} \mu_{v_{a-2\delta}}}_{\primaldomain}^+-\braket{\mu_{v_a} \mu_{v_{a-2\delta}}}_{\bbC_\delta}^+], 
\end{equation}
\begin{rem}

  One could see alternatively $G_{(a)}$ as the limit of functions $\Psi^{\calE}_{\Omega_\delta, a}$ when the domains $\Omega_{\delta} \uparrow \bbC_{\delta} $.
\end{rem}

\paragraph{The spin correlator}

Fix distinct $u_1,u_2,\ldots, u _n \in \primaldomain$, $v\in \dualdomain $ a dual vertex neighbouring $u_1$. Finally denote $ \varpi = \{ v,u_2,\ldots,u_n \} $. We now define the spin correlator using the mixed correlator $\calO=\mu_v\sigma_{u_2}\ldots u_n $ using only one dual vertex.

\begin{defn}
  Given $u_1,u_2,\ldots, u _n,v $ as above, one defines the spin correlator $\Psi^{\calS}_{\Omega_\varpi}$ for $c\in \Omega_\varpi $ by 
  \begin{equation}\label{defn:spin_correlator}
    \Psi^{\calS}_{\Omega_\varpi} := c\mapsto \eta_{c} \braket{\sigma_{u_c}\mu_{v_c} \mu_{v}  \sigma_{u_2} \ldots \sigma_{u_n}}_{\Omega_{\delta}}.
  \end{equation}
\end{defn}

The next proposition precises the properties of $\Psi^{\calS}_{\Omega_\varpi}$.

\begin{prop}
  The correlator defined in~\eqref{defn:spin_correlator} is a spinor, $s$-holomorphic everywhere on $\Omega_\varpi $ and only branches over the points of $\varpi $. Moreover, one has the identity $\Psi^{\calS}_{\Omega_\delta, \varpi}(c(u_1 v))= \pm \bbE^{+}_{\Omega_{\delta}} [ \sigma_{u_1}\ldots \sigma_{u_n} ] $, depending on the fiber $c(u_1 v)$ taken for evaluation.

\end{prop}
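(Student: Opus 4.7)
The proposition is essentially a specialization of the general results just proved for $\Psi_\varpi$ (Definition~\ref{defn:complexified_correlator} and Proposition~\ref{prop:s-hol}) to the mixed correlator $\calO = \mu_v \sigma_{u_2}\cdots\sigma_{u_n}$, together with a combinatorial identity based on $\mu_v \mu_v = 1$. I would organize the proof in three short steps, one for each assertion.

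\textbf{Well-definedness, spinor property and branching locus.} Observe that $\Psi^{\calS}_{\Omega_\varpi}$ is, by inspection, the complexified correlator $\Psi_\varpi$ of Definition~\ref{defn:complexified_correlator} applied to the specific choice $\calO = \mu_v \sigma_{u_2}\cdots\sigma_{u_n}$, where the insertions $(v, u_2, \dots, u_n)$ lie in $\dualdomain \times (\primaldomain)^{n-1}$ and thus define the double cover $\Omega_\varpi = \Omega^{[\mathbf{v},\mathbf{u}]}_\delta$ with $\mathbf{v}=(v)$ and $\mathbf{u}=(u_2,\dots,u_n)$. The general statement recalled just above (the proposition following Definition~\ref{defn:complexified_correlator}) yields at once that $\Psi^{\calS}_{\Omega_\varpi}$ is a single-valued function on $\Omega_\varpi$ with the sign-flip property, whose branchings are exactly at the points of $\varpi$: multiplication by $\eta_c$ kills the ``spurious'' branching induced by $\chi_c$ around $v_c$ and $u_c$, and only the branchings carried by $\calO$ survive.

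\textbf{$s$-holomorphicity.} Having chosen the critical isotropic parameters $\theta = \theta^\star = 1/(2\delta)$, Proposition~\ref{prop:s-hol} directly gives the corner $s$-holomorphicity identity~\eqref{eq:Psi_dy1}--\eqref{eq:Psi_dy2} at every corner $c$ whose horizontal neighbours $c^\pm$ do not belong to $\varpi$. Near a branching point of $\varpi$, the identity still holds when all corners involved are lifted to the same sheet of $\Omega_\varpi$; this is precisely the content of having $\Psi^{\calS}_{\Omega_\varpi}$ defined on the double cover, and exhibits the $s$-holomorphicity of $\Psi^{\calS}_{\Omega_\varpi}$ everywhere on $\Omega_\varpi$. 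Via Proposition~\ref{prop:s_hol_equiv}, one passes freely to the associated diamond $s$-holomorphic function.

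\textbf{Evaluation at $c(u_1, v)$.} At the corner $c = c(u_1, v)$ we have $u_c = u_1$ and $v_c = v$, so $\chi_c = \mu_v \sigma_{u_1}$. Plugging into~\eqref{defn:spin_correlator},
\[
\Psi^{\calS}_{\Omega_\varpi}\bigl(c(u_1 v)\bigr)
= \eta_{c(u_1 v)} \, \braket{\mu_v \sigma_{u_1}\,\mu_v\sigma_{u_2}\cdots\sigma_{u_n}}_{\Omega_\delta}
= \eta_{c(u_1 v)} \, \braket{\sigma_{u_1}\sigma_{u_2}\cdots\sigma_{u_n}}_{\Omega_\delta},
\]
using the cancellation $\mu_v^2 = 1$ recorded after Definition~\ref{def:path_pairing}. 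The $\pm$ in the statement records the two possible lifts of $c(u_1 v)$ to the double cover $\Omega_\varpi$, which flips the sign of $\eta_{c(u_1 v)}$ and hence of the whole expression; evaluated in the appropriate normalization for $\eta$ on the two sheets, one obtains $\pm \bbE^{+}_{\Omega_\delta}[\sigma_{u_1}\cdots\sigma_{u_n}]$.

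The only mildly subtle point is the second step: one must check that the local relations~\eqref{eq:Psi_dy1}--\eqref{eq:Psi_dy2} behave correctly at corners whose horizontal neighbours sit on the ``other'' sheet of $\Omega_\varpi$, so that $s$-holomorphicity really extends through the branchings once one works on the double cover. This is a direct consequence of the sign conventions in Propositions~\ref{prop:two_term} and~\ref{prop:two_term_dual} combined with~\eqref{eq:eta_v}--\eqref{eq:eta_u}, but it is the one computation worth verifying carefully.
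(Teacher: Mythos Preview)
Your proposal is correct and follows essentially the same approach as the paper: the spinor property, branching locus, and $s$-holomorphicity are read off from the general results on $\Psi_\varpi$ (the proposition following Definition~\ref{defn:complexified_correlator} and Proposition~\ref{prop:s-hol}), and the value at $c(u_1,v)$ is obtained by the collapse $\mu_v\mu_v=1$. The paper's own proof is the same argument in two sentences; your version simply spells out which earlier statements are being invoked, and both you and the paper are equally imprecise about the modulus of $\eta_{c(u_1 v)}$ in the final identity.
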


\begin{proof}
  This is a simple consequence of the consturctions of $\calO $ and $\eta$. For the last claim, one sees that $\braket{\sigma_{u_1}\mu_{v} \mu_{v}  \sigma_{u_2} \ldots \sigma_{u_n}}_{\Omega_{\delta}} =  \braket{\sigma_{u_1} \sigma_{u_2} \ldots \sigma_{u_n}}_{\Omega_{\delta}}=  \bbE^{+}_{\Omega_{\delta}} [ \sigma_{u_1}\ldots \sigma_{u_n} ]$.
\end{proof}

\section{Riemann-type boundary  value problems} \label{sec:bvp}

As already mentioned, Smirnov introduced in~\cite{Smirnov-conformal} (via their loop representation) complexified fermionic observables in the isoradial context, which happen to be a very powerfull tool to study the scaling limits of the Ising model defined grids with thinner and thinner mesh size. The most remarkable feature of those scaling limits is their conformal invariance/covariance i.e. sclaling limits compose nicely under conformal maps. In the next paragraph, we recall the discrete integration procedure for the imaginary part of the square a fermionic observable, which is the key tool to pass from discrete to continuum and prove conformal invariance.

\subsection{Discrete Integration procedure}

Let $\obsone, \obstwo : \midedgedomain  \to \bbC$ be corner $s$-holomorphic functions.
The next definition precises how to construct a bilinear form that can be interpreted as the imaginary part of the primitive of their product.
\begin{defn}\label{defn:primitivation_procedure}
  Let $\prim = H [\obsone, \obstwo ] := \braket{\obsone, \obstwo} :  \Omega ^{\Lambda}_{\delta} \to \bbR$ be defined  by the following rules.
  The integration with respect to $\dd z$ is the path integral.
  \begin{enumerate}
    \item If $v_1$ and $v_2$ are both in $\dualdomain$ and such that $\myr v_1 = \myr v_2$, define
    \begin{equation} \label{eq:bi_primal}
      \prim(v_2) - \prim(v_1) := \int_{v_1}^{v_2}
      \big[ \obsone(c_z^-) \overline{\obstwo} (c_z^+)
      -  \overline{\obstwo} (c_z^-) \obsone (c_z^+) \big] \dd z.
    \end{equation}
    \item If $u_1$ and $u_2$ are both in $\primaldomain$ and such that $\myr u_1 = \myr u_2$, define
    \begin{equation} \label{eq:bi_dual}
      \prim(u_2) - \prim(u_1) := - \int_{u_1}^{u_2}
      \big[ \obsone(c_z^-) \overline{\obstwo} (c_z^+)
      -  \overline{\obstwo} (c_z^-) \obsone (c_z^+) \big] \dd z.
      % \prim(u') - \prim(u) := - 2 \cdot \myi \int_u^{u'} \obsone(c_\nu^-) \overline{\obs (c_\nu^+)} \dd \nu.
    \end{equation}
    \item If $v$ and $u$ are horizontal neighbours in $\medialdomain$ with $v \in \dualdomain$ and $u \in \primaldomain$, define
    \begin{equation} \label{eq:bi_edge}
      \prim(v) - \prim(u) := - \delta \obsone(c_{vu}) \overline{ \obstwo(c_{vu}) }.
    \end{equation}
  \end{enumerate}
\end{defn}

Note that ~\eqref{eq:bi_primal} and~\eqref{eq:bi_dual} are \emph{path integrations}.
It can be easily checked that $\prim$ is locally (and hence globally) well defined up to \emph{one} additive constant on $\medialdomain $, by showing that the sum of differences~\eqref{eq:bi_primal}, \eqref{eq:bi_dual} and~\eqref{eq:bi_edge} along any closed contour is zero.
This check is easy on elementary rectangles and extends to any contour.
Below we show the computation on elementary rectangles.

% The above function $\prim$ is well defined up to an additive constant for the following reason.
Consider two corners $c_1$ and $c_2$ such that $\myr c_1 = \myr c_2$.
For $i=1, 2$, write $c_i = (v_i, u_i)$ and assume that $v_i$ is on the left and $u_i$ is on the right.
Then, 
\begin{align*}
  & [\prim(v_2) - \prim(v_1)] - [\prim(u_2) - \prim(u_1)] \\
  & \qquad = \int_{c_1}^{c_2}
    \big[ \obsone(c^-) \overline{\obstwo} (c)
    -  \overline{\obstwo} (c^-) \obsone (c) \big] \dd c \\
  & \qquad \qquad + \int_{c_1}^{c_2}
    \big[ \obsone(c) \overline{\obstwo} (c^+)
    -  \overline{\obstwo} (c) \obsone (c^+) \big] \dd c \\
  & \qquad = \int_{c_1}^{c_2} \obsone(c)  \big[ \overline{\obstwo}(c^+) - \overline{\obstwo}(c^-) \big]
    - \overline{\obstwo}(c) \big[ \obsone(c^+) - \obsone(c^-) \big] \dd c \\
  & \qquad = \icomp \delta \int_{c_1}^{c_2} \obsone(c) \cdot \partial_y \overline{\obstwo}(c)
    + \overline{\obstwo}(c)  \cdot \partial_y \obsone(c) \dd c \\
  & \qquad = \icomp \delta \int_{c_1}^{c_2} \partial_y \big[ \obsone(c) \overline{\obstwo}(c) \big] \dd c
    = -\icomp \int_{c_1}^{c_2} \partial_y \big[ \prim(v_c) - \prim(u_c) \big] \dd c.
\end{align*}

One can also easily check that $\prim$ is a bilinear form on the Hermitian space of corner $s$-holomorphic functions.
Given a complex-valued function $\obs : \midedgedomain \to \bbC$ defined at corners (requiring $\obs$ to be a spinor when defined on the double cover) one can define a \emph{primitive} of its square by simply posing $\prim  = \prim [\obs]:= \braket{ \obs, \obs}$.
This recovers the definition of the primitive $\prim$ in~\cite[Sec.~4.3]{Li-QI-SLE} where the integrals are one-dimensional integrals instead of path integrals. Recall that for $z \in \medialdomain$, we defined
\begin{equation} \label{eq:medial_observable}
  F_{\delta}^{\diamond}(z) := \obs(c_{z}^+) + \obs(c_{z}^-).
\end{equation}
Proposition~\ref{prop:s_hol_equiv} ensures that $F_{\delta}^{\diamond}$ is s-holomorphic on $\diamonddomain$.
The following proposition states that the primitive $\prim$ has a nice interpretation in terms of $\obsmedial$.
We omit the proof here since they can be found in~\cite[Section~4.3]{Li-QI-SLE}. 

\begin{prop}
  The bilinear form $\prim = \prim [\obs] $ has the following properties :
  \begin{enumerate}
    \item  $\prim $ is well defined simultaneously on $\primaldomain$ and $ \dualdomain$ up to \emph{one} additive constant.
    \item 
    One can interpret $\prim$ as the primitive of the imaginary part of $(\obsmedial)^2 $ i.e. 
    \begin{enumerate}
      \item Let $z, z' \in \diamonddomain$ such that $\myr z = \myr z'$ and $[zz'] \subset \diamonddomain$.
      Then we have
      \begin{equation} \label{eq:H_axis}
        \prim(z') - \prim(z) = \myi \int_z^{z'} \icomp \cdot (\obsmedial (\nu))^2 \dd \nu.
      \end{equation}
      \item Let $z \in \diamonddomain$ such that $z^-, z^+ \in \diamonddomain$.
      Then, 
      \begin{equation} \label{eq:H_neighbors}
        \prim(z^+) - \prim(z^-) = \myi [ \obsmedial(z)^2 (z^+ - z^-)].
      \end{equation}
    \end{enumerate}
    \item $H^{\delta} $ is subharmonic on primal lines and superharmonic on dual lines, i.e., 
    \[
      \laplacian \prim (u) \geq 0 \quad \mbox{ and } \quad \laplacian \prim (v) \leq 0
    \]
    for all $u \in \primaldomain$ and $v \in \dualdomain$, provided $\obsmedial $ doesen't branch over $u$ or $v$.
  \end{enumerate}
\end{prop}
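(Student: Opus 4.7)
The plan is to verify each of the three claims directly from Definition~\ref{defn:primitivation_procedure}, relying on the parallelism property of $\obs$ together with the identification $\obsmedial(z) = \obs(c_z^-) + \obs(c_z^+)$ provided by~\eqref{eq:medial_observable}. Since $F=G=\obs$ in the symmetric quadratic form $\prim[\obs]$, the rules~\eqref{eq:bi_primal}--\eqref{eq:bi_edge} become quadratic in $\obs$, which is exactly what makes the ``primitive of the square'' interpretation possible.

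For item~(1), I would check that the sum of the three elementary contributions vanishes around each minimal closed contour of $\medialdomain$, namely the small ``rectangles'' with two vertical sides (rules~\eqref{eq:bi_primal} and~\eqref{eq:bi_dual}) and two horizontal edges of length $\delta$ (rule~\eqref{eq:bi_edge}). The required cancellation is precisely the computation carried out in the excerpt just after Definition~\ref{defn:primitivation_procedure}: expanding and using the product rule $\partial_y[\obs\,\overline{\obs}]=\obs\,\partial_y\overline{\obs}+\overline{\obs}\,\partial_y\obs$, the two vertical integrals telescope against the two edge contributions. Well-definedness on the whole simply-connected $\medialdomain$ then follows by the usual concatenation argument for elementary loops. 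For item~(2), both statements are algebraic identities. Using the parallelism property of Definition~\ref{defn:s-hol}, write $\obs(c_z^\pm)=r_\pm\eta_{c_z^\pm}$ with $r_\pm\in\bbR$ and note that $\eta_{c_z^-}$ and $\eta_{c_z^+}$ differ by a factor in $\icomp\bbR$. Squaring $\obsmedial(z)$ then exhibits $\Im[(\obsmedial)^2]$ as proportional to $r_+^2-r_-^2$ and $\Im[\icomp(\obsmedial)^2]$ as proportional to $r_-r_+$. Plugging these expressions into rule~\eqref{eq:bi_edge} (applied twice, through the midpoint of $[z^-,z^+]$) recovers~\eqref{eq:H_neighbors}, while plugging into rule~\eqref{eq:bi_primal} (resp.~\eqref{eq:bi_dual}) along a vertical segment recovers~\eqref{eq:H_axis}.

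For item~(3), I would compute the semi-discrete Laplacian $\laplacian\prim(u)$ at a primal vertex $u\in\primaldomain$ directly from the representation obtained in~(2). The vertical second derivative $\ddy\prim(u)$ expands as a quadratic expression in $\obs$ and $\partial_y\obs$ at the two corners adjacent to $u$, while the horizontal discrete Laplacian, expressed via~\eqref{eq:H_neighbors} at the two diamond neighbours of $u$, expands as a difference of $\Im[(\obsmedial)^2]$. Substituting the vertical-derivative identities~\eqref{eq:Psi_dy1}--\eqref{eq:Psi_dy2} at the self-dual isotropic point $\theta=\theta^\star=(2\delta)^{-1}$ makes the cross-terms collapse and leaves a manifestly non-negative quadratic expression in $|\obs|$ at the corners around $u$. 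The exact same computation at a dual vertex $v\in\dualdomain$ produces the opposite sign, because the roles of eastern and western corners are swapped and because rule~\eqref{eq:bi_dual} differs from~\eqref{eq:bi_primal} by an overall minus sign.

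The main obstacle I expect is precisely the bookkeeping in~(3): the orientation conventions for the two types of corners, the opposite signs in the primitivation rules on primal versus dual vertices, and the many cross-terms between $\obs$ and its vertical derivatives make sign errors easy. The essential algebraic fact to extract is that at the isotropic point these cross-terms collapse to yield a non-negative (resp.\ non-positive) sum of squared moduli on primal (resp.\ dual) lines~--- a phenomenon entirely parallel to the discrete isoradial computation pioneered in~\cite{Smirnov-conformal} and formalized in~\cite{CS-universality}, whose template I would follow for the algebraic bookkeeping.
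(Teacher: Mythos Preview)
Your proposal is correct and follows the standard approach; the paper itself omits the proof, referring to \cite[Section~4.3]{Li-QI-SLE}, and the computation you cite for item~(1) is exactly the one the paper reproduces in the text following Definition~\ref{defn:primitivation_procedure}. Your outlined verifications for (2) and (3) are the expected direct computations and match the spirit of the referenced source.
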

One can see as in Remark 3.8 of~\cite{CHI-spin} that the above sub/super harmonicity property \emph{is preserved near one of the branchings} when the corner holomorphic observable \emph{vanished} at a corner nearby the branching (while it fails if the observable dosen't vanish near the branching.) One can also see (this is a consequence of proposition 3.15 in~\cite{Li-QI-SLE} and the sub/super harmonicity property) that the discrete primitive of the square $H$ satisfies the maximum and minimum principle i.e. in a connected set $S$ of $\Lambda(G)$ where the observable has no branchings (or vanishes near its branchings as described above), the maximum and the minimum of $H$ is attained at the boundary of $S$. 
When integrating observables definded on a double cover with a finite number of branchings, this integration procedure kills the branching structure and provides a well-defined function \emph{on the planar domain}.
We now present a trick already mentioned in~\cite{CIM-universality}, lemma 3.14 that takes advantages of the previous integration procedure to extact values of observables near their branchings.

\begin{prop} \label{prop:integration-trick}
  Let $F^{\diamond}_{v} $ and $ G_{u}^{\diamond} $ two diamond $s$-holomorphic spinors that branch respectively over $v \in \dualdomain$ and $u \in \primaldomain $, separated by a corner $c=c(v, u)$.
  Let $\calC $ be a \emph{planar} closed contour formed by lines of $\medialdomain $ and medial edges, surrounding $v$ and $u$ but no other branching of $F^{\diamond}_{v} $ and $ G_{u}^{\diamond} $.
  We have then
  \begin{equation}
    \myi \big[ \oint_{\calC} F^{\diamond}_{v}(z) G_{u}^{\diamond} (z) \dd z \big]
    = \pm 2 \delta F_{v}(c) \overline{G_{u}(c)}, 
  \end{equation}
  where $\pm$ only depends on the orientation of $\calC $ and the sheet of the lift of $c$.
\end{prop}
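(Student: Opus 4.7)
The plan is to identify the quantity $\myi \oint_\calC F^\diamond_v(z) G^\diamond_u(z) \, \dd z$ with the monodromy of a bilinear discrete primitive, and then to compute that monodromy by shrinking $\calC$ to a loop around the edge $[vu]$. First, I extend the primitivation procedure of Definition~\ref{defn:primitivation_procedure} to the bilinear setting, defining a real-valued function $H[F_v, G_u]$ on $\medialdomain$ through the polarized formulas obtained from~\eqref{eq:bi_primal}, \eqref{eq:bi_dual} and~\eqref{eq:bi_edge}. The telescoping check carried out right after Definition~\ref{defn:primitivation_procedure} uses only the $s$-holomorphicity of each factor separately, not the coincidence of the two, so it applies verbatim and shows that $H[F_v, G_u]$ is well defined up to a global additive constant on every simply connected subregion of $\Omega$ avoiding $v$ and $u$.

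Second, I rewrite the contour integral as a telescoping sum of local increments of $H[F_v, G_u]$: on each axial segment of $\calC$ one applies the bilinear analogues of~\eqref{eq:H_axis} and~\eqref{eq:H_neighbors}, while across each horizontal medial edge of $\calC$ one applies~\eqref{eq:bi_edge}. The sum telescopes and identifies $\myi \oint_\calC F^\diamond_v G^\diamond_u \, \dd z$ with the total change of $H[F_v, G_u]$ accumulated along one full traverse of $\calC$.

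Third, because $F^\diamond_v$ and $G^\diamond_u$ are spinors branching over only one of $\{v, u\}$ each, their product is genuinely single-valued as a planar function on $\Omega \setminus \{v, u\}$ as soon as the contour encloses both branchings. Thanks to the discrete Cauchy principle (a direct consequence of the local relations satisfied by $H[F_v, G_u]$ in every simply connected region avoiding $v, u$), I can deform $\calC$ to a minimal rectangular loop tightly encircling the edge $[vu]$. On this minimal loop all axial contributions cancel in pairs, and the net value is read off directly from the edge rule~\eqref{eq:bi_edge}: the contour straddles the cut $[vu]$ from two opposite sides, so the edge jump $-\delta F_v(c)\overline{G_u(c)}$ contributes twice, producing the announced $\pm 2\delta F_v(c)\overline{G_u(c)}$.

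The main obstacle is the bookkeeping of the spinor signs. Since $F_v$ and $G_u$ live on different double covers, one has to fix consistent lifts along $\calC$ and at $c$ so that $F_v(c)\overline{G_u(c)}$ represents an unambiguous real number (its reality being forced by the parallelism property $F_v(c), G_u(c) \in \eta_c \bbR$). The $\pm$ sign in the statement then records the orientation of $\calC$ together with the chosen lift of $c$, exactly as allowed by the statement.
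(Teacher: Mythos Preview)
Your proposal is correct and follows essentially the same route as the paper: extend the bilinear primitive $H[F_v,G_u]$, identify the contour integral with its monodromy, deform $\calC$ down to an elementary rectangle around the edge $[vu]$, and read off the two non-cancelling edge increments at $c$. The only point to sharpen is your reason for why the two edge contributions at $c$ \emph{add} rather than cancel: it is not that the contour ``straddles a cut from opposite sides'' but that the two double covers (branching over $v$ and over $u$) can be identified everywhere except at the lifts of $c$, so one crossing sees $\overline{G_u(c_0)}$ while the other sees $\overline{G_u(c_0^{\#})}=-\overline{G_u(c_0)}$, yielding $\delta F_v(c_0)\overline{G_u(c_0)}-\delta F_v(c_0)\overline{G_u(c_0^{\#})}=\pm 2\delta F_v(c)\overline{G_u(c)}$.
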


\begin{proof}
  The $s$-holomorphic functions $F^{\diamond}_{v}$ and $ G_{u}^{\diamond}$ are locally defined on two \emph{slightly different} double-covers, branching respectively around $v$ and $u$.
  One can naturally identify those two double-covers except at the two lifts of $c$. Consider the planar contour $\calC $ defined as $p_1 \sim p_2 \sim \ldots \sim p_r \sim p_{r+1}= p_1 $ defined as in Figure~\ref{fig:FG_contour_integral}.
  In our notations, either $p_i \sim p_{i+1} $ are two neighbours  $\medialdomain $ or $\Re[p_i] = \Re[p_{i+1}] $.

  The integration procedure introduced in Definition~\ref{defn:primitivation_procedure} for the product of $F^{\diamond}_{v}$ and  $G_{u}^{\diamond}$ is local and hence well defined  away from the branchings.
  In particular, local increments of $H[F^{\diamond}_{v}, G_{u}^{\diamond}] $ are well defined.
  Modifying step by step the contour (as done with pink arrows), it is possible to replace $\calC $ by any interior elementary rectangle surrounding $c$ and passing by $v$ and $u$.
  Splitting this last rectangle into two elementary rectangles containing the segment $[vu]$, the only remaing contribution comes from the two horizontal segments (drawn in red and blue), where the non-identification of double-covers at $c$ implies the increments \emph{do not} cancel each other. Fix a lift $c_0$ of $c$ to the double cover branching around $v$. Up to a global sign (depending on the orientation of the contour and the choice of the lift $c_0$), the total increment of the two horizontal rectangles is
  \begin{equation}
    \delta F_{v}(c_0)\overline{G_{u}(c_0)}-\delta F_{v}(c_0)\overline{G_{u}(c^\#_0)}.
  \end{equation}
  The mismatch obtained is $\pm2 \delta F_{v}(c_0)\overline{G_{u}(c_0)}$ using the spinor property.
\end{proof}

\begin{figure}[htb] \centering
  \includegraphics[scale=1.2, page=1]{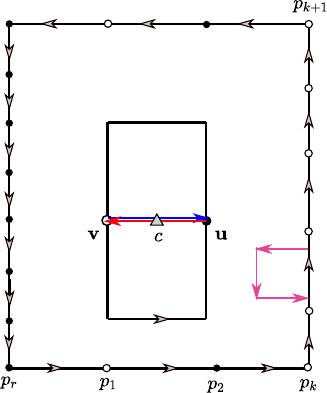}
  \caption{Illustration of the proof of Proposition~\ref{prop:integration-trick}: One first modifies a contour surrouding $c, \textbf{v}, \textbf{u}$ without changing the integral around $\calC$ (local moves represented by pink arrows) to get a standard rectangle passing by $\textbf{v}, \textbf{u}$ and surrounding $c$.
    We then split the rectangle in two different elementary rectangle, containing the segment $[uv]$. Due to the non-identification of double-covers at $c$, the increments in red and blue are \emph{the same} and sum up to $\pm 2\delta F_{v}(c)\overline{G_{u}(c)}$.}
  \label{fig:FG_contour_integral}
\end{figure}

\subsection{Caracterisation of observables via discrete boundary values problems}

For $\calR_{\delta}$ a discrete simply connected region and $z_{ext} \in \partial \calR_{\delta}$, one defines  $v_{out}(z_{ext})$ to be the tangent vector at $z$ oriented towards the exterior of $\calR_{\delta}$.
It is a classical fact~\cite{Li-QI-SLE, CS-universality} that the $s$-holomorphicity condition~\eqref{defn:s-hol} applied at a boundary vertex $z_{ext}$ implies that $\myi[F^{\diamond}(z_{ext})v_{out}^{1/2}(z_{ext})]=0$.
The latter fact reads as the fact that $H_\delta = H[F_{\delta}] $ is constant along the boundary of $ \calR_{\delta}$.
Since the primitivation procedure~\ref{defn:primitivation_procedure} is defined up to an additive constant, one can always assume that $H_\delta$ vanishes along the boundary. We write now the energy and spin observables~\eqref{defn:energy_correlator}~\eqref{defn:spin_correlator} as solutions to discrete Riemann-type boundary value problems.
We keep the notations introduced earlier, with $a \in \Omega_{\delta}^{\Upsilon} $, $ \varpi = \{ v,u_2,\ldots,u_n \} $, and $c_0 $ one of the lifts of $c(v u_1)$ in $\Omega_{\varpi_{ \delta}} $. We consider the following boundary value problems $\energyproblemdiscrete $ and $\spinproblemdiscrete $, whose respective solutions are denoted $F_{\calE} = \energysolutiondiscrete$ and $F_{\calS} = \spinsolutiondiscrete$
\begin{equation}
  \energyproblemdiscrete  : \left\{
    \begin{array}{lll}
      F_{\calE} \text{ is double-valued }  \pm 1 \text{ at } a^{\pm}  \text{ as in }~\eqref{defn:energy_correlator}. \\
      F_{\calE} \text{ is } s\text{-holomorphic everywhere in } \Omega_{a,\delta}.\\
      
      H_\delta[F_{\calE}] \text{ vanishes on } \partial \Omega_\delta.
    \end{array}
  \right.
\end{equation}

\begin{equation}
  \spinproblemdiscrete : \left\{
    \begin{array}{llll}
      F_{\calS}  \text{ is a spinor on } \Omega_{\varpi, \delta}  \text { branching around } v,u_2,\ldots,u_n. 	\\    	
      F_{\calS}(c_0)=1. \\
      F_{\calS} \text{ is } s \text{-holomorphic everwhere in } \Omega_{\varpi} \text{ (e.g. even near the branchings). } \\
      H_\delta[F_{\calS}] \text{ vanishes on } \partial \Omega_\delta.        
    \end{array}
  \right.
\end{equation}

\begin{lem}
  The observables $\energysolutiondiscrete$ and  $  {\small \bbE^{+}_{\Omega_{\delta}} [ \sigma_{u_1}\ldots \sigma_{u_n} ]^{-1} } \spinsolutiondiscrete$ are respectively the unique solutions to $\energyproblemdiscrete$ and $ \spinproblemdiscrete$.

\end{lem}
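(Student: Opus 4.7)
The plan is to split the argument into existence and uniqueness. For existence, the $s$-holomorphicity of $\Psi^{\calE}_{\Omega_\delta,a}$ everywhere (with prescribed values $\pm\eta_a$ at $a^{\pm}$) and the spinor $s$-holomorphicity of $\Psi^{\calS}_{\Omega_\varpi}$ have already been established in the two propositions preceding the definitions of these correlators; the identity $\Psi^{\calS}_{\Omega_\varpi}(c(u_1 v)) = \pm\,\bbE^{+}_{\Omega_\delta}[\sigma_{u_1}\cdots\sigma_{u_n}]$ turns the rescaled observable into a spinor taking value $1$ at the appropriate fiber $c_0$. What remains to verify is that $H_\delta[F]$ vanishes along $\partial\Omega_\delta$. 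The key input is the $+$ boundary condition: at every boundary primal vertex $u$ one has $\sigma_u = 1$, so at a boundary corner $c=(v_c,u_c)$ the mixed correlator $\braket{\chi_c\calO}$ reduces to $\braket{\mu_{v_c}\calO}$ and is real. Combined with the parallelism $F(c)\in\eta_c\bbR$ that comes from $s$-holomorphicity, this forces $(F^\diamond)^2$ to be parallel to the tangent direction of $\partial\Omega_\delta$, so the increments~\eqref{eq:H_axis} and~\eqref{eq:H_neighbors} of $H_\delta$ along the boundary have vanishing imaginary part. Fixing the free additive constant then yields $H_\delta\equiv 0$ on $\partial\Omega_\delta$.

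For uniqueness, let $F_1,F_2$ be two solutions of the same problem and set $G:=F_1-F_2$. Then $G$ is $s$-holomorphic everywhere: in the energy case the boundary data $\pm\eta_a$ at $a^{\pm}$ cancels, so $G$ is genuinely $s$-holomorphic at $a$ (no singularity), while in the spin case $G$ inherits the branching structure of the $F_i$ and vanishes at the prescribed lift $c_0$. In both situations $H_\delta[G]\equiv 0$ on $\partial\Omega_\delta$. Since $G$ vanishes at a corner adjacent to each branching (built into $\energyproblemdiscrete$ and $\spinproblemdiscrete$), the sub/super-harmonicity of $H_\delta[G]$ is preserved through the branching points, cf.~\cite[Rem.~3.8]{CHI-spin}. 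The maximum and minimum principles then force $H_\delta[G]\equiv 0$ throughout $\Omega_\delta$, and~\eqref{eq:H_neighbors} gives $(G^\diamond)^2\equiv 0$, hence $G\equiv 0$.

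The main delicate point is the analysis near the branchings and the discrete singularity at $a$: one must check that the max/min principle for $H_\delta[G]$ genuinely extends across these points, which is exactly why the BVPs $\energyproblemdiscrete$ and $\spinproblemdiscrete$ prescribe $F$ at $a^{\pm}$ and at $c_0$ rather than leaving those values free. This pins down the local singular behaviour so that the difference $G$ vanishes at a neighbouring corner, and the standard sub/super-harmonicity argument from~\cite{CHI-spin, CIM-universality} goes through unchanged in the semi-discrete setting.
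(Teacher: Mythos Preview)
Your existence argument and the energy uniqueness are essentially the paper's. The gap is in the spin uniqueness. You assert that because $G=F_1-F_2$ vanishes at $c_0$, the sub/super-harmonicity of $H_\delta[G]$ is preserved \emph{at every branching} of $\varpi=\{v,u_2,\ldots,u_n\}$, and then invoke both the maximum and the minimum principle. This is not justified: the vanishing of $G$ at $c_0=c(v,u_1)$ is a local condition at the single branching $v$; it says nothing about the corners adjacent to $u_2,\ldots,u_n$, where $G$ has no reason to vanish. Consequently only one of the two inequalities (subharmonicity on primal, say) survives globally, while the other (superharmonicity on dual) may well fail near $u_2,\ldots,u_n$. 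With only one side you get $H_\delta[G]\le 0$, not $H_\delta[G]\equiv 0$, and the argument stalls.

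The paper closes this gap by a genuinely different step. From the one-sided inequality one deduces that if $G\not\equiv 0$ then the maximum of $H_\delta[G]$ is attained on $\partial\Omega_\delta$, where $H_\delta[G]=0$; this forces the sign of $\overline{\eta_c}\,G(c)$ to be constant as $c$ runs along the boundary (cf.\ \cite[Prop.~2.16]{CHI-mixed}). But a spinor branching around any of $u_2,\ldots,u_n$ must flip sign when the boundary loop encircles that branching, which gives the contradiction. So the missing ingredient is this boundary-sign/branching incompatibility, not a two-sided maximum principle.
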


\begin{proof}
  The results of the previous sections imply that $\energysolutiondiscrete$ and $\frac{\spinsolutiondiscrete}{\bbE^{+}_{\Omega_{\delta}} [ \sigma_{u_1}\ldots \sigma_{u_n} ]}$ are indeed solutions the corresponding boundary value problems. To prove their uniqueness, consider $\hat{F}_{\calE , \calS} $ differences of two solutions of the corresponding boundary value problems. For the energy energy problem, the double-valuations at $a$ cancel each-other, hence the observable $\hat{F}_{\calE} $ is $s$-holomorphic everywhere thus $ H_{\delta}[\hat{F}_{\calE}] $ is well defined on $\Omega_{\delta} $ and satisfies the maximum and the minimum principle . Since $ H_{\delta}$ vanishes at the boundary, it implies that both $ H_{\delta}$ and $\hat{F}_{\calE} $ identically vanish. For the spin boundary value problem, on sees that $\hat{F}_{ \calS}$ vanishes near the branching $v$, preserving the subharmonicity of $ H_{\delta}$ at $v$. Thus $ H_{\delta}$ is subharmonic everywhere and vanishes on the boundary thus vanishes everywhere. Assume that $\hat{F}_{ \calS}$ is not trivial. In particular $H $ is not constant, and its maximum has to be attained at the boundary. This implies as in~\cite{CHI-mixed}, Proposition 2.16, equation (4.7) that the sign $\overline{\eta_c} \hat{F}_{ \calS} $ has to be preserved when traveling along the boundary. This is incompatible with the fact that $\overline{\eta_c }\hat{F}_{ \calS}$ branches around any of the other branchings. One can then conclude that $\hat{F}_{ \calS}$ is trivial.
\end{proof}

\subsection{Continuous boundary values problem}

The previous paragraph caracterizes discrete observables by discrete Riemann-type boundary value problems. The overall strategy to prove the convergence Ising related quantities start by proving the convergence of discrete observables to their natural continuous counterpart, which rise from naturally associated continuous boundary problems. For completeness, we recall simple facts on solutions to those continuous boundary value problems, which are summurized in a very complete way in~\cite[Sec.~3 and~5]{CHI-mixed}.

Given a simply connected domain $\Omega $, $a $ and  $ \varpi = \{ u_1,u_2,\ldots,u_n \} $, consider $\calP^{\calE}_{\Omega , a} $ and $\calP^{\calS}_{\Omega, \varpi} $ the following Riemann-Hilbert boundary value problems, whoses respective solutions are denoted $f_{\calE} = f^{\calE}_{\Omega, a}$ and $f_{\calS}= f^{\calS}_{\Omega, \varpi}$.

\begin{equation}
  \calP^{\calE}_{\Omega , a} : \left\{
    \begin{array}{lll}

      f_{\calE} \text{ is holomorphic everywhere in }  \Omega \backslash \{ a \}. \\
      f_{\calE}- \frac{2}{\pi(z-a)} \text{ is bounded near } a. \\

      H_{\calE}:= \myi \int^{z} f_{\calE}^{2}(\nu) \dd \nu  \text{ vanishes on } \partial\Omega.
    \end{array}
  \right.
\end{equation}

\begin{equation}\label{eq:BVP-spins}
  \calP^{\calS}_{\Omega, \varpi} : \left\{
    \begin{array}{llll}

      f_{\calS} \text{ is an holomorphic spinor in } \Omega_{\varpi}. \\

      H_{\calS}:=\myi \int^{z} f_{\calS}^{2}(\nu) \dd \nu  \text{ is bounded from above near } u_2,\ldots,u_n.\\
      H_{\calS}^{\dagger} := \myi \int^{z} [f_{\calS}(\nu)- \frac{e^{\icomp \frac{\pi}{4}}}{\sqrt{\nu-u_1}}]^2 \dd \nu \text{ is bounded near } u_1. \\
      H_{\calS} \text{ vanishes in } \partial \Omega.

    \end{array}
  \right.
\end{equation}

Each of the above boundary value problems has a unique solution see~\cite{CHI-mixed} (the uniqueness proofs go as in discrete, while their respective existence come from explicit constructions, first in concrete domains and then mapped to general domains). It is also to get the existence from construction in discrete. The solutions feature the following conformal rules. Given $ \phi : \Omega \to \phi(\Omega) $ a conformal map between two simply connected domains,  one has

\begin{equation}\label{eq:conformal_rule_energy}
  f^{\calE}_{\Omega,a}(z)=  |\phi'(a)| \cdot f^{\calE}_{\phi(\Omega),\phi(a)}[\phi(z)], 
\end{equation}
\begin{equation}\label{eq:conformal_rule_spin}
  f^{\calS}_{\Omega, \varpi}(z)=  \prod_{k=1}^{n}|\phi'(u_k)|^{\frac{1}{8}} \cdot f^{\calS}_{\Omega, \phi( \varpi)}[\phi(z)].   
\end{equation}

The link between Ising-related quantities and their scaling limits reads near the singularities of $ f^{\calE}_{\Omega, a}$ and $f^{\calS}_{\Omega, \varpi}$. For the solution of $\calP^{\calE}_{\Omega , a}$, one has the expansion
\begin{equation}\label{eqn:expansion_energy_observable}
  f^{\calE}_{\Omega,a}(z) - \frac{1}{\pi(z-a)} \overset{z \to a}{\longrightarrow} \frac{1}{\sqrt{2}\pi} \ell_{\Omega}(a),
\end{equation}
where $l_{\Omega}(a)$ is the conformal modulus of $\Omega$ seen from $a$~\cite[Thm.~1]{HS-energy-Ising}.

For the multiple energy correlation function, first recall that we denoted $\bbK(a_1, \ldots, a_r) $ the $r \times r$ matrix with coefficients $\bbK_{i,j}= \mathbf{1}_{i\neq j} (a_i-a_j)^{-1} $. Given $a_1,\ldots,a_r$ distinct points of $\bbH $, set

\begin{equation}\label{defn:multiple_energ_correlator}
  \langle \epsilon_{a_1} \epsilon_{a_2} \cdots  \epsilon_{a_r} \rangle_{\bbH}^{+} := \frac{1}{(\sqrt{2}i\pi)^{n} } \Pf(a_1,\ldots,a_r,\overline{a_1},\ldots, \overline{a_r}),
\end{equation}
where $\Pf $ denotes the usual Pfaffian operator.
Given $\varphi :\bbH \to \Omega $ a conformal map, one can define the continuous energy correlation in $\Omega $ via
\begin{equation}
  \langle \epsilon_{a_1} \epsilon_{a_2} \cdots  \epsilon_{a_r} \rangle_{\bbH}^{+}:= \prod^{r}_{k=1} |\varphi'(a_k)| \cdot \langle \epsilon_{\varphi (a_1)} \epsilon_{ \varphi (a_2)} \cdots  \epsilon_{ \varphi (a_r)} \rangle_{\Omega}^{+}.
\end{equation}

One can also see that the solution to $\calP^{\calS}_{\Omega_\varpi}$ admits an expansion near $u_1$ of the form
\begin{equation}\label{eq:def_coefficient_A}
  f^{\calS}_{\Omega, \varpi}(z)=  e^{\icomp \frac{\pi}{4}} \big[ \frac{1}{\sqrt{z-u_1}}+ 2  \calA_{[\Omega,u_1,\ldots,u_n]}  \sqrt{z-u_1} + O( (z-u_1)^{\frac{3}{2}}) \big].
\end{equation}
This expansion allows to define the coefficient $\calA_{(u_1,\ldots,u_n)}= \calA_{[\Omega,u_1,\ldots,u_n]}$.
Now, one can \emph{define} the \emph{continuous correlation fuction} 
\begin{equation}\label{eq:def_correlation_+}
  \langle \sigma_{u_1}\cdots \sigma_{u_n} \rangle_{\Omega}^{+}:= \exp \big( \myr \big[ \displaystyle \int^{(u_1,\ldots,u_n)}  \sum\limits_{k=1}^{n} \calA_{(x_1,\ldots,x_n)} \dd x_k \big] \big),
\end{equation}
with a normalization is chosen so that $\langle \sigma_{u_1}\cdots \sigma_{u_n} \rangle_{\Omega}^{+} \sim \frac{1}{|u_2-u_1|^{\frac14}} \langle \sigma_{u_3}\cdots  \sigma_{u_n} \rangle_{\Omega}^{+}$ as $u_1 \to u_2 $. The above integration should be understood as a integration on the space of $n$-tuples living in $\Omega^n $.

\medskip

In the special case $n=2$, one can also define the coefficient $\calB_{\Omega}(u_1,u_2)\in \bbR$ via the expansion of $f^{\calS}_{\Omega, \varpi} $ near $u_2$, such that
\begin{equation}\label{eq:def_coefficient_B}
  f^{\calS}_{\Omega, \varpi}= \frac{e^{-i\frac\pi4}}{(z-u_2)^\frac12}\calB_{\Omega}(u_1,u_2) + O(z-u_2)^{\frac12}.
\end{equation}
Due to the branching structure, the coefficient $\calB_{\Omega}(u_1,u_2)$ is defined up to the sign, thus one can simply chose it to be positive.
This allows to define the correlation function with free boundary conditions, which is at stake in the convergence of the model with $\free $ boundary conditions.

\begin{equation}\label{eq:def_correlation_free}
  \langle \sigma_{u_1}\sigma_{u_2} \rangle^{\free} := \calB_{\Omega}(u_1,u_2)\cdot \langle \sigma_{u_1}\sigma_{u_2} \rangle^{+}.
\end{equation}

For a conformal mapping $ \varphi $ and $\mathfrak{b} \in \{ +, \free \} $, one has the conformal rule

\begin{equation}
  \langle \sigma_{u_1}\sigma_{u_2} \rangle_{\Omega}^{\mathfrak{b}} = \langle \sigma_{\varphi(u_1)}\sigma_{\varphi(u_2)} \rangle_{\varphi(\Omega)}^{\mathfrak{b}}\cdot |\varphi'(u_1)|^{\frac18}|\varphi'(u_2)|^{\frac18}.
\end{equation}

\section{Derivation of main theorems}\label{sec:derivation_main_theorems}

The proofs of convergence of discrete Ising quantities start by proving that discrete fermionic observables constructed out of mixed correlators converge to their continuous counterparts. The general strategy is to prove the existence of subseqential limits (for the topology of the uniform convergence on compacts) and that any of those sub-sequential limits (now viewed in continuum) is solution to a continuous boundary value problem. As in the isoradial case, the precompactness is achieved by using the discrete primitive of the imaginary part of the square. Next lemma recalls a sufficient condition for the family $(F^{\diamond}_{\delta})_{\delta >0} $ to be precompact via some control on $(\prim)_{\delta >0} $.

\begin{lem}[Precompactness of s-holomorphic functions] \label{lem:precompactness}
  Let $Q \subset \Omega$ be a rectangular domain such that $9Q \subset \Omega$.
  Let $(\obsmedial)_{\delta > 0}$ be a family of diamond s-holomorphic functions on $\medialdomain$ and $\prim = \myi \int {\obsmedial}^2$.
  If $(\prim)_{\delta>0}$ is uniformly bounded on $9Q_\delta$, then $(\obsmedial)$ is precompact on $Q_\delta$.
\end{lem}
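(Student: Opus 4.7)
The plan is a classical Arzelà--Ascoli argument: to extract a uniformly convergent subsequence from $(F^\diamond_\delta)$ on $Q_\delta$, I need to establish (a) uniform boundedness of $|F^\diamond_\delta|$ and (b) equicontinuity, both on a slightly larger rectangle, say $3Q_\delta$, before passing to $Q_\delta$. The assumption that the primitive $H_\delta$ is bounded on the enlarged $9Q_\delta$ is exactly tailored to run the Chelkak--Smirnov-type scheme on nested rectangles $Q \subset 3Q \subset 9Q$ and reduce everything to the sub-/super-harmonicity of the two components of $H_\delta$.

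For uniform boundedness, I would exploit the three fundamental properties of the primitive recalled after Definition~\ref{defn:primitivation_procedure}: $H_\delta^\bullet := H_\delta|_{\primaldomain}$ is subharmonic, $H_\delta^\circ := H_\delta|_{\dualdomain}$ is superharmonic for the (massless) semi-discrete Laplacian~\eqref{eq:normalized_Laplacian}, and the horizontal edge relation~\eqref{eq:bi_edge} reads $H_\delta(u) - H_\delta(v) = \delta |F_\delta(c_{vu})|^2 \geq 0$. Sandwiching $H_\delta^\bullet$ above by its harmonic extension from $\partial (9Q)_\delta$ and $H_\delta^\circ$ below by the analogous extension, both of which are uniformly bounded by hypothesis, and applying the semi-discrete Harnack/Beurling estimate (Proposition~3.12 and Lemma~4.3 of~\cite{Li-QI-SLE}) on the annular region $9Q \setminus 3Q$, yields that the oscillation of both harmonic envelopes on a single elementary edge of $3Q_\delta$ is $O(\delta)$. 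Together with~\eqref{eq:bi_edge}, this gives $\delta |F_\delta(c)|^2 = H_\delta(u) - H_\delta(v) = O(\delta)$, hence $|F_\delta(c)| = O(1)$ uniformly on corners of $3Q_\delta$, and therefore $|F^\diamond_\delta| = O(1)$ by Proposition~\ref{prop:s_hol_equiv}.

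For equicontinuity, I would invoke that a uniformly bounded diamond $s$-holomorphic function on $3Q_\delta$ has uniformly bounded semi-discrete derivatives on $Q_\delta$. Concretely, writing $\partial_y F^\diamond_\delta$ and $\Delta_x F^\diamond_\delta$ through the s-holomorphicity relation $\bar\partial F^\diamond_\delta = 0$ and iterating it, the second-order relation~\eqref{eq:Psi_mLaplacian} (in its critical, massless form~\eqref{eq:normalized_Laplacian}) says that $F^\diamond_\delta$ is discrete harmonic; combining the maximum principle mentioned after Proposition~\ref{prop:s_hol_equiv} with the $O(1)$ $L^\infty$ bound on $3Q_\delta$ then gives discrete gradient estimates on $Q_\delta$ of order $O(1)$, uniformly in $\delta$. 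From these gradient bounds, equicontinuity of $F^\diamond_\delta$ (for the natural piecewise-affine interpolation between lattice points) on $Q_\delta$ is immediate, and Arzelà--Ascoli closes the argument.

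The main obstacle, and the only place where routine discrete complex analysis has to be adapted, is the anisotropy of the semi-discrete lattice: horizontal increments are of macroscopic order $2\delta$ while vertical variations are continuous, so one must be careful that Harnack-type estimates and the maximum principle are applied in the form valid for $\laplacian$ in~\eqref{eq:normalized_Laplacian} rather than for the isoradial Laplacian. This is exactly the work done in \cite[Sec.~3--4]{Li-QI-SLE}, and one simply quotes the relevant semi-discrete Harnack and a priori gradient estimates from there; no genuinely new estimate is needed beyond checking that the factor $9$ leaves enough room (two nested annuli) for the Harnack argument to transfer the boundary control on $H_\delta$ into an interior oscillation bound on the scale $\delta$.
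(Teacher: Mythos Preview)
Your proposal is correct and essentially reconstructs the argument of~\cite[Thm.~5.3]{Li-QI-SLE}; the paper itself does not reprove the lemma but simply cites that reference. Your sketch of the Arzel\`a--Ascoli scheme (sub/super-harmonicity of $H_\delta$ on primal/dual lines, Harnack-type oscillation bound to control $\delta|F_\delta(c)|^2$ via~\eqref{eq:bi_edge}, then gradient estimates from discrete harmonicity) is exactly the content of that theorem in the semi-discrete setting.
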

\begin{proof}
  This result is proven in~\cite[Thm.~5.3]{Li-QI-SLE}.
\end{proof}

\subsection{Convergence of discrete observable to continous one}
In this subsection we prove the convergence of properly normalized discrete fermionic observables in the semi-discrete lattice to their continuous counterpart. Those proofs are follow the path of the isoradial case.
Recall that the observables $\Psi^{\calE}_{\Omega_\delta, a} $ and $\Psi^{\calS}_{\Omega_\delta, \varpi}$ are respectively the unique solutions to $\calP^{\calE}_{\Omega_\delta, a}$ and $ \calP^{\calS}_{\Omega_\delta, \varpi}$.
As mentioned in Section~\ref{sec:results}, it is enough to prove the convergence results when $\Omega$ is smooth, since the analoguous theorems for general domains can be deduced using results in smooth domain, monotonicity with respect to boundary conditions and FKG inequality.

\begin{prop}\label{prop:convergence_energy_observable}
  Let the point $a $ be an interior to $ \Omega$. Then the sequence of observables $(\delta^{-1}\energysolutiondiscrete)_{\delta >0}$ converges to $\overline{\eta_a}f^{\calE}_{\Omega, a} $, uniformly on compact subsets of $\Omega \backslash \{ a \} $. 
\end{prop}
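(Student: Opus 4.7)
The plan is to apply the standard s-holomorphic machinery: extract subsequential limits of $F_\delta := \delta^{-1}\energysolutiondiscrete$ by precompactness, identify each such limit as the unique solution of the continuous Riemann--Hilbert problem $\calP^{\calE}_{\Omega, a}$, and conclude by uniqueness of that solution. Using the reductions explained at the start of Section~\ref{sec:derivation_main_theorems}, I may assume throughout that $\partial\Omega$ is smooth.

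The starting point is to isolate the discrete singularity at $a$. As noted in Section~\ref{sec:disorder_insertion}, the full-plane observable $G_{(a)}$ was designed so that $\Psi^{\calE}_{\Omega_\delta,a} - G_{(a)}$ is $s$-holomorphic \emph{everywhere} on $\Omega_\delta$; consequently
\[
R_\delta := F_\delta - \delta^{-1} G_{(a)}
\]
is globally $s$-holomorphic. The full-plane asymptotics of $G_{(a)}$ developed in Section~\ref{sec:polynomials}, together with the normalisation $\Psi^{\calE}_{\Omega_\delta,a}(a^{\pm}) = \pm\eta_a$, give that $\delta^{-1} G_{(a)}^{\diamond}(z)$ converges locally uniformly on compacts of $\bbC\setminus\{a\}$ to $\overline{\eta_a}\cdot\tfrac{2}{\pi(z-a)}$. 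To obtain precompactness of $(R_\delta)$ on compacts of $\Omega$, I would use that $H_\delta := H[F_\delta]$ vanishes on $\partial\Omega_\delta$, and that $H[\delta^{-1} G_{(a)}]$ is explicitly controlled outside any fixed neighbourhood of $a$: the bilinear nature of $H[\cdot,\cdot]$ then yields a Dirichlet-type bound for $H[R_\delta]$ on $\partial\Omega_\delta$. Since $R_\delta$ is globally regular, $H[R_\delta]$ is subharmonic at primal vertices and superharmonic at dual vertices throughout $\Omega_\delta$, and the discrete maximum principle propagates the boundary bound to uniform interior bounds on every compact subset. Lemma~\ref{lem:precompactness} then produces precompactness of $(R_\delta)$, hence of $(F_\delta)$ on compacts of $\Omega\setminus\{a\}$.

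Any subsequential limit $R$ of $(R_\delta)$ is holomorphic on $\Omega$ (the passage from semi-discrete $s$-holomorphicity to continuous holomorphicity is standard, cf.~\cite{Li-QI-SLE, CS-universality}), so along the subsequence
\[
F_\delta \longrightarrow F(z) := \overline{\eta_a}\cdot\tfrac{2}{\pi(z-a)} + R(z)
\]
locally uniformly on $\Omega\setminus\{a\}$. Passing the discrete boundary condition $H_\delta\equiv 0$ on $\partial\Omega_\delta$ to the limit, via smoothness of $\partial\Omega$ and the standard boundary regularity of s-holomorphic functions, gives $\myi \int^{z} F^{2} \equiv 0$ on $\partial\Omega$. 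Hence $\overline{\eta_a}^{-1} F$ satisfies the three requirements of $\calP^{\calE}_{\Omega, a}$, and uniqueness of the latter forces $F = \overline{\eta_a}\, f^{\calE}_{\Omega,a}$. As every subsequence has a further subsequence converging to this common limit, the full family $F_\delta$ converges locally uniformly on compacts of $\Omega\setminus\{a\}$.

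The principal obstacle is the precompactness step. Because $H[\cdot,\cdot]$ is bilinear and not a single subharmonic function, one has to carefully handle the cross contribution between $R_\delta$ and $\delta^{-1} G_{(a)}$ when extracting the boundary control on $H[R_\delta]$: this is where the sharp full-plane asymptotics of $G_{(a)}$ near $a$ are crucial. Near $\partial\Omega$, a barrier argument exploiting smoothness of the boundary is then needed to turn the mere pointwise vanishing of $H_\delta$ on $\partial\Omega_\delta$ into quantitative interior bounds on $R_\delta$, and the uniformity statement in the proposition is precisely the statement that these barrier estimates are uniform provided one stays at a definite distance both from $a$ and from $\partial\Omega$.
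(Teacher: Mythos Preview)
Your overall strategy—subtract the full-plane kernel, establish precompactness, pass boundary conditions to the limit, identify via uniqueness of $\calP^{\calE}_{\Omega,a}$—is the right one and matches the paper. The gap is in how you obtain precompactness.

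You propose to bound $H[R_\delta]$ on $\partial\Omega_\delta$ via the (sesqui)linearity of $H$:
\[
H[R_\delta]\;=\;H[F_\delta]\;-\;H[F_\delta,\delta^{-1}G_{(a)}]\;-\;H[\delta^{-1}G_{(a)},F_\delta]\;+\;H[\delta^{-1}G_{(a)}].
\]
On the boundary the first term vanishes and the last is explicit, but the cross terms contain the unknown $F_\delta$ and admit no a priori bound. Your closing paragraph correctly flags this as ``the principal obstacle'', yet offers no mechanism to control it—and there is none along these lines: the argument is circular.

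The paper sidesteps this by the standard dichotomy. Set $(M_r^\delta)^2:=\max_{\Omega_\delta\setminus B(a,r)}|H_\delta|$ with $H_\delta=H[F_\delta]$, which \emph{does} have zero boundary values. \textbf{Scenario 1:} if $(M_r^\delta)_\delta$ is bounded for each $r$, Lemma~\ref{lem:precompactness} gives precompactness of $F_\delta$ on compacts of $\Omega\setminus\{a\}$; once a subsequential limit $f$ exists away from $a$, the function $H^{\dagger}_\delta=H[R_\delta]$ is bounded on a fixed annulus around $a$ (because $R_\delta\to f-\overline{\eta_a}/(\pi(z-a))$ there), and since $R_\delta$ is globally $s$-holomorphic the max/min principle pushes this bound across the puncture, yielding boundedness of $f-\overline{\eta_a}/(\pi(z-a))$ near $a$. \textbf{Scenario 2:} if $M_{r_0}^\delta\to\infty$ along a subsequence, renormalise $\tilde F_\delta:=F_\delta/M_{r_0}^\delta$. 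Now $\delta^{-1}G_{(a)}/M_{r_0}^\delta\to 0$, so any subsequential limit $\tilde f$ has \emph{no} singularity at $a$; with Dirichlet boundary conditions for $\tilde h=\Im\int\tilde f^2$ this forces $\tilde f\equiv 0$, contradicting the normalisation (one uses Harnack to pass the normalisation to the limit and to prevent the maximiser from escaping to $\partial\Omega$).

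In short, the missing ingredient is precisely this two-scenario argument; the direct boundary control you sketch for $H[R_\delta]$ cannot be closed without already knowing a bound on $F_\delta$.
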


\begin{proof}

  Denote $F^{\dagger}_{\delta}:=  \delta^{-1} (\energysolutiondiscrete - G_{(a)}) $ where $ G_{(a)}$ is constructed in the appendix, $H_{\delta}:=  \myi [ \int  (\delta^{-1}\energysolutiondiscrete)^2 (\nu)  \dd \nu ] $ and $ H^{\dagger}_{\delta}:= \myi [ \int (F^{\dagger}_{\delta} )^2 (\nu)  \dd \nu ] $, both chosen with proper additive constants so that $H_{\delta}$ vanishes at $\partial \Omega_{\delta} $ and $ H^{\dagger}_{\delta}$ vanishes at some fixed interior point different from $a$ (this second choice is irrelevant). We also set $ (M_{r}^{\delta})^{2}:= \max\limits_{\Omega_\delta \backslash B(a,r)} |H^{\delta}| $. Since the singularities of $\energysolutiondiscrete$ and $G_{(a)}$ cancel each other, the observable $F^{\dagger}_{\delta}$ is $s$-holomorphic everywhere. There are now two potential scenarii (the second one is in fact impossible) :

  \paragraph{Scenario 1:}
  For any $r >0 $, the family $(M_{r}^{\delta})_{\delta >0}$ is uniformly bounded by a constant $C(r)$. Using the precompactness lemma~\ref{lem:precompactness}, one can extrat a subsequential limit from the family 
  $( \delta^{-1} \energysolutiondiscrete)_{\delta > 0}$ such that $\delta^{-1} \energysolutiondiscrete \to f $,  $H_{\delta} \to h= \myi [ \int^{z} f^{2}(\nu) \dd \nu ] $ and $H^{\dagger}_{\delta} \to h^{\dagger}= \myi [ \int^{z} (f - \frac{\overline{\eta_a}}{\pi(\nu -a)})^2 \dd \nu ] $  uniformly on compact subsets of 
  $\Omega \backslash \{ a \} $, with $f$ holomorphic away from $a$, $h$ harmonic except at $a$ and \emph{vanishing} along the boundary of $\Omega$. To obtain this last claim, one has to prove that the discrete boundary conditions of $H_{\delta}$ survive when passing to the limit in continuum (i.e. the function $h$ has Dirichlet boundary conditions). One can first use the boundary modification trick (\cite{Li-QI-SLE}, Lemma 5.1) and modify the weights of the Laplacian~\eqref{eq:mLaplacian} at the boundary to preserve the sub-harmonicity of $H_\delta $ on $\primaldomain $ even at the boundary. Using the subharmnocitity of $H_\delta$ on $\primaldomain $, the superharmonicity of $H_\delta$ on $\dualdomain $ ,the maximum principle for $H_\delta$ and the fact that $H_\delta$ vanishes along the boundary one gets

  \begin{equation*}
    -C(r_0) (1-\text{hm}^{\Omega_\delta \backslash B(a,r)}_{\partial\Omega_\delta}(z)) \leq H_\delta(z)\leq C(r_0) (1-\text{hm}^{\Omega_\delta \backslash B(a,r)}_{\partial\Omega_\delta}(z)),
  \end{equation*}
  where $\text{hm}^{\Omega_\delta \backslash B(a,r)}_{\partial\Omega_\delta}(z) $ stands for the harmonic measure of $\partial\Omega_\delta$ in $\Omega_\delta \backslash B(a,r) $, with respect to the random walk started at $z$.  Due to uniform crossing estimates for the random walk generated by the Laplacian~\eqref{eq:mLaplacian}, $\text{hm}^{\Omega_\delta \backslash B(a,r)}_{\partial\Omega_\delta}(z) $ goes to $1$ (uniformly in $\delta $) as $z$ approaches $\partial \Omega_\delta$. 

  \medskip

  On the other hand, the convergence of $ \delta^{-1} G_{(a)} $ to $\frac{\overline{\eta_a}}{\pi(z-a)} $ and the maximum principle applied to $ H^{\dagger}_{\delta}$  ensures that $ H^{\dagger}_{\delta}$ is uniformly bounded  near $a$ \emph{already in discrete}  (since at a fixed definite distance from $a$, $F^{\dagger}_{\delta}$ converges to $f-\frac{\overline{\eta_a}}{\pi(z-a)}$). In particular $F^{\dagger}_{\delta} $ is uniformly bounded near $a$.  Passing to the limit $ f-\frac{\overline{\eta_a}}{\pi(z-a)} $ is then \text{uniformly bounded} near $a$ (due to the maximum principle applied to the $s$-holomorphic functions $ F^{\dagger}_{\delta}$). We can thus conclude that $f$ is $\overline{\eta_a} f^{\calE}_{\Omega, a} $.

  \paragraph{Scenario 2:}
  Conversly, assume that $M_{r_0}^{\delta} \to \infty$ along some subsequence for some $r_0>0$ and consider $ \tilde{F}^{\calE}_{\Omega_\delta,a} := \frac{ \energysolutiondiscrete }{M_{r_0}^{\delta}} $, $\tilde{F}^{\dagger}_{\delta} := \frac{F^{\dagger}_{\delta}}{M_{r_0}^{\delta}}  $,  $\tilde{H}_{\delta} := \frac{H_{\delta}}{(M_{r_0}^{\delta})^{2}}  $ and $\tilde{H}^{\dagger}_{\delta} := \frac{H^{\dagger}_{\delta}}{(M_{r_0}^{\delta})^{2}}$. We first apply the Harnack inequality to the functions $\tilde{H}_{\delta}$~\cite[Prop. 3.25]{Li-QI-SLE} to deduce that $\tilde{H}_{\delta}$ is uniformly bounded on \emph{any compact subset} away from $a$.
  This way, one can repeat the previous reasoning  to extract a sub-sequential limit from the family $(\delta^{-1}\tilde{F}^{\calE}_{\Omega_\delta,a} )_{\delta >0}$ and $(\tilde{H}_{\delta})_{\delta >0}$, converging respectively (uniformly on compact subets of $\Omega \backslash \{a \} $) to  $\tilde{f} $ and $\tilde{h} = \myi [ \int \tilde{f}^{2}(\nu) \dd \nu]  $, with $\tilde{h}=0 $ on $\partial \Omega $. Up to an another extraction, we can also assume that the sequence of points $(z_{\delta})_{\delta >0} $ in $ \Omega\backslash B(a,\frac{r_0}{2}) $ such that $ \tilde{H}_{\delta}(z_{\delta})=1$ converges to $z_{\infty}$. Since $ (\delta M_{r_0}^{\delta})^{-1} G_{(a)}$ now converges to $0$, we have (still uniformly on compact subsets of $\Omega \backslash \{ a \} $) $\tilde{H}^{\dagger}_{\delta} \to \tilde{h}^{\dagger}=\tilde{h} + \text{cst} = \myi [ \int \tilde{f}^{2} \dd \nu] $. In particular, since $\tilde{h}^{\dagger} $ is bounded away from $a$ and satisfies the maximum principle near $a$ (since $\tilde{F}^{\dagger}_{\delta} $ has no singularity), hence so does $\tilde{h} $. In particular $f$ has no singularity at $a$, and thus is trivial. This is not compatible with the Dirichlet boundary conditions of $\tilde{h}$ and the fact that $\tilde{h}(z_{\infty})=1$.
\end{proof}

Let $\varpi^\delta = \{ v^{\delta},u^{\delta}_2,\ldots,u^{\delta}_n \}$ be points of $\Omega_{\delta} $, at a definite distance from each other, approximating respectively interior points $\varpi = \{u_1,\ldots,u_n \} $ of $\Omega $, with the usual convention that $ v^{\delta}\in \dualdomain$ and $ u^{\delta}_{2},\ldots,u^{\delta}_{n} \in \primaldomain$, and $v^{\delta} \sim u^{\delta}_{1} $.

\begin{prop}\label{prop:convergence_spin_observable}
  In the above mentioned context, the family of discrete observables
  $ \big( \bbE_{\Omega_\delta}^{+}[\sigma_{u_1}\ldots \sigma_{u_n}]^{-1} \delta^{-\frac12} \spinsolutiondiscrete\big)_{\delta >0}  $ converges to $(\tfrac1\pi)^{\frac12} f^{\calS}_{\Omega, \varpi}$ uniformly on compact subsets of  $ \Omega_{\varpi} \backslash \varpi   $.

\end{prop}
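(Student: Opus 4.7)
The plan is to follow the same three-step scheme as Proposition~\ref{prop:convergence_energy_observable}: precompactness of rescaled $s$-holomorphic observables via uniform bounds on their primitives, identification of any subsequential limit as a solution of the continuous Riemann--Hilbert problem $\calP^{\calS}_{\Omega, \varpi}$, and uniqueness of that solution combined with the exclusion of a blow-up scenario. Introduce the normalized observable
\[
F_\delta \;:=\; \bbE^{+}_{\primaldomain}[\sigma_{u_1}\cdots\sigma_{u_n}]^{-1}\,\delta^{-1/2}\,\spinsolutiondiscrete,
\]
which is a spinor on $\Omega_{\varpi,\delta}$ that is $s$-holomorphic \emph{everywhere}, including at its branchings, by the results of Section~\ref{sec:disorder_insertion}. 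Its primitive $H_\delta := H_\delta[F_\delta]$ is well-defined on the planar domain $\Omega_\delta$ (the contour integration cancels the branching structure), and the additive constant can be fixed so that $H_\delta$ vanishes on $\partial\Omega_\delta$.

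As in the energy case, I would argue by dichotomy on $M^\delta_r := \max_{z\in \Omega_\delta \setminus \bigcup_k B(u_k,r)} |H_\delta(z)|$. In the bounded scenario, Lemma~\ref{lem:precompactness} extracts subsequential limits $F_\delta \to f$ and $H_\delta \to h = \myi \int f^2$, uniformly on compact subsets of $\Omega_\varpi \setminus \varpi$, with $f$ holomorphic on $\Omega_\varpi \setminus \varpi$ and $h$ Dirichlet on $\partial \Omega$; the transfer of boundary conditions to the limit follows from the boundary modification trick of~\cite[Lem.~5.1]{Li-QI-SLE} together with uniform crossing estimates, exactly as in Scenario~1 of the energy proof. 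The local analysis at the branchings is the heart of the argument. Near $u_1$, the identity $\Psi^{\calS}_{\Omega_\varpi}(c(v u_1)) = \pm \bbE^+[\sigma_{u_1}\cdots\sigma_{u_n}]$ combined with the $\delta^{-1/2}$ factor and the spinor $\eta_c$ produces a $(z-u_1)^{-1/2}$-type singularity; to identify the prefactor as $(1/\pi)^{1/2} e^{\icomp \pi/4}$, I would subtract a full-plane spin observable $G^{\calS}_{(u_1)}$ (defined analogously to $G_{(a)}$ in Appendix~\ref{sec:infinite_volume_corrlators}) carrying the same discrete leading singularity at $u_1$, show via the maximum principle that $F_\delta - G^{\calS}_{(u_1)}$ has uniformly bounded primitive near $u_1$, and then transfer the explicit full-plane constant. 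At each auxiliary branching $u_k$ for $k\geq 2$, the crucial point is that $H_\delta$ stays bounded \emph{above} (but not necessarily below), which in the limit yields the one-sided boundedness condition on $H_\calS$ in $\calP^\calS_{\Omega,\varpi}$; this comes from interpreting $\bbE^+[\prod\sigma]^{-1}\spinsolutiondiscrete$ as a ratio of partition functions and applying monotonicity of boundary conditions together with FKG/Griffiths inequalities, as in~\cite[Prop.~2.16]{CHI-mixed}.

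Once the correct singular behaviors are established, uniqueness of the continuous BVP forces $f = (1/\pi)^{1/2} f^{\calS}_{\Omega,\varpi}$, and the divergent scenario is excluded by the standard rescaling argument: any weak limit of $F_\delta/M^\delta_r$ would be a holomorphic spinor with vanishing primitive on $\partial\Omega$, bounded above at every $u_k$ and with no singularity at $u_1$ (the rescaled full-plane subtraction now vanishes), hence identically zero by the maximum principle, contradicting the normalization $\max|\tilde h|=1$. The main obstacle, which has no analog in the energy proof, is the \emph{one-sided} control of $H_\delta$ at the auxiliary branchings $u_2,\dots,u_n$: since the two-sided maximum principle used for the energy observable is not available here, one must use the full probabilistic strength of the spin correlator to preserve only the upper bound needed in $\calP^\calS_{\Omega,\varpi}$.
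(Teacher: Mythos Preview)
Your overall architecture is exactly the paper's: normalize, take the primitive $H_\delta$, run a dichotomy on $M^\delta_r$, use Lemma~\ref{lem:precompactness} plus the boundary modification trick to get a subsequential limit, subtract the full-plane kernel near $u_1$ to control the singularity there, and rule out blow-up by the standard rescaling. Two points, however, are misattributed and would leave your write-up with an apparent gap.

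First, the one-sided upper bound on $H_\delta$ near the auxiliary branchings $u_2,\dots,u_n$ does \emph{not} require any probabilistic input (FKG, Griffiths, or monotonicity of boundary conditions). It comes for free from discrete complex analysis: the branchings $u_2,\dots,u_n$ lie on the \emph{primal} lattice, and the subharmonicity of $H_\delta$ on primal vertices is preserved at such branchings (this is the content of Remark~3.8 in~\cite{CHI-spin}, recalled in the paper just after Definition~\ref{defn:primitivation_procedure}). Since $H_\delta$ vanishes on $\partial\Omega_\delta$ and is subharmonic on the whole primal lattice, the maximum principle directly gives $H_\delta$ bounded from above near each $u_k$, $k\ge 2$, already in discrete. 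Your proposed FKG route would work but is a detour.

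Second, near $u_1$ the paper does not subtract a kernel branching over $u_1$; it subtracts $\delta^{-1/2}G_{[v]}$, the full-plane spinor branching over the \emph{dual} vertex $v=v^\delta$ adjacent to $u_1$. The key observation you are missing is that $F^\dagger_\delta:=F_\delta-\delta^{-1/2}G_{[v]}$ \emph{vanishes} at the corner $c_0=c(v,u_1)$ (both terms equal $\delta^{-1/2}$ there with matching phase). It is precisely this vanishing that restores the sub/super-harmonicity of $H^\dagger_\delta$ at the dual branching $v$, hence the two-sided maximum principle for $H^\dagger_\delta$ near $u_1$. Without this point, the claim that ``the maximum principle shows $F_\delta-G^{\calS}_{(u_1)}$ has uniformly bounded primitive near $u_1$'' is unjustified: the max/min principle for $H^\dagger_\delta$ fails at a dual branching unless the observable vanishes at the adjacent corner. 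The same vanishing is what makes Scenario~2 go through, since after rescaling by $M^\delta_r$ the subtracted kernel disappears and the two-sided bound on $\tilde H^\dagger_\delta$ near $v$ survives, forcing $\tilde h$ to be bounded near $u_1$ as well; non-triviality is then excluded by adapting~\cite[Lem.~3.10]{CHI-spin}.
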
 

\begin{proof}
  We keep the notations of the previous proof. Denote for the rest of the proof $F_\delta:= \delta^{-\frac12} \bbE_{\Omega_\delta}^{+}[\sigma_{u_1}\ldots \sigma_{u_n}]^{-1}F^{\calS}_{\Omega_\delta, \varpi} $, $ F^{\dagger}_{\delta}:= F_{\delta} - \delta^{-\frac12}G_{[v]}$, $H_{\delta}:=   \myi [ \int  (F_\delta)^2 (\nu) \dd \nu ]$ and $ H^{\dagger}_{\delta}:=  \myi [ \int (F^{\dagger}_{\delta})^2 (\nu)  \dd \nu ]$, with $H_{\delta}$ vanishing along $\partial \Omega_{\delta} $ and $ H^{\dagger}_{\delta}$ vanishing at a point different from the punctures. One can see that $ F^{\dagger}_{\delta}$ vanishes near the branching $v^{\delta}$. We also set $(M_{r}^{\delta})^{2}:= \max\limits_{\Omega \backslash \cup_{i=1\ldots n} B(u_i,r)} |H^{\delta}|$. There are two potential scenarii (the second one is again impossible).

  \paragraph{Scenario 1:}
  If for any $r>0$,  $(M_{r}^{\delta})_{\delta>0}$ is bounded by a constant $C(r)$.
  Using the precompactness lemma~\ref{lem:precompactness}, one can extract a subsequential limit from the family $(F_{\delta})_{\delta >0} $ as $\delta \to 0$. Any subsequential limit $f$ of $F_{\delta} $ is an holomorphic spinor on $\Omega_{\varpi} $ and $h$ is harmonic function away from the branchings.
  The sub/super harmonicity of $H_{\delta} $, the boundary modification trick and the control of discrete harmonic measures in smooth domains allow to preserve again boundary conditions from discrete to continuum, i.e. $H_{\delta} \to h= \myi[\int f^2 \dd \nu]$ uniformly on compact subsets of $\Omega \backslash \varpi $, with $h=0 $ on $\partial \Omega $. Since $F^{\dagger}_{\delta}$ vanishes near $v^\delta$, one can use the maximum and the minimum principle in discrete for $H^{\dagger}_\delta $ near $v^{\delta}$ and the convergence (away from $u_1$) of $F^{\dagger}_{\delta}$ to  $f- (\tfrac1\pi)^{\frac12} \frac{e^\frac{i\pi}{4}}{\sqrt{\nu-u_1}} $ to deduce that $H^{\dagger}_{\delta} \to h^{\dagger}= \myi[ \int (f- (\tfrac1\pi)^{\frac12} \frac{e^\frac{i\pi}{4}}{\sqrt{\nu-u_1}})^2 \dd \nu]$, the latter convergence being uniform near $u_1$. This ensures that $h^{\dagger} $ is bounded from above and from below near $u_1$. By subharmonicity property of $H_{\delta} $, we see that $H_{\delta} $ is bounded from above near $u^{\delta}_2,\ldots,u^{\delta}_n$ \emph{already in discrete}, which implies the same one sided bound for $h$ near $u_2,\ldots,u_n$.  This allows to identify $f$ as a multiple of  $f^{\calS}_{\Omega, \varpi}$ and the normalizing constant is fixed by the asymptotic near $u_1$.

  \paragraph{Scenario 2:}
  If $M_{r}^{\delta}\to \infty$ along some subsequence, one normalizes again by $M^{\delta}_{r} $ as in the previsous proposition and extracts a new subsequential limit $\tilde{f} $, which remains an holomorphic spinor on $\Omega_{\varpi} $, such that $\tilde{h}=  \myi [ \int \tilde{f}^2 \dd \nu] $ is harmonic in $\Omega \backslash \varpi  $, satisfies Dirichlet Boudary condition and is bounded from below near $u_2,\ldots,u_n$ and has positive outer normal derivative. We know that $\tilde{H}^{\dagger}_{\delta} $ satisfies the maximum principle in discrete at a small definite distance from $v^{\delta} $. Moreover $\tilde{h}=\tilde{h}^{\dagger} + \text{cst} $ thanks to the normalization by $M_{r}^{\delta}$. Thus $\tilde{h}$ is bounded from below near all points $u_1,\ldots,u_n $. This implies that $\tilde{h}$ vanishes identically. It remains to adapt the proof of~\cite[Lem.~3.10]{CHI-spin} which states that no subsequential limits of $ (M_{r}^{\delta})^{-2}H^{\delta}  $ is trivial. This (somewhat technical) proof only relies upon the same sub/super harmonicity arguments, and readingly addapts.
\end{proof}

Having explicit asymptotics of the kernels $G_{[u]}$, $G_{[v]} $ and $G_{(a)} $, one can prove the same statement only using one side of the sub/super harmonicity property of functions $H^{\delta}$, at least in smooth domains (e.g.~\cite{CIM-universality}). It could be even possible in principle to bypass the use of all sub/super harmonicity properties using ideas of~\cite{Che20}.

\subsection{Derivation of the main theorems}

We are now going to use the kernels constructed explicitely and the convergence theorems for $s$-holomorphic functions arising from boundary value problems to derive Theorems~\ref{thm:spins} and~\ref{thm:multiple_energy}.
It is important to notice that, comparing to the isoradial case, one has to be careful due to the denegeracy of the lattice in the vertical direction.
In particular, the use of the Dotsenko propagation equation becomes crucial to derive the results in the vertcical direction.
We start with the proof of~\ref{thm:ratios_spins} which is the convergence proof for correlation ratios of general spin-spin correlations.

\begin{proof}
  As before, we work with $\Omega_\delta $, an approximation of a bounded smooth simply connected domain $\Omega$.
  We assume that $ \varpi^{\delta} = \{ v^{\delta}, u^{\delta}_2,\ldots,u^{\delta}_n \}$ approximate respectively $u_1,\ldots,u_n $ interior points of $\Omega$, remaining at a definite distance from each other an from the boundary.
  For simplicity of proof reading, we denote $F^{\diamond}_{v^{\delta}}:= \delta^{-\frac{1}{2}}  F^{\calS}_{\Omega_\delta, \varpi^\delta} $.
  We start the proof of Theorem~\ref{thm:ratios_spins} with the derivation of correlation ratio in the horizontal direction.

  \begin{figure}[htb] \centering
    \includegraphics[scale=0.25, page=1]{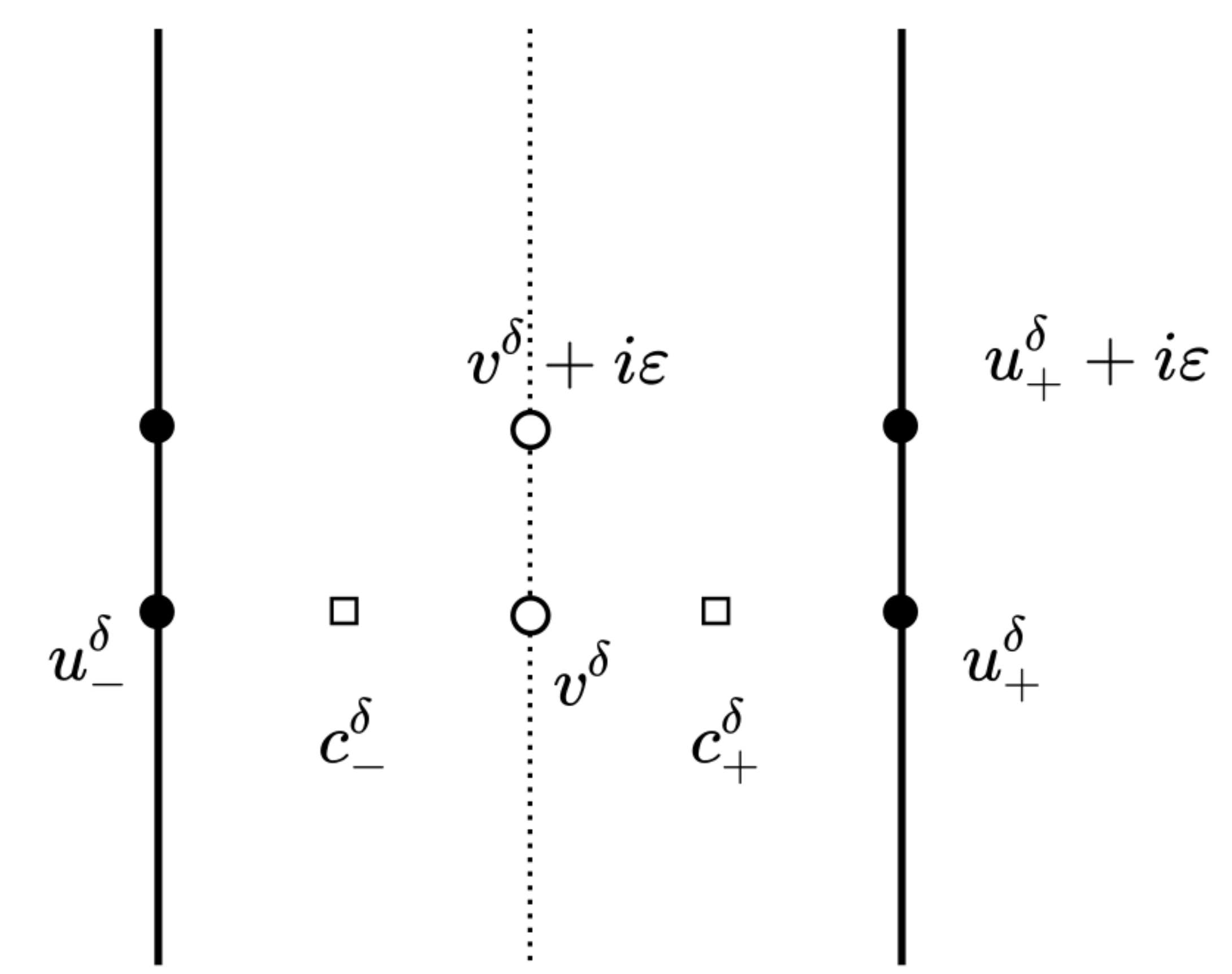} 
    \caption{Configuration of points used to derive convergence of correlation ratios, in the horizontal and the vertical direction. We chose $v^{\delta}$ and $v^{\delta}+i\varepsilon $ (respectively $u^{\delta}_{+} $ and $u^{\delta}_{+} + i\varepsilon$) to be on the same sheet of the double cover. For the proof in the vertical direction, deforming the contour leads to compute the two increments passing by $c^{\delta}_{+}$, which compensante each other due the opposite values of $\sgn(\mathcal{O}_{c^{\delta}_{+}},v^\delta) $ depending on the position of corners compared to $v^{\delta}$.}
    \label{fig:correlation_ratios}
  \end{figure}
  
  \paragraph{Correlation ratio in the horizontal direction}
  
  Consider the vertices $v^{\delta}, u^{\delta}_{-}, u^{\delta}_{+} $ and $c^{\delta}_{\pm} = (v^{\delta}u_{c^{\delta}_{\pm}})$ as in Figure~\ref{fig:correlation_ratios}.
  Fix $r > 0$ a (small) positive number and let $\calC^{\delta}_{r}$ be a discrete square of width $2r$ centered at $v^{\delta}$, oriented counter-clockwise.
  We apply the integration trick of Proposition~\ref{prop:integration-trick} twice with the contour $\calC^{\delta}_{r}$ to the functions $F^{\diamond}_{v^\delta}$ and $\delta^{-\frac12}G^{\diamond}_{[u^\delta_{-}]}$ on one hand, and to the functions $F^{\diamond}_{v^\delta}$ and $\delta^{-\frac12}G^{\diamond}_{[u^\delta_{+}]}$ on the other hand.
  One gets,
  \begin{eqnarray}\nonumber
    \frac{2 \delta \cdot \delta^{-\frac12}F_{v^\delta}(c^\delta_{+}) \cdot \delta^{-\frac12} \overline{G_{[u^{\delta}_{+}]} (c^\delta_{+})}  }{2 \delta \cdot \delta^{-\frac12}F_{v^\delta}(c^\delta_{-}) \cdot \delta^{-\frac12}\overline{G_{[u^{\delta}_{-}]} (c^\delta_{-})} } &=& \frac{\displaystyle\int_{ \calC^{\delta}_{r}}  \myi [ F^{\diamond}_{v^\delta}(z) \delta^{-\frac12}G^{\diamond}_{[u^{\delta}_{+}]}(z) \dd z  ]}{\int_{ \calC^{\delta}_{r}}  \myi [  F^{\diamond}_{v^\delta}(z)\delta^{-\frac12}G^{\diamond}_{[u^{\delta}_{-}]}(z) \dd z  ]} \\ \nonumber
                                                                                                                                                                                                                                                                                 &=& \frac{1 +    \frac{ \displaystyle\int_{ \calC^{\delta}_{r}}  \myi \big[  F^{\diamond}_{v^\delta}(z)\delta^{-\frac12}\big( G^{\diamond}_{[u^{\delta}_{+}]}(z) -G^{\diamond}_{[u^{\delta}_{-}]}(z) \big)  \dd z  \big] }{\displaystyle\int_{ \calC^{\delta}_{r}}  \myi \big[  F^{\diamond}_{v^\delta}(z)\delta^{-\frac12}\big( G^{\diamond}_{[u^{\delta}_{+}]}(z) +G^{\diamond}_{[u^{\delta}_{-}]}(z) \big)  \dd z  \big]}    }{1 -    \frac{ \displaystyle\int_{ \calC^{\delta}_{r}}  \myi \big[  F^{\diamond}_{v^\delta}(z)\delta^{-\frac12}\big( G^{\diamond}_{[u^{\delta}_{+}]}(z) -G^{\diamond}_{[u^{\delta}_{-}]}(z) \big)  \dd z  \big] }{\displaystyle\int_{ \calC^{\delta}_{r}}  \myi \big[  F^{\diamond}_{v^\delta}(z)\delta^{-\frac12} \big( G^{\diamond}_{[u^{\delta}_{+}]}(z) +G^{\diamond}_{[u^{\delta}_{-}]}(z) \big)  \dd z  \big]}  }.
  \end{eqnarray}

  The convergence of the spin fermionic observable given in Proposition~\ref{prop:convergence_spin_observable} ensures that,
  \begin{equation*}
    \frac{F_{v^\delta}^{\diamond}(z)}{\bbE_{\primaldomain}^{+}[  \sigma_{u_{+}^{\delta}} \ldots \sigma_{u^\delta_n} ]}    
    =  \frac{ e^{\icomp \frac{\pi}{4}} }{ \sqrt{\pi} }  \big( \frac{1}{\sqrt{z-u_1}}+ 2 \calA_{(u_1,\ldots,u_n)} \sqrt{z-v} + O( (z-v)^{\frac{3}{2}}) \big),
  \end{equation*}
  uniformly in $z\in \Omega^{\diamond}_{\varpi_\delta} $ remaining at a fixed distance from the punctures and the boundary, where the coefficient $\calA=\calA_{(\Omega,u_1,\ldots,u_n)}$ is defined in~\eqref{eq:def_coefficient_A}.
  Additionally, the asymptotic of the kernels $G^{\diamond}_{[u^{\delta}_{\pm}]}$ away from their branchings gives,
  \begin{equation*}
    \delta^{-\frac12}(G^{\diamond}_{[u^{\delta}_{+}]}(z)-G^{\diamond}_{[u^{\delta}_{-}]}(z))
    = \frac{ e^{-i\frac{\pi}{4}}}{ \sqrt{\pi} } \frac{(u^{\delta}_{+}-u^{\delta}_{-})}{2(z-v)^{\frac{3}{2}}} + O \Big( \frac{\delta^2}{(z-v)^{\frac{5}{2}}} \Big).
  \end{equation*}

  One first multiplies the two previous asymptotics, then uses classical bounds on interpolation of countour integrals via their discretization, together with the fact that  integrals of the continuous limits do \emph{not} depend on the contour on integration. Since $r$ can be chosen arbitrarily small, the residue theorem ensures that when $\delta \rightarrow 0$,
  \begin{align*}
    &  \myi \big[ \int_{ \calC^{\delta}_{r}}  \frac{F^{\diamond}_{v^\delta}(z)}{\bbE_{\primaldomain}^{+}[  \sigma_{u_{+}^{\delta}} \ldots \sigma_{u^\delta_n} ]}  \delta^{-\frac12}\big( G^{\diamond}_{[u^{\delta}_{+}]}(z) -G^{\diamond}_{[u^{\delta}_{-}]}(z) \big)  \dd z  \big] \\
    & = 2 \myr [\calA\cdot(u^{\delta}_{+}-u^{\delta}_{-})] + O(r\delta)+ o(\delta) \\
    & = 4\delta \textnormal{Re}[\calA_{(u_1,\ldots,u_n)}] + o(\delta).
  \end{align*}

  Similarly, for $\delta \rightarrow 0$, one has,
  \[
     \myi \big[ \int_{ \calC^{\delta}_{r}}  \frac{F^{\diamond}_{v^\delta}(z)}{\bbE_{\primaldomain}^{+}[  \sigma_{u_{+}^{\delta}} \ldots \sigma_{u^\delta_n} ]}\delta^{-\frac12}(G^{\diamond}_{[u^{\delta}_{+}]}(z) +G^{\diamond}_{[u^{\delta}_{-}]}(z) )  \dd z  \big] = 4+o(1),
  \]
  where $o$ is uniform provided $u_1, \ldots, u_n$ remain at a definite distance from each other and from the boundary.
  Finally, it is enough to notice that $|G_{[u^{\delta}_{+}]}(c^{\delta}_+)|= |G_{[u^{\delta}_{-}]}(c^{\delta}_-)|=1$ to conclude that,
  \begin{equation}
    \label{eq:correlation_ratio_horizontal}
    \frac{\bbE_{\primaldomain}^{+}[ \sigma_{u^{\delta}_{+}} \ldots \sigma_{u^\delta_n} ] }{\bbE_{\primaldomain}^{+}[ \sigma_{u^{\delta}_{-}} \ldots \sigma_{u^\delta_n} ]} = 1 +   2\delta \textnormal{Re}[\calA_{(u_1,\ldots,u_n)}] + o(\delta).
  \end{equation}

  \paragraph{Correlation ratio in the vertical direction}
  The proof in the vertical direction is more tedious and requires the use of the propagation equation~\eqref{eq:primal_two_term} and the asymptotics of correlation ratio in the horizontal direction derived in~\eqref{eq:correlation_ratio_horizontal}.
  We first fix $v^{\delta},v^{\delta}+i\epsilon ,u_{+}^{\delta}, u_{+}^{\delta}+i\epsilon, u_{-}^{\delta}$ as in Figure \ref{fig:correlation_ratios}, assuming $v^{\delta},v^{\delta}+i\epsilon$ and $u_{+}^{\delta}, u_{+}^{\delta}+i\epsilon, u_{-}^{\delta}$ are taken respectively on the same sheet. Applying the previous reasoning on a macroscopic contour $\calC^{\delta}_{r}$ of small radius $r$ one gets,
  \begin{equation}\label{eq:logarithmic_ratio_vertical_correlation}
    \frac{\bbE_{\primaldomain}^{+}[ \sigma_{u_{+}^{\delta}+i\epsilon} \ldots \sigma_{u^\delta_n} ] }{\bbE_{\primaldomain}^{+}[ \sigma_{u_{+}^{\delta}} \ldots \sigma_{u^\delta_n} ]}
    = \frac{1 + \frac{ \displaystyle \myi \big[ \int_{ \calC^{\delta}_{r}}   F^{\diamond}_{v^\delta+i\epsilon}(z)\delta^{-\frac12} G_{[u_{+}^{\delta}+i\epsilon]}(z) - F^{\diamond}_{v^\delta}(z)\delta^{-\frac12}G^{\diamond}_{[u_{+}^{\delta}]}(z)   \dd z  \big] }{\displaystyle   \myi \big[ \int_{ \calC^{\delta}_{r}}  F^{\diamond}_{v^\delta+i\epsilon}(z)\delta^{-\frac12}G^{\diamond}_{[u_{+}^{\delta}+i\epsilon]}(z) + F^{\diamond}_{v^\delta}(z)\delta^{-\frac12}G^{\diamond}_{[u_{+}^{\delta}]}(z)   \dd z  \big]}}{1 -     \frac{ \displaystyle  \myi \big[ \int_{ \calC^{\delta}_{r}}  F^{\diamond}_{v^\delta+i\epsilon}(z)\delta^{-\frac12} G_{[u_{+}^{\delta}+i\epsilon]}(z) - F^{\diamond}_{v^\delta}(z)\delta^{-\frac12}G^{\diamond}_{[u_{+}^{\delta}]}(z)   \dd z  \big] }{\displaystyle \myi \big[ \int_{ \calC^{\delta}_{r}}  F^{\diamond}_{v^\delta+i\epsilon}(z)\delta^{-\frac12}G^{\diamond}_{[u_{+}^{\delta}+i\epsilon]}(z) + F^{\diamond}_{v^\delta}(z)\delta^{-\frac12}G^{\diamond}_{[u_{+}^{\delta}]}(z)   \dd z  \big]} }. 
  \end{equation}

  We use the factorization
  \begin{align*}
    & F^{\diamond}_{v^\delta+i\epsilon}(z)G^{\diamond}_{[u_{+}^{\delta}+i\epsilon]}(z) - F^{\diamond}_{v^\delta}(z)G^{\diamond}_{[u_{+}^{\delta}]}(z) \\
    & = F^{\diamond}_{v^\delta}(z)\big( G^{\diamond}_{[u_{+}^{\delta}+i\epsilon]}(z)-G^{\diamond}_{[u_{+}^{\delta}]}(z) \big)
      + \big( F^{\diamond}_{v^\delta+i\epsilon}(z) - F^{\diamond}_{v^\delta}(z) \big) G^{\diamond}_{[u_{+}^{\delta}+i\epsilon]}(z),
  \end{align*}
  and treat separately the two terms on the RHS of the previous equality.

  \textbf{First term} Using the differentiability of the correlator with respect to its branching position (see~\eqref{eq:differential_correlator_position}), one can write the expansion,
  \begin{equation}
    \label{eq:diff_G}
    G^{\diamond}_{[u_{+}^{\delta}+i\epsilon]}(z)-G^{\diamond}_{[u_{+}^{\delta}]}(z) = i\epsilon \partial_{y} G^{\diamond}_{[u_{+}^{\delta}]}(z) + o(\epsilon),
  \end{equation}
  where $o$ is uniform provided $\epsilon$ is small enough.
  Thus, integrating on $\calC^{\delta}_{r} $, one has,
  \begin{align*} 
    &   \myi \big[\int_{ \calC^{\delta}_{r}}  F^{\diamond}_{v^\delta}(z) \delta^{-\frac12}\big( G^{\diamond}_{[u_{+}^{\delta}+i\epsilon]}(z)-G^{\diamond}_{[u_{+}^{\delta}]}(z) \big) \dd z \big] \\
    & = \epsilon   \myi \big[ \int_{ \calC^{\delta}_{r}}  \icomp \delta^{-\frac12}\partial_{y} G^{\diamond}_{[u_{+}^{\delta}]}(z)F^{\diamond}_{v^\delta}(z)  \dd z \big] + o(\epsilon).
  \end{align*}

  \textbf{Second term} We take advantage of the propagation equation stated previously.
  To simplify the proof reading of the following lines, we introduce the Kadanoff--Ceva correlator $Y_{[v^{\delta},u_{-}^{\delta},u_{+}^{\delta}]}(c):= \delta^{-\frac12}\eta_{c}\langle \mu_{v_c}\sigma_{u_c} \mu_{v^{\delta}} \sigma_{u^{\delta}_{+}}\sigma_{u^{\delta}_{-}} \sigma_{u_2} \ldots \sigma_{u_n}\rangle_{\Omega_\delta}  $, where $u^{\delta}_{\pm} $ are taken on the same sheet as in Proposition~\ref{prop:two_term}.
  We also denote $Y^{\diamond}_{[v^{\delta},u_{-}^{\delta},u_{+}^{\delta}]} $ its associated diamond $s$-holomorphic extension, see Proposition~\ref{prop:s_hol_equiv}.
  One sees that $Y^{\diamond}_{[v^{\delta},u_{-}^{\delta},u_{+}^{\delta}]} $ branches only around the three vertices $v^{\delta},u_{-}^{\delta},u_{+}^{\delta} $.
  In that case, Proposition~\ref{prop:two_term} reads,
  \begin{align*}
    & \delta^{-\frac12}\big(\langle \mu_{v_c} \sigma_{u_c} \mu_{v^{\delta}+i\epsilon} \sigma_{u^{\delta}_2} \ldots \sigma_{u^{\delta}_n} \rangle_{\Omega_\delta}
    - \langle \mu_{v_c} \sigma_{u_c} \mu_{v^{\delta}} \sigma_{u^{\delta}_2} \ldots \sigma_{u^{\delta}_n} \rangle_{\Omega_\delta} \big) \\
    & = (2\delta)^{-1} \sgn(\calO(c),v^{\delta}) Y_{[v^{\delta},u_{-}^{\delta},u_{+}^{\delta}]}(c) +O(\epsilon^2),
  \end{align*}
  where $O$ is uniform over corners $c$ remaining at a fixed distance away from $u_{+}^{\delta},u_{2}^{\delta}\ldots u_{n}^{\delta}$.
  Moreover, one notices that, provided $r$ is small enough, the function  $\sgn(\calO(c),v^{\delta}) Y_{[v^{\delta},u_{-}^{\delta},u_{+}^{\delta}]}$ has the branching structure of $ \langle \mu_{v_c}\sigma_{u_c} \mu_{v^{\delta}} \sigma_{u^\delta_2} \ldots \sigma_{u^\delta_n}\rangle_{\Omega_\delta} $ (this can be seen as a feature of Proposition \ref{prop:two_term}).
  Using again the expansion~\eqref{eq:diff_G} and integrating on $\calC^{\delta}_{r} $, one has,
  \begin{align*}
    & \myi \big[ \int_{ \calC^{\delta}_{r}} \big( F^{\diamond}_{v^\delta+i\epsilon}(z) - F^{\diamond}_{v^\delta}(z) \big) \delta^{-\frac12}G^{\diamond}_{[u_{+}^{\delta}+i\epsilon]}(z)  \dd z \big] \\ 
    & = \epsilon\cdot  (2\delta)^{-1}  \myi \big[ \int_{ \calC^{\delta}_{r}} \sgn(\calO(z),v^{\delta})Y^{\diamond}_{[v^{\delta},u_{-}^{\delta},u_{+}^{\delta}]}(z) \delta^{-\frac12}G^{\diamond}_{[u_{+}^{\delta}]}(z)  \dd z \big]+ o(\epsilon).
  \end{align*}
  A similar treatment shows that 
  \begin{align*}
    & \myi \big[  \int_{ \calC^{\delta}_{r}}  F^{\diamond}_{v^\delta+i\epsilon}(z)\delta^{-\frac12}G^{\diamond}_{[u_{+}^{\delta}+i\epsilon]}(z) + F^{\diamond}_{v^\delta}(z)\delta^{-\frac12}G^{\diamond}_{[u_{+}^{\delta}]}(z)  \dd z  \big] \\
    & = 2 \myi \big[  \int_{ \calC^{\delta}_{r}}   F^{\diamond}_{v^\delta}(z)\delta^{-\frac12}G^{\diamond}_{[u_{+}^{\delta}]}(z)   \dd z  \big] +o_{\epsilon \to 0}(1).
  \end{align*}
  Putting the previous asymptotics all together in~\eqref{eq:logarithmic_ratio_vertical_correlation} and taking the logarithm, one deduces that $\partial_y \log \bbE_{\primaldomain}^{+}[ \sigma_{u_1^{\delta}} \ldots \sigma_{u^{\delta}_n} ] $ exists and is given by,
  \begin{equation}
    \frac{  \displaystyle  \myi \big[ \int_{ \calC^{\delta}_{r}}  \icomp \delta^{-\frac12}\partial_{y} G^{\diamond}_{[u_{+}^{\delta}]}(z)F^{\diamond}_{v^\delta}(z)  \dd z \big]
      + \myi \big[ \int_{ \calC^{\delta}_{r}} \frac{\sgn(\calO(z),v^{\delta}) Y^{\diamond}_{[v^{\delta},u_{-}^{\delta},u_{+}^{\delta}]}(z)}{2\delta} \delta^{-\frac12}G^{\diamond}_{[u_{+}^{\delta}]}(z)  \dd z \big]}{
      \displaystyle \myi \big[ \int_{ \calC^{\delta}_{r}} F^{\diamond}_{v^\delta}(z)\delta^{-\frac12}G^{\diamond}_{[u_{+}^{\delta}]}(z)   \dd z  \big]}.
  \end{equation}
  We are now going to evaluate the contribution of the above terms as $\delta \rightarrow 0 $.
  Repeating computations similar to the proof in the horizontal direction, one sees that,
  \begin{align*}
    \displaystyle  \myi \big[ \int_{ \calC^{\delta}_{r}} F^{\diamond}_{v^\delta}(z)\delta^{-\frac12}G^{\diamond}_{[u_{+}^{\delta}]}(z)   \dd z  \big]
    & = 2\bbE_{\primaldomain}^{+}[ \sigma_{u^{\delta}_{+}} \ldots \sigma_{u^\delta_n} ]+o_{\delta \rightarrow 0}(\bbE_{\primaldomain}^{+}[ \sigma_{u^{\delta}_{+}} \ldots \sigma_{u^\delta_n} ]), \\
    \displaystyle  \myi \big[ \int_{ \calC^{\delta}_{r}} \icomp \delta^{-\frac12}\partial_{y} G^{\diamond}_{[u_{+}^{\delta}]}(z)F^{\diamond}_{v^\delta}(z)  \dd z \big]
    & =  2\bbE_{\primaldomain}^{+}[ \sigma_{u^{\delta}_{+}} \ldots \sigma_{u^\delta_n} ]\Re \big[i\mathcal{A}_{(u_1,\ldots, u_n)} \big] \\
    & \qquad \qquad + o_{\delta \rightarrow 0}(\bbE_{\primaldomain}^{+}[ \sigma_{u^{\delta}_{+}} \ldots \sigma_{u^\delta_n} ]),
  \end{align*}
  since the asymptotics of the derivative of the full-plane correlator away from its branching is given by (see \eqref{eq:differential_correlator_position})
  \[
    \delta^{-\frac12}\partial_{y} G_{[u^{\delta}_{+}]}(z) = - \frac{\icomp}{2 \sqrt{\pi} (z-u)^{\frac{3}{2}}} + O(\frac{\delta}{(z-u)^{\frac52}}).
  \]

  Now, we treat now the remaining term and prove it \emph{vanishes identically}.
  One first notes that the double covers $\Omega_{[v^{\delta}]} $ and $\Omega_{[u_{+}^{\delta},v^{\delta},u^{\delta}_{-}]} $ can be canonically identified. This only amounts to change the sheets of \emph{both} corners $c^{\delta}_{\pm} $ neighbouring $v^{\delta}$.
  Thus, one can use the integration procedure of Proposition~\ref{prop:integration-trick} applied to the correlators $\sgn(O(z),v^{\delta})Y^{\diamond}_{[v^{\delta},u_{-}^{\delta},u_{+}^{\delta}]} $ and $\delta^{-\frac12}G^{\diamond}_{[u_{+}^{\delta}]} $ to modify the contour and make it an elementary one passing by $c^{\delta}_{+}$.
  This is indeed a fair operation since $\sgn(O(z),v^{\delta})$ is locally constant away from the branchings.
  Thus, by contour deformation, the only terms that do not \emph{trivially} cancel out are the increments of $H[Y^{\diamond}_{[v^{\delta},u_{-}^{\delta},u_{+}^{\delta}]}, \delta^{-\frac12}G^{\diamond}_{[u_{+}^{\delta}]}(z)] $ along the segment $[v^{\delta}u^{\delta}_{+}]$ (since the two double covers $\Omega_{[v^{\delta}]}$ and $ \Omega_{[u^{\delta}_{+}]}$ are \emph{not} identified at lifts of $c^{\delta}_{+}$.
  We claim that those two increments in fact \emph{do} compensate each other.
  Indeed, fix a lift of $c_{+}^{\delta} $ in $\Omega_{[v^{\delta}]} $.
  Since the contour is oriented counter-clockwise, the ``upper'' increment passing by $c_{+}^{\delta} $ is oriented from left to right.
  Up to a global sign choice, the sum of those two increments (as in the proof of Proposition~\ref{prop:integration-trick}) equals,
  \begin{equation}\label{eq:vanishing-increment}
    \delta \cdot (+1) F_{v^{\delta}}(c^{\delta}_{+}) \delta^{-\frac12}G_{[u_{+}^{\delta}]}(c^{\delta}_{+}) - \delta \cdot (-1)F_{v^{\delta}}(c^{\delta}_{+}) \delta^{-\frac12}G_{[u_{+}^{\delta}]}((c^{\delta}_{+})^\#),
  \end{equation}
  where in the above equation $(+1)$ in the first term comes from the sign of the 'upper' increment $\sgn(\mathcal{O}_{c^{\delta}_{+}},v^{\delta})=+1 $ (i.e. coming from a deformation of the contour in the part of the graph \emph{above} $v^{\delta}$) while $(-1)$ in the second term comes from the sign of the 'lower' increment $\sgn(\mathcal{O}_{c^{\delta}_{+}},v^{\delta})=+1 $ (coming from a deformation of the contour in the part of the graph \emph{below} $v^{\delta}$). Thus \eqref{eq:vanishing-increment} vanishes.

  In conclusion, one can conclude that

  \begin{equation}\label{eq:correlation_ratio_vertical}
    \partial_y \log \bbE_{\primaldomain}^{+}[ \sigma_{u_1^{\delta}} \ldots \sigma_{u^{\delta}_n} ] = \myr[ \icomp \calA_{(u_1,\ldots,u_n)}] + o_{\delta \to 0} (1),
  \end{equation}
  where $o_{\delta \to 0} (1)$ is uniform in bounded compact subsets of $\Omega $.  \medskip
  Whenever one has the two asymptotics~\eqref{eq:correlation_ratio_horizontal} and~\eqref{eq:correlation_ratio_vertical}, it is enough to integrate them along a sequence of paths linking $b^{\delta}_1 $ to $a^{\delta}_1$, $b^{\delta}_2 $ to $a^{\delta}_2$ ... $b^{\delta}_n $ to $a^{\delta}_n$ to recover that 
  \begin{equation}
    \frac{\bbE_{\primaldomain}^{+}[\sigma_{a^{\delta}_1}\cdots\sigma_{a^{\delta}_n}]}{\bbE_{\primaldomain}^{+}[\sigma_{b^{\delta}_1}\cdots\sigma_{b^{\delta}_n}]} \underset{\delta \to 0}{\longrightarrow} \exp \big( \displaystyle \myr \big[ \int^{(a_1,\ldots,a_n)}_{(b_1,\ldots,b_n)} \sum\limits_{k=1}^{n} \calA_{(x_1,\ldots,x_n)} \dd x_k \big] \big).
  \end{equation}
  This concludes the proof.

\end{proof}

One can now use a similar scheme to prove Theorem~\ref{thm:disorder-spins}.

\begin{proof}
  Let $v^{\delta}_{2} \in \dualdomain$ be a neighbour of $u^{\delta}_2$. Repeating the previous integration formula~\ref{prop:integration-trick} to extract values near the branchings with the observables $F^{\diamond}_\delta:= \delta^{-\frac12} \bbE_{\Omega_\delta}^{+}[\sigma_{u^{\delta}_1}\sigma_{u^{\delta}_2}]^{-1}F^{\calS}_{\Omega_\delta, \{v_{1}^{\delta},u^{\delta}_2 \}} $ and $\delta^{-\frac12}G_{[v^{\delta}_{2}]} $ via a macroscopic countour $\calC^{\delta}_{r} $ at a small distance $r>0$ from $u^{\delta}_2$, one gets 
  \begin{equation}
    2\; \frac{\bbE_{\primaldomain}^{+}[\mu_{v^{\delta}_1}\mu_{v^{\delta}_2}]}{\bbE_{\primaldomain}^{+}[\sigma_{u^{\delta}_1}\sigma_{u^{\delta}_2}]}
    = \displaystyle \myi \big[ \int_{\calC^{\delta}_{r}} \delta^{-\frac12}G^{\diamond}_{[v^{\delta}_{2}]}(z)F^{\diamond}_\delta(z)\dd z \big].
  \end{equation}
  Using the asymptotics
  \begin{align*}
    & f^{\calS}_{\Omega, \varpi}= \frac{1}{\sqrt{\pi}}  \frac{e^{-\icomp\frac\pi4}}{(z-u_2)^\frac12}\calB_{\Omega}(u_1,u_2) + O \big( (z-u_2)^{\frac12} \big), \\
    & \delta^{-\frac{1}{2}}G^{\diamond}_{[v^{\delta}_{2}]}(z)
      = \frac{1}{\sqrt{\pi}} \frac{e^{\icomp \frac\pi4}}{(z-u_2)^{\frac12}} + O\big( \frac{\delta^2}{(z-v)^{\frac{5}{2}}} \big),
  \end{align*}
  one gets the result exactly as in the previous proof (taking again $r$ arbitrarily small).
  To conclude, one uses Krammers--Wannier duality to note that $\bbE_{\primaldomain}^{+}[\mu_{v^{\delta}_1}\mu_{v^{\delta}_2}] =  \bbE^{\free}_{\Omega^{\circ}_\delta}[\sigma_{v^{\delta}_1} \sigma_{v^{\delta}_2}] $.

\end{proof}

Now that once Theorem~\ref{thm:ratios_spins} is proven, one has to find the proper normalizing factors in front of the correlation ratio to prove completely Theorem~\ref{thm:spins}.
This goes by a comparaison with the full-plane lattice.
This proof is done here by induction over $n$ and recalls elements introduced in~\cite{CHI-spin}. 

\begin{proof}
  We start with the case $n = 2$.
  Define $\langle \sigma_a \sigma_{b} \rangle_{\bbC}^{+}:= |b-a|^{\frac14}$.
  In this case, \ref{thm:ratios_spins} implies that,
  \begin{align*}
    & \frac{\delta^{-\frac14} \calC^2 \bbE_{\Omega_\delta}^{+} [\sigma_{a}\sigma_{b}] }{\langle \sigma_a \sigma_{b} \rangle_{\Omega}^{+}}  = \big( \frac{\langle \sigma_a \sigma_{b'} \rangle_{\Omega}^{+}}{\langle \sigma_a \sigma_{b} \rangle_{\Omega}^{+}} \cdot \frac{\bbE_{\Omega_\delta}^{+} [\sigma_{a}\sigma_{b}]}{\bbE_{\Omega_\delta}^{+} [\sigma_{a}\sigma_{b'}]}  \big) \cdot \frac{\bbE_{\Omega_\delta}^{+} [\sigma_{a}\sigma_{b'}]}{\bbE_{\bbC_\delta}^{+} [\sigma_{a}\sigma_{b'}]} \cdot
      \frac{\bbE_{\bbC_\delta}^{+} [\sigma_{a}\sigma_{b'}]}{\delta^{-\frac14}\calC^2 \langle \sigma_a \sigma_{b'} \rangle_{\bbC}}
                                                                                                                                               \cdot \frac{ \langle\sigma_a \sigma_{b'} \rangle_{\bbC}^{+}}{\langle \sigma_a \sigma_{b} \rangle_{\Omega}^{+}}.
  \end{align*}

  Sending $\delta $ to $0$ first makes the first of the RHS on both lines converge to $1$. Sending then $b'$ to $a$ and using the fact that $ \lim_{b' \to a} \lim_{\delta \to 0} \tfrac{\bbE_{\Omega_\delta}^{+} [\sigma_{a}\sigma_{b}]}{\bbE_{\bbC_\delta}^{+} [\sigma_{a}\sigma_{b'}]} =1  $ (which is true by FKG inequality and the fact that $\calB(a,b')$ goes to $1$ as $b'$ approches $a$), it simply remains to notice that $\langle \sigma_a \sigma_{b'} \rangle_{\bbC}^{+} (\langle \sigma_a \sigma_{b'} \rangle^{+}_{\Omega})^{-1} $ goes to $1$ as $b'$ goes to $a$. The latter is true when  $\Omega = \bbH $ and holds by conformal invariance  for general domains. \newline
  For $n=1$, one notices that,
  \begin{equation}
    \frac{\bbE_{\Omega_{\delta}}^{+}[\sigma_a]^{2}}{\delta^{\frac{1}{4}}\calC^2 (\langle \sigma_a  \rangle_{\Omega}^{+})^2 } = \big( \frac{1}{(\langle \sigma_a  \rangle_{\Omega}^{+})^2} \frac{\bbE_{\Omega_{\delta}}^{+}[\sigma_a]}{\bbE_{\Omega_{\delta}}^{+}[\sigma_b]} \big) \cdot \frac{\bbE_{\Omega_{\delta}}^{+}[\sigma_a]\bbE_{\Omega_{\delta}}^{+}[\sigma_b]}{\bbE_{\Omega_{\delta}}^{+}[\sigma_a \sigma_b]  } \cdot \frac{\bbE_{\Omega_{\delta}}^{+}[\sigma_a \sigma_b]}{\delta^\frac14 \calC^2}.
  \end{equation}
  Sending $\delta $ to $0$ first the first term converges to $(\langle \sigma_a  \rangle_{\Omega}^{+} \langle \sigma_b  \rangle^{+}_{\Omega})^{-1} $ and the last term converges to $ \langle \sigma_a \sigma_b  \rangle_{\Omega}^{+} $. As $b$ approaches the boundary, the second term can be made arbitrarily close to $1$ (uniformly in $\delta $, this is a consequence of GHS inequality and the fact that $\calB_{\Omega}(a,b)$ goes to $0$ in that regime). Still when $b$ approaches the boundary, the product  $ \langle \sigma_a \sigma_b  \rangle_{\Omega}^{+}  (\langle \sigma_a  \rangle_{\Omega}^{+} \langle \sigma_b  \rangle^{+}_{\Omega})^{-1} $ is arbitrarily close to $1$.
  For $n\geq 3$, assume that the result is already proven for all $n'<n$. One decomposes
  \begin{eqnarray}
    \frac{\bbE_{\Omega_{\delta}}^{+}[\sigma_{a_1}\cdots\sigma_{a_n}]}{\calC^{n}\delta^{\frac{n}{8}} \langle \sigma_{a_1}\ldots \sigma_{a_n}  \rangle_{\Omega}^{+}}  & = &  \frac{\bbE_{\Omega_{\delta}}^{+}[\sigma_{a_1}\cdots\sigma_{a_n}]}{\bbE_{\Omega_{\delta}}^{+}[\sigma_{b}\cdots\sigma_{a_n}]} \cdot \frac{\langle \sigma_{b}\ldots \sigma_{a_n}  \rangle_{\Omega}^{+}}{\langle \sigma_{a_1}\ldots \sigma_{a_n}  \rangle_{\Omega}^{+}} \times \\
                                                                                                                                                                    &  & \frac{\bbE_{\Omega_{\delta}}^{+}[\sigma_{b}\cdots\sigma_{a_n}]}{\bbE^{+}_{\Omega_\delta}[\sigma_b] \bbE_{\Omega_{\delta}}^{+}[\sigma_{a_2}\cdots\sigma_{a_n}]} \times  \\
                                                                                                                                                                    &  & \frac{\bbE^{+}_{\Omega_\delta}[\sigma_b] }{\calC\delta^\frac18} \cdot  \frac{\bbE_{\Omega_{\delta}}^{+}[\sigma_{a_2}\cdots\sigma_{a_n}]}{\calC^{n-1}\delta^\frac{n-1}{8}} \cdot \frac{1}{\langle \sigma_{b}\ldots \sigma_{a_n}  \rangle_{\Omega}^{+}}.
  \end{eqnarray}

  Sending first $\delta $ to $0$, the RHS of the first line converges to $1$ (due to Theorem~\ref{thm:ratios_spins}).
  As $b$ approaches the boundary, the second line can be made arbitrarily close to $1$ (one side is FKG inequality while the other one is Russo--Seymour--Welsh type estimates), uniformly in $\delta $ small enough.
  Finally, the last line converges to $ \langle \sigma_{b} \rangle_{\Omega}^{+}\langle \sigma_{a_2}\ldots \sigma_{a_n}  \rangle_{\Omega}^{+} (\langle \sigma_{b}\ldots \sigma_{a_n}  \rangle_{\Omega}^{+})^{-1} $ due to the induction hypothesis.
  It is then enough to note that the last mentionned quantity can be be made arbitrarily close to $1$ provided $b$ is close enough to the boundary (this is due to the structure of continuous correlation function, see~\cite[Sec.~5]{CHI-mixed}).
\end{proof}

We now pass to the proof of convergence of Theorems~\ref{thm:single_energy} and~\ref{thm:multiple_energy}. For simplicity, we only sketch the proof of multiple-energy correlations since it can be derived clasically by induction using the Pfaffian structure of the model, up to the amount of introducing the formalism of multi-point spin-dirsorder correlator (i.e. introducing a correlator as a function depending on several corners).

\begin{proof}
  We start with the case $n=1$. One first notices within the derivation of proposition~\ref{prop:convergence_energy_observable}  that the function $F^{\dagger}_{\delta}:=  \delta^{-1} (\energysolutiondiscrete - G_{(a)} ) $ converges to $\overline{\eta_a} [ f^{\calE}_{\Omega, a} - \frac{1}{\pi} \frac{1}{z-a} ] $  \emph{uniformly} on compact subsets of $\Omega $ (since $F_{\delta}^{\dagger} $ has no singularity). In particular the convergence holds at $v_a$ and $u_a$. On the other hand, one has the expansion $f^{\calE}_{\Omega,a}(z) = \overline{\eta_a} [\frac{1}{\pi(z-a)} + \frac{1}{\sqrt{2}\pi} l_{\Omega}(a) + O(z-a)]$ near $a$, which allows to identify the limit of $F_{\delta}^{\dagger}[v_a] = \overline{\eta_a} \delta^{-1}\bbE^{+}_{\primaldomain}[\epsilon_{a^{\delta}}]$, $F_{\delta}^{\dagger}[u_a] = -\overline{\eta_a} \delta^{-1} (\bbE^{+}_{\primaldomain}[\mu_a\mu_{a-2\delta}]-\bbE^{+}_{\bbC^{\bullet}_\delta}[\mu_a\mu_{a-2\delta}])$.

  For the $n$-point energy case, one can reccursively construct as in the discrete case a multi-point fermionic correlator~\cite[Sec.~2.4]{CHI-mixed} as a function of $2r$ corners
  \begin{equation}
    G(c_1,c_2,\ldots,c_{2r}) \mapsto \eta_{c_1} \eta_{c_2} \ldots \eta_{c_{2r}} \langle \chi_{c_1} \chi_{c_2} \ldots \chi_{c_{2r}} \rangle_{\Omega_\delta},
  \end{equation}
  with proper double valuations at the corners $c_i$ and consistent choices of $\eta_c $.
  One gets that away from its diagonal (when the $c_i$ are distinct), one has
  \begin{equation}\label{eq:pfaffian_energy}
    G(c_1,c_2,\ldots,c_{2r}) = \Pf \big[ \eta_{c_k}\eta_{c_l} \langle \chi_{c_k} \chi_{c_l} \rangle_{\Omega_\delta} \big]_{1\leq k,l\leq 2r}.
  \end{equation}
  Multiple energy correlation can be recovered as values of $G$ near their diagonal (i.e. taking the corners $c_{k}\sim c_{k+r}$ to be adjacent) and the convergence of the single energy density proved in the previous lines together with the Pfaffian formula~\eqref{eq:pfaffian_energy} directly yields the result.
\end{proof}

The proof of Theorems~\ref{thm:spins_formulas} and~\ref{thm:energy_formulas} is a simple application of Theorems~\ref{thm:spins} and~\ref{thm:multiple_energy} with explicit formulas for solution to boundary value problmes, which can be found in~\cite[Sec.~7]{CHI-mixed} for the upper-half plane, and in the appendix for rectangles.

We are now able to prove Theorem~\ref{thm:rotational_invariance} that states rotational invariance of the model at criticality.

\begin{proof}
  Fix $\epsilon >0 $ and $\vec{s}$ a unitary vector. Let $\bbD^{\delta}_{R} $ be the approximation of $\bbD(0,R) $ by $\bbC_\delta $. There exist $R(\epsilon)>0$ large enough such that, uniformy in $\delta $ and $s$, 
  \begin{equation}
    1-\epsilon \leq \frac{\bbE_{\bbD^{\delta}_{R}}^{+}[\sigma_{-s}\sigma_{s}]}{\bbE_{\bbC_\delta}^{+}[\sigma_{-s}\sigma_{s}]} \leq 1+\epsilon \nonumber
  \end{equation}
  Moreover, now one can use the convergence theorem in $\bbD^{\delta}_{R} $, which states that $\bbE_{\bbD^{\delta}_{R}}^{+}[\sigma_{-s}\sigma_{s}] \bbE_{\bbD^{\delta}_{R}}^{+}[\sigma_{-1}\sigma_{1}]^{-1} $ converges~\footnote{This comes from rotational invariance of $\langle \sigma_{-s}\sigma_{s} \rangle_{\bbD_{R}}^{+} $, which comes itself from rotational invariance of the associated boundary value problem.} to $\langle \sigma_{-s}\sigma_{s} \rangle_{\bbD_{R}}^{+} (\langle \sigma_{-s}\sigma_{s} \rangle_{\bbD_{R}}^{+} )^{-1} =1 $.
  Hence, the quantity $\langle \sigma_{-s}\sigma_{s} \rangle_{\bbD^{\delta}_{R}}^{+} (\langle \sigma_{-s}\sigma_{s} \rangle_{\bbD^{\delta}_{R}}^{+} )^{-1} $ goes to $1$ as $\delta \to 0 $. Sending $\delta $  to $0$ first and then $\epsilon $ to $0$, one can deduce that $\bbE_{\bbC_\delta}^{+}[\sigma_{-s}\sigma_{s}] \bbE_{\bbC_\delta}^{+}[\sigma_{-1}\sigma_{1}]^{-1} $ is arbitrarily close to $1$ as $\delta $ goes to $0$. We conclude using the asymptotic in the vertical direction derived in Theorem~\ref{thm:crit-homogen}.

\end{proof}

\section{Orthogonal polynomials and full-plane expectation}

\label{sec:polynomials}

In this section, we discuss full-plane spin-spin correlations in the horizontal direction (taken in the infinite volume limit) for the quantum Ising model, at and below the criticality.
In the homogeneous square-grid case, those results are known since the 70's with the work of McCoy and Wu~\cite{McWu-Ising}, and were originally derived relying upon the formalism of Toeplitz determinants.
Here, we follow the strategy of~\cite{CHM-zig-zag-Ising}, where a simplification using modern techniques only using the theory of {\emph{real-valued}} orthogonal polynomials was developed.
This last mentioned method admits a generalization to the case of the quantum Ising model.
Instead of using known results for rectangular grids and then smashing the lattice (which can raise problems when exchanging limits), we prefer remaining with the formalism of disorder insertion in the semi-discrete lattice to derive results for the quantum model.

\begin{figure}[htb] \centering
  \includegraphics[scale=1.2, page=1]{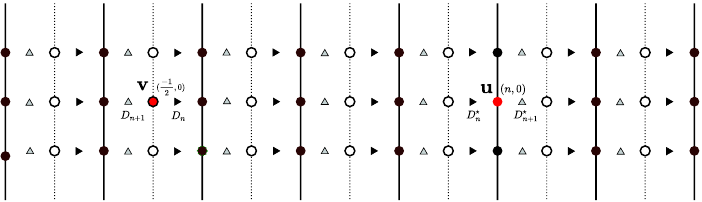}
  \caption{Local picture for the full-plane observable with two branchings (denoted in red).}
  \label{fig:Global_figure}
\end{figure}

\begin{figure}[htb] \centering
  \includegraphics[scale=1, page=1]{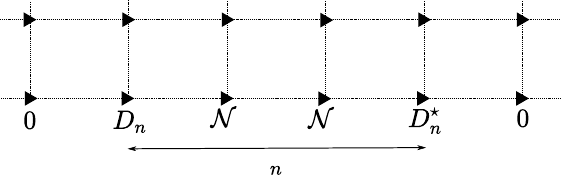} \hspace{10mm}
  \includegraphics[scale=1, page=1]{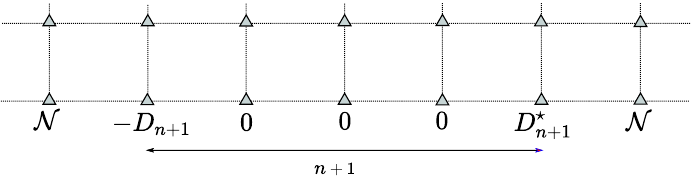}
  \caption{Symmetrized (left) and antisymmetrized (right) semi-discrete boundary value problems in the upper-half plane.}
  \label{fig:sym-anti-bvp}
\end{figure}

\subsection{Full-plane observable with two branchings}

Assume that the semi-discrete lattice on which the Ising model is so that its primal vertices coincide with~\mbox{$\bbZ \times \bbR$} and its dual vertices are~\mbox{$(\bbZ +\frac{1}{2})\times \bbR$}, see Fig.~\ref{fig:Global_figure}.
It is knwon~\cite{DCLM-universality} that at and below criticality, the quantum Ising model only admits two extremal Gibbs measures (coming from~`$+$' and `$-$' boundary conditions at infinity).
Those extremal measure are in particular translationally invariant in the infinite volume limit. Given~$n\ge 0$, we define the horizontal and next-to-horizontal correlations
\begin{align*}
  D_n := \bbE_{\bbZ \times \bbR} \big[ \sigmap_{(0, 0)} \sigmap_{(n, 0)}]\,, \qquad
  & D_n^{\star}:= \bbE^\star_{(\bbZ  \times \bbR)^\star}[\sigmad_{(-\frac{1}{2}, 0)} \sigmad_{(n-\frac{1}{2}, 0)}]\,, \\
  D_{n+1}:=\bbE_{\bbZ  \times \bbR}\big[\sigmap_{(-1, 0)} \sigmap_{(n, 0)}]\,, \qquad
  & D_{n+1}^{\star}:= \bbE^\star_{(\bbZ  \times \bbR)^\star}[\sigmad_{(-\frac{1}{2}, 0)} \sigmad_{(n+\frac{1}{2}, 0)}]\,, \\
  \widetilde{D}_{n+1} := \partial_{y} \big[ \bbE_{\bbZ  \times \bbR} \big[\sigmap_{(-1, y)} \sigmap_{(n, 0)}] \big]_{y=0}\,, \qquad
  & \widetilde{D}^{\star}_{n+1} := \partial_{y} \big[ \bbE^\star_{(\bbZ  \times \bbR)^\star}[\sigmad_{(-\frac{1}{2}, y)}\sigmad_{(n+\frac{1}{2}, y)}] \big]_{y=0}\,, 
\end{align*}
where the expectations in the second column are taken for the \emph{dual} quantum Ising model with dual parameters (see Section~\ref{sec:KW-duality}).
In particular, one can view those dual expectactions as disorder-disorder correlations for the quantum Ising model defined on $\Omega^{\bullet}$.

Let~$\mathbf{v}=(-\frac{1}{2}, 0)$ and $\mathbf{u}=(n, 0)$. Below we rely upon the \emph{full-plane} observable~$F_{[\mathbf{v}, \mathbf{u}]}$ which can be thought of as a subsequential limit of observables defined on an increasing sequence $(G_n)_{n\geq 0} $ of finite graphs exhausting the semi-discrete lattice. The existence of this pointwise (subsequential) limit is justified by the bound
\begin{equation}
  \label{eq:X-bound-onG}
  \big|\Corr{G_n}{\mu_{v(c)}\mu_{\mathbf{v}}\sigma_{u(c)}\sigma_{\mathbf{u}}}\big|\
  \le\ \Corr{G_n}{\mu_{v(c)}\mu_{\mathbf{v}}}\
  =\ \bbE_{G_n^\bullet}^{\star}[\sigmad_{v(c)}\sigmad_{\mathbf v}]\ \le\ 1, 
\end{equation}
together by the equicontinuity of correlators (so it is enough to apply the diagonal process on $(\bbZ \pm \frac{1}{4}) \times \bbQ$). The uniqueness of the limit (and hence its non dependance on the exhaustion used above) follows from Lemma~\ref{lem:uniqueness}.
Let~$[\bbZ\times\bbR;\mathbf v, \mathbf u]$ denote the double cover of the lattice $ \bbZ\times\bbR$ branching over~$\mathbf v$ and~$\mathbf u$.
We now introduce the following \emph{symmetrized} and \emph{anti-symmetrized} versions of the observable~$F_{[\mathbf{v}, \mathbf{u}]}(\cdot)$ on eastern and western corners, respectively (see Fig.~\ref{fig:Global_figure}):
\begin{align}
  \label{eq:Xsym-def}%X^{sym}(\bullet)=
  \Xsym(c) := \tfrac{e^{\icomp\frac{\pi}{4}}}{2}[F_{[\mathbf{v}, \mathbf{u}]}(c)+F_{[\mathbf{v}, \mathbf{u}]}(\bar{c})], \quad &c\in [(\bbZ -\tfrac{1}{4})\times \bbR;\mathbf v, \mathbf u], \\
  \label{eq:Xanti-def}%X^{ant}(\bullet)=
  \Xanti(c) := \tfrac{e^{-i\frac{\pi}{4}}}{2}[F_{[\mathbf{v}, \mathbf{u}]}(c)-F_{[\mathbf{v}, \mathbf{u}]}(\bar{c})], \quad &c\in [(\bbZ +\tfrac{1}{4})\times\bbR;\mathbf v, \mathbf u], 
\end{align}
where the continuous conjugation $z\mapsto \bar{z}$ on~$[(\bbZ \pm \tfrac{1}{4}) \times \bbR; \mathbf v, \mathbf u]$ is defined so it maps the segment $[\pm \tfrac{1}{4}, n \pm\tfrac{1}{4}] \times \{ 0 \} $ between $\mathbf v$ and $\mathbf u$ to itself (i.e., the conjugate of each point located over this segment is on the \emph{same} sheet of the double cover).
Once~$z \mapsto \bar{z}$ is specified in between of the branching points, it {can be `continuously' extended} to the entire {double cover} $[(\bbZ \pm \tfrac{1}{4}) \times \bbR; \mathbf v, \mathbf u]$. In particular, the points~$c$ located over the real line but outside of the segment~$[\pm \frac{1}{4}, n\pm\frac{1}{4}]$ are mapped by~$z \mapsto \overline{z}$ to their counterparts~$c^\#$ on the \emph{other} sheet of the double cover.
% (Note that there is an alternative choice of the mapping $z\mapsto\bar{z}$ for which the points~$c\in [\tfrac{1}{4};n-\tfrac{1}{4}]\times \{ 0 \}$ are mapped to their counterparts~$c^\star$ located on the other sheet of the double cover.)

We now list some basic properties of the observables~$\Xsym$ and~$\Xanti$ and show they are sufficient to characterize them uniquely.
Due to~\eqref{eq:X-bound-onG} we have
\[%\begin{equation}\label{eq:X-bound}
  \phantom{x}\;\qquad \big|\Xsym(k\!-\!\tfrac{1}{4}, s)\big|\le 1\quad \text{and}\quad \big|\Xanti(k\!+\!\tfrac{1}{4}, s)\big|\le 1\quad \text{for all}\ \ (k, s)\in\bbZ  \times \bbR.
\]%\end{equation}
Equation~\eqref{eq:mLaplacian} ensures that the observables~$\Xsym$ and~$\Xanti$ are massive harmonic away from the branching points~$\mathbf v, \mathbf u$.
In particular, one has
\begin{align}
  \label{eq:X-laplacian}
  [\Delta^{(m)}\!\Xsym]((k\!-\!\tfrac{1}{4}, s))=0\quad \text{and}\quad [\Delta^{(m)}\!\Xanti]((k\!+\!\tfrac{1}{4}, s))=0\quad \text{if}\ \ s\ne 0.
\end{align}
Moreover, the spinor property of the observable~$F_{[\mathbf v, \mathbf u]}$ coupled  with the above described choice of the conjugation ensures
\begin{align}
  \label{eq:Xsym-vanishes}
  \Xsym((k\!-\!\tfrac{1}{4}, 0))\!=\!0, \ k\not\in[0, n], \quad & [\Delta^{(m)}\!\Xsym]((k\!-\!\tfrac{1}{4}, 0))\!=\!0, \ k\in [1, n\!-\!1];\\
  \label{eq:Xanti-vanishes}
  \Xanti((k\!+\!\tfrac{1}{4}, 0))\!=\!0, \ k\in [1, n], \quad & [\Delta^{(m)}\!\Xanti]((k\!+\!\tfrac{1}{4}, 0))\!=\!0, \ k\not\in [0, n\!+\!1].
\end{align}
Finally, the definition of~$F_{[\mathbf v, \mathbf u]}$ as a correlator imply (recall  $\sigma_{u}\sigma_{u} = 1 $, $ \mu_{v}\mu_{v}=1 $)
\begin{align}
  \label{eq:Xsym-values}
  \Xsym((-\tfrac{1}{4}, 0))=D_n, \qquad & \Xsym((n\!-\!\tfrac{1}{4}, 0))=D_n^\star;\\
  \label{eq:Xanti-values}
  \Xanti((-\tfrac{3}{4}, 0))=-D_{n+1}, \qquad & \Xanti((n\!+\!\tfrac{1}{4}, 0))=D_{n+1}^\star, 
\end{align}
where we assume that these pairs of corners are located on the \emph{same sheet} of the double cover~$[(\bbZ \pm \tfrac{1}{4})\times\bbR;\mathbf v, \mathbf u]$ \emph{as viewed from the upper half-plane}; this explains why the value~$D_{n+1}$ at~$(-\tfrac{3}{4}, 0)$ comes with an opposite sign.

\begin{lem} \label{lem:uniqueness}
  (i) The uniformly bounded observable~$\Xsym$ given by~\eqref{eq:Xsym-def} is uniquely characterized by the properties~\eqref{eq:X-laplacian}, \eqref{eq:Xsym-vanishes} and its values~\eqref{eq:Xsym-values} near~$\mathbf v$ and~$\mathbf u$.

  \smallskip
  \noindent (ii) Similarly, the uniformly bounded observable~$\Xanti$ given by~\eqref{eq:Xanti-def} is uniquely characterized by the properties~\eqref{eq:X-laplacian}, \eqref{eq:Xanti-vanishes} and its values~\eqref{eq:Xanti-values} near~$\mathbf v$ and~$\mathbf u$.
\end{lem}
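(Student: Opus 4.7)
The plan is to prove uniqueness by considering the difference of two candidates and showing it must vanish via a stochastic/maximum-principle argument adapted to the double cover. Both parts follow the same pattern; I describe part (i) in detail, and part (ii) is handled analogously after swapping the roles of the sites where the function vanishes with those where the massive Laplacian vanishes.

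First, suppose $X_1, X_2$ are two uniformly bounded observables satisfying the listed properties, and set $Y := X_1 - X_2$. By linearity, $Y$ is uniformly bounded on the double cover $[(\bbZ - \tfrac{1}{4}) \times \bbR;\mathbf v, \mathbf u]$ and satisfies the massive-harmonicity conditions inherited from \eqref{eq:X-laplacian}, including at the interior horizontal-axis corners with $k \in [1, n-1]$ from \eqref{eq:Xsym-vanishes}. Since the prescribed values in \eqref{eq:Xsym-values} cancel in the difference, $Y$ vanishes at both branching-row corners $(-\tfrac{1}{4}, 0)$ and $(n - \tfrac{1}{4}, 0)$. Combined with the vanishing hypothesis in \eqref{eq:Xsym-vanishes}, this shows that $Y$ is identically zero on the set $\Gamma := \{(k - \tfrac{1}{4}, 0) : k \in \bbZ \setminus [1, n-1]\}$ and massive harmonic everywhere on the complement of $\Gamma$ inside the double cover.

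Next, I would establish the following Liouville-type statement, which is the heart of the proof: any uniformly bounded function on the double cover that is massive harmonic off $\Gamma$ and vanishes on $\Gamma$ is identically zero. I would use the probabilistic interpretation of $\Delta^{(m)}$ (recalled after \eqref{eq:mLaplacian}) as the generator of a semi-discrete Brownian motion $(\Xi_t)$ on the double cover, namely a standard vertical Brownian motion coupled to a horizontal Poisson jump process on $\bbZ - \tfrac{1}{4}$, with horizontal killing rate $1 - \tfrac{2 \theta \theta^\star}{\theta^2 + {\theta^\star}^2} \ge 0$. Writing $\tau$ for the hitting time of $\Gamma$ and $\zeta$ for the killing time, the stochastic representation for bounded massive harmonic functions yields $Y(z) = \bbE^z[Y(\Xi_\tau) \mathbf{1}_{\{\tau < \zeta\}}]$, which is zero because $Y|_\Gamma \equiv 0$. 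Below criticality the strictly positive killing rate makes the representation immediate and forces exponential decay. At criticality, where the operator reduces to the genuine semi-discrete Laplacian \eqref{eq:normalized_Laplacian} with no killing, the argument relies on recurrence: the vertical coordinate returns to zero in finite time a.s., while the horizontal walk almost surely leaves any finite interval in $\bbZ - \tfrac14$, so the joint process eventually reaches a time when the vertical coordinate is zero and the horizontal coordinate lies outside $[1, n-1]$, i.e.\ the process hits $\Gamma$.

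The main obstacle will be handling the double-cover structure cleanly. The process must be lifted to the cover (where it may wind around the branch points $\mathbf v, \mathbf u$), and one must verify that $\Gamma$ is accessible from every point on both sheets and that bounded massive harmonic functions on the cover admit the same probabilistic representation as on the plane. A more direct analytic alternative, which I would pursue if the probabilistic setup becomes cumbersome, is to apply the discrete maximum principle to $Y$ on expanding truncations of the double cover: on each truncation the maximum of $|Y|$ lies on the discrete boundary, and boundedness together with the exponential decay (subcritically) or effective two-dimensional recurrence (critically) forces the far-field contribution to vanish, giving $Y \equiv 0$. The anti-symmetric part $\Xanti$ is treated identically, after relabelling the Dirichlet set as $\{(k + \tfrac14, 0) : k \in [1, n]\}$ and the prescribed values at $(-\tfrac34, 0)$ and $(n + \tfrac14, 0)$.
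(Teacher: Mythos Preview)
Your approach is essentially the same as the paper's: take the difference of two candidates, use the probabilistic interpretation of $\Delta^{(m)}$ as the generator of a semi-discrete Brownian motion with killing on the double cover, and apply optional stopping to the resulting bounded martingale to conclude that the difference vanishes. The paper's proof is terser (it simply asserts that the process almost surely hits the Dirichlet set or dies), whereas you spell out the critical/subcritical dichotomy and flag the double-cover lifting explicitly, but the argument is the same.
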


\begin{proof} (i) Let $X_1$ and $X_2$ be two bounded spinors satisfying~\eqref{eq:X-laplacian}, \eqref{eq:Xsym-vanishes} and~\eqref{eq:Xsym-values}.
  Let $(Z_k)_{k\geq 0}$ be the semi-discrete Brownian Motion with killing started at~$c\in[(\bbZ -\tfrac{1}{4})\times\bbR;\mathbf u, \mathbf v]$ that corresponds to the massive Laplacian~$\Delta^{(m)}$.
  This semi-discrete Brownian Motion almost surely hits the points located over the set~$\{(k\!-\!\tfrac{1}{4}, 0), k\not\in [1, n\!-\!1]\}\}$ or dies. Since the process $(X_1-X_2)(Z_k) $ is a bounded martingale with respect to the canonical filtration, the optional stopping theorem yields~$X_1(c)\!-\!X_2(c)=0$. The proof of (ii) is similar.
\end{proof}

% In the next section we construct bounded spinor satisfying~\eqref{eq:X-laplacian}--\eqref{eq:Xanti-values} by means of the Fourier transform and orthogonal polynomials techniques.
The next lemma allows one to {construct explicitely $X_{[\mathbf{v}, \mathbf{u}]}$ in Section~\ref{subsect:construction-homogen}} to get a recurrence relation between consecutive spin-spin full-plane correlations. For~$n\ge 1$, denote
\begin{equation}
  \label{eq:L-def}
  L_n := - \frac{\theta}{\theta^2 + {\theta^\star}^2} \cdot \widetilde{D}_{n}, \qquad
  L_n^\star := - \frac{\theta^{\star}}{\theta^2 + {\theta^\star}^2} \cdot \widetilde{D}^{\star}_{n}.
\end{equation}
% \begin{equation}
%   \label{eq:L-def}
%   L_n := \frac{\theta\theta^{\star}}{\theta^2+(\theta^\star)^2} \cdot[D_{n}+\theta\cdot \widetilde{D}_{n}], \qquad
%   L_n^\star:=\frac{\theta\theta^{\star}}{\theta^2+(\theta^\star)^2} \cdot[D^{\star}_{n}+\theta^{\star}\cdot \widetilde{D}^{\star}_{n}].
% \end{equation}

\begin{lem} \label{lem:laplacians}
  For each~$n\ge 1$, the following identities are fulfilled:
  \begin{align}
    \label{eq:Xsym-laplacians}
    -[\Delta^{(m)}\Xsym]((-\tfrac{1}{4}, 0))=L_{n+1}, \qquad & -[\Delta^{(m)}\Xsym]((n\!-\!\tfrac{1}{4}, 0))=L_{n+1}^\star;\\
    \label{eq:Xanti-laplacians}
    -[\Delta^{(m)}\Xanti]((-\tfrac{3}{4}, 0))=-L_n, \qquad & -[\Delta^{(m)}\Xanti]((n\!+\!\tfrac{1}{4}, 0))=L_n^\star, 
  \end{align}
  with the same choice of points on the double covers~$[(\bbZ \pm \tfrac{1}{4})\times\bbZ ;\mathbf v, \mathbf u]$ as above. %in~\eqref{eq:Xsym-values},~\eqref{eq:Xanti-values}.
  If~$n=0$, the identities~\eqref{eq:Xsym-laplacians} should be replaced by~$-[\Delta^{(m)}\Xsym]((-\tfrac{1}{4}, 0))=L_1\!+\!L_1^\star$ while~\eqref{eq:Xanti-laplacians} hold with $L_0:=\frac{\theta^{\star}}{\theta^2 + {\theta^{\star}}^{2}}$ and $L_0^\star:=\frac{\theta}{\theta^2 + {\theta^{\star}}^{2}}$.
\end{lem}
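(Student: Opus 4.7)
The plan is to compute each of the four massive Laplacians directly from~\eqref{eq:mLaplacian}, using the representation of $\Xsym$ and $\Xanti$ as symmetrizations of the full--plane mixed correlator $F_{[\mathbf v,\mathbf u]}(c)=\eta_c\langle\chi_c\mu_{\mathbf v}\sigma_{\mathbf u}\rangle$, combined with the Kadanoff--Ceva propagation formulas (Propositions~\ref{prop:two_term},~\ref{prop:two_term_dual} and Lemma~\ref{lem:chi_dy}). Away from the branchings, two iterated applications of the propagation equation reproduce the massive harmonicity identity~\eqref{eq:Psi_mLaplacian} and thus give $\Delta^{(m)}\Xsym \equiv 0$ on the corners listed in~\eqref{eq:X-laplacian}. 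At a corner adjacent to $\mathbf v$ or $\mathbf u$, some of the intermediate correlators appearing in this derivation collapse via $\mu_{\mathbf v}^{\,2}=1$ or $\sigma_{\mathbf u}^{\,2}=1$, and this collapse produces an extra term which is exactly the right--hand side of~\eqref{eq:Xsym-laplacians} or~\eqref{eq:Xanti-laplacians}.

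Concretely, at $c_0=(-\tfrac14,0)$ the operator product $\chi_{c_0}\mu_{\mathbf v}\sigma_{\mathbf u}$ reduces to $\sigma_{(0,0)}\sigma_{\mathbf u}$, confirming $\Xsym(c_0)=D_n$ from~\eqref{eq:Xsym-values}, while $\Xsym(c_0^{--})=0$ from~\eqref{eq:Xsym-vanishes}. One first vertical derivative via Lemma~\ref{lem:chi_dy} gives $\partial_yF(c_0)$ as a $\theta$--contribution $\langle\chi_{c_0^-}\mu_{\mathbf v}\sigma_{\mathbf u}\rangle$, which by the same $\mu$--squared identity collapses to $D_{n+1}$, plus a $\theta^\star$--contribution that remains a genuine four--point spin--disorder correlator $\langle\mu_{(1/2,0)}\sigma_{(0,0)}\mu_{\mathbf v}\sigma_{\mathbf u}\rangle$. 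A second derivative (or, more directly, applying Proposition~\ref{prop:two_term_dual} to the definition $\widetilde D_{n+1}=\partial_y\langle\sigma_{(-1,y)}\sigma_{\mathbf u}\rangle|_{y=0}$) identifies the four--point correlator coming from the second derivative with the value of $F$ at $c_0^{--}$, and shows that $\widetilde D_{n+1}$ equals $\theta^\star$ times that same correlator up to a sign. Substituting these values into~\eqref{eq:mLaplacian}, most terms cancel against what would come from naive massive harmonicity, and one is left with $-[\Delta^{(m)}\Xsym](c_0)=-\tfrac{\theta}{\theta^2+{\theta^\star}^2}\widetilde D_{n+1}=L_{n+1}$. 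The three remaining identities follow by the same mechanism: at $(n-\tfrac14,0)$ the branching adjacent to the corner is $\mathbf u$ instead of $\mathbf v$, so Proposition~\ref{prop:two_term_dual} plays the role of Proposition~\ref{prop:two_term} and $(\theta^\star,\widetilde D^\star_{n+1})$ replaces $(\theta,\widetilde D_{n+1})$, producing $L^\star_{n+1}$; for $\Xanti$ the primal vertex attached to the corner at stake is the ``outer'' one, namely $(-1,0)$ or $(n+1,0)$, rather than the ``inner'' one, which shifts the spin--spin distance from $n+1$ to $n$ and yields $\widetilde D_n$ and $\widetilde D^\star_n$ in place of $\widetilde D_{n+1}$ and $\widetilde D^\star_{n+1}$.

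The case $n=0$ has to be handled apart because $\mathbf v$ and $\mathbf u$ are then adjacent and both sit next to the single corner $c_0=(-\tfrac14,0)$ (respectively $(-\tfrac34,0)$ for $\Xanti$): the two propagation corrections combine instead of decoupling, giving the sum $L_1+L_1^\star$ for $\Xsym$, while for $\Xanti$ the reduction leaves no disorder in the correlator and only the numerical constants $L_0,L_0^\star$ survive. The principal obstacle throughout the proof is tracking the signs and the sheet identifications: each application of Lemma~\ref{lem:chi_dy} introduces a factor $\sgn(\mathcal O,v)$ or $\sgn(\mathcal O,u)$ that depends on the oriented disorder path $L$ on the double cover $[\bbZ\times\bbR;\mathbf v,\mathbf u]$, and every reduction by $\mu^{\,2}=\sigma^{\,2}=1$ is valid only when the two factors live on the same sheet. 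These choices have to be aligned with the conjugation convention built into~\eqref{eq:Xsym-def}--\eqref{eq:Xanti-def} and with the sign choices of~\eqref{eq:Xsym-values}--\eqref{eq:Xanti-values}, and this alignment is where the real computational content of the proof lies.
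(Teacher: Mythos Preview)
Your approach is essentially the same as the paper's: both compute the massive Laplacian at the boundary corners by rerunning the two--step propagation argument behind~\eqref{eq:Psi_mLaplacian}, noting that the symmetrization forces $\Xsym((-\tfrac54,0))=0$ (and analogously for the other corners), so the only surviving defect is a term proportional to $F_{[\mathbf v,\mathbf u]}$ at the neighboring corner, which is then identified with $\widetilde D_{n+1}$ (resp.\ $\widetilde D^\star_{n+1}$, $\widetilde D_n$, $\widetilde D^\star_n$) via one more application of~\eqref{eq:Chi_dy2}. The paper's write--up is terser but follows exactly this route, and you correctly flag that the genuine content lies in the sheet and sign bookkeeping.
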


\begin{proof}
  We proof the first item. Due to the symetrization procedure (which amounts to take a cut of the double cover in the horizontal axis, outside of the branchings) the value of $\Xsym $ at $(-\frac{5}{4}, 0)$ vanishes. Thus one can reapply the strategy of the proof of~\eqref{eq:Psi_mLaplacian} but this time, and $[\Delta^{(m)}\Xsym]$ equals $e^{\icomp \frac{\pi}{4}} \frac{\theta\theta^{\star}}{\theta^2 + {\theta^{\star}}^{2}} F_{[\textbf{v}, \textbf{u}]}(-\frac{5}{4}, 0)$. Applying the equation~\eqref{eq:dual_two_term} at $c=(-\frac{5}{4}, 0) $ gives

  Applying the equation~\eqref{eq:Chi_dy2} at $c=(-\frac{3}{4}, 0) $    
  \begin{equation}
    \tilde{D}_{n+1} =\sgn(\calO, u) \cdot \theta^\star \braket{ \chi_{(-\frac{5}{4}, 0)} \calO }. \nonumber
  \end{equation}
  The check of the sign $\sgn(\calO, u) = -1 $ is left to the reader. The other items are proved in a similar maner.
\end{proof}

\subsection{Construction via the Fourier transform and orthogonal polynomials} \label{subsect:construction-homogen}
In this subsection, we construct explicitely in Lemma~\ref{lem:solution-sym} and Lemma~\ref{lem:solution-anti} two bounded functions satisfying the properties~\eqref{eq:X-laplacian}--\eqref{eq:Xanti-values} using Fourier transform and orthogonal polynomials techniques.
Due to the uniqueness statement proved in Lemma~\ref{lem:uniqueness}, these solutions must coincide with~$\Xsym$ and~$\Xanti$.
Instead of the double covers~$[(\bbZ \pm\tfrac{1}{4})\times\bbR;\mathbf u, \mathbf v]$, we work only in the upper half-plane~$\bbZ \times \bbR_{+}$ only (the link between the two setups is fully explained in Lemma~\ref{lem:H-to-dbl-cover}).

Given a function~$V:\bbZ \times \bbR \to \bbR$, we keep the definition~\eqref{eq:mLaplacian} for the massive Laplacian~$[\Delta^{(m)}V](k, s)$ when~$s>0 $ and introduce the values
\begin{equation} \label{eq:N-def}
  [\calN V](k, 0):= -\frac12 \left[ \partial_y^+ V(k, 0)  + \partial_y^- V(k, 0) \right], 
\end{equation}
% \begin{equation} \label{eq:N-def}
%   [\calNV](k, 0):=  V(k, 0)- \frac{\partial^{+}_{yy} V(k, 0)}{\theta^{2}+{\theta^{\star}}^{2}}-\frac{\theta\theta^{\star}}{\theta^{2}+{\theta^{\star}}^{2}}[V(k\!-\!1, 0)+V(k\!+\!1, 0)], 
% \end{equation}
where $\partial_y^+$ and $\partial_y^-$ stand respectively for the vertical partial derivative in the upper and the lower half planes.
The operator $\calN$ should be understood as a version of the normal second derivative of~$V$ at $(k, 0)$, thus leaving the functions at stake in their respective half-planes. We now define two intermediate explicit problems~$\mathbf{[P^{sym}_n]}$ and~$\mathbf{[P^{anti}_n]}$ whose solutions are useful to derive a reccurence relation on spin-spin expectation in the full-plane.
As a consequence of the uniqueness Lemma~\ref{lem:uniqueness}, solving those problems is equivalent to construct the functions~$X^\mathrm{sym}_{[\mathbf{v}, \mathbf{u}]}$ and $X^\mathrm{anti}_{[\mathbf{v}, \mathbf{u}]}$, respectively; see also Fig.~\ref{fig:sym-anti-bvp}.

\smallskip

\begin{itemize}
  \item $\mathbf{[P^{sym}_n]}:$ given~$n\ge 1$, construct a bounded function~$V:\bbZ \times\bbR\to\bbR$ such~that the following conditions are fulfilled:
  \begin{align*}
    [\Delta^{(m)}V](k, s)=0~\text{if}~s>0;\qquad & [\calN V](k, 0)=0~\text{for}~k\in \llbracket1, n\!-\!1\rrbracket;\\
    V(k, 0)=0~\text{for}~k\not\in  \llbracket 0, n \rrbracket;\qquad & V(0, 0)=D_n\ \ \text{and}\ \ V(n, 0)=D_n^\star\, .
  \end{align*}
  \item $\mathbf{[P^{anti}_{n+1}]}:$ given~$n\ge 0$, construct a bounded function~$V:\bbZ \times  \bbR \to\bbR$ such~that the following conditions are fulfilled:
  \begin{align*}
    \quad\qquad [\Delta^{(m)}V](k, s)=0~\text{if}~s>0;\qquad & [\calN V](k, 0)=0~\text{for}~k\not\in \llbracket0, n\!+\!1\rrbracket;\\
    V(k, 0)=0~\text{for}~k\in \llbracket 1, n\rrbracket;\qquad & V(0, 0)=-D_{n+1};\quad V(n\!+\!1, 0)=D_{n+1}^\star\, .
  \end{align*}
\end{itemize}

\begin{lem}\label{lem:H-to-dbl-cover}
  Assume that ~$V^\mathrm{sym}_n$  (resp., $V^\mathrm{anti}_{n+1}$) is a solution to $\mathrm{[P^{sym}_n]}$ (resp., $\mathrm{[P^{anti}_{n+1}]}$). Then, the following holds:
  \begin{align}
    \label{eq:Vsym-laplacians}
    [\calN V^\mathrm{sym}_n](0, 0)=L_{n+1}, \qquad & [\calN V^\mathrm{sym}_n](n, 0)=L_{n+1}^\star;\\
    \label{eq:Vanti-laplacians}
    [\calN V^\mathrm{anti}_{n+1}](0, 0)=-L_n, \, \qquad & [\calN V^\mathrm{anti}_{n+1}](n\!+\!1, 0)=L_n^\star.
  \end{align}
\end{lem}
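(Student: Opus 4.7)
The proof is a folding argument: Lemma~\ref{lem:H-to-dbl-cover} is simply the upper-half-plane translation of Lemma~\ref{lem:laplacians} once the solutions $V^\mathrm{sym}_n$ and $V^\mathrm{anti}_{n+1}$ have been identified with one-sheet restrictions of the double-cover spinors $\Xsym$ and $\Xanti$.

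The first step is to build this correspondence. Given $\Xsym$, define $V^\mathrm{sym}_n(k, s) := \Xsym((k-\tfrac{1}{4}, s))$ for $s \ge 0$ on a sheet chosen consistently with~\eqref{eq:Xsym-values}, and extend to $s<0$ by $V^\mathrm{sym}_n(k, -s) := \Xsym(\overline{(k-\tfrac{1}{4}, s)})$, using the conjugation introduced just before~\eqref{eq:Xsym-def}. Boundedness and massive harmonicity on $s > 0$ are inherited from~\eqref{eq:X-laplacian}; for $k\notin[0, n]$, $\overline{(k-\tfrac{1}{4}, 0)}$ lies on the opposite sheet, so~\eqref{eq:Xsym-vanishes} forces $V^\mathrm{sym}_n(k, 0) = 0$; for $k\in[1, n-1]$, the conjugate stays on the same sheet, so the lower-half-plane extension is symmetric in $s$ and $[\calN V^\mathrm{sym}_n](k, 0) = 0$ by direct cancellation of $\partial_y^+$ and $\partial_y^-$; and the endpoint values $V^\mathrm{sym}_n(0, 0) = D_n$, $V^\mathrm{sym}_n(n, 0) = D_n^\star$ come from~\eqref{eq:Xsym-values}. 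A uniqueness argument entirely analogous to that of Lemma~\ref{lem:uniqueness}, applied to the semi-discrete massive random walk killed at $\{(k, 0): k\notin[1, n-1]\}$, identifies this $V^\mathrm{sym}_n$ as the unique bounded solution of $\mathbf{[P^{sym}_n]}$. The corresponding identification $\Xanti \leftrightarrow V^\mathrm{anti}_{n+1}$ goes through the same way with the segment/outside regimes swapped in view of~\eqref{eq:Xanti-vanishes}--\eqref{eq:Xanti-values}.

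Next, I would read off~\eqref{eq:Vsym-laplacians}--\eqref{eq:Vanti-laplacians} from~\eqref{eq:Xsym-laplacians}--\eqref{eq:Xanti-laplacians}. At the endpoint $(0, 0)$ the function $V^\mathrm{sym}_n$ transitions from the symmetric regime on the segment to the antisymmetric one outside, so $\partial_y V^\mathrm{sym}_n$ has a jump at $(0, 0)$ whose symmetric average is precisely $[\calN V^\mathrm{sym}_n](0, 0)$. Expanding $[\Delta^{(m)} \Xsym]((-\tfrac{1}{4}, 0))$ according to~\eqref{eq:mLaplacian} and using $V^\mathrm{sym}_n(-1, 0) = 0$ for the outside-segment horizontal neighbour, subtracting the upper-half-plane PDE $\Delta^{(m)} V^\mathrm{sym}_n = 0$ taken as $s \to 0^+$ leaves exactly the jump $-\tfrac{1}{2}(\partial_y^+ V^\mathrm{sym}_n + \partial_y^- V^\mathrm{sym}_n)(0, 0)$ times an explicit prefactor that matches the definition~\eqref{eq:L-def} of $L_{n+1}$. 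The companion identity at $(n, 0)$ and the antisymmetric case are handled by the same scheme.

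The main obstacle is the careful accounting of signs and of the relative prefactor $\theta$ versus $\theta^\star$ on the two sides of each formula, which depends on whether the branching at stake is primal or dual and on which sheet the corner adjacent to the branching is evaluated on; this is ultimately where the propagation equations~\eqref{eq:primal_two_term} and~\eqref{eq:dual_two_term} (already exploited in the proof of Lemma~\ref{lem:laplacians}) enter. An extra subtlety arises in the degenerate case $n = 0$, where the two branchings become adjacent, the interior of the segment collapses, and the two identities~\eqref{eq:Vsym-laplacians} merge into the single identity $-[\Delta^{(m)}\Xsym]((-\tfrac{1}{4}, 0)) = L_1 + L_1^\star$, reflecting two simultaneous derivative contributions near the same corner; the values of $L_0$ and $L_0^\star$ in the antisymmetric case are fixed by the convention $D_0 = D_0^\star = 1$ together with an explicit evaluation of $\widetilde D_0, \widetilde D_0^\star$.
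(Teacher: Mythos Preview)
Your argument is correct and is essentially the paper's proof run in the opposite direction: the paper unfolds a given solution $V^\mathrm{sym}_n$ of $\mathrm{[P^{sym}_n]}$ to a spinor on the double cover and invokes the already-established Lemma~\ref{lem:uniqueness} to identify it with $\Xsym$, whereas you fold $\Xsym$ down to the half-plane and appeal to a (correct, analogous) uniqueness statement for $\mathrm{[P^{sym}_n]}$ itself. In both cases the identities~\eqref{eq:Vsym-laplacians}--\eqref{eq:Vanti-laplacians} are then read off directly from Lemma~\ref{lem:laplacians} via the definition~\eqref{eq:N-def} of $\calN$, so the paper's route is only marginally shorter in that it recycles Lemma~\ref{lem:uniqueness} verbatim rather than reproving it on the half-plane.
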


\begin{proof}
  Take a section of the double cover~$[(\bbZ \pm \frac{1}{4})\times\bbR;\mathbf v, \mathbf u]$ with an horizontal cut along the segment $[-\frac{1}{2}, n] \times \{ 0 \}$ for the problem~$\mathrm{[P^{sym}_n]}$ and outside the segment $[-\frac{1}{2}, n] \times \{ 0 \}$ (in the horizontal axis) for the problem~$\mathrm{[P^{anti}_{n+1}]}$.
  Define two functions respectively on the right and left corners of the lattice by
  \[
    V^\mathrm{sym}_{[\mathbf v, \mathbf u]}((\pm k\!-\!\tfrac{1}{4}, s))
    := V^\mathrm{sym}_n(k, s)
    \qquad V^\mathrm{anti}_{[\mathbf v, \mathbf u]}((\pm k\!+\!\tfrac{1}{4}, s))
    :=\pm V^\mathrm{anti}_{n+1}(k, s).
  \]
  Since both functions vanish on their respective cuts, they can be viewed as bounded spinors on the double covers~$[(\bbZ \pm \frac{1}{4})\times\bbR;\mathbf v, \mathbf u]$, which satisfy the entire set of conditions described in~\eqref{eq:X-laplacian}--\eqref{eq:Xanti-values}.
  Using the uniqueness Lemma~\ref{lem:uniqueness}, this allows to identify~$\Xsym=V^\mathrm{sym}_{[\mathbf v, \mathbf u]}$ and~$V^\mathrm{anti}_{[\mathbf v, \mathbf u]}=V^\mathrm{anti}_{[\mathbf v, \mathbf u]}$.
  The identities~\eqref{eq:Vsym-laplacians}, \eqref{eq:Vanti-laplacians} are now easily deduced from~\eqref{eq:Xsym-laplacians}, \eqref{eq:Xanti-laplacians} and the definition~\eqref{eq:N-def}.
\end{proof}

Let~$V$ be a solution to the problem~$\mathrm{[P^{sym}_n]}$.
We are going to construct $V$ explicitly, starting first with a heuristic and more intuitive argument. Assume for a moment that all the Fourier series
\begin{equation}
  \textstyle \widehat{V}_s(e^{\icomp t}):=\sum_{k \in \bbZ} V(k, s)e^{\icomp kt}, \quad s\ge 0, \quad t\in [0, 2\pi], \nonumber
\end{equation}
are well-defined. Using the massive harmonicity condition ~$[\Delta^{(m)}V](k, s)=0$ for~$s \neq 0$, this implies (provided the serie/derivation exchange is proper) that
\begin{equation}\label{eq:Fourier-series-reccurence}
  \big[ 1 - \frac{2\theta\theta^{\star}}{\theta^2+{\theta^{\star}}^{2}}\cos t \big] \cdot \widehat{V}_s(e^{\icomp t})
  \ =\ \frac{1}{\theta^2+{\theta^{\star}}^{2}} \widehat{V}^{''}_s(e^{\icomp t}), 
\end{equation}
where the derivative in the above equation is taken with respect to the variable~$s$.
A general solution to {the differential equation}~\eqref{eq:Fourier-series-reccurence} is a linear combination of the functions $e^{\pm s w(t)}$ where $w(t) := \OPweight$ solves the quadratic equation, 
\[
  1 - \frac{2\theta\theta^{\star}}{\theta^2 + {\theta^{\star}}^{2}} \cos t  = \frac{w(t)^2}{\theta^2+{\theta^{\star}}^{2}}.
\]
Let $\OPweight$ to be the non-negative root of the above equation, i.e., 
\[
  \OPweight = \sqrt{ (\theta^2 + {\theta^{\star}}^{2}) - 2 \theta \theta^\star \cos t } = |\theta - \theta^\star e^{\icomp t}|.
\]
At level~$s=0$, the function $\widehat{V}_0(e^{\icomp t})=Q_n(e^{\icomp t})$ is an unknown trigonometric polynomial of degree $n$.
Since we want Fourier series $\widehat{V}_s$ with \emph{bounded} coefficients, we are tempted to state that $\widehat{V}_s(e^{\icomp t}) = Q_n(e^{\icomp t}) \cdot e^{- s\OPweight}$ for~$s \geq 0$.
A direct computation at level $s=0$ proves that
\begin{align}
  \label{eq:NV-fourier}
  \textstyle \sum_{k\in\bbZ }[\calN V](k, 0)e^{\icomp kt}\ & =\ \OPweight Q_n(e^{\icomp t}).
  % \label{eq:w-def}
  % w(t;\theta\!, \theta^{\star})\
  %   % &=\ 1-\sin\theta^{\mathrm{h}}\cos\theta^{\mathrm{v}}\cos t- \cos\theta^{\mathrm{h}}\sin\theta^{\mathrm{v}}\cdot y_-(t)\\
  % & :=\ 1-\frac{2\theta\theta^{\star}}{\theta^2+{\theta^{\star}}^{2}}\cos t.
\end{align}
The crucial point here is that the LHS of the previous equation \emph{should not containany monomials of the family}~$ \{e^{\icomp t}, \dots, e^{\icomp (n-1)t} \}$, which simply reads as an orthogonality condition for~$Q_n(e^{\icomp t})$.
Let us make now rigourous the previous analysis to construct and identify the unique solution to the problem $\mathrm{[P^{sym}_n]}$.

\begin{lem}%[Solution to the problem~$\mathrm{[P^{sym}_n]}$]
  \label{lem:solution-sym}
  Let~$n\ge 1$. If a trigonometric polynomial \mbox{$Q_n(e^{\icomp t})=D_n\!+\ldots+\!D_n^{\star}e^{\icomp nt}$} of degree~$n$ has prescribed free ($D_n)$ and leading ($D^{\star}_n$) coefficients and is orthogonal to the family $\{e^{\icomp t}, \ldots, e^{\icomp (n-1)t}\}$ with respect to the measure $\OPweight \frac{dt}{2\pi}$ on the unit circle, then the discrete function defined by
  \[
    \textstyle V(k, s):=\frac{1}{2\pi}\int_{-\pi}^{\pi} e^{-ikt} Q_n(e^{\icomp t}) e^{-s \OPweight} \dd t
  \]
  is uniformly bounded and solves $\mathrm{[P^{sym}_n]}$ in the upper half-plane. Moreover, 
  \begin{equation}
    \label{eq:<Qn>=L-sym}
    \langle Q_n, 1\rangle_{\frac{w}{2\pi}dt} =L_{n+1} \quad\text{and}\quad \langle Q_n, e^{\icomp nt}\rangle_{\frac{w}{2\pi}dt}=L_{n+1}^\star, 
  \end{equation}
  where the scalar product is taken with respect to the same measure on the unit circle.
\end{lem}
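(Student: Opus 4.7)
The plan is to verify directly that the function $V$, defined by the Fourier integral formula of the statement, satisfies every requirement of $\mathrm{[P^{sym}_n]}$; identifying $V$ with $V^{\mathrm{sym}}_n$ will then follow from the uniqueness provided by Lemma~\ref{lem:uniqueness} combined with Lemma~\ref{lem:H-to-dbl-cover}, from which~\eqref{eq:<Qn>=L-sym} can be read off.

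Boundedness is immediate since $w(t)\ge 0$ forces the integrand to be dominated by $|Q_n(e^{\icomp t})|$, giving $|V(k,s)|\le \sup_{t}|Q_n(e^{\icomp t})|$ uniformly in $(k,s)$. The boundary values at $s=0$ come from Fourier inversion: $V(k,0)$ is the $k$-th Fourier coefficient of the trigonometric polynomial $Q_n$, which by hypothesis is supported on the frequencies $\{0,1,\dots,n\}$ with constant term $D_n$ and leading term $D_n^\star$. This immediately yields $V(k,0)=0$ for $k\notin\llbracket 0,n\rrbracket$, $V(0,0)=D_n$ and $V(n,0)=D_n^\star$, matching the boundary data of $\mathrm{[P^{sym}_n]}$ verbatim.

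For massive harmonicity, I will differentiate twice under the integral sign, justified by dominated convergence since all $s$-derivatives of $e^{-sw(t)}$ are bounded on $[-\pi,\pi]\times[\varepsilon,\infty)$ for any $\varepsilon>0$. This produces
\[
\partial_{ss} V(k,s)=\tfrac{1}{2\pi}\int_{-\pi}^{\pi}e^{-\icomp kt}\,Q_n(e^{\icomp t})\,w(t)^2\,e^{-sw(t)}\,dt,
\]
while translating horizontally gives $V(k\!+\!1,s)+V(k\!-\!1,s)=\tfrac{1}{2\pi}\int_{-\pi}^{\pi}2\cos(t)\,e^{-\icomp kt}Q_n(e^{\icomp t})e^{-sw(t)}dt$. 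Plugging both into~\eqref{eq:mLaplacian} and using the defining identity $w(t)^2=\theta^2+\theta^{\star 2}-2\theta\theta^\star\cos(t)$ collapses the integrand of $[\Delta^{(m)}V](k,s)$ to zero pointwise in $t$, hence $[\Delta^{(m)}V](k,s)=0$ whenever $s>0$.

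It remains to handle the Neumann-type condition. Differentiating once in $s$ and using the convention~\eqref{eq:N-def} for $\mathcal{N}$, I will obtain
\[
[\mathcal{N} V](k,0)=\tfrac{1}{2\pi}\int_{-\pi}^{\pi}e^{-\icomp kt}Q_n(e^{\icomp t})w(t)\,dt=\langle Q_n,\,e^{\icomp kt}\rangle_{w\,dt/(2\pi)},
\]
i.e.\ the $k$-th Fourier coefficient of the product $Q_n\cdot w$. The orthogonality hypothesis on $Q_n$ immediately makes this vanish for $k=1,\dots,n-1$, and so $V$ solves $\mathrm{[P^{sym}_n]}$; by the uniqueness statement obtained from Lemma~\ref{lem:uniqueness} via Lemma~\ref{lem:H-to-dbl-cover}, $V$ coincides with $V^{\mathrm{sym}}_n$. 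Matching the remaining Fourier coefficients at $k=0$ and $k=n$ with the values $[\mathcal{N}V^{\mathrm{sym}}_n](0,0)$ and $[\mathcal{N}V^{\mathrm{sym}}_n](n,0)$ supplied by~\eqref{eq:Vsym-laplacians} then yields the two identities of~\eqref{eq:<Qn>=L-sym}. The main technical point to handle carefully will be fixing the sign convention of $\partial_y^{\pm}$ in~\eqref{eq:N-def} so that the heuristic formula for $[\mathcal{N}V](k,0)$ above is literally correct rather than differing by a sign; one also needs a mild uniform integrability argument near $t=0$ at criticality $\theta=\theta^\star$, where $w$ vanishes linearly, but this is harmless since $w(t)^\ell e^{-sw(t)}$ remains bounded for every $\ell\ge 0$.
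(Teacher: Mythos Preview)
Your proof is correct and follows essentially the same approach as the paper's. The paper's own argument is just a terse version of exactly what you wrote: boundedness from $|e^{-sw(t)}|\le 1$, the boundary values and the Neumann condition read off as Fourier coefficients of $Q_n$ and $wQ_n$ respectively, massive harmonicity by direct substitution using $w(t)^2=\theta^2+{\theta^\star}^2-2\theta\theta^\star\cos t$, and then~\eqref{eq:<Qn>=L-sym} deduced from~\eqref{eq:Vsym-laplacians} in Lemma~\ref{lem:H-to-dbl-cover}. Your explicit flagging of the sign convention in~\eqref{eq:N-def} is a fair caveat, as the paper is indeed somewhat informal about how $V$ is extended to $s<0$ and how $\partial_y^{\pm}$ are oriented.
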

\begin{proof} The values of~$V(k, s)$ defined above are uniformly bounded since~$| e^{-s \OPweight}| \le 1$. The massive harmonicity property~$[\Delta^{(m)}V](k, s)=0$ for~$s>0$ is straightforward and the properties required for~$V(k, 0)$ and~$[\calN F](k, 0)$ follow directly from the assumptions made on~$Q_n$. The identities in~\eqref{eq:Vsym-laplacians} give~\eqref{eq:<Qn>=L-sym}.
\end{proof}

On can perform a similar construction to solve~$\mathrm{[P^{anti}_{n+1}]}$, {see Fig.~\ref{fig:sym-anti-bvp}.} The only difference is that at level~$s=0$, it is now required that
$\widehat V_0(e^{\icomp t})$ does not contain monomials~$e^{\icomp t}, \dots, e^{\icomp (n+1)t}$ while
\begin{equation}
  \label{eq:wV=poly-anti}
  \textstyle \sum_{k\in\bbZ}[\calN V](k, 0)e^{\icomp kt}\ =\ w(t;\theta\!, \theta^{\star}) \widehat V_0(e^{\icomp t}) \ =\ -L_n+\ldots+L_n^\star e^{\icomp (n+1)t}
\end{equation}
is a trigonometric polynomial of degree~$n\!+\!1$. In other words, the polynomial in the RHS has to be ortgonal to~$\{e^{\icomp t}, \dots, e^{\icomp nt}\}$ with respect to the weight
\begin{equation}
  \label{eq:whash-def}
  w^\# (t;\theta\!, \theta^{\star})\; :=\; (w(t;\theta\!, \theta^{\star}))^{-1}, \qquad t\in [0, 2\pi].
\end{equation}
provided that~$w^\#$ is integrable on the unit circle. One can easily see from its definition that this integrability condition holds if and only if~$\theta\neq \theta^{\star}$. The special case $\theta=\theta^{\star} $ corresponds the model at criticality and will require modification as explained in Section~\ref{subsect:crit-homogen}.
\begin{lem}%[Solution to the problem~$\mathrm{[P^{anti}_{n+1}]}$]
  \label{lem:solution-anti}
  Let~$n\ge 0$ and assume that~$\theta\neq \theta^{\star}$. If a trigonometric polynomial $Q^\#_{n+1}(e^{\icomp t})=-L_n+\ldots+L_n^{\star}e^{\icomp (n+1)t}$ of degree~$n+1$ has prescribed free ($-L_n)$ and leading ($L^{\star}_n$) coefficients is orthogonal to the family $\{e^{\icomp t}, \ldots, e^{\icomp nt}\}$ with respect to the measure~$w^\#\!(t;\theta\!, \theta^{\star})\frac{dt}{2\pi}$, then the function defined by
  \begin{equation}
    \label{eq:solution-anti}
    \textstyle V(k, s):=\frac{1}{2\pi}\int_{-\pi}^{\pi} e^{-ikt} Q^\#_{n+1}(e^{\icomp t}) e^{-s \OPweight} w^\# (t;\theta\!, \theta^{\star}) dt
  \end{equation}
  is uniformly bounded and solves $\mathrm{[P^{anti}_{n+1}]}$. Moreover, 
  \begin{equation}
    \label{eq:<Qn>=L-anti}
    \langle Q^\#_{n+1}, 1\rangle_{\frac{w^\#}{2\pi}dt} = -D_{n+1} \quad\text{and}\quad \langle Q^\#_{n+1}, e^{\icomp (n+1)t}\rangle_{\frac{w^\#}{2\pi}dt}= D_{n+1}^\star, 
  \end{equation}
  where the scalar product is taken with respect to the same measure on the unit circle.
\end{lem}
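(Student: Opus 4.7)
The plan is to follow the template of the proof of Lemma~\ref{lem:solution-sym}, the only new ingredient being the presence of the weight $w^\#$. The hypothesis $\theta\neq\theta^\star$ enters right away, guaranteeing that $w(t;\theta,\theta^\star)=|\theta-\theta^\star e^{it}|\geq|\theta-\theta^\star|>0$ and hence that $w^\#=1/w$ is bounded on the unit circle; combined with $|e^{-sw(t)}|\leq 1$ for $s\geq 0$, this provides the uniform bound $|V(k,s)|\leq\|Q^\#_{n+1}w^\#\|_\infty$ and makes the subsequent Fourier manipulations legitimate. The massive harmonicity $[\Delta^{(m)}V](k,s)=0$ for $s>0$ is then obtained, exactly as in the symmetric case, by differentiating under the integral sign and invoking the defining relation $w(t)^2=\theta^2+{\theta^\star}^2-2\theta\theta^\star\cos t$.

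Next I would verify the boundary conditions on $\bbZ\times\{0\}$. By construction $V(k,0)$ is the $k$-th Fourier coefficient of the integrable function $Q^\#_{n+1}(e^{it})w^\#(t)$, i.e.\ the scalar product $\langle Q^\#_{n+1},e^{ikt}\rangle_{w^\#dt/(2\pi)}$; the assumed orthogonality of $Q^\#_{n+1}$ to $e^{it},\ldots,e^{int}$ thus yields $V(k,0)=0$ for $k\in\llbracket 1,n\rrbracket$. Computing $[\calN V](k,0)$ in the same Fourier manner, the crucial cancellation $w(t)w^\#(t)\equiv 1$ collapses the series into $\sum_k[\calN V](k,0)e^{ikt}=Q^\#_{n+1}(e^{it})$, a trigonometric polynomial supported on $\llbracket 0,n+1\rrbracket$. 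Hence $[\calN V](k,0)=0$ for $k\notin\llbracket 0,n+1\rrbracket$, while the prescribed constant and leading coefficients of $Q^\#_{n+1}$ translate into $[\calN V](0,0)=-L_n$ and $[\calN V](n+1,0)=L_n^\star$.

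At this stage $V$ satisfies every clause of $\mathrm{[P^{anti}_{n+1}]}$ except possibly the two Dirichlet values at $0$ and $n+1$, which I would pin down by an identification with $\Xanti$ on the double cover. Lifting $V$ antisymmetrically as in the proof of Lemma~\ref{lem:H-to-dbl-cover} produces a bounded spinor whose massive Laplacian at the two corners neighbouring $\mathbf{v}$ and $\mathbf{u}$ matches, by the previous step, the values computed in Lemma~\ref{lem:laplacians} for $\Xanti$. The difference is then a bounded massive harmonic function on the full double cover; since under $\theta\neq\theta^\star$ the random walk associated to $\Delta^{(m)}$ has strictly positive horizontal killing rate $1-\frac{2\theta\theta^\star}{\theta^2+{\theta^\star}^2}>0$, an optional stopping argument as in Lemma~\ref{lem:uniqueness} forces this difference to vanish. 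Unraveling the identification then yields $V(0,0)=-D_{n+1}$ and $V(n+1,0)=D_{n+1}^\star$, which by the integral formula are precisely the two scalar products appearing in~\eqref{eq:<Qn>=L-anti}. The most delicate aspect is really this last identification, and the very same feature---non-integrability of $w^\#$ and degeneracy of the killing---is what will force the workaround of Subsection~\ref{subsect:crit-homogen} at the self-dual point.
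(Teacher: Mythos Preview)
Your argument is correct and is essentially what the paper's one-line ``identical to Lemma~\ref{lem:solution-sym}'' is meant to convey once one observes that the roles of Dirichlet and Neumann data are swapped between the symmetric and anti-symmetric problems. You rightly notice that, unlike in the symmetric case, the two prescribed Dirichlet values $V(0,0)=-D_{n+1}$ and $V(n\!+\!1,0)=D_{n+1}^\star$ are not read off directly from the Fourier coefficients of~$Q^\#_{n+1}$ but must instead be recovered from the identification with~$\Xanti$, and your use of the Neumann values $[\calN V](0,0)=-L_n$, $[\calN V](n\!+\!1,0)=L_n^\star$ together with Lemma~\ref{lem:laplacians} to carry out this identification is exactly the right move.

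One small imprecision: the difference between your lifted~$V$ and~$\Xanti$ is not literally massive harmonic on the \emph{entire} double cover --- at the corners on the real axis between the branchings (where both spinors vanish) the individual Laplacians are not known. However this does not matter for the optional-stopping argument you invoke: the difference \emph{vanishes} on that segment and is massive harmonic everywhere else (including at the two corners $k=0$ and $k=n\!+\!1$, thanks to the Neumann matching), so stopping the killed walk upon hitting that segment yields zero, exactly as in the proof of Lemma~\ref{lem:uniqueness}. With this minor rewording your proof is complete.
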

\begin{proof}
  The proof is identical to the one of Lemma~\ref{lem:solution-sym}.
\end{proof}

\subsection{Horizontal spin-spin correlations below criticality} \label{subsect:below-crit-homogen}
In this section, we derive the asymtotics of the horizontal spin-spin correlations~$D_n$ (as~$n\to \infty$) by combining Lemmas~\ref{lem:solution-sym} and~\ref{lem:solution-anti}. We assume that~$\theta<\theta^{\star}$ and recall that~$D_n^{\star}\to 0$ as~$n\to \infty$. The latter claim is classical, and can be derived from the monotonicity of~$D_n$ with respect to the temperature together with the fact that~$D_n=D_n^\star\to 0$ as~$n\to\infty$ at cricicality, result which is discussed in the very next section. Even without the next section, one can simply mention RSW estimates at criticality to conclude.

\begin{theo}\label{thm:magnetization_below_criticality}
  Let~$\theta < \theta^{\star} $. The spontaneous magnetization~$\calM(\theta, \theta^{\star})$ of the quantum Ising model defined below satisfies
  \begin{equation}
    \label{eq:M=magnetization_below_criticality}
    \calM(\theta, \theta^{\star})\ := \lim\limits_{n\to \infty} D_n^{1/2}
    \ =\ (\theta^2 + {\theta^\star}^2)^{-\frac12} (1 - (\tfrac{\theta}{\theta^\star})^2)^{\frac18}.
  \end{equation}
\end{theo}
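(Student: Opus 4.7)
The plan is to iterate Lemmas~\ref{lem:solution-sym} and~\ref{lem:solution-anti}, cast the resulting alternating recurrence for $(D_n, D_n^\star, L_n, L_n^\star)$ in the language of orthogonal polynomials on the unit circle for the weights $w(t) = |\theta-\theta^\star e^{it}|$ and $w^{\#}(t) = w(t)^{-1}$, and then invoke the strong Szeg\H{o} limit theorem to evaluate $\calM^2 = D_\infty$ in closed form. Starting from the initial values $D_0 = 1$ and $L_0 = \theta^\star/(\theta^2+{\theta^\star}^2)$, $L_0^\star = \theta/(\theta^2+{\theta^\star}^2)$ from Lemma~\ref{lem:laplacians}, I would expand $Q_n = A_n \psi_n^{*} + B_n \psi_n$ and $Q^{\#}_{n+1} = a_{n+1}(\phi^{\#}_{n+1})^{*} + b_{n+1}\phi^{\#}_{n+1}$ in the two-dimensional spaces spanned by the monic OPUC and their reversals for $w$ and $w^{\#}$ respectively; solving the $2\times 2$ linear systems for the prescribed free/leading coefficients and combining with~\eqref{eq:<Qn>=L-sym} and~\eqref{eq:<Qn>=L-anti} yields an explicit alternating linear recursion involving only $\psi_n(0), \phi^{\#}_n(0)$ and the norms $\|\psi_n\|^{2}_{w}, \|\phi^{\#}_n\|^{2}_{w^{\#}}$.

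Next I would exploit the explicit Szeg\H{o} factorization. Setting $r := \theta/\theta^\star \in (0,1)$, the weight $w^{\#}$ belongs to the Szeg\H{o} class with outer function $D_{w^{\#}}(z)^2 = (\theta^\star(1-rz))^{-1}$. Classical Szeg\H{o} theory then gives $\|\phi^{\#}_n\|^{2}_{w^{\#}} \to (\theta^\star)^{-1}$ and $\|\psi_n\|^{2}_{w} \to \theta^\star$, together with the geometric decay $\psi_n(0), \phi^{\#}_n(0) = O(r^n)$. Plugged into the alternating recursion these show that $(D_n, L_n) \to (\calM^2, \calM^2 \theta^\star)$ and $(D_n^\star, L_n^\star) \to (0,0)$, but this fixed-point analysis is genuinely one-parameter: the recursion alone yields only the tautological relation $L_\infty = \calM^2 \theta^\star$ and does not pin down $\calM^2$.

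To extract the multiplicative constant, I would track a bilinear quantity such as $D_n L_n^\star - D_n^\star L_n$, which the alternating recursion preserves up to the leading-order Szeg\H{o} correction, and use it to transport the initial condition $L_0 L_0^\star = \theta\theta^\star/(\theta^2+{\theta^\star}^2)^2$ to infinity. The associated strong Szeg\H{o} (Szeg\H{o}--Widom) constant corresponds to the unimodular symbol $f(e^{it}) = \sqrt{(1-re^{-it})/(1-re^{it})}$, whose logarithm has Fourier coefficients $\hat{g}(\pm k) = \mp r^k/(2k)$ for $k \geq 1$; hence
\[
  E(f) = \exp\Big(\sum_{k\geq 1} k\,\hat{g}(k)\hat{g}(-k)\Big) = \exp\Big(-\sum_{k\geq 1} \tfrac{r^{2k}}{4k}\Big) = (1-r^2)^{1/4},
\]
which is the familiar Onsager--Yang exponent. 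Combined with the $(\theta^2+{\theta^\star}^2)^{-1}$ prefactor carried along by the initial condition, this gives the claimed identity~\eqref{eq:M=magnetization_below_criticality}.

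The hard part will be this last step. The naive fixed point of the recursion is under-determined, and the exact multiplicative constant must be recovered by carefully matching the Szeg\H{o}--Widom correction against the two-weight alternation. In the rectangular case of~\cite{CHM-zig-zag-Ising} both lattice directions are on equal footing, whereas in the semi-discrete setting the vertical direction enters only through derivatives (the $\widetilde D_n$ variables defining $L_n$) and through the anisotropic coefficient $\theta\theta^\star/(\theta^2+{\theta^\star}^2)$ of the massive Laplacian~\eqref{eq:mLaplacian}. Verifying that the Geronimus-type structure of $w^{\#}$ nonetheless produces exactly the exponent $1/4$ together with the specific prefactor $(\theta^2+{\theta^\star}^2)^{-1}$, and not some other algebraic combination of $\theta$ and $\theta^\star$, is where most of the analytic work will sit.
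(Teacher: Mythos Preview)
Your framework is the same as the paper's: expand $Q_n$ and $Q^\#_{n+1}$ in the OPUC bases for $w$ and $w^\#$, read off a $2\times 2$ transfer relation between $(D_n,D_n^\star)$ and $(L_{n+1},L_{n+1}^\star)$ (and similarly from $(L_n,L_n^\star)$ to $(D_{n+1},D_{n+1}^\star)$), then invoke Szeg\H{o} asymptotics. Where you diverge is in the choice of conserved quantity, and this is exactly where you are making life harder than necessary.

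The paper (following~\cite{CHM-zig-zag-Ising}) observes that the transfer matrices take the special form
\[
\begin{pmatrix}L_{n+1}^\star\\ L_{n+1}\end{pmatrix}=\beta_{n-1}\begin{pmatrix}1&\alpha_{n-1}\\ \alpha_{n-1}&1\end{pmatrix}\begin{pmatrix}D_n^\star\\ D_n\end{pmatrix},\qquad
\begin{pmatrix}D_{n+1}^\star\\ -D_{n+1}\end{pmatrix}=\beta^\#_n\begin{pmatrix}1&\alpha^\#_n\\ \alpha^\#_n&1\end{pmatrix}\begin{pmatrix}L_n^\star\\ -L_n\end{pmatrix},
\]
so the \emph{quadratic} form $x^2-y^2$ is preserved up to the determinant. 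Together with the Szeg\H{o} recursion $\beta_n=\beta_{n-1}(1-\alpha_{n-1}^2)$ this gives an \emph{exact} telescoping product,
\[
D_{2m+1}^2-(D_{2m+1}^\star)^2=\prod_{k=0}^{2m+1}\beta^\#_k\cdot\prod_{k=0}^{2m-1}\beta_k\cdot\bigl(L_0^2-(L_0^\star)^2\bigr),
\]
and since $D_n^\star\to 0$ one only has to evaluate the infinite product. Strong Szeg\H{o} applied separately to $w$ and $w^\#=w^{-1}$ gives $\prod\beta_k^\#\prod\beta_k\to(\theta^\star)^{-2}(1-r^2)^{-1/2}$, and the initial value is $L_0^2-(L_0^\star)^2=({\theta^\star}^2-\theta^2)/(\theta^2+{\theta^\star}^2)^2$; the formula~\eqref{eq:M=magnetization_below_criticality} drops out.

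So the ``hard part'' you anticipate does not exist: the correct invariant is $D_n^2-(D_n^\star)^2$, not the mixed bilinear $D_nL_n^\star-D_n^\star L_n$ you propose (which couples different steps of the recursion and is only approximately conserved), and there is no need to pass to a unimodular symbol or Szeg\H{o}--Widom. Your route would likely be made to work, but the paper's choice of quadratic invariant turns the last step into two lines.
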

Since we developped a formalism fitted to~\cite{CHM-zig-zag-Ising}, this proofs goes as in Theorem 3.6. The next lemma has exactly the same proof as in~\cite{CHM-zig-zag-Ising}.

\begin{lem}\label{lem:reccurence_correlation_Verbrusky}
  Set $\Phi_n(z):=z^n+\ldots -\alpha_{n-1}$ be the $n$-th unitary orthogonal polynomial with respect to the measure~$w(t;\theta, \theta^\star)\frac{dt}{2\pi}$ on the unit circle.
  We also set~$\Phi_n^\star(z) :=z^{n}\Phi_n(z^{-1})=-\alpha_{n-1}z^n+\ldots+1$ the polynomial reciprocal to $\Phi_n$.
  Moreover, write $\beta_n :=  \| \Phi_n \|^2 = \braket{\Phi_n, e^{\icomp nt}} = \braket{\Phi^\star_n, 1}$ and $\beta_n^\# := \| \Phi_n^\# \|^2$, where $\Phi_n^\#$ is the $n$-th unitary orthogonal polynomial with respect to the measure defined in~\eqref{eq:whash-def}.
  Then, 
  \begin{align*}
    % \label{eq:Dn-to-Ln+1-squares}
    L_{n+1}^2-(L_{n+1}^\star)^2 & = \beta_n\beta_{n-1}\cdot (D_n^2-(D_n^\star)^2)\quad \text{for}\ \ n\ge 1, \\
    % \label{eq:Ln-to-Dn+1-squares}
    D_{n+1}^2-(D_{n+1}^\star)^2& = \beta^\#_{n+1}\beta^\#_n\cdot (L_n^2-(L_n^\star)^2)\quad \text{for}\ \ n\ge 0.
  \end{align*}
  As a consequence, by applying the above two formulas recursively, we also have, 
  \begin{equation}
    \label{eq:Dn-square-diff}
    D_{2m+1}^2-(D_{2m+1}^\star)^2 = \prod_{k=0}^{2m+1}\beta^\#_k\cdot\prod_{k=0}^{2m-1}\beta_k\cdot (L_0^2-(L_0^\star)^2), 
  \end{equation}
  where~$L_0^2 - (L_0^\star)^2 = \frac{ \theta^2 - {\theta^\star}^2 }{ ( \theta^2 + {\theta^\star}^2 )^2 }$.
\end{lem}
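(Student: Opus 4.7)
The plan is to compute $L_{n+1}$ and $L_{n+1}^\star$ directly by expanding the trigonometric polynomial $Q_n$ supplied by Lemma~\ref{lem:solution-sym} in a well-chosen basis of the two-dimensional space of polynomials of degree at most $n$ that are orthogonal to $\{e^{\icomp t},\ldots,e^{\icomp(n-1)t}\}$ with respect to the measure $w(t;\theta,\theta^\star)\tfrac{dt}{2\pi}$. This space contains $Q_n$ by construction, and a natural basis is $\{\Phi_n,\Phi_{n-1}^\star\}$: the Szegő polynomial $\Phi_n$ is orthogonal to everything of degree strictly less than $n$, while the reversed polynomial $\Phi_{n-1}^\star$ is (by the standard property of OPUC reversal) orthogonal precisely to $\{z,\ldots,z^{n-1}\}$. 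These two are linearly independent since they have different degrees.

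Reading off leading and free coefficients ($\Phi_n$ monic with free coefficient $-\alpha_{n-1}$, and $\Phi_{n-1}^\star$ of degree $n-1$ with free coefficient $1$) yields the explicit expansion $Q_n = D_n^\star\,\Phi_n + (D_n+\alpha_{n-1}D_n^\star)\,\Phi_{n-1}^\star$. I would then compute $L_{n+1}=\langle Q_n,1\rangle$ and $L_{n+1}^\star=\langle Q_n,z^n\rangle$ from four elementary inner products: three of them, namely $\langle\Phi_n,1\rangle=0$, $\langle\Phi_n,z^n\rangle=\beta_n$ and $\langle\Phi_{n-1}^\star,1\rangle=\beta_{n-1}$, are immediate from the definition of the Szegő polynomials; the fourth, $\langle\Phi_{n-1}^\star,z^n\rangle=\alpha_{n-1}\beta_{n-1}$, follows from the change of variables $t\mapsto-t$ (permitted since $w$ is even in $t$) combined with the Szegő recursion $z\Phi_{n-1}=\Phi_n+\alpha_{n-1}\Phi_{n-1}^\star$.

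Plugging in and using the standard Verblunsky identity $\beta_n=(1-\alpha_{n-1}^2)\beta_{n-1}$, both quantities collapse to $L_{n+1}=\beta_{n-1}(D_n+\alpha_{n-1}D_n^\star)$ and $L_{n+1}^\star=\beta_{n-1}(D_n^\star+\alpha_{n-1}D_n)$. The difference of squares then factors neatly as $\beta_{n-1}^2(1-\alpha_{n-1}^2)(D_n^2-(D_n^\star)^2)=\beta_n\beta_{n-1}(D_n^2-(D_n^\star)^2)$, which is the first identity. The second identity is proved by the exact same template with $w$ replaced by $w^\#$, $Q_n$ by $Q_{n+1}^\#$ from Lemma~\ref{lem:solution-anti}, and the pair $(D_n,D_n^\star)$ replaced by $(-L_n,L_n^\star)$; the stray sign in front of $L_n$ is harmless after squaring. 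The telescoping formula~\eqref{eq:Dn-square-diff} then follows by alternating the two recursions, starting from $L_0^2-(L_0^\star)^2$ and collecting the $\beta$'s and $\beta^\#$'s.

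The main nuisance is bookkeeping at the boundary cases $n=0,1$, where the degree-zero polynomial $\Phi_{n-1}^\star$ degenerates and the special values $L_0=\theta^\star/(\theta^2+{\theta^\star}^2)$, $L_0^\star=\theta/(\theta^2+{\theta^\star}^2)$ recorded after Lemma~\ref{lem:laplacians} must be fed into the recursion by hand; I would verify these low cases separately rather than try to force them into the generic argument. Beyond this, every step is a short algebraic manipulation, so no genuinely hard estimate is needed — the real content of the lemma is the choice of the basis $\{\Phi_n,\Phi_{n-1}^\star\}$, which simultaneously diagonalizes the orthogonality conditions defining $Q_n$ and the two linear functionals $\langle\,\cdot\,,1\rangle$ and $\langle\,\cdot\,,z^n\rangle$ producing $L_{n+1}^{(\star)}$.
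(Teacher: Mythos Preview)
Your proposal is correct and follows essentially the same approach as the paper, which defers to~\cite{CHM-zig-zag-Ising} for the proof but records in Remark~\ref{rem:energy_polynomials} the matrix identities $L_{n+1}=\beta_{n-1}(D_n+\alpha_{n-1}D_n^\star)$, $L_{n+1}^\star=\beta_{n-1}(D_n^\star+\alpha_{n-1}D_n)$ (and their $\#$-analogues) --- exactly the relations you derive via the basis $\{\Phi_n,\Phi_{n-1}^\star\}$. Your write-up in fact supplies the details the paper omits.
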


Now we can complete the proof of Theorem~\ref{thm:magnetization_below_criticality}.
\begin{proof}
  Recall that~$D^\star_{2m+1}\to 0$ as~$m\to \infty$. It remains to apply the Szegő theory (e.g., see~\cite[Section~5.5]{Szego-Grenander-book} or~\cite[Theorems~8.1~and~8.5]{Simon-OPUC-one-foot}) to the weights $w(t; \theta\!, \theta^{\star})$ and $ w^\#(t;\theta\!, \theta^{\star})$.
  Let $q = \theta / \theta^\star$.
  We can write
  \begin{align*}
    w(t; \theta\!, \theta^{\star}) \ = \theta^* w_{q}(t), \qquad & \text{where}\ 
                                                                   w_q(t) = |1 - q e^{\icomp t}|, 
  \end{align*}
  Since~$w^\#(t;\theta\!, \theta^{\star}) = (w(t;\theta\!, \theta^{\star}))^{-1}$, we have
  \begin{equation}
    \label{eq:product_beta_beta_sharp}
    \textstyle \lim_{m\to\infty} \prod_{k=0}^{2m+1}\beta^\#_k\cdot\prod_{k=0}^{2m-1}\beta_k\ =\ {\theta^*}^{-2}\cdot G^2, 
  \end{equation}
  where
  \begin{align*}
    \log G & = \frac{1}{4\pi} \iint_{\bbD} \frac{\dd}{\dd z} \Big| \log(1 - q z) \Big|^2 \dd A(z)
             = \sum_{k\geq 1} \frac{q^{2k}}{4k} = - \frac14 \log (1-q^2).
  \end{align*}
  Moreover, 
  \[
    L_0^2 - (L_0^\star)^2
    = \frac{\theta^2 - {\theta^\star}^2}{ (\theta^2 + {\theta^\star}^2)^2 }
    = \frac{1}{{\theta^\star}^{2}} \frac{1-q^2}{(1+q^2)^2} .
  \]
  Putting all the factors together, one gets~\eqref{eq:M=magnetization_below_criticality}.

\end{proof}

\begin{rem}\label{rem:energy_polynomials}
  In the proof of lemma~\ref{lem:reccurence_correlation_Verbrusky}, one uses the identities (3.24) and (3.26) from~\cite{CHM-zig-zag-Ising}, which are
  \begin{align}
    \left[
    \begin{array}{c}
      L_{n+1}^\star \\ L_{n+1}
    \end{array}
    \right]
    & =\ \beta_{n-1} \left[
      \begin{array}{cc}
        1 & \alpha_{n-1} \\
        \alpha_{n-1} & 1
      \end{array}
                       \right]
                       \;
                       \left[
                       \begin{array}{c}
                         D_n^\star \\
                         D_n
                       \end{array}
    \right],    \label{eq:Dn-to-Ln+1} \\
    \left[
    \begin{array}{c}
      D_{n+1}^\star \\ -D_{n+1}
    \end{array}
    \right]
    & =\ \beta^\#_n \left[
      \begin{array}{cc}
        1 & \alpha^\#_n \\
        \alpha^\#_n & 1
      \end{array}
                      \right]
                      \;
                      \left[
                      \begin{array}{c}
                        L_n^\star \\
                        -L_n
                      \end{array}
    \right].    \label{eq:Ln-to-Dn+1}
  \end{align}
  We notice that, \eqref{eq:Ln-to-Dn+1} applied to $n=0$ provides the formula,
  \[
    D_1= \frac{\beta_0^\#}{\theta^2 + (\theta^\star)^2} \cdot [\theta^\star-\alpha_0^\#\theta],
  \]
  which is the \emph{energy density} of the quantum Ising model.
\end{rem}

Note that Equation~\eqref{eq:Dn-square-diff} becomes useless at criticality since $\theta = \theta^\star $ and thus $L_0 = L_0^\star$.
The asymptotic analysis needs to be treated in differently as explained in the following section.

\subsection{Asymptotics of horizontal correlations~$D_n$ as~$n\to\infty$ at criticality} \label{subsect:crit-homogen}

We now work at the critical and isotropical point of the model, i.e., $\theta = \theta^{\star}$. We generalize to the quantum Ising model the classical result by Mc-Coy and Wu that states that spin-spin correlations~$D_m$ decay like~$m^{-1/4}$ when $m\to \infty $. The power-law type decay can be deduced by usual RSW arguments, but finding the correct asymptotic requires some additional work, which we perform now. We keep following the formalism of~\cite{CHM-zig-zag-Ising}. The proof of the next theorem goes as the one of Theorem 3.9 and we only give its outlines.
\begin{theo} \label{thm:crit-homogen}
  Let~$\calC_\sigma:=2^{\frac{1}{6}}e^{\frac{3}{2}\zeta'(-1)}$. For the critical quantum Ising model, one has the asymptotic as $ m\to\infty$
  \begin{equation}\label{eq:crit-homogen}
    D_m\ \sim\ \calC_\sigma^2\cdot (2m)^{-1/4}.
  \end{equation}
\end{theo}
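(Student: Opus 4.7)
We will follow closely the strategy of \cite[Thm.~3.9]{CHM-zig-zag-Ising}, adapting it to the Kadanoff--Ceva correlator formalism of Sections~\ref{subsect:construction-homogen}--\ref{subsect:below-crit-homogen}. At the self-dual point $\theta = \theta^\star$, Kramers--Wannier duality forces $D_n = D_n^\star$ and $L_n = L_n^\star$ for every $n \ge 0$, and the weight $w(t;\theta,\theta) = 2\theta|\sin(t/2)|$ remains integrable on the unit circle while its reciprocal $w^\#$ develops a non-integrable singularity at $t=0$. As a consequence, Lemma~\ref{lem:solution-anti}, and with it the ``odd'' half of the recursion of Lemma~\ref{lem:reccurence_correlation_Verbrusky}, breaks down: identity~\eqref{eq:Dn-square-diff} degenerates into $0 = 0$. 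Our first step will be to close the recursion using only the $w$-orthogonal polynomial side.

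Plugging self-duality into~\eqref{eq:Dn-to-Ln+1} yields $L_{n+1} = \beta_{n-1}(1+\alpha_{n-1})D_n$. A second, closing identity will be obtained either by approaching $\theta = \theta^\star$ from below and passing to the limit in a suitably regularized version of~\eqref{eq:Ln-to-Dn+1}, or by rewriting $D_n$ directly as the $n \times n$ Toeplitz determinant with symbol $w$ via a standard loop/contour expansion of the horizontal two-point correlator built from~\eqref{eq:high-T}. Either route produces a closed product formula expressing $D_n$ in terms of the norms $\beta_k$ of the $w$-orthogonal polynomials alone, up to explicit prefactors. We then apply the Fisher--Hartwig asymptotics to $w$, which carries a single Fisher--Hartwig singularity of exponent $\alpha = \tfrac12$ at $t = 0$, and obtain an asymptotic of the form $\prod_{k=0}^{n-1}\beta_k \sim (2\theta)^{-n}\, n^{-1/4}\, E$, where $E$ is explicit and involves the Barnes $G$-function value $G(\tfrac12)$. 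Substituting back and using the evaluation $\log G(\tfrac12) = \tfrac{1}{24}\log 2 - \tfrac14\log\pi + \tfrac{3}{2}\zeta'(-1)$, all powers of $\theta$, $\pi$ and $2$ collapse into the single constant $\calC_\sigma^2 = 2^{1/3} e^{3\zeta'(-1)}$, and the stated asymptotic $D_m \sim \calC_\sigma^2 (2m)^{-1/4}$ follows.

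The main obstacle will be the first step, namely closing the recursion once $w^\#$ has ceased to be integrable: this is the one genuinely new input compared to~\cite{CHM-zig-zag-Ising}, whereas the Fisher--Hartwig input and the bookkeeping of Barnes $G$-values are by now standard. The continuity route is conceptually simplest but demands uniform control of the Fisher--Hartwig remainders as $\theta/\theta^\star \uparrow 1$; the Toeplitz reformulation is cleaner but requires a combinatorial identification of $D_n$ as $\det T_n(w)$ which, in the semi-discrete setting, we expect to follow from the flattened-lattice picture of Section~\ref{sec:model} together with the random-parity representation~\eqref{eq:high-T}.
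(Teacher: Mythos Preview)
Your diagnosis of the obstacle is correct: at $\theta=\theta^\star$ the weight $w^\#$ is non-integrable and Lemma~\ref{lem:solution-anti} collapses, so the recursion~\eqref{eq:Dn-square-diff} degenerates. But you do not actually close the gap. You offer two routes---a continuity argument as $\theta/\theta^\star\uparrow 1$, or a direct identification of $D_n$ with a Toeplitz determinant---and then assume ``either route produces a closed product formula'' without carrying out the computation. The first route, as you yourself note, requires uniform-in-$q$ control of Fisher--Hartwig remainders, which is delicate and not supplied. The second route requires proving, directly in the semi-discrete lattice, that the horizontal two-point function is a Toeplitz determinant with the specific symbol $w$; your appeal to the flattened-lattice picture and~\eqref{eq:high-T} is a heuristic, not a derivation. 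As written, the central step of the argument is missing.

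The paper (and indeed \cite[Thm.~3.9]{CHM-zig-zag-Ising}, which you cite but do not follow) circumvents the non-integrability by a change of variable rather than by a limit or a reformulation. One passes from the unit circle to the segment $[-1,1]$ via $x=\cos\tfrac{t}{2}$: the symmetric problem is solved by $Q_n(e^{it})=D_n\,e^{int/2}\,2^nP_n(\cos\tfrac{t}{2})$ with $P_n$ orthogonal for $\overline{w}_c(x)=(1-x^2)^{1/2}$, giving $L_{n+1}=\pi^{-1}2^{2n}\|P_n\|^2 D_n$. For the anti-symmetric problem one writes $Q^\#_{n+1}(e^{it})=L_n\,(e^{it}-1)\,e^{int/2}\,2^nP^\#_n(\cos\tfrac{t}{2})$; the extra factor $(e^{it}-1)$ cancels the singularity of $w_c^{-1}$ at $t=0$, so the construction~\eqref{eq:solution-anti} goes through with the \emph{integrable} segment weight $\overline{w}_c^\#(x)=(1-x^2)^{-1/2}$, yielding $D_{n+1}=\pi^{-1}2^{2n}\|P^\#_n\|^2 L_n$. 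Telescoping gives a closed formula for $D_{m+1}D_m$, the Fisher--Hartwig asymptotics of \cite{Deift-Its-Krasovsky-Annals} produce $D_{m+1}D_m\sim 2^{2/3}e^{6\zeta'(-1)}(2m)^{-1/2}$, and the final step $D_{m+1}\sim D_m$ comes from the convergence of correlation ratios (Theorem~\ref{thm:ratios_spins}). The key idea you are missing is precisely this $(e^{it}-1)$ regularization inside the ansatz, which keeps the whole argument within the orthogonal-polynomial framework already set up.
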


\begin{proof}
  A straightforward computation shows that
  \begin{equation*}
    w_{c}(t)=2(\sin\tfrac{1}{2}t)^2\, .
  \end{equation*}  
  thus the weight $w_{c}^\#:=w_{c}^{-1}$ is not integrable anymore and the arguments of the proof of Theorem~\ref{thm:magnetization_below_criticality} have to be modified. We still can remark that $D_n=D_n^\star$, $L_n=L_n^\star$ by Kramers--Wannier duality. Still one can keep using~\eqref{eq:Dn-to-Ln+1}. This allows to switch to the framework of orthogonal polynomials on the \emph{the segment} $[-1, 1]$ instead of the working with weights defined on the unit circle. Let
  \begin{equation}
    \label{eq:w-real}
    \overline{w}_{c}(x)\ :=\ [\, 1-x^2\, ]^{1/2}\,, \quad x\in [-1;1], 
  \end{equation}
  and ~$P_n(x)=x^n+\dots$ be the unitary orthogonal polynomial of degree~$n$ on the segment~$[-1, 1]$ for the weight $\overline{w}_{c}(x)$. The trigonometric polynomial
  \[
    Q_n(e^{\icomp t})\ :=\ D_n \cdot e^{\frac{1}{2}\icomp nt}\cdot 2^nP_n(\cos\tfrac{1}{2}t)
  \]
  fits the construction given in Lemma~\ref{lem:solution-sym} to solve~$[\mathrm{P^{sym}_n}]$ (it has the proper orthogonality relations with prescribed proper free and leading coefficients $D_n$ (recall that  $P_n$ is unitary). The formula~\eqref{eq:<Qn>=L-sym} gives for $n \geq 1$
  \begin{equation}\label{eq:Dn-to-Ln+1-real}
    L_{n+1}=\frac{1}{2\pi}\int_{-\pi}^{\pi} Q_{n}(e^{\icomp t})w_c(t)dt= \pi^{-1}2^{2n}\cdot\|P_n\|^2_{\overline{w_c}dx}\cdot D_n.
  \end{equation}  
  An analoguous computation for $n=0$ gives (using~$D_0=1$ and due to the modification required in Lemma~\ref{lem:laplacians} for the case $n=0$),
  \begin{equation}
    \label{eq:D0-to-L1-real}
    \textstyle 2L_1\ =\ 2\pi^{-1}\int_{-1}^1P_0(x)\overline{w_c}(x)dx = 2\pi^{-1}\cdot\|P_0\|^2_{\overline{w_c}dx}
  \end{equation}
  We construct similarly a solution to~$\mathrm{[P^{anti}_{n+1}]}$ treated in Lemma~\ref{lem:solution-anti} in the supercritical regime. Denote~$P_n^\#(x)$ be the unitary orthogonal polynomial of degree~$n$ on the segment~$[-1, 1]$ for the weight,
  \begin{equation}
    \label{eq:whash-real}
    \overline{w}_{c}^\#(x)\ :=\ [\, 1-x^2\, ]^{-1/2}\,, \quad x\in [-1, 1], 
  \end{equation}
  and,
  \[
    Q^\#_{n+1}(e^{\icomp t})\ :=\ L_n\cdot (e^{\icomp t}\!-\!1)e^{\frac{1}{2}\icomp nt}\cdot 2^nP^\#_n(\cos\tfrac{1}{2}t).
  \]
  The formula~\eqref{eq:solution-anti} indeed provides a solution to the problem~$\mathrm{[P^{anti}_{n+1}]}$, and now the product~$(e^{\icomp t}-1)w_{c}^\#(t)$ \emph{becomes integrable} in unit circle as the additional factor $(e^{\icomp t}\!-\!1)$ removes the singularity of~$w_c^\#$ at~$t=0$. Finally, the computation~\eqref{eq:<Qn>=L-anti} is still valid and implies for $n\geq 0$,
  \begin{equation}
    \label{eq:Ln-to-Dn+1-real}
    D_{n+1}=-\frac{1}{2\pi}\int_{-\pi}^{\pi} Q^\#_{n+1}(e^{\icomp t})w_{c}(t)^{-1}dt= \pi^{-1}2^{2n}\cdot \|P^\#_n\|^2_{\overline{w}_{c}^\#dx}\cdot L_n.
  \end{equation}

  Since~$L_0=1$ (see Lemma~\ref{lem:laplacians}). Using the reccurence relations~\eqref{eq:D0-to-L1-real}, \eqref{eq:Dn-to-Ln+1-real} for~$n=1, \dots, m\!-\!1$, and~\eqref{eq:Ln-to-Dn+1-real} for~$n=0, \dots, m$, one gets
  \begin{align}
    \label{eq:DmDm+1-crit}
    D_{m+1}D_{m}\ &=\ \pi^{-2m-1} 2^{2m^2}\prod\nolimits_{k=0}^{m-1}\|P_k\|^2_{\overline{w_c}dx} \cdot \prod\nolimits_{k=0}^{m}\|P^\#_k\|^2_{\overline{w_{c}}^\#dx}\,, 
  \end{align}
  where the weights~$w_c(x)$ and~$w^{\#}_{c}(x)$ on~$[-1, 1]$ are given by~\eqref{eq:w-real} and~\eqref{eq:whash-real}.
  This is a classical problem of orthogonal theory, and going back to the unit circle with a  $|t|$-type singularity of the weights appears when~$e^{\icomp t}=1$. General results (accounted, e.g., in~\cite{Deift-Its-Krasovsky-Annals}) ensure that,
  \[
    D_{m+1}D_{m}\ \sim\ 2^{2/3}e^{6\zeta'(-1)}(2m)^{-1/2}, \quad m\to\infty, 
  \]
  To conclude, it is enough to prove that~$D_{m+1}\sim D_m$ as~$m\to\infty$. This can be seen in the convergence of horizontal correlation ratios in a finite box~\ref{thm:ratios_spins} and Russo-Seymour-Welsh estimates to pass to the full-plane. 
\end{proof}

\subsection{Asymptotics of correlations above criticality}

\begin{prop}
  The horizontal correlation lenght equals $\xi = \frac{1}{2}\log(\frac{\theta}{\theta^{\star}}).$
\end{prop}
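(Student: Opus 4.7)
The strategy is to extend the orthogonal polynomial/Fourier analysis of Section~\ref{sec:polynomials} into the super-critical regime. Set $q := \theta^\star/\theta \in (0,1)$, so that $w(t;\theta,\theta^\star) = \theta|1 - q e^{it}|$ is smooth and uniformly positive on the unit circle, and both $w$ and $w^\# := 1/w$ admit the full Szegő theory.

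First I would invoke Kramers--Wannier duality together with Theorem~\ref{thm:magnetization_below_criticality} applied at swapped parameters $(\theta^\star,\theta)$, to identify $(D_n^\star)^2 \to (\calM(\theta^\star,\theta))^4 > 0$, while $D_n \to 0$ by the sharp threshold of~\cite{DCLM-universality}. Substituting these limits into the recursions~\eqref{eq:Dn-to-Ln+1}--\eqref{eq:Ln-to-Dn+1} determines the Szegő limits $\alpha_\infty, \beta_\infty, \alpha_\infty^\#, \beta_\infty^\#, L_\infty, L_\infty^\star$, and in particular forces the compatibility $L_\infty - \alpha_\infty^\# L_\infty^\star = 0$ (otherwise $D_{n+1} = \beta_n^\#(L_n - \alpha_n^\# L_n^\star)$ would not converge to $0$).

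To extract the exponential rate of $D_n$, the next step is a refined Szegő-type analysis. Both weights analytically continue to the annulus $q < |z| < 1/q$ with square-root branch points at $z = q$ and $z = 1/q$, coming from the factorization $w(z)^2 = -\theta\theta^\star(z-q)(z-1/q)/z$. A Riemann--Hilbert / steepest-descent analysis of the integral representations from Lemmas~\ref{lem:solution-sym}--\ref{lem:solution-anti}, obtained by deforming the contour around the branch cut connecting the nearest singularity to the unit circle (and tracking the sub-leading Szegő corrections to $\alpha_n^\#, L_n, L_n^\star$ that enter the recursion), yields $D_n \sim C\,q^{n/2}$ for an explicit constant $C > 0$. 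The half-integer exponent is the key feature; it originates from the square-root character of the branch point at $z = q$, which is the closest singularity of the analytic extension of the Szegő function $D(z) = \theta^{1/2}(1-qz)^{1/2}$.

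Taking logarithms gives $\xi = -\lim \frac{1}{n}\log D_n = \frac{1}{2}\log(1/q) = \frac{1}{2}\log(\theta/\theta^\star)$, as claimed. The main technical hurdle is justifying the $q^{n/2}$ (rather than $q^n$) decay: this requires carefully tracking the square-root branch-cut contribution in the contour deformation, in the spirit of~\cite{Deift-Its-Krasovsky-Annals, CHM-zig-zag-Ising}. Once this rate is established, the conclusion is immediate.
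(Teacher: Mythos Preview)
Your overall direction is sound but the route diverges from the paper's, and one of your preparatory steps is empty. In the super-critical regime the weight $w(t;\theta,\theta^\star)=\theta|1-qe^{it}|$ is strictly positive, so the Verblunsky coefficients satisfy $\alpha_n,\alpha_n^\#\to 0$; combined with $D_n\to 0$ and $D_n^\star\to\mathrm{const}>0$, the recursion~\eqref{eq:Dn-to-Ln+1} already forces $L_n\to 0$. Hence your ``compatibility'' $L_\infty-\alpha_\infty^\# L_\infty^\star=0$ is the identity $0=0$ and carries no information about the decay rate. All the content of your argument is therefore pushed into the Riemann--Hilbert/steepest-descent step, which you only sketch.

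The paper takes a more elementary and explicit route that avoids any contour deformation. Composing~\eqref{eq:Dn-to-Ln+1} with~\eqref{eq:Ln-to-Dn+1} gives a two-step linear recursion $D_{n+2}=q_nD_n+r_n$ with $q_n=\beta^\#_{n+1}\beta_{n-1}(1-\alpha^\#_{n+1}\alpha_{n-1})$ and $r_n=\beta^\#_{n+1}\beta_{n-1}(\alpha_{n-1}-\alpha^\#_{n+1})D_n^\star$. Solving it, $D_{2n}$ equals $\prod_{m<n}q_m$ times a partial sum; since $\prod q_m$ converges to a positive constant while $D_{2n}\to 0$, that partial sum must be the tail of a convergent series. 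The exponential rate of $D_{2n}$ is then exactly the rate of $r_n$, which is governed by $\alpha_{n-1}-\alpha_{n+1}^\#$. The paper then invokes the precise Verblunsky asymptotics $\alpha_n\sim C_1 R^{-n}n^{-1/2}$ and $\alpha_n^\#\sim C_1^\# R^{-n}n^{1/2}$ (Simon, Theorem~7.2.1), which encode the square-root branch point at $z=R=\theta/\theta^\star$ you identified; this gives $r_n\asymp R^{-n}$, hence $D_{2n}\asymp R^{-n}$ and $\xi=\tfrac12\log R$. Your intuition about the half-integer exponent coming from the square-root singularity of the Szeg\H{o} function is exactly right, but in the paper it enters through off-the-shelf Verblunsky asymptotics rather than a bespoke Riemann--Hilbert analysis.
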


\begin{proof}
  One first notices that using~\eqref{eq:Dn-to-Ln+1} and~\eqref{eq:Ln-to-Dn+1}, one has, for all $n$,
  \begin{equation}\label{eq:reccursion_rule_supercritical}
    D_{n+2} = q_n D_n + r_n,
  \end{equation}
  with $q_n = \beta^\#_{n+1}\beta_{n-1}(1-\alpha^\#_{n+1}\alpha_{n-1})$ and $r_n = \beta^\#_{n+1}\beta_{n-1}(\alpha_{n-1}-\alpha^\#_{n+1})D_n^{\star}  $.
  The above formula gives, by induction, the following formula,
  \begin{equation}
    D_{2n} = \bigg( D_0 + \sum\limits_{j=0}^{n-1} \frac{r_j}{\prod\nolimits_{k=0}^{j}q_k} \bigg) \prod \limits_{m=0}^{n-1} q_m.
  \end{equation}

  Note that $\alpha^\#_{n+1}$ and $\alpha_{n-1}$ converge to $0$ exponentially fast~\cite[Theorem~9.1]{Simon-OPUC-one-foot} with the same exponential rate $\lim_{n \to \infty} |\alpha_n|^{1/n} = \lim_{n \to \infty} |\alpha_n^\#|^{1/n} = R^{-1}$, with $R = \theta^\star / \theta > 1$ given by the radius of the disk on which $w$ has an analytic continuation.
  We also know that $D_n$ converges to 0 as $n$ tends to $\infty$.
  Moreover, using~\eqref{eq:product_beta_beta_sharp}, when $n$ tends to infinity, we have that the product $\prod_{m=0}^{n-1} q_m$ converges to a positive constant and that $\beta_{n+1}^\# \beta_{n-1}$ tends to 1.
  The aforementioned convergences all together imply that the factor
  \[
    D_0 + \sum\limits_{j=0}^{n-1} \frac{r_j}{\prod\nolimits_{k=0}^{j}q_k}
  \]
  goes to 0 as $n \to \infty$, and can be consequently rewritten as the remainder of a converging infinite series,
  \[
    - \sum\limits_{j=n}^{\infty} \frac{r_j}{\prod\nolimits_{k=0}^{j}q_k}.
  \]
  The ratio of the two consecutive terms in the above remainder is given by,
  \begin{equation}
    \label{eq:D0-remainder}
    \frac{r_{j+1} / q_{j+1}}{r_j}
    = \frac{ \alpha_j - \alpha_{j+2}^\# }{ \alpha_{j-1} - \alpha_{j+1}^\# }
    \frac{1}{ \beta_{j+1}^\# \beta_{j-1} }
    \frac{1}{1 - \alpha_{j+2}^\# \alpha_j},
  \end{equation}
  which tends to $R^{-1}$ as explained below. Thus, the whold remainder~\eqref{eq:D0-remainder} converges exponentially fast at rate $R^{-1}$.

  We prove now the annouced fact that the given ratio converges to $R^{-1}$. The last two terms in~\eqref{eq:D0-remainder} converge to $1$ exponentially fast. Let $D(z)=\theta^{\frac{1}{2}}(1-\frac{1}{R}z)^{\frac{1}{2}} $ and $D^\#(z)=\theta^{-\frac{1}{2}}(1-\frac{1}{R}z)^{-\frac{1}{2}} $ the Szego functions naturally associated to the weights $w $ and $w^\# $.  Let $D^{-1}(z)=\sum\limits_{j=0}^{\infty} d_{j,-1}z^{j} $ and $(D^\#)^{-1}(z)=\sum\limits_{j=0}^{\infty} d^\#_{j,-1}z^{j} $ their decomposition as power series. 

  Asymptotically, we have $d_{j,-1} =  \theta^{-\frac{1}{2}}{\frac{-1}{2} \choose j  }R^{-j} \sim \theta^{-\frac{1}{2}} (-1)^{j}\Gamma(\frac{1}{2})^{-1} j^{-\frac{1}{2}} $ and  $d^\#_{j,-1} = \theta^{\frac{1}{2}} {\frac{1}{2} \choose j  }R^{-j} \sim \theta^{\frac{1}{2}}(-1)^{j}\Gamma(-\frac{1}{2})^{-1} j^{-\frac{3}{2}} $.
  We now use the corollary (2) of Theorem 7.2.1 \cite[Theorem 7.2.1]{Simon-Book} that states that the asymptotics on  $d_{j,-1} d^\#_{j,-1}$ imply directly that $\alpha_j \sim (C_1)R^{-j}j^{-\frac{1}{2}} $ and $\alpha_j \sim C_{1}^\#R^{-j}j^{\frac{1}{2}} $.
  This concludes the proof.

\end{proof}

\appendix

\section{Construction of infinite volume correlators}\label{sec:infinite_volume_corrlators}

In this section, we construct infinite-volume correlators in the semi-discrete lattice, which we rely on to identify the scaling limit fermionic observables in general simply connected domains.
Those constructions are similar to the isoradial case, and are made out of a proper integration of the so-called discrete exponentials, introduced in the semi-discrete lattice in~\cite{Li-QI-SLE}, following the path of~\cite{Kenyon-dimer-conformal, Baxter-book, CS-universality}.
One of the key features here is to extend discrete exponentials to the entire semi-dirscrete lattice, including the corner graph.

\begin{defn}
  Let $\delta > 0$.
  Take $p\in \medialdomain \cup \midedgedomain $ be a vertex on either the medial graph or the corner graph.
  Fix $\lambda \in \bbC$.
  We define the \emph{semi-discrete exponential}
  $\exp_\delta (\lambda,\cdot,p)$ \emph{normalized at} $p$ using the following reccursion rules.
  \begin{itemize}
    \item $\exp_\delta (\lambda, p, p) := 1$.
    \item $\exp_\delta (\lambda, q, p) := \exp_\delta \big[ \frac{ \lambda(q-p) }{ (1+\frac{\lambda \delta}{2})(1-\frac{\lambda \delta}{2}) } \big] $ for any $q$ such that $\myr (q) = \myr (p) $.
    \item $\exp_\delta (\lambda, c, p) := \exp_\delta (\lambda, q, p) (1+\lambda(c-q))^{-1}$ for each pair of adjacent vertices $q \in \medialdomain$ and $c \in \midedgedomain$.
  \end{itemize}
\end{defn}
Moreover, the reccursion rules ensure that if $u \sim v$ are neighbours in $\medialdomain$, we have,
\begin{equation*}
  \exp_\delta(\lambda, v, p)
  = \exp_\delta(\lambda, u, p)
  \frac{ 1 + \tfrac{\lambda \delta}{2} (v-u) }
  { 1 - \tfrac{\lambda \delta}{2} (v-u) }.
\end{equation*}
One can easily check that the semi-discrete exponential is well-defined and also this definition coincides with the one from~\cite{Li-QI-SLE} if we restrict it to the semi-discrete medial lattice.

\begin{lem}
  Given $p_0\in \medialdomain$ and $ \lambda \in \bbC$, the function $ c\mapsto \exp(\lambda,c,p_0)$ is discrete holomorphic everywhere.
\end{lem}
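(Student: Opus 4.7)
The plan is to verify by direct computation that the discrete Cauchy--Riemann relation
\[
\derzbar F(c) = \tfrac{1}{2}\Bigl[\tfrac{F(c^+) - F(c^-)}{2\delta} - \tfrac{\partial_y F(c)}{\icomp}\Bigr] = 0
\]
holds at every corner $c \in \midedgedomain$, where $F(c) := \exp_\delta(\lambda, c, p_0)$. Since the three defining rules give fully explicit rational factors, the entire argument boils down to an algebraic identity in $\lambda\delta$.

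First I would compute the horizontal increments. Given a corner $c$, let $q \in \medialdomain$ be a medial vertex adjacent to $c$ lying between $c$ and (say) $c^+$, so that $|c - q| = |c^+ - q| = \delta/2$. Applying rule~(3) twice (once to go from $c$ to $q$, once to pass from $q$ to $c^+$) expresses both $F(c)$ and $F(c^+)$ as $\exp_\delta(\lambda, q, p_0)$ multiplied by a corner factor. Taking the quotient yields a Möbius-type ratio $F(c^+)/F(c)$ depending only on $\lambda\delta$; a symmetric argument on the other side of $c$ produces $F(c^-)/F(c)$ as its reciprocal. A direct simplification then gives
\[
\tfrac{F(c^+) - F(c^-)}{2\delta} \;=\; \tfrac{\lambda \, F(c)}{1 - \lambda^2\delta^2/4}.
\]

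Second I would compute the vertical derivative. Rule~(3) applied to $c + \icomp y$ with the medial neighbour $q + \icomp y$ shows that $F(c+\icomp y) = \exp_\delta(\lambda, q + \icomp y, p_0) \cdot (1 + \lambda(c-q))^{-1}$, where the corner factor does not depend on $y$. Since $q$ and $q + \icomp y$ lie on the same vertical line of $\medialdomain$, rule~(2) combined with the multiplicative property of $\exp_\delta$ along vertical lines identifies $y \mapsto \exp_\delta(\lambda, q+\icomp y, p_0)$ as an honest complex exponential with rate $\icomp\lambda/(1 - \lambda^2\delta^2/4)$. Differentiating at $y=0$ gives
\[
\tfrac{\partial_y F(c)}{\icomp} \;=\; \tfrac{\lambda\, F(c)}{1-\lambda^2\delta^2/4}.
\]

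Substituting the two expressions into the definition of $\derzbar$ yields exact cancellation. The key mechanism is that the denominator $1 - \lambda^2\delta^2/4$ in rule~(2) is designed precisely to match the product $(1-\lambda\delta/2)(1+\lambda\delta/2)$ that arises when chaining two applications of rule~(3) across a medial vertex. The only real bookkeeping obstacle is orientation and sign: one must consistently track on which side of $c$ the medial neighbour $q$ lies and use the correct sign of $c-q$ in rule~(3). Well-definedness of $\exp_\delta(\lambda, \cdot, p_0)$ (independence from the path used in the recursion) reduces to the same local consistency identity around elementary loops of $\medialdomain \cup \midedgedomain$, and is therefore verified simultaneously with the discrete holomorphicity.
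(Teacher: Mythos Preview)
Your proposal is correct and follows essentially the same route as the paper: both verify the discrete Cauchy--Riemann relation $\derzbar F(c)=0$ by computing $F(c^\pm)/F(c)$ and $\partial_y F(c)/F(c)$ explicitly from the recursion rules and checking the resulting algebraic identity in $\lambda\delta$. The paper's proof normalises to $\delta=1$ and simply states the three local ratios, while you derive them more carefully from the definition and keep general $\delta$; otherwise the two arguments are identical.
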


\begin{proof}
  This is a straightforward computation that we precise here. Assume that $\delta=1$, fix  $\lambda \in \bbC $ and set $F=\exp_{1}(\lambda,\cdot,p_0) $.
  Assuming that $c^{-}\sim c \sim c^{+}$ are naturally ordered neighbors, one has,
  \begin{equation*}
    F(c^{+}) = \frac{ 1 + \tfrac{\lambda }{2} }{ 1 - \tfrac{\lambda }{2} } \,F(c),
    \quad F(c^{-}) = \frac{ 1 - \frac{\lambda }{2} }{ 1 + \frac{\lambda }{2} } \,F(c),
    \quad \partial_y F(c) = \frac{ \icomp \lambda}{ (1+\tfrac{\lambda }{2} )(1 - \tfrac{\lambda }{2})} \,F(c).
  \end{equation*}
  Thus checking discrete holomorphicity~\eqref{defn:s-hol2} amounts to verify that 
  \begin{equation*}
    F(c)\big[\frac{1}{2} (\frac{1+\frac{\lambda}{2}}{1-\frac{\lambda}{2}} - \frac{1-\frac{\lambda}{2}}{1+\frac{\lambda}{2}} ) - \frac{1}{i} \frac{i\lambda}{(1+\frac{\lambda}{2})(1-\frac{\lambda}{2})}   \big]=0
  \end{equation*}
  which holds.

\end{proof}

We are now in position to construct the infinite volume-correlators out of semi-discrete exponentials. 
\begin{defn}[Infinite-volume energy correlator]
  Given $a\in \midedgedomain$, we define the full-plane energy correlator normalized at $a$ by setting for $c\neq a \in \midedgedomain$
  \begin{equation}
    \eta_c\langle \chi_{c}\chi_{a} \rangle_{\bbC_{\delta}} := \frac{1}{2\pi} \displaystyle \int_{-\bbR_{+}\overline{c-a}} \frac{ \overline{\eta_a}\exp_{\delta}(\lambda,c,a)}{1-i\overline{\eta_a}^2\frac{\lambda^2}{4}} \dd \lambda.
  \end{equation}
  The previous definition should be understood at $a^{\pm}$ by
  \begin{equation}\label{eq:double_valuation_energy_correlator}
    \eta_a\langle \chi_{a^{\pm}}\chi_{a} \rangle_{\bbC_{\delta}} := \pm \frac{1}{2\pi} \displaystyle \int_{\bbR_{+}} \frac{\overline{\eta_a}}{1-i\overline{\eta_a}^2\frac{\lambda^2}{4}} \dd \lambda.
  \end{equation}
  We also denote $G_{(a)}:= \eta_a\langle \chi_{c}\chi_{a} \rangle^{\diamond}_{\bbC_{\delta}}  $ its diamond $s$-holomorphic counterpart.

\end{defn}

\begin{defn}[Infinite-volume spin correlators]
  Given $u\in \primaldomain$ and $v\in \dualdomain $, we also define the infinite volume correlators $\eta_c\langle \chi_{c} \sigma_u \rangle_{[\bbC_{\delta},u]}  $ and $\eta_c \langle \chi_{c} \mu_v \rangle_{[\bbC_{\delta},v]}$ by setting
  \begin{equation}
    \eta_c \langle \chi_{c} \sigma_u \rangle_{[\bbC_{\delta},u]} :=  \tfrac{e^{-\frac{i\pi}{4}}}{2\pi} \displaystyle \int_{-\bbR_{+}\overline{c-u}} \frac{1}{\lambda^{1/2}}  \exp_{\delta}(\lambda,c,u) \dd \lambda,
  \end{equation}
  \begin{equation}
    \eta_c \langle \chi_{c} \mu_v \rangle_{[\bbC_{\delta},v]} :=  \tfrac{e^{\frac{i\pi}{4}}}{2\pi} \displaystyle \int_{-\bbR_{+}\overline{c-v}} \frac{1}{\lambda^{1/2}}  \exp_{\delta}(\lambda,c,v) \dd \lambda.
  \end{equation}
  The correlators $\langle \chi_{c} \sigma_u \rangle_{[\bbC_{\delta},u]}  $ and  $\langle \chi_{c} \mu_v \rangle_{[\bbC_{\delta},v]}$ are respectively defined in $[\bbC_{\delta},u] $ and $[\bbC_{\delta},v]$, the double covers of the semi-discrete lattice $\bbC_{\delta} $ respectively branching around $u$ and $v$.
  We also denote $G^{\diamond}_{[u]} := \langle \chi_{c} \sigma_u \rangle^{\diamond}_{[\bbC_{\delta},u]} $ and $G^{\diamond}_{[v]}:=  \langle \chi_{c} \mu_v \rangle^{\diamond}_{[\bbC_{\delta},v]} $ the naturally associated diamond s-holomorphic extensions.
\end{defn}

\begin{prop}
  The energy correlator $\langle \chi_{c}\chi_{a} \rangle^{\diamond}_{\bbC_{\delta}}$ , double valued at ${a}^{\pm}$ as in~\eqref{defn:energy_correlator}, is corner $s$-holomorphic \emph{everywhere} (even at $a^{\pm}$ with the proper graph modifications). Moreover, uniformly on compact subsets of $\bbC\backslash \{ a \} $, one has the asymptotic $\delta^{-1}G_{(a)}(z) = \frac{\overline{\eta_a}}{\pi(z-a)}(1+O(\delta))$ as $\delta \to 0$.
\end{prop}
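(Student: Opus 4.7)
The plan is to establish the two claims separately.

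For s-holomorphicity at corners $c \neq a^\pm$, I would exploit the lemma proved immediately above, which asserts that for each fixed $\lambda\in\bbC$ the discrete exponential $c\mapsto\exp_\delta(\lambda,c,a)$ is diamond discrete-holomorphic. Fixing a reference corner $c_0\neq a^\pm$ and deforming the integration ray $-\bbR_+\overline{c-a}$ to the fixed ray $-\bbR_+\overline{c_0-a}$ for all $c$ in a small neighbourhood of $c_0$ (which is legitimate by Cauchy's theorem since the integrand is meromorphic in $\lambda$, with poles of the denominator on $\pm2\sqrt{\delta}\,\bbR\cup\pm2i\sqrt\delta\,\bbR$, and decays exponentially along either ray), the correlator becomes a genuine $c$-linear functional of $c\mapsto\exp_\delta(\lambda,c,a)$. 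Since the operators $\partial_y$, $\Delta_x$ commute with the $\lambda$-integration, discrete holomorphicity passes to the integral. The parallelism condition $F(c)\parallel\eta_c\bbR$ is the more delicate piece; it is verified by a case analysis on western/eastern corners using the specific form of the weight $(1-i\overline{\eta_a}^2\lambda^2/4)^{-1}$ and the reality structure one obtains after the substitution $\lambda\mapsto\overline{\eta_a}/\overline{\eta_c}\cdot\mu$ that aligns the integrand with the line $\eta_c\bbR$.

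For s-holomorphicity at the split corners $a^\pm$ in the modified graph $\Omega_{a,\delta}$, the prescribed values $\pm\eta_a$ emerge directly from~\eqref{eq:double_valuation_energy_correlator}: a straightforward evaluation (e.g., by the substitution $u=\overline{\eta_a}\lambda/2$ and use of the elementary integral $\int_0^\infty(1-iu^2)^{-1}\,du$) produces the normalization $\pm\eta_a$ required to match the definition~\eqref{defn:energy_correlator}. Combined with the edge-splitting that isolates the interaction of $a^+$ and $a^-$ with opposite sides of the segment $[v_au_a]$, these values are precisely what is needed for the discrete holomorphicity and parallelism relations to close up at the two new corners.

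For the asymptotic, the natural rescaling is $\lambda=2\sqrt\delta\,\mu$, which turns the denominator $1-i\overline{\eta_a}^2\lambda^2/4$ into the $\delta$-independent quantity $1\pm\mu^2$ (sign depending on the type of $a$), while mapping the ray to itself (only reparametrized). In these variables $\exp_\delta(\lambda,z,a)=\exp\bigl[2\sqrt\delta\mu(z-a)/(1-\delta\mu^2)\bigr]$, which is uniformly close to $1+2\sqrt\delta\mu(z-a)+O(\delta\mu^2)$ on any bounded $\mu$-region; outside such a region the exponential decay along the ray gives negligible contribution. The leading $O(1)$ contributions from the two corners $c_z^\pm$ making up $G_{(a)}(z)$ cancel against each other (because of the opposite orientations encoded by their $\eta_{c_z^\pm}$-factors and the unchanged contour direction), and the next-order term, which is $O(\delta)$, produces $\frac{\overline{\eta_a}}{\pi(z-a)}$ after a Cauchy/residue evaluation on the rescaled contour. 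Uniformity on compacts follows because all estimates above are uniform in $z$ keeping a positive distance from $a$. An alternative, perhaps cleaner route is to combine the boundedness of $\delta^{-1}G_{(a)}$ on compacts (obtained from the rescaled integral) with Lemma~\ref{lem:precompactness} and the uniqueness of diamond s-holomorphic functions on $\bbC$ that are bounded at infinity with the prescribed simple-pole singularity.

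The main obstacle is this last asymptotic computation: tracking the exact cancellation of the leading-order contributions from $c_z^+$ and $c_z^-$ and extracting the correct constant $1/\pi$ from the subleading residue. The poles of $(1-i\overline{\eta_a}^2\lambda^2/4)^{-1}$ sit at scale $\sqrt\delta$, so neither a naive Taylor expansion in $\lambda$ nor a pure residue calculation is directly applicable; one must combine both through the right rescaling and carefully match the $\overline{\eta_a}$-factors, which is precisely where the numerical constant in the asymptotic is produced.
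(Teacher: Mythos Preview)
Your treatment of s-holomorphicity is essentially the paper's: deform the ray to a fixed contour so that the discrete-holomorphicity of $c\mapsto\exp_\delta(\lambda,c,a)$ passes under the integral; the paper phrases this as replacing the ray by a closed contour encircling all nearby poles, but the mechanism is the same. The parallelism check you flag as delicate is also left as an exercise in the paper.

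The asymptotic argument, however, has a genuine gap. The rescaling $\lambda=2\sqrt\delta\,\mu$ is not the right one, and the formula you write for the rescaled exponential is incorrect (the vertical part carries $(1-\lambda^2\delta^2/4)^{-1}=(1-\delta^3\mu^2)^{-1}$, not $(1-\delta\mu^2)^{-1}$, and you omit the horizontal-step factors $(1\pm\lambda\delta/2)^{-1}$). More importantly, your cancellation story is wrong: at your leading order the integrand depends only on $a$ and the ray direction, so the contributions from $c_z^+$ and $c_z^-$ are \emph{equal}, not opposite. The paper instead rescales the lattice to $\delta=1$ (equivalently, sends $|c-a|\to\infty$) and runs a Laplace-type analysis with a three-region split $|\lambda|\le|c-a|^{-1/3}$, $|\lambda|\in[|c-a|^{-1/3},|c-a|^{2/3}]$, $|\lambda|\ge|c-a|^{2/3}$. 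The middle region is exponentially negligible; the two extremes contribute at the \emph{same} order, via
\[
\text{integrand}\sim \overline{\eta_a}\,e^{\lambda(c-a)}\ \ (\lambda\to 0),\qquad
\text{integrand}\sim 4\eta_a\eta_c^2\,\lambda^{-2}e^{4\lambda^{-1}\overline{(c-a)}}\ \ (\lambda\to\infty),
\]
yielding the corner asymptotic $\eta_c\langle\chi_c\chi_a\rangle=\tfrac{1}{2\pi}\bigl[\tfrac{\overline{\eta_a}}{c-a}+\tfrac{\eta_a\eta_c^2}{\overline{c-a}}\bigr]+O(|c-a|^{-2})$. The passage to the diamond observable $G_{(a)}^\diamond(z)=F(c_z^+)+F(c_z^-)$ then cancels the \emph{anti-holomorphic} pieces because $\eta_{c_z^+}^2+\eta_{c_z^-}^2=0$, leaving $\tfrac{\overline{\eta_a}}{\pi(z-a)}$. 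So there is a cancellation, but it is between the $\lambda\to\infty$ contributions of the two corners, not between leading terms of a small-$\mu$ expansion; your proposed route never sees the large-$\lambda$ regime that produces the anti-holomorphic part in the first place.
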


\begin{proof}
  The corner discrete holomorphicity comes from the fact that one can (using simple deformation of contours) replace the line $-\bbR_{+}\overline{c-a}$ by a contour surrounding all the poles of the integrand at nearby corners.
  One can then use the same contour for nearby corners and the corner discrete holomorphicity comes directly from the one of discrete exponentials.
  In order to compute asymptotics, a simple rescaling of the grid allows to work with $\delta = 1 $ and $|c-a|\to \infty $.
  One can cut the integral in $3$ parts:  $|\lambda|\leq |c-a|^{-\frac{1}{3}} $ , $\lambda \in [|c-a|^{-\frac{1}{3}};|c-a|^{+\frac{2}{3}} ] $ and $|\lambda|\geq |c-a|^{\frac{2}{3}} $.
  Using the fact that $\eta_c^2 = \frac{i}{v_c - u_c} $,  the integrand $\exp(\lambda,c,a)(1-i\overline{\eta_a}^2\frac{\lambda^2}{4})^{-1}$ has the following asymptotics 
  \begin{itemize}
    \item $\overline{\eta_a} \exp(\lambda(c-a)+O(\lambda^2)+ O(|c-a|\lambda^3))$ as $\lambda \to 0$.
    \item $4 \eta_a  \eta_c^2  \lambda^{-2} \exp(4\lambda^{-1}\overline{(c-a)}+O(\lambda^{-2})+ O(|c-a|\lambda^{-3})) $ as $\lambda \to  \infty$.
    \item $\exp(\lambda,c,a)(1-i\overline{\eta_a}^2\frac{\lambda^2}{4})^{-1}=O[\exp(-|c-a|^{\frac{1}{2}})]$ for $\lambda \in [|c-a|^{-\frac{1}{3}};|c-a|^{+\frac{2}{3}} ] $ as in the proof of~\cite[Prop.~3.20]{Li-QI-SLE}.
  \end{itemize}
  Applying the Laplace method one directly gets the expansion 
  $\eta_c \langle \chi_{c}\chi_{a} \rangle_{\bbC_{1}}= \frac{1 }{2\pi} [\frac{\overline{\eta}_a }{c-a} + \frac{ \eta_a \eta_{c}^{2} }{\overline{c-a}}] + O(|c-a|^{-2}) $ as $|c-a|\to \infty $.
  The values at $a^{\pm} $ are obtained by a straightfoward computation.
  The check of complex sign of the projection $\eta_c\langle \chi_{c}\chi_{a} \rangle_{\bbC_{\delta}}$ is left as an exercice to the reader (e.g. see Remark in~\cite[Sec.~5.2]{CIM-universality}).

\end{proof}

\begin{prop}
  The correlator $\eta_c\langle \chi_{c} \sigma_u \rangle_{[\bbC_{\delta},u]}$ is a well-defined corner $s$-holormophic function everywhere on $[\bbC_{\delta},u]$.
  Moreover one has the convergence (uniformly on compacts away from $u$) that $ \delta^{-\frac12}G^{\diamond}_{[u]}(z) = (\tfrac1\pi)^{\frac12} \frac{e^{-\frac{i\pi}{4}}}{\sqrt{z-u}} + O(\frac{\delta^2}{(z-u)^{\frac{5}{2}}}$).
  Similarly the correlators $\eta_c\langle \chi_{c} \mu_v \rangle_{[\bbC_{\delta},v]}$ is a well-defined corner $s$-holormophic function everywhere on the double cover of $\bbC_{\delta} $ branching around $v$.
  One also has the convergence uniformly on compacts away from $v$) to $ \delta^{-\frac12}G^{\diamond}_{[v]}(z) = (\tfrac1\pi)^{\frac12}\frac{e^{\frac{i\pi}{4}}}{\sqrt{z-v}} + O(\frac{\delta^2}{(z-u)^{\frac{5}{2}}}$).
  At corner neighbouring their branchings, both corner s-holomorphic correlators have an absolute value $1$, with appropriate complex sign.
\end{prop}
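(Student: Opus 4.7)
The strategy parallels the construction of the infinite-volume energy correlator $G_{(a)}$ proved just above, the essential new ingredient being the additional factor $\lambda^{-1/2}$ that generates the branching structure on the double cover. First, one establishes convergence of the defining integral and $s$-holomorphicity. The line of integration $-\bbR_+\overline{c-u}$ is chosen precisely so that $\exp_\delta(\lambda,c,u)$ decays at infinity (the dominant exponent $\lambda(c-u)/[(1+\lambda\delta/2)(1-\lambda\delta/2)]$ has negative real part on this ray), while integrability at $\lambda=0$ is ensured by the fact that $\lambda^{-1/2}$ is locally integrable. Since $\lambda\mapsto \exp_\delta(\lambda,c,u)$ is analytic on $\bbC\setminus\{\pm 2/\delta\}$ and $c\mapsto \exp_\delta(\lambda,c,u)$ is corner $s$-holomorphic (by the preceding lemma), differentiation under the integral sign transfers corner $s$-holomorphicity to the correlator. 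The spinor property is then immediate: as $c$ winds once around $u$ on $[\bbC_\delta,u]$, the ray $-\bbR_+\overline{c-u}$ winds once around $0$, so the chosen branch of $\lambda^{-1/2}$ picks up the factor $-1$, which is exactly the sign-flip required for a spinor on the double cover.

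Next, for the asymptotic expansion it is sufficient to treat the case $\delta=1$ and $R := |z-u| \to \infty$ (a dilation by $\delta$ recovers the general statement, the prefactor $\delta^{-1/2}$ arising from the Jacobian combined with the $\lambda^{-1/2}$). I would split the integration ray into the three regions $\{|\lambda|\leq R^{-1/3}\}$, $\{R^{-1/3}\leq|\lambda|\leq R^{2/3}\}$ and $\{|\lambda|\geq R^{2/3}\}$, following the proof of the previous proposition for $G_{(a)}$. In the small-$\lambda$ zone one uses the Taylor expansion $\exp_1(\lambda,c,u)=\exp[\lambda(z-u)+O(\lambda^3(z-u))]$, reducing the integral after the substitution $\lambda = -t\,\overline{z-u}^{-1}$ to the elementary Gamma-type identity
\begin{equation*}
  \int_0^{\infty} t^{-1/2} e^{-t}\,\dd t = \Gamma(\tfrac12) = \sqrt{\pi},
\end{equation*}
which produces exactly the expected leading coefficient $\pi^{-1/2}e^{-\icomp\pi/4}(z-u)^{-1/2}$. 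The contribution of the middle zone is bounded by $O(\exp(-R^{1/2}))$ using the exponential decay of the discrete exponential along the chosen direction (the argument is verbatim the one in~\cite[Prop.~3.20]{Li-QI-SLE} already invoked for $G_{(a)}$). The large-$\lambda$ zone uses the reciprocal-symmetry of $\exp_\delta$ around $\lambda=\infty$ and contributes $O(R^{-5/2})$. Controlling these higher-order correction terms uniformly in $\delta$ and extracting the precise error $O(\delta^2(z-u)^{-5/2})$ is the main technical point and requires carefully tracking cubic corrections in the Taylor expansion of $\exp_\delta$.

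Finally, the statement about modulus $1$ at corners $c$ neighbouring $u$ is a direct computation: at such $c$, the discrete exponential $\exp_\delta(\lambda,c,u)$ reduces to a single elementary rational factor of $\lambda$ (coming from the one edge $[u,c]$ in the definition of $\exp_\delta$), so the integral can be evaluated in closed form, analogously to~\eqref{eq:double_valuation_energy_correlator}. The corresponding statement for $\eta_c\langle \chi_c \mu_v\rangle_{[\bbC_\delta,v]}$ is obtained by repeating the same argument with $v\in\dualdomain$ in place of $u\in\primaldomain$; the switch between $e^{-\icomp\pi/4}$ and $e^{\icomp\pi/4}$ just reflects the rotation by $\icomp$ that interchanges primal and dual roles of $\eta_c$, and can equivalently be verified using the Kramers--Wannier duality~\eqref{eq:disorder_duality}. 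The main obstacle throughout is the quantitative control of the Laplace method along a contour that passes through $0$ (where $\lambda^{-1/2}$ is singular) with uniformity in $\delta$; once this is carried out, the rest of the argument is routine.
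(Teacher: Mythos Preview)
Your proposal is correct and follows essentially the same route as the paper: reduce to $\delta=1$, split the ray into three zones $|\lambda|\le R^{-1/3}$, $R^{-1/3}\le|\lambda|\le R^{2/3}$, $|\lambda|\ge R^{2/3}$, apply the Laplace method using the small- and large-$\lambda$ expansions of the discrete exponential, and read off the spinor property from the branching of $\lambda^{-1/2}$.

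One technical point is handled slightly differently. You transfer corner $s$-holomorphicity by ``differentiation under the integral sign'', but the integration ray $-\bbR_+\overline{c-u}$ itself depends on $c$, so the $s$-holomorphicity relation compares integrals over \emph{different} rays for $c^-$, $c$, $c^+$. The paper deals with this by first deforming all nearby integrals to a common contour (avoiding the poles $\pm 2/\delta$ and the branch point at $0$), after which the $s$-holomorphicity of $c\mapsto\exp_\delta(\lambda,c,u)$ passes under the integral directly. You already note the analyticity of $\lambda\mapsto\exp_\delta(\lambda,c,u)$ on $\bbC\setminus\{\pm 2/\delta\}$, which is exactly what makes this deformation legitimate, so the fix is immediate. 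For the values at corners neighbouring the branching, the paper invokes the residue theorem rather than a direct evaluation; either works.
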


\begin{proof}
  The spirit of the proof is very similar to the energy correlator.
  Deforming again the half-line into contour (this time avoiding the poles of discrete exponentials and $u$) and using the a asymptotics of discrete exponentials allows to transfer the discrete holomorphicity of discrete exponentials to the discrete holomorphicity of the correlators.
  Rescaling again the lattice, we work with $\delta=1 $ when $|c-u|\to \infty $. This time the integrand has the asymptotics,
  \begin{itemize}
    \item $e^{-\frac{i\pi}{4}} \lambda^{-\frac{1}{2}}\exp(\lambda(c-u)+ O(|c-u|^2\lambda^2))$ as $\lambda \to 0$.
    \item $ 2 e^{\frac{i\pi}{4}}\eta_c^2\lambda^{-\frac{3}{2}} \exp(4\lambda^{-1}\overline{(c-u)}+\calO(|c-u|^{-2}\lambda^{-2}) $ as $\lambda \to  \infty$.
    \item $\lambda^{-\frac{1}{2}}\exp(\lambda,c,u)=O[\exp(-|c-u|^{\frac{1}{2}})]$ for $\lambda \in [|c-u|^{-\frac{1}{3}};|c-u|^{+\frac{2}{3}} ] $ as in the proof of~\cite[Prop.~3.20]{Li-QI-SLE}.
  \end{itemize}
  This proves again that the integral is well defined and the Laplace method provides the announced asymptotics. The spinor property comes from the spinor property of the function $\lambda^{-\frac12}$ and the values near the branchings are computed using the residue theorem. The check of complex sign of the projection $\eta_c \langle \chi_{c} \sigma_u \rangle_{[\bbC_{\delta},u]} $ (i.e. not only the complex sign of its asymptotic which is given by the above computation) is left as an exercice to the reader (e.g. see Remark in~\cite[Sec.~5.2]{CIM-universality}).

\end{proof}

\begin{prop}\label{eq:differential_correlator_position}
  The correlator $G_{[u^\delta]} $ is differentiable with respect to the position of its branching $u$, i.e. uniformly over $z$'s at a fixed distance from $u^\delta$, one has,
  \begin{equation}
    \delta^{-\frac12}\partial_{y}G_{[u^{\delta}]}(z):=  \lim\limits_{\varepsilon \rightarrow 0}\frac{G_{[u^\delta+i\epsilon]}(z)-G_{[u^\delta]}(z)}{\epsilon} =  -(\tfrac1\pi)^{\frac12}  \frac{i}{2(z-u)^{\frac{3}{2}}} + O(\frac{\delta}{(z-u)^{\frac52}}),
  \end{equation}
  where the asymptotic is given when $\delta \rightarrow 0 $.
\end{prop}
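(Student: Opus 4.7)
The plan is to differentiate the explicit integral representation of $G_{[u^\delta]}$ under the integral sign, and then run the Laplace method exactly as was done for $G_{[u^\delta]}$ itself. The starting point is the multiplicative identity
\[
  \exp_\delta(\lambda, c, u + \icomp \epsilon)
  = \exp_\delta(\lambda, c, u) \cdot \exp\!\Big[\tfrac{-\icomp \epsilon \lambda}{1 - \lambda^2 \delta^2/4}\Big],
\]
which follows immediately from the transitivity of the semi-discrete exponential and from its explicit form along vertical shifts. The first step will be to observe that $G_{[u^\delta]}$ and $G_{[u^\delta + \icomp \epsilon]}$ live a priori on two slightly different double covers, branching respectively over $u$ and $u + \icomp \epsilon$, which can be canonically identified away from a small neighbourhood of $u$.

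Next, for $c$ at a definite distance from $u$, I would deform the integration contour $-\bbR_+ \overline{c - (u + \icomp \epsilon)}$ continuously to $-\bbR_+ \overline{c - u}$ without crossing either the branch cut of $\lambda^{1/2}$ or the poles $\pm 2/\delta$ of $\exp_\delta$, using the exponential decay of the integrand at infinity that has already been exploited in the proof of the asymptotics of $G_{[u^\delta]}$. This lets me rewrite $G_{[u^\delta + \icomp \epsilon]}(c)$ as the same integral as $G_{[u^\delta]}(c)$, only with the integrand multiplied by $\exp[-\icomp \epsilon \lambda /(1 - \lambda^2\delta^2/4)]$. Dominated convergence along the same contour then justifies differentiation under the integral sign at $\epsilon = 0$, which will produce
\[
  \partial_y G_{[u^\delta]}(c)
  = \tfrac{-\icomp \, e^{-\icomp \pi/4}}{2\pi} \int_{-\bbR_+ \overline{c - u}} \tfrac{\lambda^{1/2}}{1 - \lambda^2 \delta^2/4} \, \exp_\delta(\lambda, c, u) \, \dd \lambda.
\]
This expression differs from the one defining $G_{[u^\delta]}$ only by the extra factor $\lambda/(1 - \lambda^2\delta^2/4)$, which vanishes linearly at the origin and stays bounded on the bulk of the contour.

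The final step is to rescale so that $\delta = 1$ and $|c - u| \to \infty$, and then to split the contour into the three ranges $|\lambda| \le |c-u|^{-1/3}$, $|\lambda| \in [|c-u|^{-1/3}, |c-u|^{2/3}]$ and $|\lambda| \ge |c-u|^{2/3}$, repeating verbatim the analysis already carried out for $G_{[u^\delta]}$. In the small-$\lambda$ regime one has $\exp_\delta(\lambda, c, u) = e^{\lambda(c-u)}(1 + O(|c-u|\lambda^3))$ and $(1-\lambda^2\delta^2/4)^{-1} = 1 + O(\lambda^2\delta^2)$, so applying the Laplace method to $\lambda^{1/2} e^{\lambda(c-u)}$ on the ray $-\bbR_+ \overline{c-u}$ will yield the announced leading term of order $(z-u)^{-3/2}$: the extra factor $\lambda$ relative to $G_{[u^\delta]}$ is precisely what converts $(z-u)^{-1/2}$ into $(z-u)^{-3/2}$. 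The middle and large-$\lambda$ regimes contribute only exponentially small corrections, leaving a residual error $O(\delta/(z-u)^{5/2})$.

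The hard part of the argument will be the rigorous justification of the contour deformation in the presence of a moving branching on the double cover, together with the verification that the identification of the two double covers near $u$ does not introduce any additional phase in the integral representation. Once this is settled, the transitivity identity above neutralises all the combinatorial complications and the asymptotic analysis becomes a routine variation on the one already performed for $G_{[u^\delta]}$.
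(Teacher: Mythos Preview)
Your approach is correct in principle, but the paper takes a much shorter route that you may have overlooked. Rather than differentiating the integral representation directly, the paper exploits the \emph{translation invariance} of the semi-discrete lattice: since only the relative position of $c$ and $u$ matters, one has $G_{[u^\delta+i\epsilon]}(c^\delta)=G_{[u^\delta]}(c^\delta-i\epsilon)$, so the derivative with respect to the branching position equals minus the vertical derivative with respect to the evaluation point. By corner s-holomorphicity the latter is $-i$ times the horizontal finite difference $\tfrac{1}{2\delta}\bigl(G_{[u^\delta]}(c^\delta+\delta)-G_{[u^\delta]}(c^\delta-\delta)\bigr)$, and plugging in the already-known two-term asymptotics of $G_{[u^\delta]}$ gives the result immediately.

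This trick buys a lot: it completely bypasses the contour deformation, the dominated-convergence step, and---most importantly---the issue you yourself flag as the ``hard part'', namely the identification of the two slightly different double covers and the tracking of possible phase factors. All of that is absorbed into the single observation that shifting $u$ and shifting $c$ are equivalent. Your route, by contrast, re-derives from scratch via the Laplace method something that is already encoded in the asymptotics of $G_{[u^\delta]}$; it works, but it duplicates effort and forces you to confront analytic subtleties (integrability of the extra $\lambda/(1-\lambda^2\delta^2/4)$ factor at both ends of the contour, uniform control in $\epsilon$) that the paper's argument never sees.
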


\begin{proof}
  Let $c^\delta$ be a fixed corner, away from the $u^\delta$ and $u^\delta+i\epsilon $.
  By translation invariance of the lattice, one clearly has $G_{[u^\delta+i\epsilon]}(c^\delta + \icomp \epsilon) = G_{[u^\delta]}(c^\delta)  $, thus we immediately deduce that $G_{[u^\delta+i\epsilon]}(c^\delta) = G_{[u^\delta]}(c^\delta - i\epsilon)$.
  This implies that,
  \begin{eqnarray}  \nonumber
    \frac{G_{[u^\delta+i\epsilon]}(c^\delta)-G_{[u^\delta]}(c^\delta)}{\epsilon}    & = & \frac{G_{[u^\delta]}(c^\delta-i\epsilon)-G_{[u^\delta]}(c^\delta)}{\epsilon} \\  \nonumber
                                                                                    & \underset{\epsilon \to 0}{\longrightarrow} & \partial_{y}G_{[u^\delta]}(c^\delta) = -i \frac{G_{[u^\delta]}(c^\delta+\delta) - G_{[u^\delta]}(c^\delta-\delta) }{2\delta},
  \end{eqnarray} 
  where the last equality is justified by the fact that $G_{[u^\delta]}(\cdot) $ is discrete holomorphic away from $u$.
  The announced asympotic is now easily recovered by the two terms asymptotics of $G_{[u^\delta]}$ away from its branching.
\end{proof}

\section{Hyperbolic metric on a rectangular domain}
\label{sec:hyperbolic_rectangle}

Let $0 < k < 1$ and define $\calR(k)$ to be the rectangle given by $(-K(k), K(k)) \times (0, K'(k))$, where $K$ denotes the complete elliptic integral of the first kind,
\begin{align*}
  K(k) = \int_0^1 \frac{ \dd t }{ \sqrt{ (1-t^2) (1 - k^2t^2) } },
\end{align*}
and $K'$ denotes its complementary, i.e. $K'(k) = K( \sqrt{1 - k^2} )$.
Note that $k \mapsto K(k) / K'(k)$ is strictly increasing with limits $0$ and $+\infty$ when $k$ tends to $0$ and $1$.
Hence, the whole family $\{ \calR(k), 0 < k < 1 \}$ describes all the rectangles of all the possible aspect ratios.

\begin{prop}
  Let $0 < k < 1$.
  The following Schwarz--Christoffel mapping $f_k$ transforms the upper half-plane $\bbH$ into the rectangle $\calR(k)$,
  \[
    f_k(z) = \int_0^z \frac{ \dd t }{ \sqrt{ (1-t^2) (1 - k^2t^2) } }.
  \]
\end{prop}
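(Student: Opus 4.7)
The plan is to verify that the Schwarz--Christoffel formula, applied with four prevertices on $\bbR$, produces exactly the integrand $[(1-t^2)(1-k^2t^2)]^{-1/2}$, and then to evaluate the integrals along the four arcs of~$\bbR$ to identify the image as the rectangle $\calR(k)$.

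First I would choose the prevertices. A rectangle has four right angles, so by the Schwarz--Christoffel recipe one needs four prevertices on $\bbR$ with exterior angles~$\pi/2$. Exploiting the evident symmetries $z\mapsto -z$ and $z\mapsto 1/(kz)$ of the target rectangle, I place them symmetrically at $-1/k, -1, 1, 1/k$. The Schwarz--Christoffel derivative then takes the form
\[
f_k'(z) \,=\, \frac{C}{\sqrt{(z^2-1)(k^2z^2-1)}},
\]
and choosing the branch of the square root so that $f_k'$ is real and positive on $(-1,1)$ gives
\[
f_k'(z) \,=\, \frac{C'}{\sqrt{(1-z^2)(1-k^2z^2)}}.
\]
Fixing $C'=1$ (which amounts to fixing the scale of the image) and integrating from $0$ recovers the formula in the statement.

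Next I would verify the images of the four boundary arcs. Clearly $f_k(0)=0$. Since the integrand is even in~$t$ one sees $f_k$ is odd, and by definition of $K(k)$ we have $f_k(1)=K(k)$ and $f_k(-1)=-K(k)$; on $(-1,1)$ the integrand is real and positive, so this segment is mapped onto the bottom side $(-K(k),K(k))\times\{0\}$. To analyse the remaining arcs, I take the boundary limit from the upper half-plane and use the fact that on $(1,1/k)$ the quantity $(1-t^2)(1-k^2t^2)$ becomes negative, so the appropriate branch of $\sqrt{(1-z^2)(1-k^2z^2)}$ is $-i\sqrt{(t^2-1)(1-k^2t^2)}$; this turns the increment $f_k(1/k)-f_k(1)$ into a purely imaginary integral. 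The key computation is then
\[
\int_1^{1/k}\!\frac{dt}{\sqrt{(t^2-1)(1-k^2t^2)}}\,=\,K'(k),
\]
which I would establish via the substitution $t=[1-(1-k^2)s^2]^{-1/2}$: a direct computation shows the integral transforms into $\int_0^1 [(1-s^2)(1-(1-k^2)s^2)]^{-1/2}\,ds = K(\sqrt{1-k^2})=K'(k)$. Hence $f_k(1/k)=K(k)+iK'(k)$. A symmetric argument (or applying oddness) gives $f_k(-1/k)=-K(k)+iK'(k)$, and a second change of variable on $[1/k,\infty)$ shows that this arc is mapped onto the remaining top half $(0,K(k))+iK'(k)$, meeting $f_k(-\infty)=f_k(+\infty)$ at the midpoint $iK'(k)$ of the top side.

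Having checked that the boundary of $\bbH$ traverses $\partial\calR(k)$ once in the correct orientation and that the four prevertices map to the four corners of $\calR(k)$, the Schwarz--Christoffel theorem (or equivalently the argument principle applied to $f_k-w$ for $w\in\calR(k)$) guarantees that $f_k\colon \bbH\to\calR(k)$ is a conformal bijection. The principal obstacle is the reduction step above, where one must pick the correct determination of the square root when crossing the branch point at $t=1$ and carry out the change of variable to match $K'(k)$; the remaining checks are bookkeeping built on symmetry and the evenness of the integrand.
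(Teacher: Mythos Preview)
Your proof is correct and follows essentially the same route as the paper's: recognize the integrand as a Schwarz--Christoffel derivative with prevertices $\pm 1,\pm 1/k$ and equal interior angles $\pi/2$, compute $f_k(\pm 1)=\pm K(k)$ directly, and obtain $f_k(1/k)-f_k(1)=iK'(k)$ by a change of variable. Your version is simply more explicit---you write out the substitution $t=[1-(1-k^2)s^2]^{-1/2}$ and add the argument-principle justification and the tracking of the top side---where the paper leaves these as understood.
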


\begin{proof}
  We can easily see that $f_k$ is indeed a Schwarz--Christoffel mapping where $-k^{-1}$, $-1$, 1 and $k^{-1}$ are mapped to vertices of the resulting polygon with angles all equal to $\tfrac\pi2$, thus a rectangle.
  Moreover, the images of $-1$ and $1$ via $f_k$ can be computed as follow, $f_k(1) = K(k)$, $f_k(-1) = -K(k)$.
  Finally, we compute $f_k(k^{-1}) - f_k(1) = \icomp K'(k)$ by a change of variables, similarly for $f_k(-k^{-1}) - f_k(-1)$.
\end{proof}

The hyperbolic metric of a simply connected domain $\calD$ is defined by
\begin{equation}
  \label{eq:hyperbolic_metric}
  \ell_\calD(a) = \frac{2 |h'(a)|}{ 1 - |h(a)|^2}, \qquad a \in \calD,
\end{equation}
where $h$ is \emph{any} conformal map sending $\calD$ to the unit disk $\bbD$.
It is important to note that such a conformal map is well-defined only up to a Möbius transformation, which does not change the metric defined via~\eqref{eq:hyperbolic_metric}.

\begin{prop}
  Given $k>1$.
  The hyperbolic metric on $\calR(k)$ is given by
  \[
    \ell_{\calR(k)}(a) = \frac{ \cn(a, k) \dn(a, k) }{ \myi \sn(a, k) }, \qquad a \in \calR(k).
  \]
\end{prop}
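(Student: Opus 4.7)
The plan is to pull back the hyperbolic metric from $\bbH$ via the conformal uniformization of $\calR(k)$ provided by the Jacobi $\sn$ function, and then to match the result against the explicit formula in the statement. First I would observe that the Schwarz--Christoffel map $f_k\colon\bbH\to\calR(k)$ constructed in the previous proposition is precisely the inverse of $\sn(\cdot,k)$: by the defining inversion for the Jacobi elliptic sine, the relation $u=\int_0^z dt/\sqrt{(1-t^2)(1-k^2t^2)}$ is equivalent to $z=\sn(u,k)$. Consequently, $\sn(\cdot,k)$ is a conformal bijection $\calR(k)\to\bbH$.

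Next I would invoke three standard ingredients. The hyperbolic metric on $\bbH$ is $\ell_\bbH(z)=1/\Im z$, obtained from~\eqref{eq:hyperbolic_metric} applied to the M\"obius map $h(z)=(z-\icomp)/(z+\icomp)$ sending $\bbH$ to $\bbD$. The hyperbolic metric is conformally covariant: for any conformal $\phi\colon\calD_1\to\calD_2$, one has $\ell_{\calD_1}(z)=|\phi'(z)|\,\ell_{\calD_2}(\phi(z))$, which is a direct consequence of~\eqref{eq:hyperbolic_metric} and the chain rule. Finally, the classical derivative formula $\sn'(w,k)=\cn(w,k)\dn(w,k)$ follows from the very definition of $\sn$ as an inverse integral. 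Applying the covariance rule with $\phi=\sn(\cdot,k)$ and combining with the other two facts yields immediately
\[
  \ell_{\calR(k)}(a)=\frac{|\cn(a,k)\,\dn(a,k)|}{\Im\sn(a,k)},\qquad a\in\calR(k).
\]

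It remains to identify this expression with the stated formula $\cn(a,k)\dn(a,k)/(\myi\,\sn(a,k))$, which has to be interpreted as a positive real quantity. Both sides are real-analytic functions of $a$ on the connected open set $\calR(k)$, so equality up to sign can be checked on a single convenient slice, for example the imaginary axis $a=\icomp y$ with $0<y<K'(k)$; there the Jacobi imaginary-argument formulas $\sn(\icomp y,k)=\icomp\sn(y,k')/\cn(y,k')$, $\cn(\icomp y,k)=1/\cn(y,k')$, $\dn(\icomp y,k)=\dn(y,k')/\cn(y,k')$ with $k'=\sqrt{1-k^2}$ reduce both sides to the same explicit expression in $\sn(y,k')$, $\cn(y,k')$, $\dn(y,k')$. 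The main obstacle is precisely this last step: neither $\cn(a,k)\dn(a,k)$ nor $\sn(a,k)$ is individually real on $\calR(k)$, so the identification demands a careful use of the Jacobi imaginary-transformation and quarter-period-shift formulas to fix the overall sign consistently on each of the four sub-rectangles determined by the axes of symmetry. Once this bookkeeping is carried out, the proposition follows immediately.
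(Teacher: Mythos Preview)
Your approach is essentially identical to the paper's: compose $f_k^{-1}=\sn(\cdot,k)$ with the Cayley map $z\mapsto(z-\icomp)/(z+\icomp)$ to obtain a conformal map $\calR(k)\to\bbD$, and then apply~\eqref{eq:hyperbolic_metric} together with $\sn'=\cn\cdot\dn$. The paper's proof is in fact more terse than yours and does not even mention the modulus/sign reconciliation you worry about in your last paragraph; it simply declares that ``the result follows by applying~\eqref{eq:hyperbolic_metric} and the properties of the Jacobi elliptic functions,'' so your extra care there goes beyond what the paper provides.
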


\begin{proof}
  Let $g : \bbH \to \bbD$ be a conformal map between the upper half-plane and the unit disk given by $g(z) = \frac{z - \icomp}{z + \icomp}$ for $z \in \bbH$.
  Then the map $h := g \circ f_k^{-1}$ is a conformal map between $\calR(k)$ and $\bbD$.
  Moreover, $f_k^{-1}$ is the inverse of an elliptic integral and is given by the Jacobi elliptic function $a \mapsto sn(a, k)$.
  Then the result follows by applying~\eqref{eq:hyperbolic_metric} and the properties of the Jacobi elliptic functions.
\end{proof}

\bibliographystyle{alpha}
\bibliography{qi-conformal}

\end{document}